\newcommand{\risk}{\ell}
\newcommand{\e}{e}
\let\emptyset\varnothing
\newcommand{\Mgood}{\calM_{\sf good}}
\newcommand{\Mbad}{\calM_{\sf bad}}
\newcommand{\ceil}[1]{{\left\lceil {#1} \right \rceil}}
\newcommand\numberthis{\addtocounter{equation}{1}\tag{\theequation}}
\tikzstyle{dot}=[circle,fill,black,inner sep=1pt]
\tikzset{cross/.style={cross out, draw=black, minimum size=2*(#1-\pgflinewidth), inner sep=0pt, outer sep=0pt},
cross/.default={1pt}}
\tikzstyle{int}=[draw, fill=blue!20, minimum size=2em]
\tikzstyle{dot}=[circle, draw, fill=blue!20, minimum size=2em]
\tikzstyle{dotred}=[circle, draw, fill=red!20, minimum size=2em]
\tikzstyle{init} = [pin edge={to-,thin,black}]
\tikzstyle{initred} = [pin edge={to-,thin,red}]
\tikzstyle{plan}=[draw, fill=blue!20, minimum size=2em, text width=5em, rounded corners,align=center]
\tikzstyle{planwide}=[draw, fill=blue!20, minimum size=2em, text width=8em, rounded corners,align=center]
\tikzstyle{nodedot}=[circle, draw, fill=white, minimum size=0.2cm,inner sep=0pt]
\tikzstyle{smallnode}=[circle, draw, fill=black, minimum size=0.1cm,inner sep=0pt]
\tikzstyle{vertexdot}=[circle, draw, fill=white, minimum size=3,inner sep=0pt]
\tikzstyle{Redge}=[red, line width=2.71828pt]
\tikzstyle{Bedge}=[blue, thick]
\tikzstyle{shadedgiantnode}=[circle, draw, fill=black!10, minimum size=0.7cm, inner sep=0pt]
\tikzstyle{unshadedgiantnode}=[circle, draw, fill=white, minimum size=0.7cm, inner sep=0pt]
\tikzset{my loop/.style =  {to path={
  \pgfextra{}
  [looseness=5,min distance=10mm]
  \tikz@to@curve@path},font=\sffamily\small
  }}  
\newcolumntype{C}[1]{>{\centering\arraybackslash}p{#1}}
\newtheorem{theorem}{Theorem}
\newtheorem{lemma}{Lemma}
\theoremstyle{definition}
\newtheorem{definition}{Definition}
\newtheorem{remark}{Remark}
\newcommand{\stepa}[1]{\overset{\rm (a)}{#1}}
\newcommand{\stepb}[1]{\overset{\rm (b)}{#1}}
\newcommand{\stepc}[1]{\overset{\rm (c)}{#1}}
\newcommand{\diverge}{\to\infty}
\newcommand{\iiddistr}{{\stackrel{\text{\iid}}{\sim}}}
\newcommand{\reals}{{\mathbb{R}}}
\newcommand{\diff}{\mathrm{d}}
\newcommand{\Expect}{\mathbb{E}}
\newcommand{\expect}[1]{\mathbb{E}\!\left[ #1 \right]}
\newcommand{\prob}[1]{ \mathbb{P}\left\{ #1 \right\} }
\newcommand{\var}{\mathrm{Var}}
\newcommand{\Var}{\mathrm{Var}}
\newcommand{\Bern}{{\rm Bern}}
\newcommand{\Binom}{{\rm Binom}}
\newcommand{\ie}{i.e.\xspace}
\newcommand{\iid}{i.i.d.\xspace}
\newcommand{\pth}[1]{\left( #1 \right)}
\newcommand{\qth}[1]{\left[ #1 \right]}
\newcommand{\sth}[1]{\left\{ #1 \right\}}
\newcommand{\indc}[1]{{\mathbf{1}_{\left\{{#1}\right\}}}}
\newcommand{\tU}{{\widetilde{U}}}
\newcommand{\sfb}{{\mathsf{b}}}
\newcommand{\sfr}{{\mathsf{r}}}
\newcommand{\calA}{{\mathcal{A}}}
\newcommand{\calB}{{\mathcal{B}}}
\newcommand{\calC}{{\mathcal{C}}}
\newcommand{\calE}{{\mathcal{E}}}
\newcommand{\calK}{{\mathcal{K}}}
\newcommand{\calL}{{\mathcal{L}}}
\newcommand{\calM}{{\mathcal{M}}}
\newcommand{\calN}{{\mathcal{N}}}
\newcommand{\calP}{{\mathcal{P}}}
\newcommand{\calQ}{{\mathcal{Q}}}
\newcommand{\calU}{{\mathcal{U}}}
\newcommand{\calX}{{\mathcal{X}}}
\DeclareMathAlphabet{\varmathbb}{U}{bbold}{m}{n}
\renewcommand{\hat}{\widehat}
\renewcommand{\tilde}{\widetilde}
\newcommand{\symdiff}{\triangle}
\newcommand{\dx}{\mathrm{d}x}
\newcommand{\dy}{\mathrm{d}y}
\newcommand{\sz}{s} 
\renewcommand{\path}{P}
\renewcommand{\sp}{\eta}
\newcommand{\sr}{\beta}
\newcommand{\wt}{\mathsf{wt}}
\newcommand{\wtr}{\wt_{\sfr}}
\newcommand{\wtb}{\wt_{\sfb}}
\newcommand{\dev}{\mathsf{dev}}
\newcommand{\devr}{\dev_{\sfr}}
\newcommand{\devb}{\dev_{\sfb}}
\newcommand{\Mmin}{\hat M_{\mathsf{ML}}}
\newcommand{\Mplanted}{M^*}
\newcommand{\ER}{Erd\H{o}s-R\'{e}nyi\xspace}
\title{The planted matching problem: \\
Sharp threshold and infinite-order phase transition
}
\author{Jian Ding, Yihong Wu, Jiaming Xu, and Dana Yang\thanks{
J.\ Ding is with Department of Statistics, The Wharton School, University of Pennsylvania, Philadelphia, USA, \texttt{dingjian@wharton.upenn.edu}.
Y.\ Wu is with Department of Statistics and Data Science, Yale University, New Haven, USA, \texttt{yihong.wu@yale.edu}.
J.\ Xu and D.\ Yang are with The Fuqua School of Business, Duke University, Durham NC, USA, \texttt{\{jx77,xiaoqian.yang\}@duke.edu}.
}}
\begin{document}

\maketitle

\begin{abstract}
We study the problem of reconstructing a perfect  matching $M^*$ hidden in a randomly weighted 
$n\times n$ bipartite graph. 
The edge set includes every node pair in $M^*$ 
and each of the $n(n-1)$ node pairs not in $M^*$ independently with probability
$d/n$. The weight of each edge $e$ is independently drawn from the distribution
$\calP$ if $e \in M^*$ and from $\calQ$ if $e \notin M^*$.
We show that if $\sqrt{d} B(\calP,\calQ) \le 1$, where $B(\calP,\calQ)$ stands for the Bhattacharyya coefficient, 
 the reconstruction error (average fraction of misclassified edges) 
of the maximum likelihood estimator of $M^*$ converges to $0$ as $n\to \infty$. 
Conversely, if $\sqrt{d} B(\calP,\calQ) \ge 1+\epsilon$ for an arbitrarily small constant $\epsilon>0$,
the reconstruction error for any estimator is shown to be bounded away from $0$ under both the sparse and dense model, resolving the conjecture in \cite{Moharrami2020a,Semerjian2020}.
Furthermore, in the special case of complete exponentially weighted graph with $d=n$, $\calP=\exp(\lambda)$,
and $\calQ=\exp(1/n)$, for which the sharp threshold simplifies to $\lambda=4$, we prove that when $\lambda \le 4-\epsilon$, 
the optimal reconstruction error is $\exp\left( - \Theta(1/\sqrt{\epsilon}) \right)$, 
confirming the conjectured infinite-order phase transition in \cite{Semerjian2020}.
\end{abstract}

\tableofcontents

\section{Introduction}

Let $\calM$ denote the set of perfect matchings on the complete bipartite graph $K_{n,n}$, with left-hand vertices labeled as $[n]=\{1,\ldots,n\}$ and right-hand vertices labeled as $[n]'=\{1',\ldots,n'\}$.
We denote the weights on the edge $e=(i,j')$ by $W_{i,j'}=W_e$.
Each perfect matching $M$ is understood as a set of edges in $K_{n,n}$.

\begin{definition}[Planted matching recovery]
\label{def:model}
Consider a weighted bipartite graph $G$ randomly generated as follows.  
First sample $M^*$ uniformly at random from the set $\calM$ of all perfect matchings on $K_{n,n}$ and connect all pairs in $M^*$.
Then for every node pair  $(i,j')$ not in $M^*$, connect them independently with probability $\frac{d}{n}$.  
Finally, for edges in $M^*$, the edge weights are drawn independently from $\calP$. The remaining edge weights are drawn independently from $\calQ$. The goal is to reconstruct the hidden matching $\Mplanted$ based on  $G$. 
\end{definition}

This problem is first proposed by~\cite{Chertkov2010} and motivated from tracking moving objects in a video, such as flocks of birds, motile cells, or particles in a fluid.
A slight variation of \prettyref{def:model} is studied in \cite{Semerjian2020} for unipartite graphs, where $M^*$ is chosen uniformly at random from the set of all perfect matchings on the complete unipartite graph $K_n$ (with even $n$) and the edge set of $G$ includes all $n/2$ node pairs in $M^*$ and each of the $\binom{n}{2}-n/2$ node pairs not in $M^*$ independently with probability $\frac{d}{n}$;
the edge weights are still independently distributed according to $\calP$ for edges in $M^*$ and $\calQ$ otherwise. In this paper, we present our results and analysis for bipartite graphs; nevertheless, the proof techniques can be straightforwardly extended to the unipartite version and all conclusions hold verbatim.

Of particular interest are the following two regimes, which are the focus of \cite{Semerjian2020} and the present paper:
\begin{itemize}
	\item \textbf{Sparse model}: The average degree $d$ is a constant. In this case, the null ({\it i.e.} unplanted) distribution $\calQ$ and planted distribution $\calP$ can be arbitrary distributions independent of $n$.
	
	\item \textbf{Dense model}: The average degree $d \equiv d(n) \to \infty$ as $n$ grows. In particular, when $d=n$, we observe a complete bipartite graph with weights correlated with the hidden matching.
In this case, we focus on the following special case of weight distributions, where the planted distribution $\calP$ has a fixed density $p$ (with respect to the Lebesgue measure) and the null distribution $\calQ$ has a density $q$ of the following form:
	\begin{equation}
	q(x) = \frac{1}{d} \rho\pth{\frac{x}{d}}
	\label{eq:q-dense}
	\end{equation}
	where $\rho$ is some fixed density on $\reals$ with $\rho(0)>0$.\footnote{If $\rho(0)=0$, then for each vertex, among its incident edges, the planted edge weight has the smallest magnitude with high probability, in which case almost perfect recovery is trivially achievable.}
	This scaling is natural and meaningful, because the magnitude of each unplanted  edge weight is $O(d)$ on average, and each vertex is incident to an average of $d$ unplanted edges. Under this scaling, the minimum magnitude the unplanted edge weights incident to a given vertex is on the same order of $O(1)$ as the planted edge weight. Of special interest is the complete graph with exponential weights \cite{Moharrami2020a,Semerjian2020}, which we refer to as the \emph{exponential model}, where $d=n, \calP=\exp(\lambda)$ and $\calQ=\exp(\frac{1}{n})$.
	Note that its unplanted version is the celebrated random assignment model studied in \cite{Walkup1979,Parisi1987,Karp1987,Aldous2001,Linusson2004,Nair2005,Wastlund2009}.
  Another special case was studied by in~\cite{Chertkov2010} 	
	where $\calP=|\calN(0,\kappa)|$ is a folded Gaussian and $\calQ$
  is uniform over $[0,n]$. 
\end{itemize}

Let $\hat M\equiv \hat M(G)$ denote an estimator of $M^*$. 
The reconstruction error, namely, the fraction of misclassified edges is 
\begin{equation}
\risk(\hat M,M^*) = \frac{1}{n}| \Mplanted \symdiff \hat{M} |,
\label{eq:risk}
\end{equation}
where $\symdiff$ denotes the symmetric set difference. We say that $\hat M$ achieves \emph{almost perfect recovery} if $\Expect[\risk(\Mplanted,\hat M)] = o(1)$.
It can be shown (see e.g.~\cite[Appendix A]{HajekWuXu_one_info_lim15}) that achieving a vanishing reconstruction error in expectation is equivalent to that with high probability.
Note that for any estimator $\hat M=\hat{M}(G)$, we have
\begin{equation}
\expect{\risk \left(M^*,\hat{M}\right)} = \frac{1}{n} \sum_{e\in E(K_{n,n})} \prob{e \in M^* \symdiff \hat{M}}.
\label{eq:risk}
\end{equation}
Thus the average reconstruction error is minimized by the
\emph{marginal maximum a posteriori (MAP)} estimator, where $e \in \hat{M}$ if and only if 
$ \prob{e \in M^* | G} \ge \prob{e \notin M^* | G}  $, since it minimizes each summand in \prettyref{eq:risk}.
Note that in general the marginal MAP need not be a matching or even be of size $n$.
Nevertheless, it is easy to see that one can project any estimator  to the set of perfect matchings (in distance metric $\risk$)  
while increasing the average reconstruction error by a factor of two.

\subsection{Main results}

The information-theoretic threshold of the planted matching model is determined by the following key quantity, known has 
the Bhattacharyya coefficient (or Hellinger affinity)~\cite{bhattacharyya1943measure}:
\begin{equation}
B(\calP,\calQ) \triangleq \int \sqrt{f g}\ d\mu,
\label{eq:B}
\end{equation}
where $f,g$ denote the relative density of $\calP,\calQ$ with respect to some common dominating measure $\mu$, respectively. 
It is well-known that $0 \leq B(\calP,\calQ) \leq 1$, with $B(\calP,\calQ)$ equals $0$ (resp.~$1$) when $\calP$ and $\calQ$ are mutually singular (resp.~identical). For simplicity, we assume throughout the main body of the paper that $\calP\ll\calQ$, so that their relative density, denoted henceforth by $\frac{\calP}{\calQ}$, is well-defined. Nevertheless,
even when $P \not \ll Q$ (such as the weight distributions considered in \cite{Chertkov2010}), all results continue to hold (see Appendices~\ref{app:acpos} and \ref{app:acneg} for justification).

We first give a sufficient condition for 
the maximum likelihood estimator (MLE) to achieve almost perfect recovery.
The MLE reduces to the max-weighted matching on $K_{n,n}$ where each edge $e$ is weighted by the corresponding log likelihood ratio, namely,
\begin{equation}
\Mmin \in \arg \max_{M \in\calM}  \; \sum_{e\in M}  \log \frac{\calP}{\calQ}(W_e),
\label{eq:MLE}
\end{equation}
which can be computed in polynomial time (as linear assignment).
\begin{theorem}
\label{thm:possibility} 
Assume that
\begin{equation}
\sqrt{d} \;B(\calP, \calQ) \leq 1+\epsilon
\label{eq:possibility-condition}
\end{equation}
for some $\epsilon\geq 0$.
Then there exists universal constant $C>0$, such that for large enough $n$,
\[
\Expect\left[\risk(\Mplanted,\Mmin)\right]\leq C\max\left\{\log(1+\epsilon), \sqrt{\frac{\log n}{n}}\right\}.
\] 
\end{theorem}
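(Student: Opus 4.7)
I would prove Theorem~\ref{thm:possibility} by a first-moment / union-bound analysis over alternating cycles in the symmetric difference $M^{*}\triangle\Mmin$. Since $M^{*}$ and $\Mmin$ are both perfect matchings of $K_{n,n}$, $M^{*}\triangle\Mmin$ decomposes uniquely into a vertex-disjoint family of alternating cycles, where a cycle of length $2k$ contains exactly $k$ planted edges and $k$ non-planted edges. Consequently
\[
\Expect\bigl[|M^{*}\triangle\Mmin|\bigr]
\;=\; \sum_{C}\,|C|\cdot\Prob\bigl(C\subseteq M^{*}\triangle\Mmin\bigr),
\]
where the sum is over alternating cycles $C$ in $K_{n,n}$, and the task reduces to controlling the right-hand side by summing over cycle lengths.

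\paragraph{Per-cycle Bhattacharyya estimate.} A necessary condition for a fixed alternating cycle $C$ of length $2k$ to lie in $M^{*}\triangle\Mmin$ is that (i) all $k$ non-planted edges of $C$ appear in the random graph $G$, which happens with probability $(d/n)^{k}$; and (ii) the MLE prefers the non-planted half of $C$ to the planted half, i.e.\
\[
\sum_{e\in C\setminus M^{*}}\log\tfrac{\calP}{\calQ}(W_{e})
\;\geq\; \sum_{e\in C\cap M^{*}}\log\tfrac{\calP}{\calQ}(W_{e}),
\]
which follows from the definition of $\Mmin$ by considering the alternative matching $\Mmin\triangle C$. Applying the Chernoff bound at the symmetric tilt $t=1/2$ (Markov with exponent $1/2$ on the ratio of the two likelihood products) reduces each per-edge moment generating function to $B(\calP,\calQ)$ and yields $\Prob(\text{(ii) holds on }C)\leq B^{2k}$. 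Combining with the count $\frac{n!}{2k(n-k)!}\leq n^{k}/(2k)$ of alternating cycles of length $2k$ in $K_{n,n}$ and the hypothesis $\sqrt{d}\,B(\calP,\calQ)\leq 1+\epsilon$ gives the crude bound
\[
\Expect\bigl[|M^{*}\triangle\Mmin|\bigr]
\;\leq\; \sum_{k=2}^{n}(d B^{2})^{k}
\;\leq\; \sum_{k=2}^{n}(1+\epsilon)^{2k}.
\]

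\paragraph{Truncation and the main obstacle.} This bound is useless if summed all the way to $k=n$, since the right-hand side is $\Theta(n)$ at $\epsilon=0$ and grows geometrically when $\epsilon>0$. The plan is therefore to truncate at a length cutoff $K$, controlling the short-cycle contribution by the dominant term $K(1+\epsilon)^{2K}$ and then dividing by $n$; optimizing $K\asymp 1/\epsilon$ when $\epsilon\gtrsim\sqrt{\log n/n}$ and $K\asymp\sqrt{n/\log n}$ at the threshold produces a short-cycle contribution of the target order $C\max\{\log(1+\epsilon),\sqrt{\log n/n}\}$. The \emph{main obstacle}, which I expect to absorb the bulk of the argument, is controlling the long-cycle regime $k>K$, where the first-moment estimate above is too crude. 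Here the plan is to exploit two structural features: (a) alternating cycles in $M^{*}\triangle\Mmin$ are vertex-disjoint, so the total Hamming distance is deterministically at most $n$; and (b) sharper probabilistic estimates — for instance a second-moment bound, a sprinkling/decoupling argument on the existence of the non-planted edges, or a local-tree analysis in the sparse regime — show that simultaneously realizing many long ``bad'' alternating cycles is atypical. Splicing together the short-cycle first moment and a long-cycle tail estimate at the optimal cutoff $K$ is what produces the $\sqrt{\log n/n}$ rate at the exact threshold $\sqrt{d}\,B=1$.
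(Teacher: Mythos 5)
Your setup is right: the decomposition of $M^{*}\triangle\Mmin$ into alternating cycles, the per-cycle Bhattacharyya estimate $\Prob(\Delta(C)\ge 0)\le B^{2k}$, and the observation that $C\subseteq M^{*}\triangle\Mmin$ forces $\Delta(C)\ge 0$ (by comparing $\Mmin$ to $\Mmin\triangle C$) are all correct and mirror \prettyref{eq:LDXY}. The plan for controlling the resulting sum, however, cannot work as described, and the fixes you propose for the long-cycle regime point in the wrong direction.

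Keep the falling factorial in your cycle count $c_k\le(n)_k/(2k)$ rather than discarding it: after passing to the complete-graph reformulation in which $\sqrt{n}\,B(\calP,\calQ')\le 1+\epsilon$, the cycle-wise first moment becomes
\[
\sum_{k\ge 2} 2k\,c_k\,B^{2k}\;\le\;\sum_{k\ge 2}(1+\epsilon)^{2k}e^{-k(k-1)/(2n)}.
\]
The summand peaks at $k^{*}\approx 2n\log(1+\epsilon)$, where it equals $e^{2n\log^{2}(1+\epsilon)}$. For any constant $\epsilon>0$ this is $e^{\Theta(n)}$, so the first moment over cycles is hopeless, and this is \emph{not} cured by truncating at $K\asymp 1/\epsilon$ — the problematic peak sits at $k^{*}=\Theta(n)$, deep inside your ``long cycle'' regime. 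The inequality $\Prob(C\subseteq M^{*}\triangle\Mmin)\le\Prob(\Delta(C)\ge 0)$ is exponentially lossy there: typically there \emph{are} exponentially many alternating cycles of length $\approx 4n\log(1+\epsilon)$ with $\Delta(C)\ge 0$, but almost none of them can be a component of $M^{*}\triangle\Mmin$, because that would require $\Mmin$ to globally outcompete all other matchings — a constraint your per-cycle bound ignores entirely. Second-moment, sprinkling, or local-tree arguments (the tools used for the \emph{negative} results, Theorems~\ref{thm:impossibility}--\ref{thm:exp}) establish the \emph{existence} of many bad cycles; they cannot show the MLE avoids them, which is what you need here.

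The paper's proof (\prettyref{thm:first_moment_bound}) sidesteps the peak by not taking a first moment over cycles at all. It truncates the Hamming distance, $\Expect[|M^{*}\triangle\Mmin|]\le 2\beta n+2n\,\Prob(|M^{*}\triangle\Mmin|\ge 2\beta n)$, and bounds the tail by a union bound over \emph{perfect matchings} $M$ at distance $\ge 2\beta n$, using $\Prob(L(M)\ge L(M^{*}))\le B^{2t}$ and the matching count $\binom{n}{t}t!\le n^{t}e^{-t(t-1)/(2n)}$. The resulting series $\sum_{t\ge\beta n}(1+\epsilon)^{2t}e^{-t(t-1)/(2n)}$ is taken entirely past its peak $t^{*}=2n\log(1+\epsilon)$ once $\beta>4\log(1+\epsilon)$, at which point $e^{-t^{2}/(2n)}\le e^{-\beta t/2}$ turns it into a convergent geometric series. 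Choosing $\beta=\max\{8\log(1+\epsilon),2\sqrt{\log n/n}\}$ balances the two terms and gives the stated rate. In short: the truncation — absorbing the small-distance (including the critical peak) contribution into the trivial bound $2\beta n$ rather than estimating it probabilistically — is exactly the step missing from your proposal, and it is the whole ballgame.
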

Note that Theorem~\ref{thm:possibility} allows $\epsilon$ to take arbitrary values, including $0$. In particular, if $\sqrt{d}B(\calP,\calQ)\leq 1$, then $\Expect[\risk(\Mplanted,\Mmin)]\le C \sqrt{\log n/n}$; if~\eqref{eq:possibility-condition} holds with $\epsilon\rightarrow 0$ as $n\rightarrow \infty$, then $\Mmin$ achieves almost perfect recovery.

Next we proceed to negative results, which are the main focus of this paper. 
The following theorem shows the tightness of the condition \prettyref{eq:possibility-condition}:

\begin{theorem}
\label{thm:impossibility}
Assume that 
\begin{equation}
\sqrt{d} \;B(\calP, \calQ) \geq 1 + \epsilon.
\label{eq:impossibility}
\end{equation}
for some arbitrary constant $\epsilon>0$.
Suppose that  $\rho$ is continuous at $0$ and $\rho(0)<\infty$. 
Then in both the sparse and the dense model, for any estimator $\hat M$ and large $n$, 
\begin{equation}
\Expect[\risk(M^*,\hat{M})] \ge  c,
\label{eq:risk-lb}
\end{equation}
where $c>0$ is a constant independent of $n$. In the sparse model, $c$ only depends on $\epsilon,\calP,\calQ$; in the dense model, $c$ only depends on $\epsilon, \calP, \rho$. 
\end{theorem}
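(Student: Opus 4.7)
The plan is to establish the error lower bound by showing that, for a constant fraction of planted edges $e$, the marginal posterior $\Prob\{e \in M^* \mid G\}$ is bounded away from $1$ with constant probability. Since the marginal MAP estimator is Bayes-optimal for the risk \eqref{eq:risk}, this will yield $\Expect[\risk(M^*, \hat M)] \ge c > 0$. The strategy is local: I couple the neighborhood of $e$ in $G$ with a random tree on which the posterior admits an explicit branching-process interpretation that becomes supercritical precisely when $\sqrt{d}\,B(\calP,\calQ) > 1$.

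For a fixed planted edge $e = (v,v')$, the relevant competing matchings are of the form $M^* \triangle C$ for alternating cycles $C \subseteq G$ containing $e$, whose posterior odds against $M^*$ equal
\begin{equation*}
R(C) \;:=\; \prod_{e' \in C \setminus M^*} L(W_{e'}) \cdot \prod_{e' \in C \cap M^*} L(W_{e'})^{-1}, \qquad L \;:=\; \tfrac{d\calP}{d\calQ}.
\end{equation*}
In the sparse regime ($d$ constant), the depth-$K$ neighborhood of $e$ in $G$ couples in total variation with a two-type Galton--Watson tree $\calT$: the root edge $(v,v')$ is planted, each of $v,v'$ spawns $\Pois(d)$ non-planted children on the opposite side, each such child has a planted partner who spawns $\Pois(d)$ further non-planted children, recursively. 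In the dense regime, the hypotheses $\rho$ continuous at $0$ and $\rho(0) < \infty$ allow a pruning step: restrict to non-planted edges with weight $W_{e'} \le \tau d$ for a small constant $\tau = \tau(\epsilon)$; by the scaling $q(x) = d^{-1}\rho(x/d)$, each vertex retains $\Pois(d\tau\rho(0) + o(1))$ such edges. Choosing $\tau$ so that the pruned effective Bhattacharyya product remains above $1 + \epsilon/2$ reduces the dense case to the sparse analysis.

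On $\calT$, the key identities
\begin{equation*}
\Expect_{W \sim \calP}[L(W)^{-1/2}] \;=\; \Expect_{W \sim \calQ}[L(W)^{1/2}] \;=\; B(\calP,\calQ)
\end{equation*}
show that after Bhattacharyya reweighting, an alternating cycle of length $2k$ through the root contributes $B^{2k}$ in expectation while the expected number of such cycles in $\calT$ grows as $d^k$: the effective branching factor is $dB^2 \ge (1+\epsilon)^2 > 1$, rendering the branching supercritical. Combining a distributional fixed-point analysis of the BP recursion on $\calT$ with a Paley--Zygmund second-moment argument on truncated versions of $\sum_C R(C)$ (restricted to cycles of length $\le 2K$ and bounded log-likelihoods), supercriticality translates into the posterior marginal at the root being a non-degenerate random variable taking values below $1 - \delta$ with probability at least $c$, for constants $c, \delta > 0$ depending only on $(\epsilon, \calP, \calQ)$ in the sparse case and $(\epsilon, \calP, \rho)$ in the dense case. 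By local weak convergence, the finite-graph marginal posterior inherits the same non-degeneracy up to $o_n(1)$, yielding the required lower bound.

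The main obstacle is the second-moment control needed to pass from ``supercritical mean growth'' to ``non-trivial posterior with constant probability.'' The variance of the truncated augmenting-cycle sum involves pairs of overlapping alternating cycles, contributing cross-terms of the form $\Expect_\calQ[L^2]^r \cdot \Expect_\calP[L^{-2}]^s$, which are $\chi^2$-type quantities that can be unbounded. I expect to address this either through a tight truncation of $L$ (with a quantitative argument that the truncation preserves supercriticality), or through a conditional second-moment argument restricted to a high-probability event on which the likelihood ratios along alternating paths are well-behaved. In the dense model, the initial pruning must simultaneously preserve enough Bhattacharyya mass and keep the pruned $\chi^2$-moments bounded, which is precisely where the regularity of $\rho$ at $0$ becomes essential.
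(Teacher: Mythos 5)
Your plan hinges on reducing the posterior marginal for a fixed planted edge $e$ to a local computation on a Galton--Watson tree $\calT$, and then asserting supercriticality through ``alternating cycles of length $2k$ through the root'' whose expected count grows as $d^k$. This step is incoherent as stated: $\calT$ is a tree, so it contains no cycles, and the branching factor $dB^2$ that you correctly identify does not count cycles in $\calT$. Moreover, in the actual sparse graph $G$, the expected number of alternating cycles of fixed length $2k$ through a fixed planted edge $e$ is $\Theta(d^k/n)$, so with high probability $e$ lies on no short cycle at all, and the constant-radius neighborhood of $e$ carries no uncertainty. The uncertainty the theorem captures comes entirely from alternating cycles of length $\Theta(n)$ --- an inherently global structure invisible to any depth-$K$ local exploration. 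The closing claim that ``by local weak convergence, the finite-graph marginal posterior inherits the same non-degeneracy'' is precisely the step that requires a proof; the posterior marginal is a functional of the global partition function $Z(W)$, and local weak convergence by itself does not control it.

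The paper takes a fundamentally different, global route (Sections~\ref{sec:negative}--\ref{sec:impossibility.proof}): it samples $\tilde M$ from the posterior, splits $\calM$ into $\Mgood$ and $\Mbad$, and lower-bounds $\mu_W(\Mbad)$ by exhibiting $e^{\Omega(n)}$ distinct alternating cycles of length $\Theta(n)$ with positive excess log-likelihood. It notes explicitly that a naive second moment on long cycles fails due to excessive correlations, and that even disjoint unions of $\Theta(\log n)$-length cycles only cover $\Theta(n/\log n)$ total length --- which rules out exactly the truncated Paley--Zygmund argument you propose. The resolution is the two-stage scheme of \prettyref{alg:cycle_finding}: construct many vertex-disjoint alternating paths via branching-process exploration (\prettyref{alg:exploration}, \prettyref{thm:exploration}), then sprinkle reserved vertices to stitch the paths into long cycles through a supercritical \ER super-graph (\prettyref{thm:sprinkling}, \prettyref{lmm:bip.many.cycles}). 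Your Bhattacharyya identity and the criterion $dB^2 > 1$ do give the right heuristic threshold, and the dense-to-sparse pruning resembles \prettyref{app:reduction_dense_sparse}, but the proposed local BP mechanism does not supply a proof.
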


\prettyref{thm:possibility}  and \prettyref{thm:impossibility} together 
establish $\sqrt{d} \; B(\calP, \calQ) =1$ as the sharp threshold for almost perfect recovery in both the 
sparse and dense model, proving the conjecture in \cite{Semerjian2020}.\footnote{To be precise, the conjecture given in~\cite[eq.\ (45) and (40)]{Semerjian2020} is stated under the aforementioned unipartite version of the planted matching problem.
Nevertheless, our proof techniques as well as the sharp thresholds in Theorems \ref{thm:possibility}--\ref{thm:exp} continue to hold for the unipartite version.}
In the dense model with the scaling \prettyref{eq:q-dense}, the condition \prettyref{eq:impossibility} simplifies to
\begin{equation}
\int_0^\infty \sqrt{p(x)} dx \geq \frac{1+\epsilon}{\sqrt{\rho(0)}}.
\label{eq:impossibility-dense}
\end{equation}
Note that this condition depends on the density function $\rho$ in the null case only through its value at zero. In the special case of exponential weights of $p(x)=\lambda e^{-\lambda x}$ and $\rho(x)=e^{-x}$, the condition \prettyref{eq:impossibility-dense} further simplifies to 
\begin{equation}
\lambda \leq 4 - \epsilon.
\label{eq:impossibility-exp}
\end{equation}
In view of the positive result in \cite{Moharrami2020a}, this establishes $\lambda=4$ as the sharp threshold of almost perfect recovery, resolving a conjecture in \cite{Moharrami2020a}.
The next result, specialized to the exponential model, shows that the optimal reconstruction error is in fact $e^{-\Theta(\frac{1}{\sqrt{\epsilon}})}$, resolving a conjecture in \cite{Semerjian2020}.
Interestingly, this shows that the phase transition in the average reconstruction error is of infinite order,\footnote{In statistical physics parlance, a phase transition is called \textit{continuous} if the order parameter (in this case, the average reconstruction error) is continuous at the threshold, and \emph{of $p$th order} if its $(p-2)$th derivative is continuous \cite{Semerjian2020}.} unlike 
 other well-known planted problems such as  the stochastic block model with two groups (second order) or with four or more groups (first order)~\cite{Moore2017}. 

\begin{theorem}[Optimal reconstruction error for exponential model]
\label{thm:exp}
There exist an absolute constant $C_0$ such that the following holds. 
Suppose that $\lambda=4-\epsilon$ for some arbitrary constant $\epsilon>0$. 
Then there exists $n_0=n_0(\epsilon)$, such that for all $n\geq n_0$ and for any estimator $\hat M=\hat{M}(W)$, 
\begin{equation}
\Expect[\risk(M^*,\hat{M})] \ge \e^{-\frac{C_0}{\sqrt{\epsilon}}}.
\label{eq:risk-lb-exp}
\end{equation}
Furthermore, let $\Mmin$ be given in \prettyref{eq:MLE} (which in this case coincides with the min-weight bipartite matching). Then 
\begin{equation}
\Expect[\risk(M^*,\Mmin)] \le \frac{C_0}{\epsilon^3} \e^{-\frac{2\pi}{\sqrt{\epsilon}}}.
\label{eq:risk-ub-exp}
\end{equation}
\end{theorem}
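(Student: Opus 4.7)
In the exponential model the log-likelihood ratio $\log(\calP/\calQ)(w) = \log(\lambda n) - (\lambda - 1/n)w$ is affine in $w$, so $\Mmin$ coincides with the min-weight bipartite matching, and $M^*\oplus\Mmin$ decomposes into vertex-disjoint alternating cycles $C$, each satisfying the \emph{improving} condition
\[
S(C) \;:=\; \sum_{e\in C\cap M^*} W_e \;-\; \sum_{e\in C\setminus M^*} W_e \;>\; 0.
\]
The proof of both bounds in Theorem~\ref{thm:exp} reduces to sharp estimates on the statistics of such improving cycles. A cycle of length $2k$ uses $k$ planted weights i.i.d.\ $\exp(\lambda)$ and $k$ unplanted weights i.i.d.\ $\exp(1/n)$, so
\[
\Prob[S(C)>0] \;=\; \Prob\bigl[\mathrm{Gamma}(k,\lambda)\ge \mathrm{Gamma}(k,1/n)\bigr],
\]
which can be evaluated explicitly via the small-argument expansion of the incomplete Gamma CDF (valid while $k\ll n$), yielding a closed-form expression proportional to $\binom{2k}{k}/(\lambda n)^k$.

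For the upper bound~\eqref{eq:risk-ub-exp}, my plan is to control $\Prob[e^*\in M^*\oplus\Mmin]$ for a fixed planted edge $e^*$ via the expected number $T_k$ of improving cycles of length $2k$ through $e^*$. Combining the cycle count $\Theta(n^{k-1}(k-1)!)$ with the probability above and applying Stirling gives $T_k\sim (4/\lambda)^k/(n\,k^{3/2})$; at $\lambda=4-\epsilon$ the sum $\sum_k T_k$ is dominated by $k\asymp 1/\sqrt\epsilon$, and a Laplace-method evaluation at the saddle produces the exponent $2\pi/\sqrt\epsilon$, with the factor $2\pi$ coming from the Gaussian integration and the $\epsilon^{-3}$ prefactor from combining the $k^{-3/2}$ factor with the Gaussian width. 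A key subtlety is controlling contributions from long cycles ($k\gg 1/\sqrt\epsilon$), where the small-argument Gamma expansion fails; handling this will require either a uniform tail bound on the incomplete Gamma CDF or an explicit use of the vertex-disjointness constraint (at most $O(n/k)$ length-$2k$ cycles can coexist in $M^*\oplus\Mmin$) to effectively truncate the sum.

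For the lower bound~\eqref{eq:risk-lb-exp}, since the marginal MAP estimator minimizes the average error by~\eqref{eq:risk}, it suffices to lower-bound its error. My plan is to exhibit around a uniformly random planted edge $e^*$ an alternating cycle $C$ of length $\asymp 1/\sqrt\epsilon$ whose weights realize $S(C)>0$ with probability at least $e^{-C_0/\sqrt\epsilon}$ (by the same Gamma--Gamma comparison evaluated at the saddle). Such a configuration drives the posterior likelihood ratio at $e^*$ past unity, causing the MAP to misclassify $e^*$. A conditional second-moment calculation, which tracks the vertex-overlap between distinct candidate cycles through $e^*$, then converts this first-moment existence into an actual positive misclassification probability for $e^*$.

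The most delicate step will be the precise saddle-point extraction of the constants $2\pi$ and $C_0$: it requires uniform control of the cycle-weight-distribution tails beyond the leading small-argument incomplete-Gamma expansion, since the naive first-moment sum over cycle lengths diverges at subcritical $\lambda$ without such refinements. A secondary challenge arises in the lower-bound second-moment calculation, where at the critical length $\asymp 1/\sqrt\epsilon$ distinct alternating cycles through $e^*$ share many vertices with non-negligible probability and the overlap contributions must be carefully dominated to preserve the constant-factor lower bound on the misclassification probability.
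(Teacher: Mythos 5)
Both halves of your proposal contain genuine gaps.

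\textbf{Upper bound.} Your first-moment bound on $\Prob[e^*\in M^*\symdiff\Mmin]$ via $\sum_k T_k$ cannot work as stated, because the sum does not converge for fixed $\epsilon>0$ and large $n$. The per-cycle gain factor is $n\cdot e^{-\alpha}$ with $e^{-\alpha}=B(\calP,\calQ)^2=\tfrac{4\lambda/n}{(\lambda+1/n)^2}$, so $n e^{-\alpha}\to 4/\lambda>1$ when $\lambda<4$. Thus $T_k\asymp (4/\lambda)^k\cdot\mathrm{poly}(k)/n$ grows geometrically in $k$, and even with the $\e^{-k^2/(2n)}$ decay from replacing $n^{k-1}$ by $(n-1)!/(n-k)!$, the sum is maximized near $k^*\asymp n\epsilon$ and evaluates to $\e^{\Omega(n\epsilon^2)}$, which diverges for any fixed $\epsilon$. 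Neither suggested fix helps: the refined Gamma tail bound saturates exactly at the rate $e^{-\alpha k}$ above, which is not strong enough; and the ``at most $O(n/k)$ vertex-disjoint cycles of length $2k$'' observation constrains the \emph{realized} number $N_k$ in $M^*\symdiff\Mmin$ but gives no traction in a Markov-inequality bound on $\Prob[\exists\text{ improving cycle through }e^*]$, whose expectation is what diverges. The paper's positive result for the exponential model (Appendix D) uses a completely different mechanism: it invokes the exact asymptotic characterization $\lim_n\Expect[\risk]=4\int_0^\infty(1-UV)(1-(1-U)W)VW\,\dx$ from \cite{Moharrami2020a} in terms of the ODE system \prettyref{eq:ODE_UVW}, bounds this by $4\int_0^\infty V^2\e^{-\lambda x}\dx$, and then shows $\delta\le\tfrac{c'}{\sqrt\epsilon}\e^{-\pi/\sqrt\epsilon}$ by dominating $U$ by an explicit solution involving $\tan\big(\tfrac{x}{2}\sqrt{\lambda(4-\lambda)}\big)$. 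The $2\pi$ constant comes from this explicit ODE integration, not from a Gaussian saddle point.

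\textbf{Lower bound.} Your localization to a single planted edge $e^*$ is not sound. The marginal MAP at $e^*$ compares $\sum_{m\ni e^*}\e^{L(m)}$ against $\sum_{m\not\ni e^*}\e^{L(m)}$; exhibiting one short improving cycle $C$ through $e^*$ only shows $\e^{L(M^*\symdiff C)}>\e^{L(M^*)}$, i.e., it raises a single summand on the ``exclude $e^*$'' side above a single summand on the ``include $e^*$'' side. This does not move the posterior decision at $e^*$ unless you also control the rest of both sums, which is exactly the hard part. The paper's lower bound (Section 3, Lemmas \ref{lmm:Mgood}--\ref{lmm:Mbad}) does not localize: it shows globally that, with probability $\tfrac12-o(1)$, the posterior mass of matchings $\Omega(n)$-far from $M^*$ dominates that of nearby matchings, by upper-bounding $\mu_W(\Mgood)/\mu_W(M^*)$ via a truncated moment calculation and lower-bounding $\mu_W(\Mbad)/\mu_W(M^*)$ by constructing $\e^{\Omega(n)}$ distinct improving cycles of length $\Theta(n)$. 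The construction avoids the second-moment blowup you (correctly) worry about by a two-stage scheme: first it finds $\Omega(n/\ell^2)$ vertex-disjoint alternating paths of length $\ell\asymp\e^{O(1/\sqrt\epsilon)}$ via a truncated first/second moment argument plus Tur\'an's theorem, where the crucial innovation is imposing a \emph{uniformity} constraint on the path weights (Definition~\ref{def:light_uniform}, after \cite{ding2015percolation,ding2015supercritical}) to tame the correlations; second, it sprinkles in the reserved-vertex edges to stitch the paths into long cycles (Section~\ref{sec:sprinkling}). A conditional second-moment calculation over cycles of length $\Theta(1/\sqrt\epsilon)$ through a single edge, as you propose, would at best show one such cycle exists --- but as explained above, that alone does not force the marginal MAP to err.
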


We end this section with two remarks on the universality of the sharp threshold and infinite-order phase transitions. 
\begin{remark}
The sharp threshold  $\sqrt{d}  \;  B(\calP, \calQ) = 1$ does not hold universally for all $d, \calP, \calQ$. 
Here is a simple example where this condition is not tight. 
Consider a complete graph with Gaussian edge weights drawn from either $\calP=N(\mu,1)$ or $\calQ=N(0,1)$. This model does not follow the scaling in \prettyref{eq:q-dense} for the dense regime. In the Gaussian model, $\sqrt{n}  \;  B(\calP, \calQ) = 1$ simplifies to $\mu^2=4 \log n$. However, the sharp threshold in fact occurs at $\mu^2 = 2 \log n$. Indeed, when $\mu^2 \geq (2+\epsilon) \log n$, a simple thresholding algorithm finds a matching that differs from the planted matching by $o(n)$ edges with high probability; conversely, using the mutual information argument in \cite{DWXY20}, it is easy to show that almost perfect recovery is impossible if  $\mu^2 \leq (2-\epsilon) \log n$ for any constant $\epsilon>0$.	
\end{remark}

\begin{remark}[Finite-order phase transition for unweighted graphs]\label{rmk:unweighted}
Although we believe the infinite-order phase transition established in \prettyref{thm:exp} holds beyond exponential weights, this turns out to be not a universal phenomenon. In fact, the phase transition is of a \emph{finite order} for sparse unweighted graphs. 
In this case, the observed graph is a bipartite \ER graph $G(n,n,\frac{d}{n})$ with a planted perfect matching. Applying Theorems~\ref{thm:possibility} and \ref{thm:impossibility} with $\calP=\calQ$, we conclude that the almost perfect recovery is possible if and only if $d \le 1$. 
When $d=1+\epsilon$ for small $\epsilon$, on the one hand, \prettyref{thm:possibility} 
shows that the average reconstruction error of MLE
is at most $O(\epsilon)$. On the other hand,
by slightly modifying the proof of Theorem~\ref{thm:impossibility}, in  \prettyref{app:unweighted} we show that  
the average reconstruction error is at least $\Omega(\epsilon^8)$. 
Determining the exact order of the phase transition for unweighted \ER model is an open problem.
\end{remark}


\subsection{Proof techniques}



The proof of \prettyref{thm:possibility} is a simple application of large-deviation analysis and the union bound, similar to that of \cite[Theorem 1]{Moharrami2020a}. 
The bulk of the paper is devoted to proving the negative results of Theorems \ref{thm:impossibility} and~\ref{thm:exp}, which is much more challenging.

Our starting point is the simple observation that to prove the impossibility of almost perfect recovery, it suffices to consider the random matching sampled from the posterior distribution, which
reduces the problem to studying the typical behavior of this Gibbs distribution. 
We aim to show that with high probability, there is more posterior mass over the \emph{bad} matchings (those far away from the hidden one) than that over the \emph{good} matchings (those near the hidden one) in the posterior distribution. Via a first-moment calculation with proper truncation, 
it is not hard to bound from above the total posterior mass 
of good matchings.  To bound from below the posterior mass  of bad matchings, a key observation is that for a perfect matching $M$, the symmetric difference $M^*\symdiff M$ consists of a disjoint union of even cycles which alternate between planted and unplanted edges. Therefore, a natural idea is to show the existence of many alternating cycles of length $\Theta(n)$ that are \emph{augmenting}, that is, cycles for which the unplanted edges have a total log-likelihood that exceeds that of the planted edges. 
Unfortunately, a straightforward second-moment calculation fundamentally fails, due to  the excessive correlations among 
long augmenting cycles. 
To construct the desired long augmenting cycles, we instead proceed in two steps: First, we construct many disjoint alternating paths of constant lengths; then we connect them to form exponentially many distinct augmenting cycles using the remaining edges via sprinkling. The first step is achieved by greedily exploring the local neighborhoods in analogy to a super-critical branching process, and the second step can be attained by reducing it to a problem of finding long cycles in a super-critical Erd\H{o}s-R\'{e}nyi bipartite graph with a planted perfect matching. 
This two-stage cycle finding scheme suffices to prove the sharp threshold in \prettyref{thm:impossibility}. 

Although the above construction suffices for determining the sharp threshold, 
the local neighborhood exploration is too wasteful to extract sufficiently long alternating paths 
and falls short of proving the optimal reconstruction error bound in~\prettyref{thm:exp} for the exponential model.
To resolve this inefficiency, in the first stage, following the program in \cite{ding2015percolation}, we use the truncated first and second moment methods 
to show the existence of many alternating paths that are sufficiently long, then applying Tur\'an's theorem to extract a large disjoint subcollection. 
Notably, we further impose extra \emph{uniformity} constraints introduced in \cite{ding2013scaling,ding2015supercritical} on
the weights of the alternating paths to reduce the correlations in the second moment calculation, while at the same time keeping the first moment large.

In passing, we remark that our proof strategy of the impossibility results significantly deviates from most existing approaches in the literature. In many planted problems 
such as community detection in stochastic block models~\cite{Decelle11} or sparse PCA~\cite{lesieur2015mmse}, the optimal overlap (one minus reconstruction error) exhibits
a sharp transition from zero to strictly positive. 
Such correlated recovery threshold can be established via either mutual information arguments or reduction to detection (hypothesis testing) -- see \cite{WX18} for a survey; however, these techniques are either too loose or inapplicable for our model,
where the optimal overlap undergoes a phase transition from strictly less than one to one.


Finally, we briefly discuss the planted $k$-factor model recently studied in \cite{sicuro2020planted}. The special case $k=1$ is a variant of the planted matching model. 
The conjectured threshold for almost perfect recovery is at $ \sqrt{kd} B(\calP,\calQ)=1$. The positive direction of this conjecture can be established by extending the proof of \prettyref{thm:possibility}. Extending the impossibility results of \prettyref{thm:impossibility} 
to the planted $k$-factor model is an interesting future direction.



\subsection{Organization}
	\label{sec:org}

The rest of the paper is organized as follows. In \prettyref{sec:positive} we prove the positive result in \prettyref{thm:possibility} by analyzing the MLE. In \prettyref{sec:negative}, we outline the proof of the negative results in \prettyref{thm:impossibility} and \prettyref{thm:exp}, which are the main results of this paper. In this section, we focus on the sparse model and the exponential model (with general dense model deferred till \prettyref{app:reduction_dense_sparse}). The negative results are proved by analyzing the posterior distribution, which relies on a cycle-finding scheme involving a path construction stage (detailed in Sections \ref{sec:path.construction} and \ref{sec:exponential} for sparse and exponential models, respectively) 
 and a sprinkling stage (specified in \prettyref{sec:sprinkling}). 
Combining results from Sections \ref{sec:path.construction} and \ref{sec:sprinkling}, in \prettyref{sec:impossibility.proof} we prove the key \prettyref{lmm:Mbad} previously stated in \prettyref{sec:negative}, thereby finishing the proof of \prettyref{thm:impossibility} under the sparse model. In \prettyref{sec:exponential}, we give the details for the exponential model and complete the proof of the negative part of \prettyref{thm:exp}. 


The appendix contains auxiliary technical results and postponed proofs. \prettyref{app:LD} contains the large deviation results that are used throughout the paper. 
Next, two auxiliary results crucial for proving the negative result under the exponential model (\prettyref{thm:exp}) are presented:
 \prettyref{app:erlang} contains the Chernoff bounds of the Erlang distribution, and \prettyref{app:bridge} recalls a technical lemma from \cite{ding2015percolation} for controlling the deviation of Exp-minus-one random bridges. 
The positive part of \prettyref{thm:exp} is proved in \prettyref{app:ode}. \prettyref{app:reduction} contains all reduction-type arguments used in our proof: In \prettyref{app:reduction_dense_sparse}, we prove \prettyref{thm:impossibility} under the dense model by reducing it to the sparse model; 
in \prettyref{app:acpos} and \prettyref{app:acneg}, we drop the absolute continuity conditions on distributions $\calP,\calQ$, which are assumed in the proof given in the main part of the paper. \prettyref{app:unweighted} proves the finite-order phase transition for unweighted graphs previously announced in \prettyref{rmk:unweighted}.


\section{Positive results via maximal likelihood}\label{sec:positive}




To prove \prettyref{thm:possibility}, we first consider a general case of complete graph ($d=n$) with arbitrary weight distributions, 
 and then deduce the result for the general planted matching model by refining the weight distribution to incorporate edges not in $G$.
\begin{theorem}
\label{thm:first_moment_bound} 
For the dense model with parameters $(n,\calP,\calQ)$, suppose that
\begin{equation}
\sqrt{n}  \;  B(\calP, \calQ)
 \leq 1+\epsilon.
\label{eq:possibility-condition-general}
\end{equation}
for some $\epsilon\geq 0$.
Then there exists universal constant $C>0$, such that for large enough $n$,
\[
\expect{\risk(\Mplanted,\Mmin)}\leq C\max\left\{\log (1+\epsilon), \sqrt{\frac{\log n}{n}}\right\}.
\]
\end{theorem}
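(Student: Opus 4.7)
The plan is to apply the first-moment method via union bound over alternative matchings, using Chernoff's inequality at parameter $\theta = 1/2$, which naturally produces the Bhattacharyya coefficient $B = B(\calP,\calQ)$. Setting $X_e = \log(\calP/\calQ)(W_e)$, for any matching $M \neq M^*$ define the score $\score(M) = \sum_{e \in M\setminus M^*} X_e - \sum_{e \in M^*\setminus M} X_e$; the event $\{\Mmin = M\}$ implies $\score(M) \ge 0$. When $|M \symdiff M^*| = 2k$, the $k$ edges in $M\setminus M^*$ carry weights i.i.d.\ $\sim \calQ$ and the $k$ edges in $M^*\setminus M$ carry weights i.i.d.\ $\sim \calP$. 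Applying Markov to $\e^{\score(M)/2}$ together with the identity $\Expect_\calQ[\sqrt{\calP/\calQ}] = \Expect_\calP[\sqrt{\calQ/\calP}] = B$ yields the per-matching bound $\Prob[\score(M) \ge 0] \le B^{2k}$.

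Next, I would exploit the cycle structure of $\Mmin \symdiff M^*$. This symmetric difference decomposes into vertex-disjoint alternating even cycles, and \emph{each} such cycle must individually be augmenting (score $\ge 0$), since otherwise reverting just that cycle strictly increases the log-likelihood, contradicting optimality of $\Mmin$. Letting $A_k$ denote the number of augmenting $2k$-cycles in the weighted $K_{n,n}$, one has $|\Mmin \symdiff M^*| \le \sum_{k \ge 2} 2k\, A_k$. The count of alternating $2k$-cycles with respect to $M^*$ equals $\frac{1}{2k}\prod_{i=0}^{k-1}(n-i)$, which admits the refined bound $\le \frac{n^k}{2k}\,\e^{-k(k-1)/(2n)}$ via $\log(1-x) \le -x$. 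Combining with the per-cycle Chernoff bound from Step~1 gives $\Expect[A_k] \le \frac{(nB^2)^k}{2k}\,\e^{-k(k-1)/(2n)}$.

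In the third step I would sum over $k$ and substitute the hypothesis $nB^2 \le (1+\epsilon)^2$:
\[
\Expect\bigl[|\Mmin \symdiff M^*|\bigr] \le \sum_{k=2}^n (1+\epsilon)^{2k}\, \e^{-k(k-1)/(2n)}.
\]
The summand has a Gaussian profile in $k$ peaked near $k^* \approx 2n\log(1+\epsilon)$ with standard width $\sqrt{n}$; a saddle-point estimate then splits into two regimes. When $\epsilon = 0$ the peak sits at $k=0$, the sum is dominated by small $k$, and accurate tail control (absorbing a logarithmic loss from the boundary of the summation range) produces $\Expect[\risk] \le C\sqrt{\log n / n}$. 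When $\epsilon > 0$, the Gaussian integral evaluates to $O(n\log(1+\epsilon))$, giving $\Expect[\risk] \le C\log(1+\epsilon)$; for $\epsilon$ so large that this bound already exceeds $1$, the trivial $\Expect[\risk] \le 1$ suffices.

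The main obstacle is the saddle-point step. Since the naive cycle-count bound $(nB^2)^k$ diverges the moment $nB^2 \ge 1$, the whole argument rests on the Stirling-type depletion factor $\e^{-k(k-1)/(2n)}$ supplying Gaussian suppression at long cycle lengths. Delicately balancing the exponential growth $(1+\epsilon)^{2k}$ against this suppression, and tracking constants carefully to match the promised rates $\log(1+\epsilon)$ and $\sqrt{\log n/n}$ rather than weaker polynomial factors, is the technical heart of the proof.
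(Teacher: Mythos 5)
Your framework---Chernoff at $\theta = 1/2$, a cycle-based combinatorial count with the depletion factor $e^{-k(k-1)/(2n)}$, then a saddle-point sum---correctly identifies the main ingredients, and the observation that each cycle in $\Mmin \symdiff M^*$ must individually have score $\ge 0$ (else flipping it would strictly improve the likelihood) is a clean and correct refinement. The gap is in the saddle-point step you yourself flag as the technical heart: the claim that ``the Gaussian integral evaluates to $O(n\log(1+\epsilon))$'' is not right. Completing the square in the exponent $2k\log(1+\epsilon) - k(k-1)/(2n)$ places the peak near $k^* \approx 2n\log(1+\epsilon)$ with \emph{peak value} $\exp\bigl(2n\log^2(1+\epsilon) + O(\log(1+\epsilon))\bigr)$, so that
\[
\sum_{k\ge 2}(1+\epsilon)^{2k}\,e^{-k(k-1)/(2n)} \;=\; \Theta\!\left(\sqrt{n}\,e^{2n\log^2(1+\epsilon)}\right),
\]
which is exponentially large in $n$ for any fixed $\epsilon>0$. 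Dividing by $n$, the resulting estimate $\Expect[\risk] \lesssim n^{-1/2}e^{2n\log^2(1+\epsilon)}$ already overshoots the target once $\log(1+\epsilon) \gtrsim \sqrt{\log\log n/n}$; for example at $\log(1+\epsilon) = c\sqrt{\log n/n}$ with any constant $c>0$, your bound is $\Theta(n^{2c^2-1/2})$ while the target is $\Theta(\sqrt{\log n/n})$, and the ratio $n^{2c^2}/\sqrt{\log n}$ diverges. The root cause is that $\sum_k 2k\,A_k$ is the expected total length of \emph{all} augmenting cycles: overlapping augmenting cycles are heavily correlated, so there can be exponentially many of them even though $|\Mmin\symdiff M^*| \le 2n$ deterministically. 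A plain union bound over cycles (or, equivalently, over matchings) therefore cannot control the expectation directly.

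The paper repairs this with a truncation: it writes $\Expect[|\Mmin\symdiff M^*|] \le \beta n + n\,\Prob\{|\Mmin\symdiff M^*| \ge \beta n\}$ and applies the union bound only to the tail event. Choosing $\beta = \max\{8\log(1+\epsilon),\, 2\sqrt{\log n/n}\}$ ensures the sum over distances $t \ge \beta n$ begins strictly past the Gaussian peak $t^* = 2n\log(1+\epsilon) < \beta n$, so the summand $(1+\epsilon)^{2t}e^{-t^2/(2n)}$ is dominated by $e^{-t\beta/4}$, yielding a geometrically decaying tail and $\Prob\{|\Mmin\symdiff M^*| \ge \beta n\} = e^{-\Omega(\beta^2 n)}$. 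The expectation is then driven by the trivial $\beta n$ term, producing the stated rate $O(\beta) = O(\max\{\log(1+\epsilon),\sqrt{\log n/n}\})$. To salvage your cycle-based argument you would need the same split: bound the probability that the augmenting cycles in $\Mmin\symdiff M^*$ have total length $\ge \beta n$, and absorb the short-cycle contribution into the trivial $\beta n$ term rather than summing the union bound over all lengths.
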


%

\begin{proof}[Proof of Theorem \ref{thm:first_moment_bound}]
Recall our standing assumption that $\calP\ll \calQ$. We give in~\prettyref{app:acpos} a reduction-based argument that handles the case where $\calP$ is not absolutely continuous with respect to $\calQ$. Under this assumption, the likelihood-ratio $\calP/\calQ$ is well-defined and we can apply the large deviation result~\eqref{eq:LDXY}. 
We also assume WLOG that $\epsilon \le 1$. For a fixed $M\in \calM$ for which $M^*\symdiff M$ contains $2t$ edges, we have
\[
\mathbb{P}\left\{\sum_{e\in M}\log \frac{\calP}{\calQ}(W_e)\geq \sum_{e\in M^*}\log\frac{\calP}{\calQ}(W_e)\right\}
=\mathbb{P}\left\{\sum_{i=1}^t (Y_i-X_i)\geq 0\right\},
\]
where $X_i$'s and $Y_i$'s be two independent sequences of random variables such that $X_i$'s are i.i.d.\ copies of $
\log(\calP/\calQ)$ under distribution $\calP$ and $Y_i $'s are i.i.d.\ copies of $\log(\calP/\calQ)$ under distribution $\calQ$. Using standard large-deviation estimates (see~\eqref{eq:LDXY} in \prettyref{app:LD}), the RHS of the inequality above is upper bounded by $e^{-t\alpha}$, where
\begin{align}
\alpha \triangleq -2 \log B(\calP,\calQ)
\label{eq:alpha}
\end{align}
is  the R\'enyi divergence of order $\frac{1}{2}$ between distributions $\calP$ and $\calQ$.
Since there are at most $\binom{n}{t}t! \le n^t e^{-t(t-1)/2n} $ perfect matchings differing from the true matching $M^*$
by $2t$ edges, it follows from a union bound and the assumption~\eqref{eq:possibility-condition-general} that
\begin{align}
\prob{|\Mplanted \symdiff \Mmin| \ge \beta n} 
\le \sum_{t\ge \beta n} \binom{n}{t} t! e^{-t\alpha}
&\le e^{1/2} \sum_{t\ge \beta n} e^{-t \alpha +t \log n - t^2/(2n)} \label{eq:exact_step}\\
&\le e^{1/2} \sum_{t\ge \beta n}  \left( (1+\epsilon)^2 e^{-\beta/2} \right)^t  \nonumber  \\
&\le e^{1/2} \frac{e^{-\beta^2n/4}}{1-e^{-\beta/4}}, \nonumber 
\end{align}
where  the last equality holds for all strictly positive $\beta\geq 8 \log(1+\epsilon)$ so that $(1+\epsilon)^2 e^{-\beta/2}\leq e^{-\beta/4}<1$.
Thus, 
 \begin{align*}
 \expect{|\Mplanted \symdiff \Mmin|} 
 & =
 \expect{|\Mplanted \symdiff \Mmin|\indc{|\Mplanted \symdiff \Mmin|\le \beta n}}+ 
 \expect{|\Mplanted \symdiff \Mmin|\indc{|\Mplanted \symdiff \Mmin|> \beta n} } \\
& \le \beta n + n \prob{|\Mplanted \symdiff \Mmin| \ge \beta n}\\
&\le \beta n + e^{1/2}\frac{e^{-\beta^2n/4}}{1-e^{-\beta/4}} n.
 \end{align*}

Next, by choosing
\[
\beta = \max\left\{8\log(1+\epsilon), 2\sqrt{\frac{\log n}{n}}\right\},
\]
we have $e^{-\beta^2 n/4}\leq \beta^2/4$ and $1-e^{-\beta/4} \ge \beta/8.$ 
Therefore, 
\[
\expect{\risk(\Mplanted,\Mmin)}
=\frac{1}{n} \expect{|\Mplanted \symdiff \Mmin|}
 \leq \beta+e^{1/2}\frac{\beta^2/4}{\beta/8}
 \leq 5 \beta \leq 10 \max\left\{ 4\log(1+\epsilon), \sqrt{\frac{\log n}{n}}\right\}.
\]
\end{proof}

\prettyref{thm:possibility} follows from \prettyref{thm:first_moment_bound}  as a corollary. 

\begin{proof}[Proof of \prettyref{thm:possibility}]
To apply  \prettyref{thm:first_moment_bound}, 
let us first reformulate the planted matching model in \prettyref{def:model} in a more convenient form. Recall that the observed weight on the edge $e$ is drawn independently from $\calP$ or $\calQ$ depending on whether $e$ belongs to the planted matching $M^*$ or not, where $\calP$ and $\calQ$ are arbitrary probability measures on some space $\calX$. Let us use a special symbol $\star\notin \calX$ to signify an edge $e$ that is not in $G$ and write $W_e=\star$. Therefore, for $e\in M^*$, $W_e$ is drawn from $\calP$ independently; for each $e \in E(K_{n,n}) \backslash M^*$, with probability $1-\frac{d}{n}$, $W_e=\star$; with probability $\frac{d}{n}$, $W_e$ is drawn from $\calQ$ independently. In other words, $W_e \iiddistr \calQ ' \triangleq (1-\frac{d}{n}) \delta_{\star }+\frac{d}{n} \calQ$ for $e \notin M^*$. The model is thus reformulated into a dense model with parameters $(n,\calP,\calQ')$.


 By \prettyref{thm:first_moment_bound}, $\Mmin$ achieves $\Expect[\risk(\Mplanted,\Mmin)] \leq C\max\left\{ \log (1+\epsilon),\sqrt{\frac{\log n}{n}}\right\}$, provided that 
 $\sqrt{n} B(\calP, \calQ')\leq 1+\epsilon$. Since $\sqrt{n} B(\calP, \calQ') = \sqrt{d} B(\calP, \calQ)$, \prettyref{thm:possibility} readily follows.
\end{proof}

\begin{remark}\label{rmk:first_moment}
We note that when $\alpha  - \log n \to +\infty$, from~\prettyref{eq:exact_step}
we have  $\Mmin=\Mplanted$ with high probability, \ie, 
$\Mmin$ achieves exact recovery. 
This coincides with the exact recovery threshold for the hidden Hamiltonian cycle problem~\cite{bagaria2020hidden}, in which a (unipartite) weighted graph is observed such that the edge weights on the planted Hamiltonian cycle are drawn from $\calP$ and other weights are drawn from $\calQ$.
 \end{remark}

\section{Negative results via analyzing posterior distribution}
\label{sec:negative}
In this section, we prove the impossibility results in \prettyref{thm:impossibility} and \prettyref{thm:exp} by directly analyzing the posterior distribution. For \prettyref{thm:impossibility}, we focus on the sparse model where the average degree $d$ is a constant. The impossibility result under the dense model follows from a reduction argument (see \prettyref{app:reduction_dense_sparse}).


\subsection{Proof outline of negative results}
\label{sec:neg.proof.outline}
In this subsection we outline the proof of the impossibility results in \prettyref{thm:impossibility} under the sparse model, and \prettyref{thm:exp}.

The negative results are proved by studying the posterior distribution of the hidden matching. 
Recall in the proof of \prettyref{thm:possibility} the reformulation of the planted matching model in \prettyref{def:model} in the complete graph  model with weight distributions $(\calP, \calQ ')$. 
In particular, $W_e \iiddistr \calQ ' \triangleq (1-\frac{d}{n}) \delta_{\star }+\frac{d}{n} \calQ$ for $e \notin M^*$. Here for simplicity we again assume $\calP\ll\calQ$ (the general case is handled in \prettyref{app:acneg}). Under this assumption, $\calP\ll\calQ'$ and the density $\calP/\calQ'$ is well-defined. 
The likelihood function of $W=(W_e: e\in E(K_{n,n}))$ given $M^*=m$ is given by 
\begin{equation}
\begin{aligned}
 \prob{W \mid M^*=m} = \prod_{e \in m} \calP(W_e) \prod_{e \notin m} \calQ'(W_e) 
&\propto \prod_{e\in m}  \frac{\calP}{\calQ '}(W_e) = \exp\underbrace{\pth{\sum_{e\in m} \log \frac{\calP}{\calQ '}(W_e)}}_{\triangleq L(m)} \, \label{eq:density_planted},
\end{aligned}
\end{equation}
where $\log \frac{\calP}{\calQ '}$ takes extended real-values in $\reals\cup\{-\infty\}$, and $L(m) = \sum_{e \in m} W_e$ is the total log-likelihood ratio on a set $m$ of edges.
Thus, conditioned on $W$, the posterior distribution of $M^*$ is a Gibbs distribution, given by
\begin{equation}
\mu_W(m) = \frac{1}{Z(W)}  \exp \left( L(m) \right), \quad m \in \calM,
\label{eq:posterior}
\end{equation}
where $Z(W) = \sum_{m\in\calM} \exp \left( L(m) \right)$ is the normalization factor. 

In order to reduce the impossibility proof to a statement on the posterior distribution, the first observation is that  
it suffices to consider the estimator $\tilde M$ which is sampled from the posterior distribution $\mu_W$. 
Indeed, 
given any estimator $\hat M=\hat M(W)$, we have
$(\hat M,M^*) \overset{\calL}{=} (\hat M,\tilde M)$ (in distribution) and hence
\[
\Expect[d(\tilde M,M^*)] \leq  \Expect[d(\tilde M,\hat M)]+  \Expect[d(M^*,\hat M)]= 2 \Expect[d(M^*,\hat M)],
\]
which shows that $\tilde M$ is optimal within a factor of two.
Thus it suffices to bound $\Expect[d(\tilde M,M^*)] $ from below.
To this end, fix some $\delta$ to be specified later and 
define the sets of good and bad solutions respectively as 
\begin{align*}
\Mgood = & ~ \{M \in \calM: \ell(M,M^*) < 2 \delta \} \\
\Mbad = & ~ \{M \in \calM: \ell(M,M^*)  \geq 2 \delta \}.
\end{align*}
By the definition of $\tilde M$, we have 
\[
\Expect[\ell(\tilde M,M^*)] \geq 2\delta  \cdot \Expect[\mu_W(\Mbad)].
\]
Next we show
\begin{lemma}
\label{lmm:Mgood}	
Assume  \prettyref{eq:impossibility} holds for some arbitrary constant $\epsilon>0$. 
There exist an absolute constant $\epsilon_0$ and constants $n_0=n_0(\epsilon)$ and $c=c(\delta,\epsilon)$, such that for all $\epsilon < \epsilon_0$ and $n\geq n_0$, 
in both the sparse and the dense model, 
with probability at least $1-e^{- c n/\log n}$, 
	\begin{equation}
	\frac{\mu_W(\Mgood)}{\mu_W(M^*)} \leq 2 e^{c_1n},
	\label{eq:Mgood}
	\end{equation}
	where $c_1 = 7 \epsilon \delta$.
\end{lemma}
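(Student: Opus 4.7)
I will control $R \triangleq \mu_W(\Mgood)/\mu_W(M^*) = \sum_{M \in \Mgood} e^{L(M) - L(M^*)}$ by bounding its ``square root'' $S \triangleq \sum_{M \in \Mgood} e^{(L(M)-L(M^*))/2}$; the elementary inequality $\sum_M a_M \le (\sum_M \sqrt{a_M})^2$ for non-negative $\{a_M\}$, applied with $a_M = e^{L(M)-L(M^*)}$, gives $R \le S^2$. It will thus suffice to show $S \le e^{3\epsilon\delta n}$ with probability at least $1 - e^{-cn/\log n}$, which yields $R \le e^{6\epsilon \delta n} \le 2 e^{7\epsilon\delta n}$.

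For a fixed matching $M$ with $|M \symdiff M^*| = 2t$, using the complete-graph reformulation $\calQ' = (1-\frac{d}{n})\delta_\star + \frac{d}{n}\calQ$ from \prettyref{sec:positive}, so that $L(M) - L(M^*) = \sum_{e \in M \setminus M^*} \log(\calP/\calQ)(W_e) - \sum_{e \in M^* \setminus M} \log(\calP/\calQ)(W_e)$ whenever $M \subseteq E(G)$ and $-\infty$ otherwise, an elementary moment computation exploiting independence of the edge weights yields
\begin{equation*}
\Expect\!\left[e^{(L(M)-L(M^*))/2}\,\indc{M \subseteq E(G)}\right] \;=\; (d/n)^t \cdot B(\calP,\calQ)^{2t}.
\end{equation*}
The factor $(d/n)^t$ is the probability that the $t$ non-planted edges of $M$ are observed in $G$, and each factor $B$ arises from $\Expect_\calQ[\sqrt{\calP/\calQ}] = \Expect_\calP[\sqrt{\calQ/\calP}] = B(\calP,\calQ)$. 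Since the number of matchings at Hamming distance $2t$ from $M^*$ is $N_t = \binom{n}{t} D_t \le n^t$, summing gives
\begin{equation*}
\Expect[S] \;\le\; \sum_{t=0}^{\lfloor \delta n\rfloor} (d B(\calP,\calQ)^2)^t.
\end{equation*}
Taking $\epsilon$ to be the precise excess $\sqrt{d}B(\calP,\calQ) - 1$ above the critical threshold (without loss of generality, since the hypothesis \prettyref{eq:impossibility} is monotone in $\epsilon$ and the lemma is applied for the smallest admissible $\epsilon$), we have $dB(\calP,\calQ)^2 = (1+\epsilon)^2$, so the geometric series is bounded by $(C/\epsilon) e^{2\epsilon\delta n}$. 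Markov's inequality with threshold $e^{cn/\log n}$ times the mean then yields $S \le e^{cn/\log n} \cdot \Expect[S] \le e^{3\epsilon\delta n}$ with probability at least $1 - e^{-cn/\log n}$, provided $n \ge n_0(\epsilon,\delta)$ is large enough that $cn/\log n + \log(C/\epsilon) \le \epsilon \delta n$.

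The main technical subtlety is the passage from the hypothesis $\sqrt{d}B(\calP,\calQ) \ge 1+\epsilon$ to the tight estimate $dB(\calP,\calQ)^2 = (1+\epsilon)^2$: were $\sqrt{d}B$ substantially larger, the geometric sum would grow as $e^{\delta n \log(dB^2)}$ rather than $e^{2\epsilon\delta n}$, and $c_1$ could not be taken proportional to $\epsilon$. The constant $7$ in $c_1 = 7\epsilon\delta$ is generous enough to absorb both the Markov loss $e^{cn/\log n}$ and the $(C/\epsilon)$ prefactor from the geometric series, for $\epsilon < \epsilon_0$ and $n$ large. The remaining ingredients---the first-moment computation via the Bhattacharyya coefficient, the crude counting $N_t \le n^t$, and the inequality $R \le S^2$---are routine.
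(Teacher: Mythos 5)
Your proof is correct and, in fact, takes a genuinely different and cleaner route than the paper's. The paper splits $\mu_W(\Mgood)/\mu_W(M^*)$ into two pieces: $R_1$ over matchings within distance $O(n/\log n)$ (handled by the trivial identity $\Expect[e^{\Delta(m)}]=1$ plus Markov), and $R_2$ over the remaining good matchings, which is handled by truncating on the event $\calE$ that no $\Delta(m)$ is atypically large, controlling $\Prob[\calE^c]$ by large deviations, and bounding the truncated MGF $\Expect[e^{\Delta(m)}\indc{\Delta(m)\le r\ell}] \le \Expect[e^{(\Delta(m)+r\ell)/2}]$ — the last step being where the $\tfrac12$-power, and hence $B(\calP,\calQ)$, enters. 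Your argument eliminates both the two-range decomposition and the truncation altogether: the elementary inequality $\sum_M a_M \le (\sum_M \sqrt{a_M})^2$ reduces the problem to controlling $S=\sum_{M\in\Mgood}e^{(L(M)-L(M^*))/2}$, whose first moment is immediately governed by $B$. This is exactly the right quantity to tame: $\Expect[R]$ itself blows up (as $e^{\Theta(n\log n)}$, which is why the paper needs the truncation), but $\Expect[S]$ is only $O(\epsilon^{-1}e^{2\epsilon\delta n})$ under the standing normalization $\sqrt{d}B=1+\epsilon$, and a single Markov step finishes the proof with even a slightly better constant ($e^{6\epsilon\delta n}$ vs.\ $2e^{7\epsilon\delta n}$). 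Your parenthetical WLOG remark about normalizing to $\sqrt{d}B=1+\epsilon$ is the same densification reduction the paper invokes in the footnote preceding~\eqref{eq:impossible-assumption}, so that is not a gap. In short: same key computation (the Bhattacharyya moment), but a cleaner route to it that dispenses with truncation.
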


\begin{lemma}
\label{lmm:Mbad}
Consider the sparse model 
such that \prettyref{eq:impossibility} holds for some arbitrary constant $\epsilon>0$. 
There exist an absolute constants $\epsilon_0$ and constants $c_0$, $c$, 
$c_2$, $n_0$ that only depend on $\epsilon, \calP, \calQ$, 
such that for all $\epsilon<\epsilon_0$, $n \geq n_0$, 
and $\delta \leq c_2$, 
with probability at least $1-\frac{c}{n}$, 
		\begin{equation}
	\frac{\mu_W(\Mbad)}{\mu_W(M^*)} \geq e^{c_0 n}.
	\label{eq:Mbad}
	\end{equation}

Furthermore, in the exponential model with $d=n$ and $\calP=\exp(\lambda)$ and $\calQ=\exp(1/n)$ where $\lambda \le 4-\epsilon$ for some arbitrary constant $\epsilon>0$,
\prettyref{eq:Mbad} continues to hold with probability at least 
$\frac{1}{2} - \frac{c}{n}$, where
all constants $c_0$, $c$, $c_2$, $n_0$ depend only on $\epsilon$ 
and 
$c_0, c_2=e^{-O(1/\sqrt{\epsilon})}$.
\end{lemma}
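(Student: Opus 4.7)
The plan is to lower-bound $\mu_W(\Mbad)/\mu_W(M^*)$ by exhibiting exponentially many elements of $\Mbad$ whose log-likelihood is comparable to $L(M^*)$. Writing any $M = M^* \symdiff \calC$ where $\calC$ is a union of vertex-disjoint even alternating cycles on $M^* \cup M$, the ratio decomposes as
\[
\frac{\mu_W(\Mbad)}{\mu_W(M^*)} = \sum_{\calC} \exp\pth{\sum_{C \in \calC} \Delta(C)},
\]
where $\calC$ runs over collections of vertex-disjoint alternating cycles covering at least $2\delta n$ edges, and $\Delta(C)$ is the log-likelihood gain of flipping $C$. It therefore suffices to produce $\Theta(n)$ vertex-disjoint \emph{augmenting} alternating cycles (cycles with $\Delta(C)\ge 0$, or not too negative) together with exponentially many subfamilies to sum over. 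As noted in the excerpt, a direct second-moment argument on long cycles fails because of heavy correlations; hence the two-stage cycle-finding scheme below.

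\textbf{Stage 1 (short disjoint augmenting paths).} In the sparse model, I would carry out a local alternating-tree exploration rooted at each planted edge, using only a constant fraction of unplanted edges and reserving the rest for sprinkling. The number of augmenting length-$\ell$ alternating paths from a fixed root is governed by a branching process whose per-generation ``tilted'' mean is essentially $\sqrt{d}\,B(\calP,\calQ)\ge 1+\epsilon$ (the Bhattacharyya coefficient is exactly the exponential tilt that makes $\Expect[e^{\Delta/2}]=B(\calP,\calQ)$ per edge step). Thus the first moment of augmenting paths of length $\ell=O(1)$ from a given root is $\gtrsim(1+\epsilon)^\ell$; a second-moment estimate at \emph{constant} $\ell$ shows concentration, yielding $\Omega(n)$ augmenting paths in total. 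A Tur\'an-type deletion procedure then extracts a vertex-disjoint subcollection of size $\Omega(n)$.

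\textbf{Stage 2 (sprinkling into cycles).} Contracting each surviving path to a single super-edge on its two endpoints and adding back the reserved unplanted edges, I reduce the cycle-finding task to the problem of building many perfect matchings on a supercritical bipartite \ER graph on the endpoints with a planted perfect matching (the contracted planted edges outside the paths). Standard arguments for the giant component / long cycles in supercritical \ER graphs, together with a sprinkling increment, produce exponentially many distinct cycle-covers; combined with the $\Omega(n)$ paths each with $\Delta\ge 0$, this yields $\mu_W(\Mbad)/\mu_W(M^*)\ge e^{c_0 n}$ on an event of probability $1-c/n$, giving the sparse-model statement.

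\textbf{Exponential model and main obstacle.} The main difficulty is the regime $\lambda=4-\epsilon$, where the drift of the increment $\log(\calP/\calQ')(W_e)$ along alternating edges is only $O(\sqrt{\epsilon})$ per half-step while the variance is $\Theta(1)$, so constant-length paths no longer suffice and a naive second moment for length $\ell=\Theta(1/\sqrt{\epsilon})$ blows up because paths sharing a short prefix are heavily correlated through their weight sums. Following \cite{ding2015percolation,ding2013scaling,ding2015supercritical}, I would impose a \emph{uniformity constraint} on the partial sums $S_k=\sum_{i\le k}\log(\calP/\calQ')(W_{e_i})$, restricting them to a thin tube around the drift line; the Chernoff bounds for Erlang (\prettyref{app:erlang}) and the Exp-minus-one bridge estimates (\prettyref{app:bridge}) are tailored for exactly this purpose. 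The constraint kills the dominant correlation contribution in the second moment while preserving a first moment of the right exponential order, enabling a Paley--Zygmund bound to get the required abundance of paths on an event of probability at least $1/2-c/n$. Turán extracts a disjoint subfamily, sprinkling completes cycles as in Stage 2, and tracking the tube width through the argument produces $c_0,c_2=e^{-O(1/\sqrt{\epsilon})}$, matching the claimed infinite-order transition rate.
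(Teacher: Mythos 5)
Your overall roadmap---two-stage cycle finding, reduction to the posterior, and the use of uniformity constraints plus Tur\'an's theorem in the exponential regime---is the same as the paper's. For the exponential model your sketch (exp-minus-one bridges, Erlang Chernoff bounds, restricted first and second moments, Paley--Zygmund, Tur\'an extraction) is essentially identical to the paper's \prettyref{thm:disjoint_paths} and \prettyref{lmm:impossible_second_moment}.

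The gap is in your Stage~1 for the \emph{sparse} model, where you propose a second-moment bound on constant-length paths followed by Tur\'an-type extraction. This does not work, and the paper explicitly flags why at the end of \prettyref{sec:Mbad}. Concretely: Tur\'an on $(2\ell-1)$-alternating paths can deliver at most $K_1\lesssim n/\ell^2$ vertex-disjoint paths, each contributing $s\lesssim \ell$ sprinkling anchors on each end. In \prettyref{thm:sprinkling}, the super-graph degree is $d_{\rm super}\sim K_1 b^2\eta/n$ with $b\sim \beta s\eta$, $\beta=|V^*|/n\lesssim\gamma$ and $\eta\le d$. Plugging in gives $d_{\rm super}\lesssim \gamma^2 d^3$, which is \emph{independent of} $\ell$. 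Since $\gamma$ must be $O(\epsilon)$ (otherwise removing $V$ kills supercriticality of $G_1$, as $(1-2\gamma)d$ needs to stay above $1$ when $d$ is as small as $(1+\epsilon)^2$), and $d$ is a bounded constant, $d_{\rm super}=O(\epsilon^2)$ cannot reach the threshold $256\log(32e)$ demanded by \prettyref{thm:sprinkling}. In the exponential model the paper salvages Tur\'an precisely because $\eta$ (the sprinkling-threshold-tunable degree) can be pushed up to $\tau=e^{\tau_0/\sqrt\epsilon}$; in the sparse model $\eta\le d$ is fixed. This is why the paper instead uses the two-sided BFS trees of \prettyref{sec:path.construction}, which cap the number of explored pairs per tree at $m=(1+\epsilon)^{2HL}e^{O(H)}$ but produce \emph{exponentially many} leaves $s=(1+3\epsilon/4)^{2HL}$ per side, so that $d_{\rm super}\sim [(1+3\epsilon/4)^2/(1+\epsilon)]^{2HL}\cdot(\text{poly in }\gamma,d,H)$ can be driven arbitrarily large by increasing $L$. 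Without this exponential gain in $s$, your sprinkling stage is subcritical and produces no long cycles. A secondary issue: you contract each path to a single super-edge on its two endpoints; the paper instead takes $L_k,R_k$ to be \emph{sets} of (many) anchor vertices near the two ends, which is exactly what makes $b=\Omega(\beta s\eta)$ large enough.
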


Given the above two lemmas, Theorems \ref{thm:impossibility} and \ref{thm:exp} readily follow. Indeed, combining
\prettyref{lmm:Mgood} and \prettyref{lmm:Mbad} and choosing 
$\delta = \min\{c_2, c_0/(14\epsilon)\}$ yields that $c_0 \ge 2c_1$
and hence
 $\mu_W(\Mbad) \geq \frac{e^{c_1 n}}{2+ e^{c_1n} }$ with probability at least $\frac{1}{2} - o(1) $, 
 which shows that $\Expect[\ell(\tilde M,M^*)] \gtrsim  \delta$ as desired
 in both the sparse and the exponential model on complete graph. 
 
We prove \prettyref{lmm:Mgood} in Section~\ref{sec:Mgood}, and outline the proof of \prettyref{lmm:Mbad} in Section~\ref{sec:Mbad}. For the rest of the proof, we will assume WLOG\footnote{Indeed, suppose that $\sqrt{d}B(\calP,\calQ)=c>1+\epsilon$. Consider the model parametrized with $(d',\calP,\calQ)$ where $d'=d(1+\epsilon)^2/c^2<d$ so that $\sqrt{d'}B(\calP,\calQ)=1+\epsilon$. From an observed graph $G$ generated from the $(d',\calP,\calQ)$ model, one can ``densify'' $G$ by add edges independently with edge weight drawn from $\calQ$ to arrive at an instance of the $(d,\calP,\calQ)$ model. Therefore, the lower bound on the average reconstruction error carries over to the $(d,\calP,\calQ)$ model.} that~\eqref{eq:impossibility} holds with equality. That is,
 \begin{equation}
\sqrt{d} \;B(\calP, \calQ) = 1+\epsilon,
\label{eq:impossible-assumption}
\end{equation}
for some small $\epsilon$. 

\subsection{Upper bounding the posterior mass of good matchings}
\label{sec:Mgood}

In this section, we prove \prettyref{lmm:Mgood}. Before proceeding with the proof, let us first introduce some notation that will be used throughout the remainder the paper. Fix the true matching $M^*$. We shall represent the planted edges (those in $M^*$) and the unplanted edges (those outside $M^*$) as red and blue edges, respectively. A cycle in $K_{n\times n}$ is called \emph{alternating} if it is an even cycle and alternates between red and blue edges. An important observation is that the symmetric difference between the truth $M^*$ and another perfect matching $M$ is always a disjoint union of alternating cycles. See \prettyref{fig:cycles} for an example. 

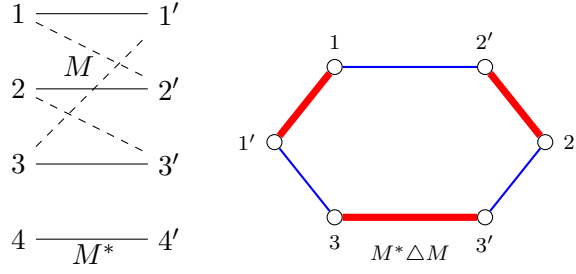
\begin{figure}[ht]
  \begin{center}
	\begin{tikzpicture}[scale=1,transform shape,node distance=1cm,auto]
\node at (0,3) (a1) {$1$};
\node at (0,2) (a2) {$2$};
\node at (0,1) (a3) {$3$};
\node at (0,0) (a4) {$4$};

\node at (2,3) (b1) {$1'$};
\node at (2,2) (b2) {$2'$};
\node at (2,1) (b3) {$3'$};
\node at (2,0) (b4) {$4'$};

\draw (a1) -- (b1);
\draw (a2) -- (b2);
\draw (a3) -- (b3);
\draw (a4) -- (b4);

\draw[dashed] (a1) -- (b2);
\draw[dashed] (a2) -- (b3);
\draw[dashed] (a3) -- (b1);

\node at (0.8,2.3) {$M$};
\node at (1,-0.2) {$M^*$};

\end{tikzpicture}
~~
\begin{tikzpicture}[scale=1,every edge/.append style = {thick,line cap=round},font=\small,font=\scriptsize]
\draw (-1,1) node (a1) [nodedot, label = above:$1$] {} ;
\draw (1,1) node (b2) [nodedot, label=above:$2'$] {};
\draw (1.8,0) node (a2) [nodedot, label = right:$2$] {};
\draw (1,-1) node (b3) [nodedot, label = below:$3'$] {};
\draw (-1,-1) node (a3) [nodedot, label = below:$3$] {};
\draw (-1.8,0) node (b1) [nodedot, label = left:$1'$] {};
	
\draw[Bedge]  (a1)--(b2);
\draw[Redge] (b2)--(a2);
\draw[Bedge]  (a2)--(b3);
\draw[Redge] (b3)--(a3);
\draw[Bedge]  (a3)--(b1);
\draw[Redge] (b1)--(a1);
	
\node at (0,-1.5) {$M^* \triangle M$};

 \end{tikzpicture}
\end{center}
\caption{Example of matching and alternating cycle ($n=4,\ell=3$). The matching $M^*$ and $M$ matches $1234$ to $1'2'3'4'$ and $2'3'1'4'$, respectively. The difference of their corresponding permutation is a cyclic shift of length three, i.e., $\pi^{-1}\pi'=(123)$ in the notation of cycle decomposition, and the symmetric difference graph $M^* \triangle M$ is an alternating $6$-cycle that contains $3$ planted (red, thick) edges and $3$ unplanted (blue, thin) edges.}%
\label{fig:cycles}%
\end{figure}

For any set $T$ of edges in $K_{n,n}$, let $\sfr(T) \triangleq T\cap M^*$ and $\sfb(T)\triangleq T\backslash M^*$ denote the set of red and blue edges in $T$, respectively. 
Define the \emph{excess weight} of $T$ as the total log-likelihood of blue edges minus that of the blue edges:
\begin{equation}\label{eq:Delta.def}
\Delta(T)=\sum_{e\in \sfb(T)}\log\frac{\calP}{\calQ'}(W_e) - \sum_{e\in \sfr(T)}\log\frac{\calP}{\calQ'}(W_e).
\end{equation}
For a perfect matching $M\in\calM$, we will denote $\Delta(M\symdiff M^*)$ simply as $\Delta(M)$. 
Recall the posterior distribution of $M^*$ given $W$ in \prettyref{eq:posterior}. Therefore, if $M^*=m^*$, we have
\begin{align*}
\Delta(m)
=\sum_{e\in m \backslash m^*} \log \frac{\calP}{\calQ '}(W_e) - \sum_{e\in m^* \backslash m} \log \frac{\calP}{\calQ '}(W_e)
= L(m)-L(m^*)
= \log \frac{\mu_W(m)}{\mu_W(m^*)}.
\end{align*}
Note that in the display above, the likelihood ratio $\calP/\calQ'$ can be replaced $\calP/\calQ$ if $m$ only contains edges in $G$. To see this, note the fact that $|m \backslash m^*|=|m^* \backslash m|=\ell$ and $\log \frac{\calP}{\calQ '}(w)=\log \frac{\calP}{\calQ}(w)+\log \frac{n}{d}$ whenever $w\neq \star$. Moreover, if $W_e=\star$ for some $e\in m$ then $\mu_W(m)=0$ and $\Delta(m)=-\infty$. Thus, we have
\begin{equation}
\Delta(m) = 
\begin{cases}
\sum_{e\in m \backslash m^*} \log \frac{\calP}{\calQ}(W_e) - \sum_{e\in m^* \backslash m} \log \frac{\calP}{\calQ}(W_e)  & W_e \neq \star, \forall e \in m \backslash m^* \\
-\infty & \text{else}.\\
\end{cases}
\label{eq:Deltam-cases}
\end{equation}

We have the following lemma:
\begin{lemma}
\label{lmm:muW}	
Let $m \in \calM$ be such that $|m\triangle m^*|=2\ell$.
For each $x\geq 0$, 
\[
\prob{\Delta(m) \geq {x\ell}  \mid  M^*=m^*}
\leq \pth{\frac{d}{n} e^{-(\alpha + x/2)}}^\ell.
\]
\end{lemma}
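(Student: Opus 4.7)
The plan is to decompose the event $\{\Delta(m) \ge x\ell\}$ into an edge-presence event and a likelihood-ratio deviation event, and then apply the Chernoff bound at $s=1/2$, which is exactly where the Bhattacharyya coefficient enters.

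First I would set $\ell = |m\setminus m^*| = |m^*\setminus m|$ and invoke the case analysis in~\eqref{eq:Deltam-cases}: conditional on $M^*=m^*$, the event $\{\Delta(m) \ge x\ell\}$ forces $W_e \neq \star$ (i.e., the edge is actually present in $G$) for every $e \in m\setminus m^*$, since otherwise $\Delta(m) = -\infty$. By \prettyref{def:model}, each edge $e \notin m^*$ is present independently with probability $d/n$, and since the $\ell$ edges of $m\setminus m^*$ all lie outside $m^*$, this joint presence event has probability exactly $(d/n)^\ell$. This produces the prefactor in the target bound.

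Next, conditional on this presence event and on $M^*=m^*$, the weights $\{W_e : e \in m\setminus m^*\}$ are i.i.d.\ $\calQ$ and, independently, $\{W_e : e \in m^*\setminus m\}$ are i.i.d.\ $\calP$. Hence $\Delta(m)$ is distributed as $\sum_{i=1}^\ell (Y_i - X_i)$, where $X_i$ and $Y_i$ are independent i.i.d.\ sequences with $X_i \sim \log(\calP/\calQ)$ under $\calP$ and $Y_i \sim \log(\calP/\calQ)$ under $\calQ$, exactly as in the proof of~\prettyref{thm:first_moment_bound}. A direct computation gives $\Expect[e^{(Y_1 - X_1)/2}] = \pth{\int \sqrt{pq}\, d\mu}^2 = B(\calP,\calQ)^2 = e^{-\alpha}$, so Markov's inequality (equivalently, the large-deviation estimate~\eqref{eq:LDXY}) yields $\prob{\sum_{i=1}^\ell (Y_i - X_i) \ge x\ell \mid \text{presence}} \le e^{-\ell(\alpha + x/2)}$. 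Multiplying the two factors delivers the stated bound.

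There is no essential obstacle; the argument is short and mirrors the Bhattacharyya-based computation already used in~\prettyref{sec:positive}. The one bit of bookkeeping that I want to get right is to condition on the edge-presence event first and then apply the Chernoff bound to the remaining conditional distribution, rather than trying to apply a single Chernoff bound to the mixed null law $\calQ' = (1-d/n)\delta_\star + (d/n)\calQ$, which would obscure the clean product form of the $(d/n)^\ell e^{-\ell(\alpha + x/2)}$ estimate.
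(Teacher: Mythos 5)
Your proof is correct and follows essentially the same route as the paper's: condition on the edge-presence event $\{W_e\neq\star,\ \forall e\in m\setminus m^*\}$ to extract the $(d/n)^\ell$ factor, then apply the Chernoff bound at $s=1/2$ (which is exactly what the large-deviation estimate~\eqref{eq:LDXY} does) to the remaining i.i.d.\ sum $\sum_{i=1}^\ell (Y_i-X_i)$. The only cosmetic difference is that you make explicit the observation that $\Delta(m)=-\infty$ when some edge is absent, which justifies the conditioning; the paper uses the same fact via~\eqref{eq:Deltam-cases} but leaves the justification implicit in step (a).
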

\begin{proof}
From~\eqref{eq:Deltam-cases}, we have
\begin{align*}
& ~ \prob{\Delta(m) \geq x\ell \mid M^*=m^*} \\
= & ~ \prob{\sum_{e\in m \backslash m^*} \log \frac{\calP}{\calQ '}(W_e) - \sum_{e\in m^* \backslash m} \log \frac{\calP}{\calQ '}(W_e) \geq x \ell  \mid M^*=m^*, W_e \neq \star, \forall e\in m} \pth{\frac{d}{n}}^\ell \\
\stepa{=} & ~ \prob{\sum_{e\in m \backslash m^*} \log \frac{\calP}{\calQ}(W_e) - \sum_{e\in m^* \backslash m} \log \frac{\calP}{\calQ}(W_e) \geq x \ell  \mid M^*=m^*, W_e \neq \star, \forall e\in m} \pth{\frac{d}{n}}^\ell \\
\stepb{=} & ~ \prob{\sum_{i=1}^\ell (Y_i-X_i) \geq x \ell} \pth{\frac{d}{n}}^\ell  \stepc{\leq} e^{-(\alpha+x/2)\ell} \pth{\frac{d}{n}}^\ell,
\end{align*}	
where (a) follows from \prettyref{eq:Deltam-cases}; 
in (b) we write $X_i$ and $Y_i$ are iid copies of $\log \frac{\calP}{\calQ}$ under $\calP$ and $\calQ$ respectively;
(c) follows from the large deviation bound \prettyref{eq:LDXY} in \prettyref{app:LD}.
\end{proof}

We now proceed to the proof of \prettyref{eq:Mgood}.
Recall that our standing assumption from~\eqref{eq:impossible-assumption} is that
\[
\sqrt{d} \,B(\calP, \calQ) = 1+\epsilon,
\]
for some small $\epsilon$. 
Throughout the proof we condition on $M^*=m^*$.

Note that 
\[
\frac{\mu_W(\Mgood)}{\mu_W(M^*)} =  \sum_{m: d(m,m^*) < 2\delta n } e^{\Delta(m)} = 
 R_1 + R_2, 
\]
where 
\begin{align*}
R_1 \triangleq   & ~\sum_{m: d(m,m^*) < 2\beta n/\log n }  e^{\Delta(m)} \\
R_2 \triangleq   & ~ \sum_{m: \frac{2 \beta n}{\log n} \leq d(m,m^*) < 2\delta n  } e^{ \Delta(m)}
\end{align*}
for some $\beta$ to be specified.
Next we bound $R_1$ and $R_2$ separately.


First, we note that 
given a perfect matching $m$, the number of matchings that differ from $m$ by $2\ell$ edges is
\begin{equation}
|\{m\in\calM: d(m,m^*)=2\ell\}| = !\ell \cdot \binom{n}{\ell},
\label{eq:mcount}
\end{equation}
where $!\ell$ denotes the number of derangements of $\ell$ elements, given by 
\[
!\ell = \ell! \sum_{i=0}^\ell \frac{(-1)^i}{i!} = \qth{\frac{\ell!}{e} },
\]
and $[\cdot]$ denotes rounding to the nearest integer. Thus
\begin{equation}
\frac{1}{2e} n(n-1)\cdots (n-\ell+1) \leq 
|\{m\in\calM: d(m,m^*)=2\ell\}| \leq \frac{2}{e} n(n-1)\cdots (n-\ell+1).
\label{eq:mcount2}
\end{equation}

Furthermore, for any $m$,
\begin{equation}
\Expect[\exp \left(  \Delta(m) \right)] = 1.
\label{eq:LR}
\end{equation}
This follows from the definition of log likelihood ratio as 
\[
\Expect[\exp \left(  \Delta(m) \right)] = 
\prod_{e\in m \backslash m^*} \underbrace{\Expect_{W_e\in \calQ '}\qth{\frac{\calP}{\calQ '}(W_e)}}_{=1} 
\prod_{e\in m^* \backslash m} \underbrace{\Expect_{W_e\in \calP}\qth{\frac{\calQ '}{\calP}(W_e)}}_{=1} 
 = 1.
\]
To bound $R_1$, using \prettyref{eq:mcount2} and \prettyref{eq:LR} we have
\begin{align*}
\expect{R_1} 
& = \sum_{d(m,m^*) < \frac{2\beta n}{\log n} } \expect{ e^{ \Delta(m)}}  \leq \sum_{\ell < \frac{\beta n}{\log n} }  \frac{2}{e} n^\ell \leq \frac{2n}{e} \exp(\beta n).
\end{align*}
By Markov's inequality, 
\begin{equation}
\prob{R_1 \geq e^{2 \beta n}} \leq \frac{2n}{e} \exp(-\beta n).
\label{eq:R1}
\end{equation}

To bound $R_2$, the calculation above shows that directly applying the Markov inequality is too crude since $\Expect[R_2]= e^{\Theta(n \log n)}$.
%
Note that although $\Delta(m)$ is negatively biased, 
when $\Delta(m)$ is atypically large it results in an excessive contribution to the exponential moments. Thus we truncate on the following event:
$$
\calE \triangleq  \bigcap_{m: 2\beta n/\log n \leq d(m,m^*) < 2\delta n}  \left\{ \Delta(m) \le r d(m,m^*)/2 \right\}
$$
for some constant $r$ to be chosen.
Then 
\begin{align*}
& \prob{ R_2  \ge e^{c' n} } \\
& \le \prob{\calE^c} + \prob{ \{R_2  \ge e^{c' n}\}   \cap \calE } \\
& \le \prob{\calE^c} +\prob{ \sum_{\frac{2\beta n}{\log n} \leq d(m,m^*)  <  2\delta n } e^{  \Delta(m)  }  \indc{ \Delta(m) \leq r d(m,m^*)/2}  \ge e^{c' n} } \\
& \le  \prob{\calE^c} + e^{-c' n} \sum_{\frac{2\beta n}{\log n} \leq d(m,m^*)  <  2\delta n } \expect{e^{ \Delta(m)}  \indc{ \Delta(m) \leq r d(m,m^*)/2} }. \numberthis \label{eq:R2}
\end{align*}

To bound the first term, note the fact that if $d(m,m^*)=2\ell$, by \prettyref{lmm:muW}, we have 
\[
\prob{\Delta(m) \geq x \ell} \leq \pth{\frac{d}{n} e^{-(\alpha+x/2)}}^\ell ,
\]
Therefore, it follows from a union bound and the fact that $\alpha=-2\log\int\sqrt{\calP\calQ}$ that
\begin{align*}
\prob{\calE^c} & = \sum_{ \frac{2\beta n}{\log n} \le d(m,m^*) <2\delta n }  \prob{ \Delta(m) \ge r d(m,m^*)/2 }  \\
& \le \frac{2}{e} \sum_{ \frac{\beta n}{ \log n} \le \ell <\delta n } n^\ell  
\pth{\frac{d}{n} \pth{\int \sqrt{\calP \calQ}}^2  }^\ell e^{-r\ell/2} \\
& = \frac{2}{e} \sum_{ \frac{\beta n}{ \log n} \le \ell <\delta n } \pth{(1+\epsilon)^2e^{-r/2}}^{\ell}
\end{align*}
Choose $r = 8 \epsilon$. We have 
\begin{equation}
\prob{\calE^c} \leq e^{-c \beta n/\log n} 
\label{eq:R2a}
\end{equation}
for some $c=c(\epsilon)$ and all sufficiently large $n$. 

For the second term in \prettyref{eq:R2a}, we bound the truncated MGF as follows: 
\begin{align*}
 \expect{e^{ \Delta(m)}  \indc{ \Delta(m) \le r \ell} } 
&\le \expect{\exp \left( \frac{1}{2} \left(  \Delta(m) + r \ell \right) \right) }  \\
& =  \expect{e^{ \Delta(m)/2} \mid  W_e \neq \star, \forall e \in m\backslash m^*} \pth{\frac{d}{n}}^{\ell}  e^{r \ell/2}\\
& \stepa{=}
\prod_{e\in m \backslash m^*} \Expect_{W_e\sim \calQ} \qth{\sqrt{\frac{\calP}{\calQ}(W_e)}}
\prod_{e\in m^* \backslash m} \Expect_{W_e\sim \calP} \qth{\sqrt{\frac{\calQ}{\calP}(W_e)}} \pth{\frac{d}{n}}^{\ell}  e^{r \ell/2}  \\
& = \pth{\frac{d}{n} B(\calP,\calQ)^2   }^{\ell}  e^{r \ell/2}  \leq \pth{\frac{e^{6\epsilon }}{n}}^\ell
\end{align*}
where (a) follows from \prettyref{eq:Deltam-cases}.
Combining the above with \prettyref{eq:mcount2}, we have
\begin{align*}
\sum_{\frac{2\beta n}{\log n} \leq d(m,m^*)  <  2\delta n } \expect{e^{ \Delta(m)}  \indc{ \Delta(m) \le  r d(m,m^*)/2} } 
\le   \frac{2}{e} \delta n e^{6 \epsilon \delta n}
\end{align*}
Choosing  $c' = 7 \epsilon \delta$, we get that 
\begin{equation}
 e^{-c' n}\sum_{\frac{2\beta n}{\log n} \leq d(m,m^*)  <  2\delta n } \expect{e^{ \Delta(m)}  \indc{ \Delta(m) \le  r d(m,m^*)/2} }  \le e^{- c \delta n}.
\label{eq:R2b}
\end{equation}
for some $c=c(\epsilon)$ and all large $n$. Substituting \prettyref{eq:R2a} and \prettyref{eq:R2b} into \prettyref{eq:R2}, we get
$$
\prob{R_2 \ge e^{7  \epsilon \delta n}  } \le e^{-\Omega(n/\log n)}
$$
Combining this with \prettyref{eq:R1} and upon choosing $\beta=\epsilon \delta$, we have 
$\prob{R_1+R_2 \geq 2 e^{ 7 \epsilon \delta n}} \leq e^{-\Omega(n/\log n)}$, concluding the proof.

\subsection{Lower bounding the posterior mass of bad matchings}
\label{sec:Mbad}

In this section, we outline the proof of \prettyref{lmm:Mbad}. 
Recall that 
\[
\frac{\mu_W(\Mbad)}{\mu_W(M^*)} =  \sum_{ m: d( m,m^*)  \ge 2\delta n } e^{ \Delta(m) } .
\]
For any perfect matching $m$ such that $d(m,m^*)=2\ell$, the set difference $m \triangle  m^*$ can be represented by a disjoint union of \emph{alternating} cycles,
denoted by $C$, where the edges in $m^* \backslash  m$
and $m \backslash m^*$ are colored red and blue respectively, so that in total there are $\ell$ red edges and $\ell$ blue edges. 

Recall from~\eqref{eq:Delta.def} that the excess weight $\Delta(C)$ of a cycle $C$
denotes the difference between the total blue and red edge (log-likelihood) weights. In order to lower bound the posterior mass of bad matchings, we show that with probability at least $1/2- c_1/n$, 
there exist at least $ e^{n c_2}$ distinct alternating cycles $C$ of length at least
$n c_3$, so that 
$\Delta(C) \geq n c_4$, for some constants $c_1, c_2, c_3, c_4$ that are independent of $n$ and only depend on $\epsilon, \calP, \calQ$.
Note that $c_1, c_2, c_3>0$, while $c_4$ is non-negative and is $0$ when $\calP=\calQ$. 
Since each alternating cycle $C$ corresponds to a perfect matching in $\Mbad$, we have 
\begin{align}
\frac{\mu_W(\Mbad)}{\mu_W(M^*)} \geq e^{n \left( c_2 + c_4 \right)}.  \label{eq:prob_bad_desired}
\end{align}


Let us point out a simple yet useful observation: if a perfect matching $m$ contains any edges not in $G$ ($e$ with $W_e=\star$), then it has zero posterior mass. Thus, all alternating cycles that give rise to a perfect matching in $\Mbad$ with positive posterior mass must consist only of edges in $G$. Therefore, to show~\eqref{eq:prob_bad_desired}, it suffices to consider the alternating cycles in $G$. Let us also remark that by the same reasoning as~\eqref{eq:Deltam-cases}, for any set $T$ of edges in $G$, 
\[
\Delta(T)=\sum_{e\in \sfb(T)} \log \frac{\calP}{\calQ}(W_e) - \sum_{e\in \sfr(T)} \log \frac{\calP}{\calQ}(W_e).
\]
Since we will be focusing on edge sets in $G$, for the remainder of the proof, we will work directly with $\calP/\calQ$ instead of $\calP/\calQ'$.

To show the existence of many alternating cycles in $G$, 
one natural idea is to define $S$ as the set of alternating cycles $C$ satisfying the aforementioned length and weight requirements;
and then bound the cardinality of  $S$ from below using the first and second-moment methods. 
This boils down to proving that $\var(|S|) \lesssim  ( \expect{|S|} )^2 $. 
Unfortunately, this idea fails because the variance of $|S|$ turns out to be exponentially larger than
$(\expect{|S|} )^2$, due to the excessive correlations among these long alternating cycles. 
A similar phenomenon is also observed in counting the number of long cycles in \ER graphs~\cite[Section 3]{marinari2006number}.

Alternatively, recall that the set difference  $m \symdiff  m^*$ 
is allowed to be a \emph{disjoint union} of alternating cycles rather than a \emph{single} alternating cycle. 
Thus, one can resort to showing the existence of disjoint unions of 
many but short alternating cycles satisfying the total length and weight requirements. 
However, using the first and second moment methods, at best we can show that 
there exists a disjoint union of $\Theta(n/\log^2 n) $ desired alternating cycles of length $\Theta(\log n)$;
thus the total length is  only $\Theta(n/\log n)$, falling short of meeting the total length requirement of $\Omega(n)$. 



To construct the desired alternating cycles, we instead proceed in two steps. 
We first reserve a set of vertices and construct many but short alternating paths with the desired total weight 
in the subgraph induced by the non-reserved vertices.
Then we use the edges incident to the reserved vertices to connect these paths to form the desired alternating cycles. 
At a high level, our two-stage cycle finding scheme is inspired by the previous work~\cite{ding2015percolation} in a different
context: The goal therein is to find a long path whose average weight is below a certain threshold in a complete graph with i.i.d.\ exponentially weighted edges.
More broadly, our second step is similar in spirit  to the sprinkling idea commonly used in random graph theory (see, e.g.~\cite[Section 11.9]{Alon2016}
and~\cite[Section 2.1.4]{krivelevich2016long}).

The specific construction is described in Algorithm~\ref{alg:cycle_finding}. Therein, $V$ denotes the set of reserved left vertices from $[n]$, 
where $|V|=\gamma n$  and $\gamma=\gamma_0 \epsilon$ for some small constant $\gamma_0>0$. 
Denote by $V'\subset [n]'$ its counterpart on the right side defined by the red edges. 
We write $V^c=[n]\backslash V$ and therefore $(V^c)' = [n]'\backslash V'$.

\begin{algorithm}[h]
\caption{Two-stage cycle finding algorithm} \label{alg:cycle_finding}
\begin{algorithmic}[1]
\State{\bfseries Input:} Weighted bipartite graph $G$ on $[n]\times [n]'$ with weight vector $w$, thresholds $\tau_\mathsf{red}$, $\tau_\mathsf{blue}$, and parameters $s,\Delta_0, c_5, c_6, c_7.$
   \State{\bfseries Step 1: Path construction.}
Let $G_1$ denote the weighted subgraph of $G$ induced by 
the edges in $V^c \times (V^c)'$. 
Construct a family of disjoint sets $L_k \subset V^c$ of left vertices 
and $R_k \subset (V^c)'$ of right vertices for $k \in \calK_1 \subset V^c$
such that: (1) $|L_k| \geq s, |R_k| \geq s$ and $K_1 \triangleq |\calK_1| \geq n c_5$; 
(2) each pair of vertices $u \in L_k$ and $v' \in R_k$ are connected via an alternating
path $P$ that starts and ends in red edges, with $\Delta(P)\geq \Delta_0$, where $\Delta(P)$ is defined in~\prettyref{eq:Delta.def}.

   \State{\bfseries Step 2: Sprinkling.}
Let $V^*=\{i\in V:\log(\calP/\calQ)(W_{i,i'})\leq \tau_\mathsf{red}\}$. Let $G_2$ be the subgraph of $G$ that contains every red edge in $V^*\times (V^*)'$ and every blue edge $e$ in $V^*\times (V^*)'$, $V^c\times (V^*)'$, or $V^*\times (V^c)'$, if and only if $\log(\calP/\calQ)(W_e)\geq \tau_\mathsf{blue}$.
Let $\{U_k: k\in \mathcal{K}_1\}$ (resp.~$\{V_k: k\in \mathcal{K}_1\}$) be a collection of disjoint subsets of left (resp.~right) vertices, such that  every vertex in $U_k'$ is connected to $L_k$ by at least one blue edge in $G_2$, and every vertex in $V_k$ is connected to $R_k$ by at least one blue edge in $G_2$. Next, on a subset $\calK_2\subset \calK_1$ of size $K_2\ge K_1/16$, define a bipartite ``super graph'' $G_{\rm super}$ with vertex sets $\calK_2$ and $\mathcal{K}_2'$. In $G_{\rm super}$, there is a red edge between $k$ and $k'$ for every $k\in \calK_2$, and a blue edge between $i$ and $j'$ if and only if there is at least a blue edge in $G_2$ connecting $U_i$ and $V'_j$. 
Construct $e^{c_6 K_1}$ distinct alternating cycles in $G_{\rm super}$, each of length at least $2c_7K_1$ for some universal constants $c_6,c_7>0$. 

\State{\bfseries Output:} 
Expand each alternating cycle on $G_{\rm super}$ into an alternating cycle on $G$,
by replacing each red edge $(k,k')$ in $G_{\rm super}$ by 
an alternating path between $U_k$ and $V'_k$ that starts and ends in red edges and connects 
$L_k$ and $R_k.$  See Figure~\ref{fig:sprinkling} for an illustration.  Output all the resulting alternating cycles on $G$.
\end{algorithmic}
\end{algorithm}

\begin{figure}[ht]
  \begin{center}
  \begin{tikzpicture}[scale=1,every edge/.append style = {thick,line cap=round},font=\scriptsize]
\draw (-1,1) node (l1)[unshadedgiantnode]{$1$};
\draw (-1,-1) node (l2)[unshadedgiantnode]{$2$};
\draw (1,1) node (r1)[unshadedgiantnode]{$1'$};
\draw (1,-1) node (r2)[unshadedgiantnode]{$2'$};
\draw[Bedge] (l1)--(r2) (l2)--(r1);
\draw[Redge] (l1)--(r1) (l2)--(r2);
\node at (0,-1.5) {$G_{\rm super}$};
   \end{tikzpicture}
~~\quad\quad\quad
  \begin{tikzpicture}[scale=1,every edge/.append style = {thick,line cap=round},font=\scriptsize]
  \draw (-2,2.5) ellipse (1cm and 0.3cm);
  \draw (2,2.5) ellipse (1cm and 0.3cm);
  \draw (-2,1.5) ellipse (1cm and 0.3cm);
  \draw (2,1.5) ellipse (1cm and 0.3cm);
  \draw (-2,0.5) ellipse (1cm and 0.3cm);
  \draw (2,0.5) ellipse (1cm and 0.3cm);
  \draw (-2,-0.5) ellipse (1cm and 0.3cm);
  \draw (2,-0.5) ellipse (1cm and 0.3cm);
  \draw (-2,-1.5) ellipse (1cm and 0.3cm);
  \draw (2,-1.5) ellipse (1cm and 0.3cm);
  \draw (-2,-2.5) ellipse (1cm and 0.3cm);
  \draw (2,-2.5) ellipse (1cm and 0.3cm);
\node at (-3.5,2.5) {$L_1$};
\node at (-3.5,1.5) {$U_1'$};
\node at (-3.5,0.5) {$U_1$};
\node at (-3.5,-0.5) {$L_2$};
\node at (-3.5,-1.5) {$U_2'$};
\node at (-3.5,-2.5) {$U_2$};
\node at (3.5,2.5) {$R_1$};
\node at (3.5,1.5) {$V_1$};
\node at (3.5,0.5) {$V_1'$};
\node at (3.5,-0.5) {$R_2$};
\node at (3.5,-1.5) {$V_2$};
\node at (3.5,-2.5) {$V_2'$};

\draw(-2.5,2.5) node(l11) [nodedot] {};  
\draw(-2,2.5) node(l12) [nodedot] {};  
\draw(-1.5,2.5) node(l13) [nodedot] {};  
\draw(-2.5,1.5) node(l21) [nodedot] {};  
\draw(-2,1.5) node(l22) [nodedot] {};  
\draw(-1.5,1.5) node(l23) [nodedot] {}; 
\draw(-2.5,0.5) node(l31) [nodedot] {};  
\draw(-2,0.5) node(l32) [nodedot] {};  
\draw(-1.5,0.5) node(l33) [nodedot] {};  
\draw(-2.5,-0.5) node(l41) [nodedot] {};  
\draw(-2,-0.5) node(l42) [nodedot] {};  
\draw(-1.5,-0.5) node(l43) [nodedot] {};   
\draw(-2.5,-1.5) node(l51) [nodedot] {};  
\draw(-2,-1.5) node(l52) [nodedot] {};  
\draw(-1.5,-1.5) node(l53) [nodedot] {};  
\draw(-2.5,-2.5) node(l61) [nodedot] {};  
\draw(-2,-2.5) node(l62) [nodedot] {};  
\draw(-1.5,-2.5) node(l63) [nodedot] {};  
\draw(2.5,2.5) node(r11) [nodedot] {};  
\draw(2,2.5) node(r12) [nodedot] {};  
\draw(1.5,2.5) node(r13) [nodedot] {};  
\draw(2.5,1.5) node(r21) [nodedot] {};  
\draw(2,1.5) node(r22) [nodedot] {};  
\draw(1.5,1.5) node(r23) [nodedot] {}; 
\draw(2.5,0.5) node(r31) [nodedot] {};  
\draw(2,0.5) node(r32) [nodedot] {};  
\draw(1.5,0.5) node(r33) [nodedot] {};  
\draw(2.5,-0.5) node(r41) [nodedot] {};  
\draw(2,-0.5) node(r42) [nodedot] {};  
\draw(1.5,-0.5) node(r43) [nodedot] {};   
\draw(2.5,-1.5) node(r51) [nodedot] {};  
\draw(2,-1.5) node(r52) [nodedot] {};  
\draw(1.5,-1.5) node(r53) [nodedot] {};  
\draw(2.5,-2.5) node(r61) [nodedot] {};  
\draw(2,-2.5) node(r62) [nodedot] {};  
\draw(1.5,-2.5) node(r63) [nodedot] {};  

\draw [Redge] (l21)--(l31) (l22)--(l32) (l23)--(l33) (l51)--(l61) (l52)--(l62) (l53)--(l63) (r21)--(r31) (r22)--(r32) (r23)--(r33) (r51)--(r61) (r52)--(r62) (r53)--(r63);
\draw [Bedge] (l12)--(l21) (l43)--(l51) (r13)--(r21) (r43)--(r51) (l11)--(l22) (l13)--(l23) (l43)--(l52) (l43)--(l53) (r13)--(r23) (r12)--(r22) (r11)--(r21) (r43)--(r53) (r41)--(r52) (l33)--(r63) (l63)--(r33);
\draw [red, dashed, line width = 2pt] (l13) -- node[black, above]{$P_1$} (r13);
\draw [red, dashed, line width = 2pt] (l43) -- node[black, above]{$P_2$} (r43);
\end{tikzpicture}
\end{center}
\caption{Example of the sprinkling stage of \prettyref{alg:cycle_finding}. To the left is an alternating cycle $C_{\rm super}=(1,1',2,2')$ on the super graph $G_{\rm super}$. To the right is its expansion to an alternating cycle on $G$. The two blue edges in $G_{\rm super}$ correspond to the two long crossed blue edges in $G$: the blue edge $(1,2')$ in $G_{\rm super}$ means that there exist $u_1\in U_1$, $v_2'\in V_2'$ such that $(u_1,v_2')$ is a blue edge in $G_2$. Similarly, $(u_2,v_1')$ is a blue edge for some $u_2\in U_2$ and $v_1'\in V_1'$. The dashed red edges represent alternating paths $P_1$ and $P_2$ that start and end in red edges. The existence of $P_1$ and $P_2$ follows from the first stage of \prettyref{alg:cycle_finding}.}%
\label{fig:sprinkling}%
\end{figure}
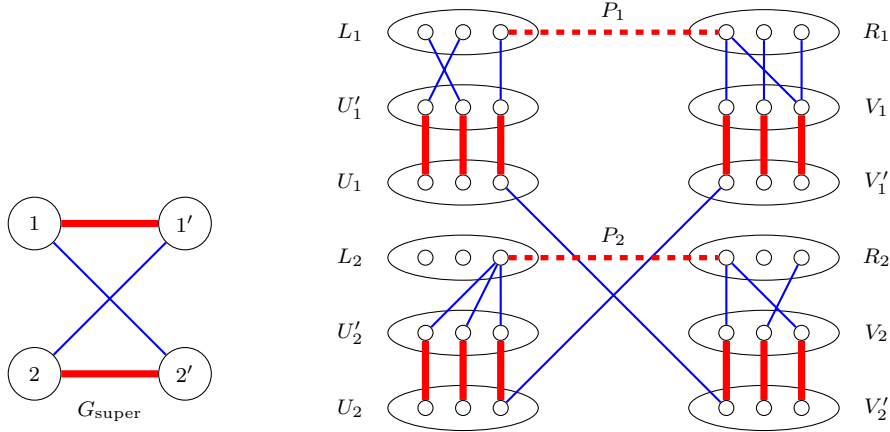

The construction of the super graph is the key step in Algorithm~\ref{alg:cycle_finding}. In the construction of $L_k,R_k,U_k,V_k$, only the edges in $V^c\times (V^c)'$, $V\times (V^c)'$ and $V^c\times V'$ are inspected. Therefore, whether a blue edge appears in the super graph is independent of all the steps prior, since it only depend on edges in $V\times V'$. Furthermore, the sets $\{U_k\}$ and $\{V_k\}$ are disjoint, so that the blue edges in the super graph are all formed independently. Therefore, $G_{\rm super}$ can be viewed as (or at least contains) an \ER bipartite graph with a planted perfect matching. 
In the sprinkling stage, by choosing the thresholds $\tau_\mathsf{red}$ and $\tau_\mathsf{blue}$ appropriately and by showing that the sets $\{U_k\}$ and $\{V_k\}$ are large for all $k\in \calK_2$, we ensure that the average degree of $G_{\rm super}$ is a large enough constant to be in the very supercritical regime.
In this regime, we will show in Lemma~\ref{lmm:bip.many.cycles} that $G_{\rm super}$ contains exponentially many alternating cycles of length $\Omega(K_2)=\Omega(n)$. As illustrated in Figure~\ref{fig:sprinkling}, each such ``super cycle'' can be expanded into an alternating cycles in $G$, which, in turn, gives rise to a perfect matching in $\Mbad$. Again by choosing the thresholds appropriately for the edges of $G_2$, we can make the sprinkling edge-weights negligible compared to the weights of the paths, so that for an alternating cycle $C$ formed by connecting paths $P_1,...,P_r$, we have $\Delta(C)\approx\sum_{k\leq r}\Delta(P_k)=\Omega(n)$, completing the proof of~\eqref{eq:prob_bad_desired}.

In \prettyref{alg:cycle_finding} the specific construction of disjoint paths is not spelled out. This part in fact
differs for the sparse model (\prettyref{thm:impossibility}) and the exponential model (\prettyref{thm:exp}, which are detailed in \prettyref{sec:path.construction} and \prettyref{sec:exponential} respectively. Specifically, for the sparse model with bounded average degree, the desired alternating paths are found by exploring the local neighborhood
in $G_1$ using breadth-first-search and constructing two-sided trees $T_k$ where $L_k$  and $R_k$ 
correspond to the set of leaves of the left- and right-sided tree, respectively. This suffices to prove the sharp threshold in~\prettyref{thm:impossibility} for the sparse model (and, by a reduction argument, for the dense model as well). However, this path finding scheme via constructing two-sided trees is wasteful in the sense
that the length of the path extracted is only about the depth of the tree, which is much smaller than the size of the tree;
thus the obtained cycles are not long enough to yield the optimal reconstruction error bound in~\prettyref{thm:exp} for the exponential model.
To remedy this, we take a more direct approach: Following the program in \cite{ding2015percolation},
we use first and second moment methods combined with Tur\'an's theorem to
show the existence of many disjoint, short alternating paths in $G_1$ of desired total weights. It is worth noting that this method
yields fewer paths in total, which makes the super graph smaller. Nevertheless, for exponential model we can appropriately choose the threshold for the edge weights in $G_2$ in the sprinkling stage, so that the super graph is still very supercritical. In contrast, under the sparse model with bounded average degree, this shortage of paths due to applying Tur\'an's theorem cannot be salvaged by tuning the threshold, and it is necessary to resort to neighborhood-exploration to find paths.

\section{Path construction (under the sparse model)}
\label{sec:path.construction}
In this section we construct $K$ disjoint subgraphs on $G_1$, which contain the disjoint alternating short paths we will later use to form the long cycles. In particular, each subgraph consists of two trees on $G_1$ whose root nodes are connected via a red edge. We will refer to the subgraphs as \emph{two-sided trees} denoted as $T_1,...,T_K$. See Figure~\ref{fig:two.sided.tree} for an illustration. For each $k$, our construction is such that on either side of $T_k$, the path from any leaf node to its root node is of alternating color that starts in a red edge and ends in a blue edge. As such, the path from any leaf node on one side to any leaf node on the other side is also an alternating path. The two-sided trees are constructed via a greedy neighborhood exploration process. Before elaborating on the exploration process, the following are a few desirable features that we aim for.

\begin{enumerate}
\item The trees are not too small: on either side, we want the exploration process to survive long enough to yield sufficiently long paths;
\item There is a large number of leaf nodes on each side of the trees, so that there are plenty of paths to choose from when forming the long cycles via sprinkling.
\item The trees are not ``overgrown'': since we need the $K$ trees to be vertex-disjoint, none of them should have too many vertices;
\item The edge weights on all the alternating paths are well behaved. The definition of ``well behaved'' will become clear later. This is to ensure that the long cycles constructed from these paths occupy sufficiently large posterior probability.
\end{enumerate}

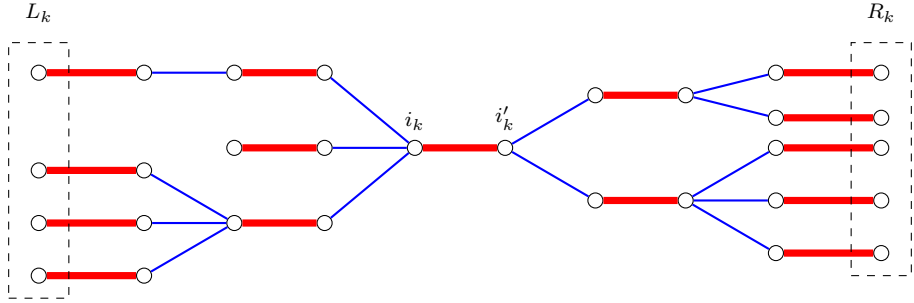
\begin{figure}[ht]
  \begin{center}
  \begin{tikzpicture}[scale=1,every edge/.append style = {thick,line cap=round},font=\scriptsize]
  \draw(-0.6,0) node (l0) [nodedot, label = above:$i_k$] {};
  \draw(0.6,0) node (r0) [nodedot, label = above:$i_k'$] {};
  \draw(-1.8,1) node (l1) [nodedot] {};
  \draw(-1.8,0) node (l2) [nodedot] {};
  \draw(-1.8,-1) node (l3) [nodedot] {};
  \draw(-3,1) node (l4) [nodedot] {};
  \draw(-3,0) node (l5) [nodedot] {};
  \draw(-3,-1) node (l6) [nodedot] {};
  \draw(-4.2,1) node (l7) [nodedot] {};
  \draw(-4.2,-0.3) node (l8) [nodedot] {};
  \draw(-4.2,-1) node (l9) [nodedot] {};
  \draw(-4.2,-1.7) node (l10) [nodedot] {};
  \draw(-5.6,1) node (l11) [nodedot] {};
  \draw(-5.6,-0.3) node (l12) [nodedot] {};
  \draw(-5.6,-1) node (l13) [nodedot] {};
  \draw(-5.6,-1.7) node (l14) [nodedot] {};
  \draw(1.8,0.7) node (r1) [nodedot] {};
  \draw(1.8,-0.7) node (r2) [nodedot] {};
  \draw(3,0.7) node (r3) [nodedot] {};
  \draw(3,-0.7) node (r4) [nodedot] {};
  \draw(4.2,1) node (r5) [nodedot] {};
  \draw(4.2,0.4) node (r6) [nodedot] {};
  \draw(4.2,0) node (r7) [nodedot] {};
  \draw(4.2,-0.7) node (r8) [nodedot] {};
  \draw(4.2,-1.4) node (r9) [nodedot] {};
    \draw(5.6,1) node (r10) [nodedot] {};
  \draw(5.6,0.4) node (r11) [nodedot] {};
  \draw(5.6,0) node (r12) [nodedot] {};
  \draw(5.6,-0.7) node (r13) [nodedot] {};
  \draw(5.6,-1.4) node (r14) [nodedot] {};
  \draw[Redge] (l0)--(r0) (l1)--(l4) (l2)--(l5) (l3)--(l6) (r1)--(r3) (r2)--(r4) (l7)--(l11) (l8)--(l12) (l9)--(l13) (l10)--(l14) (r5)--(r10) (r6)--(r11) (r7)--(r12) (r8)--(r13) (r9)--(r14);
  \draw[Bedge] (l0)--(l1) (l0)--(l2) (l0)--(l3) (l4)--(l7) (l6)--(l8) (l6)--(l9) (l6)--(l10) (r0)--(r1) (r0)--(r2) (r3)--(r5) (r3)--(r6) (r4)--(r7) (r4)--(r8) (r4)--(r9);
\draw[dashed] (-6,-2) -- (-6,1.4) -- (-5.2,1.4) -- (-5.2,-2) -- cycle;
\draw[dashed] (6,-1.7) -- (6,1.4) -- (5.2,1.4) -- (5.2,-1.7) -- cycle;
\node at (-5.6,1.8) {$L_k$};
\node at (5.6,1.8) {$R_k$};
 \end{tikzpicture}

  \end{center}
\caption{Example of a two-sided tree $T_k$. For each $u\in L_k$ and $v\in R_k$, there is an alternating path from $u$ to $v$ that starts and ends in red edges, and passes through the red edge $(i_k,i_k')$.}%
\label{fig:two.sided.tree}%
\end{figure}

In the remainder of this section, we give the precise construction of the two-sided trees, and show that sufficiently many of them fulfill all the features above.

\subsection{Construction of two-sided trees}\label{sec:exploration.construction}
As shown in Figure~\ref{fig:two.sided.tree}, each two-sided tree is centered at a red edge. We will refer to the subtree rooted at its left (resp.~right) end point as the left (resp.~right) subtree of this two-sided tree. We construct each left or right subtree via a neighborhood exploration process that starts from the root vertex. To ensure that the paths are alternating in color, each explored blue edge must be followed by (the unique) red edge (see Figure~\ref{fig:two.sided.tree}). In other words, the vertices are always explored in \emph{pairs}, where the pairs are identified by the red edges. Recall that $G_1$ is a bipartite graph with $(1-\gamma)n$ left and right vertices. We will have $\gamma n$ pairs of vertices actively participate in the construction, and leave $(1-2\gamma) n$ pairs unexplored in the neighborhood exploration process.


The local graph neighborhoods are explored in a fashion analogous to the breadth-first search (BFS). However, as opposed to the vanilla BFS algorithm, we design a two-stage exploration-selection scheme, where the neighborhoods (left and right subtrees) are grown in epochs, where each epoch contains a few exploration steps and a leaf node selection step. The selection step is necessary to ensure that the weights of the paths, and the resulting cycles, fulfill the weight requirement of bad matchings. Since we need to construct many disjoint two-sided trees, the selection step is done periodically to prevent each tree from overgrowing and using up too many vertices.

\begin{algorithm}[H]
\caption{Construction of two-sided trees} \label{alg:exploration}
\begin{algorithmic}[1]
\State{\bfseries Input:} $n,\gamma$, a bipartite graph $G_1$ that contains $(1-\gamma)n$ pairs of vertices, threshold $\zeta$,
and parameters $H,L,\epsilon,\alpha $.
\State Initialize $\calU=\{\text{all left vertices of }G_1\}$ as the set of unexplored left vertices. Define
\[
m=(1+\epsilon)^{2HL}\exp(3H\alpha+\epsilon H)
\]
for the number of left vertices each of the left and right subtrees will contain. Let $K=\frac{\gamma n}{2m}$. For $k=1,2,...K$, repeat the following steps 3-6 to construct $T_k$.
   \State Let $i_k$ be the member of $\calU$ with the smallest index. Update $\mathcal{U}\leftarrow \mathcal{U}\backslash\{i_k\}$ to mark $i_k$ as explored. Initialize $T_k=\{(i_k,i_k')\}$ to be a tree containing only one red edge.
	
   \State Construct the left tree of $T_k$ via $L$ epochs of color-alternating breadth-first search (BFS) on $G_1$. Define $V_0=\{i_k\}$. In the $i$'th epoch, the BFS starts from the the set of vertices in $V_{i-1}$. Repeat the following two steps for $i=1,...,L$:
\begin{enumerate}[(a)]
\item Exploration: for each $v\in V_{i-1}$, grow a color-alternating subtree in $G_1$ with $v$ as its root. Concretely, define the offsprings of $v$ as
\[
O_{v}'= \left\{u'\in \mathcal{U}': (u', v)\in E(G_1)\right\}.
\]
For all $u'\in O_v'$, append edges $(u',v)$ and $(u,u')$ to $T_k$. Update $\mathcal{U}\leftarrow\mathcal{U}\backslash O_v$ to mark all members of $O_v$ as explored.

Grow the next 2 layers of the left tree similarly: sequentially (ordered by the vertex indices) for each $u\in O_v$, define its offsprings as the set of all unexplored vertices that are connected to $u$ via a blue edge in $G_1$; append to $T_k$ all the blue edges from $u$ to the offsprings and their corresponding red edges; and mark all the offsprings as explored. 
Repeat this exploration step above until the tree is of depth $2H$, unless the exploration process becomes extinct or the following termination condition is met:
\begin{equation}
\label{eq:terminate}
\text{Terminate when the number of left vertices in the tree exceeds } m. 
\end{equation}

\item Selection of leaf nodes: for each $v\in V_{i-1}$, let $\mathcal{L}_v$ 
denote the set of all leaf nodes at depth $2H$ in the subtree rooted at $v$. Among those, select
\begin{equation}
\label{eq:pruning}
\mathcal{L}_{v,\mathsf{sel}}=\left\{u\in \mathcal{L}_v: \Delta(P_{u,v})\geq \zeta H\right\},
\end{equation}
where
\[
P_{u,v}= \text{the path from } u\text{ to }v\text{ on the subtree rooted at }v.
\]
Let
$
V_i=\cup_{v\in V_{i-1}} \mathcal{L}_{v,\mathsf{sel}}
$
be the set of selected leaf nodes from the $i$'th epoch. The next epoch of the construction will start from $V_i$.
\end{enumerate}
After repeating the exploration-selection process for $L$ epochs, we arrive at a tree of depth $2HL$, rooted at $i_k$.

\State Let $m'$ be the total pairs of vertices used in the construction of the left tree. Let $\mathcal{U}'$ be the first $m-m'$ indices in $\mathcal{U}$, and update $\mathcal{U}$ to be $\mathcal{U}\backslash \mathcal{U}'$.

\State Construct the right tree via the same scheme, starting from root vertex $i_k'$.

\end{algorithmic}
\end{algorithm}

\begin{figure}[ht]
  \begin{center}
  \begin{tikzpicture}[scale=1,every edge/.append style = {thick,line cap=round},font=\scriptsize]
  \draw(0,0) node (0) [nodedot, fill = black, label={[label distance=-0.1cm]left:$i_k$}] {};
  \draw(-1,-0.7) node (11) [nodedot] {};
\draw(0,-0.7) node (12) [nodedot] {};
\draw(1,-0.7) node (13) [nodedot] {};
  \draw(-1,-1.4) node (21) [nodedot] {};
\draw(0,-1.4) node (22) [nodedot] {};
\draw(1,-1.4) node (23) [nodedot] {};
\draw(-1.5,-2.1) node (31) [nodedot] {};
\draw(-0.5,-2.1) node (32) [nodedot] {};
\draw(0.25,-2.1) node (33) [nodedot] {};
\draw(0.75,-2.1) node (34) [nodedot] {};
\draw(1.25,-2.1) node (35) [nodedot] {};
\draw(1.75,-2.1) node (36) [nodedot] {};
\draw(-1.5,-2.8) node (41) [nodedot, fill = black, label={[label distance=-0.1cm]left:$u$}] {};
\draw(-0.5,-2.8) node (42) [nodedot] {};
\draw(0.25,-2.8) node (43) [nodedot] {};
\draw(0.75,-2.8) node (44) [nodedot] {};
\draw(1.25,-2.8) node (45) [nodedot, fill = black, label = {[label distance=-0.1cm]left:$v$}] {};
\draw(1.75,-2.8) node (46) [nodedot] {};
\draw(-2.25,-3.5) node(51) [nodedot] {};
\draw(-1.5,-3.5) node(52) [nodedot] {};
\draw(-0.75,-3.5) node(53) [nodedot] {};
\draw(0.75,-3.5) node(54) [nodedot] {};
\draw(1.75,-3.5) node(55) [nodedot] {};
\draw(-2.25,-4.2) node(61) [nodedot] {};
\draw(-1.5,-4.2) node(62) [nodedot] {};
\draw(-0.75,-4.2) node(63) [nodedot] {};
\draw(0.75,-4.2) node(64) [nodedot] {};
\draw(1.75,-4.2) node(65) [nodedot] {};
\draw(-2.75,-4.9) node(71) [nodedot] {};
\draw(-1.75,-4.9) node(72) [nodedot] {};
\draw(-1.25,-4.9) node(73) [nodedot] {};
\draw(-0.25,-4.9) node(74) [nodedot] {};
\draw(0.75,-4.9) node(75) [nodedot] {};
\draw(1.25,-4.9) node(76) [nodedot] {};
\draw(2.25,-4.9) node(77) [nodedot] {};
\draw(-2.75,-5.6) node(81) [nodedot] {};
\draw(-1.75,-5.6) node(82) [nodedot, fill = black] {};
\draw(-1.25,-5.6) node(83) [nodedot] {};
\draw(-0.25,-5.6) node(84) [nodedot, fill = black] {};
\draw(0.75,-5.6) node(85) [nodedot] {};
\draw(1.25,-5.6) node(86) [nodedot] {};
\draw(2.25,-5.6) node(87) [nodedot, fill = black] {};
\draw(-2.5,-6.3) node(91) [nodedot] {};
\draw(-1.75,-6.3) node(92) [nodedot] {};
\draw(-1,-6.3) node(93) [nodedot] {};
\draw(-0.25,-6.3) node(94) [nodedot] {};
\draw(1.5,-6.3) node(95) [nodedot] {};
\draw(2.25,-6.3) node(96) [nodedot] {};
\draw(3,-6.3) node(97) [nodedot] {};
\path (0,-6.65) -- (0,-7.35) node [black, midway, sloped] {$\dots$};
\draw(-3.75,-7.7) node(101) [nodedot] {};
\draw(-3,-7.7) node(102) [nodedot] {};
\draw(-2.25,-7.7) node(103) [nodedot] {};
\draw(-1.5,-7.7) node(104) [nodedot] {};
\draw(-0.75,-7.7) node(105) [nodedot] {};
\draw(0,-7.7) node(106) [nodedot] {};
\draw(0.75,-7.7) node(107) [nodedot] {};
\draw(1.5,-7.7) node(108) [nodedot] {};
\draw(2.25,-7.7) node(109) [nodedot] {};
\draw(3,-7.7) node(1010) [nodedot] {};
\draw(3.75,-7.7) node(1011) [nodedot] {};
\draw(-3.75,-8.4) node(111) [nodedot, fill=black] {};
\draw(-3,-8.4) node(112) [nodedot, fill=black] {};
\draw(-2.25,-8.4) node(113) [nodedot] {};
\draw(-1.5,-8.4) node(114) [nodedot, fill=black] {};
\draw(-0.75,-8.4) node(115) [nodedot] {};
\draw(0,-8.4) node(116) [nodedot] {};
\draw(0.75,-8.4) node(117) [nodedot, fill=black] {};
\draw(1.5,-8.4) node(118) [nodedot] {};
\draw(2.25,-8.4) node(119) [nodedot, fill=black] {};
\draw(3,-8.4) node(1110) [nodedot] {};
\draw(3.75,-8.4) node(1111) [nodedot, fill=black] {};

\node at (-6,0.7) {Depth};
\node at (-6,0) {$0$};
\node at (-6,-0.7) {$1$};
\node at (-6,-1.4) {$2$};
\path (-6,-1.75) -- (-6,-2.45) node [black, midway, sloped] {$\dots$};
\node at (-6,-2.8) {$2H$};
\draw (-7.5,-3.15) edge[dashed, line width = 0.5pt] (7.5,-3.15);
\node at (-6,-3.5) {$2H+1$};
\path (-6,-3.85) -- (-6,-5.25) node [black, midway, sloped] {$\dots$};
\node at (-6,-5.6) {$4H$};
\draw (-7.5,-5.95) edge[dashed, line width = 0.5pt] (7.5,-5.95);
\node at (-6,-6.3) {$4H+1$};
\path (-6,-6.65) -- (-6,-8.05) node [black, midway, sloped] {$\dots$};
\node at (-6,-8.4) {$2HL$};
\draw (6,0.7) node(sel) [nodedot, fill=black] {};
\node at (6,0) {$V_0=\left\{i_k\right\}$};
\node at (6,-2.8) {$V_1=\mathcal{L}_{i_k,\mathsf{sel}}=\left\{u,v\right\}$};
\node at (6,-5.6) {$V_2=\mathcal{L}_{u,\mathsf{sel}}\cup\mathcal{L}_{v,\mathsf{sel}}$};
\node at (6,-8.4) {$V_L=: L_k$};

  \draw[Bedge] (0)--(11) (0)--(12) (0)--(13) (21)--(31) (21)--(32) (23)--(33) (23)--(34) (23)--(35) (23)--(36) (41)--(51) (41)--(52) (41)--(53) (45)--(54) (45)--(55) (61)--(71) (61)--(72) (63)--(73) (63)--(74) (64)--(75) (65)--(76) (65)--(77) (82)--(91) (82)--(92) (82)--(93) (84)--(94) (87)--(95) (87)--(96) (87)--(97);
  \draw[Redge] (21)--(11) (22)--(12) (23)--(13) (41)--(31) (42)--(32) (43)--(33) (44)--(34) (45)--(35) (46)--(36) (51)--(61) (52)--(62) (53)--(63) (54)--(64) (55)--(65) (71)--(81) (72)--(82) (73)--(83) (74)--(84) (75)--(85) (76)--(86) (77)--(87) (101)--(111) (102)--(112) (103)--(113) (104)--(114) (105)--(115) (106)--(116) (107)--(117) (108)--(118) (109)--(119) (1010)--(1110) (1011)--(1111);
 \end{tikzpicture}

  \end{center}
\caption{Example of the left tree of $T_k$ rooted at $i_k$, with $H=2$. The shaded vertices represent the set of selected vertices in each epoch. The epochs are separated by the horizontal dashed lines. In the figure, after the first epoch, the set of selected vertices $V_1$ consists of two vertices $u,v$. Hence the exploration step in the second epoch is run with root vertices $u,v$. The set $L_k$ is defined as the set of selected vertices after the $L$'th epoch.}%
\label{fig:exploration}%
\end{figure}
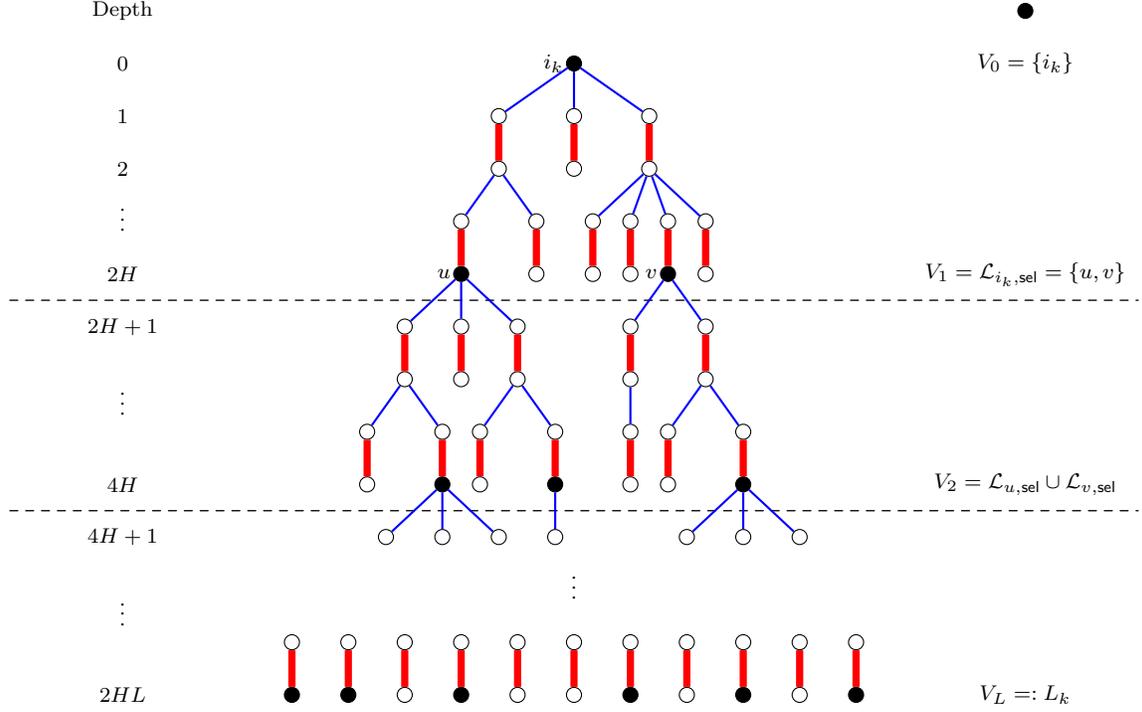

An illustration of the construction of the left tree of $T_k$ is given in Figure~\ref{fig:exploration}. 
At the end of each epoch (of $2H$ hops), we select those leaf nodes whose path to the previous root has a sufficiently large excess weight $\Delta$, and continue to grow the tree from these vertices.
As we will see in Section~\ref{sec:many.good.trees}, the parameters $H$ and $L$ needed to be chosen appropriately to ensure that existence of many two-sided trees that are not overgrown, yet contain sufficiently many leaf nodes. On one hand, we cannot perform the selection step too frequently, i.e., $H$ must be large enough. This is to guarantee that the large deviation analysis is tight, so that $\mathbb{P}\{\Delta(P) \ge \zeta H \}\approx e^{-\alpha H}$ for each 
alternating path of length $2H$ -- see \prettyref{eq:def.p}. 
On the other hand, we also need to select the leaf nodes sufficiently often, i.e. $L$ must be large enough. This is to ensure that the growth of each tree does not use up too many vertices, so that the exploration process yields many two-sided trees.


Finally, we note that Step 5 of Algorithm~\ref{alg:exploration} ensures that each two-sided tree uses up \emph{exactly} $2m$ pairs of vertices, so that the structure (i.e.~the isomorphism class) of the $K$ two-sided trees are independent. Moreover, from the definition
\begin{equation}
\label{eq:K.lower}
K= \frac{\gamma n}{2m}= \frac{\gamma}{2(1+\epsilon)^{2HL}\exp(3H\alpha+\epsilon H)}\cdot n,
\end{equation}
we have that after all trees are constructed, the total number of unexplored vertex pairs, i.e., the size of $\calU$, equals $(1-2\gamma) n$. 


\subsection{Existence of many two-sided trees}
\label{sec:many.good.trees}
For each two-sided tree $T_k$ constructed from Algorithm~\ref{alg:exploration}, define
\begin{align*}
L_k= &\text{ the set of selected leaf vertices at depth }2HL\text{ in the left subtree of }T_k ,\\
R_k= &\text{ the set of selected leaf vertices at depth }2HL\text{ in the right subtree of }T_k.
\end{align*}
Since the left tree and the right tree are rooted at $i_k\in V^c$ and $i_k'\in (V^c)'$ respectively, we have $L_k\subset V^c$ and $R_k\subset(V^c)'$. From the definition of the two-sided trees, each pair of vertices $i \in L_k$ and $j \in R_k$ are connected via an alternating path $P$ of length $4HL$ on $T_k$. In this subsection, we show that at least $K_1=\Omega(n)$ of the $K$ two-sided trees yield sufficiently large sets $L_k,R_k$, and large enough weight $\Delta(P)$.
\begin{theorem}\label{thm:exploration}
Suppose that the condition \prettyref{eq:impossibility} holds and Algorithm~\ref{alg:exploration} is run with parameters
\begin{equation}
\label{eq:params}
\gamma\leq \gamma_0 =\epsilon/32,\;\;\; \zeta  = \min\left\{\epsilon/32, D(\calP\|\calQ) + D(\calQ\|\calP) \right\},
\end{equation}
and  $H$ is a large constant depending only on $\epsilon, \calP, \calQ$. 
Then with probability at least $1-e^{-\Omega(n)}$, there exists $\mathcal{K}_1\subset \mathcal{K}$ with
\[
K_1=|\mathcal{K}_1|= \frac{\gamma}{16(1+\epsilon)^{2HL}\exp(7H\alpha+3\epsilon H/2)}\cdot n,
\]
such that for all $k\in \mathcal{K}_1$, $T_k$ satisfies
\begin{enumerate}
\item $|L_k|\geq (1+3\epsilon/4)^{2HL}$, $|R_k|\geq (1+3\epsilon/4)^{2HL}$;
\item for all $i \in L_k$ and $j \in R_k$, the path $P$ between $i$ and $j$ on $T_k$ has weight 
$
\Delta(\path)\geq 2\zeta HL-\tau_\mathsf{red},
$
where $\tau_\mathsf{red}=\inf\{x:\calP[\log(\calP/\calQ)\leq x]\geq 1/2\}$ is the median of $\log(\calP/\calQ)$ under $\calP$.
 \end{enumerate}
\end{theorem}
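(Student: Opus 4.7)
The plan is to view the construction of each two-sided tree $T_k$ as a two-sided supercritical branching process with selection, show that a single tree satisfies all three properties with a positive constant probability, and then aggregate over the $K$ independent trees via a Chernoff bound. Throughout I will condition on the planted matching $M^*$ and treat $H, L$ as constants depending only on $\epsilon, \calP, \calQ$.

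\emph{Step 1 (branching-process model).} Fix a tree $T_k$ and consider its left subtree. Let $Z_\ell = |V_\ell|$, so $Z_0 = 1$. Conditionally on $Z_{\ell-1}$, each vertex in $V_{\ell-1}$ independently spawns its selected children via two substeps: (i) an $H$-pair-hop BFS in $G_1$ that generates depth-$2H$ leaf candidates; (ii) a pruning step that keeps only those candidates $u$ whose path $P_{u,v}$ satisfies $\Delta(P_{u,v})\ge \zeta H$. Because each tree uses at most $2m$ vertex pairs (fixed by Step~5 of Algorithm~\ref{alg:exploration}) and $\gamma \le \gamma_0$, the fraction of vertices that are ever explored is at most $2\gamma = o(1)$ in the relevant regime, so the BFS sees essentially the full pool of unexplored right vertices at each step.

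\emph{Step 2 (expected offspring $\ge (1+3\epsilon/4)^{2H}$).} For a single root $v$, the expected number of depth-$2H$ left leaves is $(1-o(1)) d^{H}$, by a straightforward Bernoulli computation on blue edges of $G_1$. For each such leaf, the excess weight $\Delta(P_{u,v})$ is equal in law to $\sum_{i=1}^H (Y_i - X_i)$ with $X_i\iiddistr \log(\calP/\calQ)$ under $\calP$ and $Y_i \iiddistr \log(\calP/\calQ)$ under $\calQ$, independently. The Bahadur--Rao refinement of the Chernoff bound (or a direct tilting argument applied at the saddle point $s=1/2$) gives
\[
\prob{\sum_{i=1}^H (Y_i - X_i) \ge \zeta H} \ge \frac{c}{\sqrt{H}} \, e^{-H(\alpha + c'\zeta)}
\]
for constants $c, c' > 0$ depending only on $\calP, \calQ$. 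Combining the two estimates and using $d e^{-\alpha} = (1+\epsilon)^2$ from the standing assumption~\eqref{eq:impossible-assumption}, the expected number of selected offspring per root is at least $\frac{c}{\sqrt{H}}(1+\epsilon)^{2H}e^{-c'\zeta H}$. For $\zeta \le \epsilon/32$ as in~\eqref{eq:params} and $H$ a sufficiently large constant (depending on $\epsilon, \calP, \calQ$), this quantity dominates $(1+3\epsilon/4)^{2H}$. This is the main technical step, and the principal obstacle: it is delicate because one must match the leading exponential rate while losing only $O(\zeta H + \log H)$ in the exponent, which forces the joint choice $\zeta \ll \epsilon$ and $H \gg 1/\epsilon$.

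\emph{Step 3 (survival with many leaves).} The offspring variable above has finite variance (a routine large-deviation tail bound gives all exponential moments needed). A standard Paley--Zygmund / Kesten--Stigum argument for supercritical Galton--Watson processes with mean $\mu \ge (1+3\epsilon/4)^{2H}$ then shows that $\prob{Z_L \ge (1+3\epsilon/4)^{2HL}} \ge q_0$ for some $q_0 = q_0(\epsilon, \calP, \calQ, H, L) > 0$. Since the left and right subtrees are built on disjoint vertex pools (by the reservation mechanism in Step~5 of Algorithm~\ref{alg:exploration}) their exploration processes are independent; hence $|L_k|\ge (1+3\epsilon/4)^{2HL}$ and $|R_k|\ge (1+3\epsilon/4)^{2HL}$ jointly with probability $\ge q_0^2$.

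\emph{Step 4 (weight bound).} For $u\in L_k$ and $v\in R_k$, the path $P$ between them on $T_k$ consists of the left path from $u$ to $i_k$, the root red edge $(i_k,i_k')$, and the right path from $i_k'$ to $v$. By concatenating the per-epoch guarantee~\eqref{eq:pruning}, each of the left and right pieces contributes at least $L\cdot \zeta H$ to $\Delta(P)$. The root red edge contributes $-\log(\calP/\calQ)(W_{i_k,i_k'})$; since $\tau_\mathsf{red}$ is the median of $\log(\calP/\calQ)$ under $\calP$, with probability $\ge 1/2$ we have $-\log(\calP/\calQ)(W_{i_k,i_k'})\ge -\tau_\mathsf{red}$. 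This event is independent of all edges used in building the subtrees (the root red edge weight is never inspected during Algorithm~\ref{alg:exploration}). Summing gives $\Delta(P)\ge 2\zeta HL - \tau_\mathsf{red}$ on a tree-$k$ event of probability at least $q_0^2/2$.

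\emph{Step 5 (concentration across trees).} Declare $T_k$ \emph{good} if (i) $|L_k|,|R_k|\ge (1+3\epsilon/4)^{2HL}$ and (ii) $\log(\calP/\calQ)(W_{i_k,i_k'})\le \tau_\mathsf{red}$. Because the vertex pools of distinct $T_k$'s are disjoint and the blue-edge indicators and edge weights are independent across disjoint pools, the events $\{T_k \text{ good}\}$ are mutually independent with probability $\ge q := q_0^2/2$. A Chernoff bound yields, with probability $1-e^{-\Omega(K)}=1-e^{-\Omega(n)}$, at least $qK/2$ good trees; since $q$ is a fixed positive constant (depending on $\epsilon,\calP,\calQ$), this is $\Omega(n)$, matching the announced $K_1$ up to adjusting the constant $H$ in the denominator $\exp(7H\alpha+3\epsilon H/2)$ to absorb the factor $q$.

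The key obstacle is the sharpness in Step~2: the Bahadur--Rao type lower bound is required in order to squeeze a per-epoch branching factor arbitrarily close to $(1+\epsilon)^{2H}$ out of the greedy BFS-plus-pruning scheme. All other steps are either standard branching-process or concentration arguments, made routine by the disjointness of vertex pools guaranteed by the pre-reservation in Step~5 of Algorithm~\ref{alg:exploration}.
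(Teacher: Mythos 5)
Your high-level blueprint matches the paper's: model each two-sided tree as a branching process with selection, obtain a per-tree success probability, and aggregate with a Chernoff bound. Step 2 (first moment of selected offspring via a lower large-deviation estimate at $\zeta\ll\epsilon$) and Step 4 (weight decomposition and the median trick for the central red edge) track the paper closely. However, there is a genuine gap in Steps 1 and 3: you treat the exploration as a \emph{free-running} Galton--Watson process, but Algorithm~\ref{alg:exploration} has a hard termination rule \eqref{eq:terminate} that stops the exploration once the tree reaches $m$ vertices. Your Paley--Zygmund argument is applied to the unterminated process; to carry it over to the actual construction you must rule out that the cap truncates a typical tree before depth $2HL$. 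This is not a formality, because the per-tree ``many-leaves'' probability is only $\approx e^{-2\alpha H - \epsilon H/4}$ (exponentially small in $H$, driven by $\sigma^2/\mu^2 \approx p_{\mathsf{sel}}^{-2} \approx e^{2\alpha H}$, a fact you gesture at but do not compute), so the probability of hitting the cap must be pushed \emph{below} that same exponentially small scale. The paper handles this by sandwiching the actual tree between a ``lower'' and an ``upper'' free branching process with offspring laws $\Binom((1-2\gamma)n,d/n)$ and $\Binom(n,d/n)$, and then proves $\mathbb{P}(\text{overgrown}) \le \frac{1}{4}e^{-2\alpha H - \epsilon H/4}$ by a first-moment/Markov bound on the total tree size of the upper process, using the headroom built into $m = (1+\epsilon)^{2HL}\exp(3H\alpha + \epsilon H)$. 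Without this step, the claimed per-tree success probability (and hence the form of $K_1$) does not follow. Your last sentence of Step~5 hints that you realize $q$ must be absorbed into $\exp(7H\alpha + 3\epsilon H/2)$, which is exactly the $4H\alpha$ from the per-tree probability added to the $3H\alpha$ already in $m$; but the cap-avoidance argument is the missing piece that makes this accounting legitimate.
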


Before proving Theorem~\ref{thm:exploration}, we introduce the following auxiliary lemma. The lemma states that for each two-sided tree $T_k$, with sufficiently large probability, the termination condition~\eqref{eq:terminate} is not hit and there are sufficiently many leaf nodes at depth $2HL$ in both the left and the right subtrees. As a consequence, we get large sets $L_k$ and $R_k$ as desired in the statement of Theorem~\ref{thm:exploration}.

\begin{lemma}\label{lmm:good.trees}
Under the assumptions of Theorem~\ref{thm:exploration}, we have for each $k=1,...,K$, with probability at least $\tfrac{1}{2}\exp(-4\alpha H-\epsilon H/2)$, the following hold for both the left and the right subtree in the two-sided tree $T_k$.
\begin{enumerate}[(a)]
\item\label{enum:not.overgrown} ({\it not overgrown}) The construction of the tree is not terminated by hitting condition~\eqref{eq:terminate}.
\item\label{enum:many.leaves} ({\it many leaf nodes}) The number of selected leaf nodes at depth $2HL$ is at least
$
(1+3\epsilon/4)^{2HL}.
$
\end{enumerate}
\end{lemma}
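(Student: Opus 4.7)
The plan is to analyze the construction of each subtree of $T_k$ in Algorithm~\ref{alg:exploration} as an approximate Galton--Watson branching process. Focusing on the left subtree, let $V_i$ denote the set of selected leaf nodes at depth $2Hi$; if we ignore finite-graph and truncation effects, $(|V_i|)_{0\le i\le L}$ is supercritical with per-epoch offspring mean
\begin{equation*}
\mu_H \;=\; d^H \cdot p_H, \qquad p_H \;\triangleq\; \Prob\{\Delta(P) \ge \zeta H\},
\end{equation*}
where $P$ denotes a random length-$2H$ alternating path with edge weights drawn i.i.d.\ from $\calP$ on red edges and from $\calQ$ on blue edges. To prove the lemma we then need three ingredients: (i) a large-deviation lower bound yielding $\mu_H \ge (1+3\epsilon/4)^{2H}$; (ii) survival of the BP to depth $L$ with the required growth; and (iii) negligibility of the termination event, combined with independence of the two subtrees and the median event at the central red edge.

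For (i), write $\Delta(P) = \sum_{i=1}^H(Y_i - X_i)$ as in the proof of \prettyref{thm:first_moment_bound}. Cram\'er's theorem gives the optimal Chernoff exponent $\alpha = -2\log B(\calP,\calQ)$ at threshold level $\zeta = 0$. Since $\zeta \le \min\{\epsilon/32, D(\calP\|\calQ)+D(\calQ\|\calP)\}$ lies strictly inside the large-deviation regime and close to zero, continuity of the Cram\'er rate function yields $p_H \ge c\,e^{-\alpha H - \epsilon H/16}$ for some constant $c = c(\calP,\calQ)$ and all large $H$; the matching upper tail (used to control overgrowth) follows from \prettyref{lmm:muW}. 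Rearranging~\eqref{eq:impossible-assumption} into $d = (1+\epsilon)^2 e^{\alpha}$ gives $\mu_H \ge c\,(1+\epsilon)^{2H} e^{-\epsilon H/16} \ge (1+3\epsilon/4)^{2H}$ for $H$ large in terms of $\epsilon,\calP,\calQ$. For (ii), conditional on $|V_{i-1}|\ge (1+3\epsilon/4)^{2H(i-1)}$, $|V_i|$ is a sum of $|V_{i-1}|$ i.i.d.\ offspring counts with mean $\ge \mu_H$ and variance $O(d^{2H})$, so Chebyshev gives $\Prob\{|V_i|\ge (1+3\epsilon/4)^{2Hi} \mid V_{i-1}\}\ge 1 - O(d^{2H}/|V_{i-1}|)$; iterating across $i$, the bottleneck is the first one or two epochs where $V_i$ is still small, and a direct Paley--Zygmund calculation on $|V_1|$ and $|V_2|$ (aimed at growth $(1+3\epsilon/4)^{2H}$ rather than the typical $\mu_H$, which costs a factor exponential in $H$) yields exactly the claimed probability $\exp(-4\alpha H - \epsilon H/2)$. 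For (iii), the expected total tree size is dominated by the last epoch and equals $\approx d^H \mu_H^{L-1} = (1+\epsilon)^{2HL}\,e^{H\alpha}$, smaller than $m = (1+\epsilon)^{2HL}\exp(3H\alpha + \epsilon H)$ by a factor $\exp(-2H\alpha - \epsilon H)$; Markov bounds the overgrowth probability by $\exp(-2H\alpha - \epsilon H)$, which is absorbed into the survival estimate. Step~5 of Algorithm~\ref{alg:exploration} allocates disjoint vertex sets to the left and right subtrees, making their success events independent; the extra factor $\tfrac12$ accounts for the event $\log(\calP/\calQ)(W_{i_k,i_k'}) \le \tau_\mathsf{red}$ on the central red edge, needed (in Theorem~\ref{thm:exploration}) for the path-weight bound $\Delta(P) \ge 2\zeta HL - \tau_\mathsf{red}$.

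The main technical obstacle is the coupling between the actual BFS exploration on $G_1$ and the idealized Galton--Watson tree: each tree consumes vertices that become unavailable for later trees, and within-tree exploration is sampling without replacement, so the offspring distribution deviates from $\Bin(n,d/n)$ as the tree grows. Our parameter choice guarantees that the total vertex budget per tree is $m = o(n)$ and the reserved set $|V| = \gamma n$ has $\gamma = \gamma_0\epsilon$ with $\gamma_0$ small, so the coupling gap is a $1\pm o(1)$ multiplicative distortion of the offspring distribution and step~(i) goes through verbatim. A secondary delicate point is the tension between (a) (total tree size at most $m$) and (b) (growth to $(1+3\epsilon/4)^{2HL}$ leaves): the super-exponential gap $\exp(3H\alpha + \epsilon H)$ built into $m$ resolves this tension, so both constraints hold simultaneously with the stated probability.
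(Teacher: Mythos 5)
Your overall plan matches the paper's: model each subtree as an approximate supercritical branching process with per-epoch offspring mean roughly $((1-2\gamma)d)^H p_\mathsf{sel}$, show $p_\mathsf{sel}\gtrsim e^{-\alpha H-\epsilon H/16}$ via the lower large-deviation bound, control overgrowth by Markov, use Paley--Zygmund for survival, and multiply over the two subtrees using independence ensured by Step 5. Two substantive gaps remain, however.

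First, the factor $\tfrac12$ in the lemma statement has nothing to do with the central red edge. The event $\log(\calP/\calQ)(W_{i_k,i_k'})\le\tau_\mathsf{red}$ is introduced only later, in the proof of Theorem~\ref{thm:exploration} (which is why that theorem's proof has a $\tfrac14$, i.e.\ the lemma's $\tfrac12$ times an additional $\tfrac12$ from the central edge). Lemma~\ref{lmm:good.trees} concerns only events (a) and (b) for both subtrees, and its $\tfrac12$ comes from the union-bound slack when combining the ``not overgrown'' and ``many leaves'' events: per subtree one obtains $\mathbb{P}(\mathcal{A}^{(\mathsf{U})})+\mathbb{P}(\mathcal{B}^{(\mathsf{L})})-1\ge\tfrac34 e^{-2\alpha H-\epsilon H/4}$, and squaring for two sides gives $\tfrac{9}{16}e^{-4\alpha H-\epsilon H/2}\ge\tfrac12 e^{-4\alpha H-\epsilon H/2}$. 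Your proposal produces a Paley--Zygmund bound of $e^{-4\alpha H-\epsilon H/2}$ and then erroneously multiplies by $\tfrac12$ for an event that is not in the statement, while omitting the slack that actually generates the $\tfrac12$.

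Second, and more importantly, the interplay between the termination condition~\eqref{eq:terminate} and the survival estimate needs a genuine monotone coupling, not just a declaration that the overgrowth probability is ``absorbed.'' Paley--Zygmund applies cleanly to a free-running branching process, but $T_k$ is truncated at budget $m$; the truncation is positively correlated with the survival event (trees that overgrow are exactly the ones with many leaves). The paper resolves this by sandwiching $T_k^{(\mathsf{L})}\subset T_k\subset T_k^{(\mathsf{U})}$, valid on the event $\mathcal{A}_{L,k}^{(\mathsf{U})}$ that the \emph{upper} BP is not overgrown, which yields $\mathcal{A}_{L,k}^{(\mathsf{U})}\cap\mathcal{B}_{L,k}^{(\mathsf{L})}\subset\mathcal{A}_{L,k}\cap\mathcal{B}_{L,k}$ and then the union bound above. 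Your proposal mentions the coupling obstacle and claims the distortion is ``$1\pm o(1)$,'' but the offspring-mean distortion from the reserved vertices and sampled-out pairs is a fixed constant $1-2\gamma$ (absorbed by choosing $H$ large, not by $n\to\infty$), and the truncation issue is not resolved by distortion estimates at all. Without the explicit stochastic-domination sandwich, the step from ``Paley--Zygmund on a free BP'' to ``many selected leaves in the actual truncated $T_k$'' is not justified.
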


With Lemma~\ref{lmm:good.trees}, we are ready to prove Theorem~\ref{thm:exploration}. The proof of~\prettyref{lmm:good.trees} is deferred to the end of this subsection.
\begin{proof}[Proof of Theorem~\ref{thm:exploration}]
For each $k$, each pair of $i \in L_k$ and $j \in R_k$, the path $\path$ between $i$ and $j$ on $T_k$ consists of the central edge $(i_k,i_k')$ and $2L$ subpaths, each of length $H$. By the leaf node selection step~\eqref{eq:pruning}, each subpath has weight at least $\zeta H$. Therefore,
\[
\Delta(\path)\geq 2\zeta HL-\log\frac{\calP}{\calQ} \left(W_{i_k,i_k'}\right).
\]
From the definition of $\tau_\mathsf{red}$,
\[
\mathbb{P}\left\{\log\frac{\calP}{\calQ}\left(W_{i_k,i_k'}\right)\leq \tau_\mathsf{red}\right\}\geq 1/2.
\]
Therefore, with probability at least $1/2$, the weight of all paths from $L_k$ to $R_k$ have weight at least $2\zeta HL-\tau_\mathsf{red}$.

Let $\widetilde{\mathcal{K}}$ be the set of all $k\leq K$ such that $|L_k|\geq (1+3\epsilon/4)^{2HL}, $ $|R_k|\geq (1+3\epsilon/4)^{2HL}, $ and $\Delta(\path)\geq 2\zeta HL-\tau_\mathsf{red}$ for all $\path$ from $L_k$ to $R_k$.
Since the construction of the trees are completely independent of the weight of the central edges, we have from property (\ref{enum:many.leaves}) in Lemma~\ref{lmm:good.trees},
\[
\mathbb{P}\left\{k\in \widetilde{\mathcal{K}}\right\}\geq \frac{1}{4}\exp(-4\alpha H-\epsilon H/2).
\]
Moreover, recall that we deliberately ensured that the construction of each tree uses up a deterministic number of vertices. As a result, the isomorphism class of each two-sided tree $T_k$, in particular its total number of vertices and the number of leaf nodes, 
are independent of the prior $T_1,...,T_{k-1}$. That, combined with the independence of the red edge weights, yields independence of the events $\{k\in \widetilde{\calK}\}_{k\leq K}$.
Thus
\begin{align*}
& \mathbb{P}\left\{\left|\widetilde{\mathcal{K}}\right|<\frac{1}{8}\exp(-4\alpha H-\epsilon H/2)K\right\}\\
\leq &\mathbb{P}\left\{\Binom\left(K,\frac{1}{4}\exp(-4\alpha H-\epsilon H/2)\right)<\frac{1}{8}\exp(-4\alpha H-\epsilon H/2)K\right\} =\exp(-\Omega(K))
\end{align*}
by Hoeffding's inequality. Combine with~\eqref{eq:K.lower} to conclude that with probability at least $1-e^{-\Omega(n)}$,
\[
\left|\widetilde{\mathcal{K}}\right|\geq \frac{1}{8}\exp(-4\alpha H-\epsilon H/2)K= \frac{\gamma}{16(1+\epsilon)^{2HL}\exp(7H\alpha +3\epsilon H/2)}\cdot n\triangleq c_5 n.
\]
To finish the proof of Theorem~\ref{thm:exploration}, let $\mathcal{K}_1$ be an arbitrary subset of $\widetilde{\mathcal{K}}$ of size exactly $c_5 n$.
\end{proof}

\begin{proof}[Proof of Lemma~\ref{lmm:good.trees}]
For a fixed $k$, let
\begin{align*}
\mathcal{A}_{L,k}\triangleq &\left\{\text{The left tree  of }T_k \text{ satisfies condition (a) ({\it not overgrown})}\right\},\\
\mathcal{B}_{L,k}\triangleq &\left\{\text{The left tree  of }T_k \text{ satisfies condition (b) ({\it many leaf nodes})}\right\}.
\end{align*}
Similarly define events $\mathcal{A}_{R,k}$, $\mathcal{B}_{R,k}$. To prove Lemma~\ref{lmm:good.trees}, it suffices to show that for each $k$,
\begin{equation}
\label{eq:conditions.ab}
\mathbb{P}\left(\mathcal{A}_{L,k} \cap \mathcal{B}_{L,k} \cap \mathcal{A}_{R,k} \cap \mathcal{B}_{R,k}\right) \geq \tfrac{1}{2}e^{-4\alpha H-\epsilon H/2}.
\end{equation}

Let us first focus on the analysis of the left tree. Recall that in Algorithm~\ref{alg:exploration}, in each step of the exploration process from vertex $v$, the set of offsprings of $v$ is defined as all members of $\mathcal{U}'$ that are incident to $v$ via a blue edge. Thus, the offspring distribution is $\Binom(|\mathcal{U}|,d/n)$. The randomness in $|\mathcal{U}|$ results in dependence between different steps of the exploration process, which is technically inconvenient. To resolve this problem, we utilize the fact that $|\mathcal{U}|$ is always between $(1-\gamma)n$ and $n$, and couple the tree with two two-sided trees $T_k^{(\mathsf{L})}=T_k^{(\mathsf{Lower})}$ and $T_k^{(\mathsf{U})}=T_k^{(\mathsf{Upper})}$, whose offspring distributions follow $\Binom((1-2\gamma)n,d/n)$ and $\Binom(n,d/n)$, respectively.

More specifically, in each epoch of the growth of the left tree of $T_k^{(\mathsf{L})}$, the exploration process follows a branching process with offspring distribution $\Binom((1-2\gamma)n,d/n)$, such that it is homomorphic to a subtree of $T_k$; the leaf node selection is done in a similar way that preserves the stochastic ordering, such that each vertex is selected with probability
\begin{equation}\label{eq:def.p}
p_\mathsf{sel}\triangleq \mathbb{P}\left\{\sum_{j\leq H}Y_j- \sum_{j\leq H}X_j\geq \zeta H\right\},
\end{equation}
where $X_i$'s are i.i.d.\ copies of $\log(\calP/\calQ)$ under distribution $\calP$ and $Y_i's$ are i.i.d.\ copies of $\log(\calP/\calQ)$ under distribution $\calQ$. Construct the right tree of $T_k^{(\mathsf{L})}$, and $T_k^{(\mathsf{U})}$ in similar fashion. One caveat is that the tree $T_k$ may hit the termination condition~\eqref{eq:terminate}, while  both $T_k^{(\mathsf{L})}$ and $T_k^{(\mathsf{L})}$ are assumed to be ``free-growing" without any termination conditions. As a result, on the event that neither subtree of $T_k$ is 
terminated by hitting the termination condition~\eqref{eq:terminate}, we have
\[
T_k^{(\mathsf{L})} \subset T_k \subset T_k^{(\mathsf{U})},
\]
where $\subset$ denotes stochastic dominance of the three trees. Let $\mathcal{A}_{L,k}^{(\mathsf{L})}, \mathcal{A}_{R,k}^{(\mathsf{L})}, \mathcal{B}_{L,k}^{(\mathsf{L})}, \mathcal{B}_{R,k}^{(\mathsf{L})}$ denote the analogue events on the tree $T_k^{(\mathsf{L})}$, and similarly define the events on $T_k^{(\mathsf{U})}$. A key observation is that
\[
\mathcal{A}_{L,k}^{\mathsf{(U)}}\cap \mathcal{B}_{L,k}^{(\mathsf{L})}\subset \mathcal{A}_{L,k} \cap \mathcal{B}_{L,k}.
\]
To see why, note that $T_k \subset T_k^{(\mathsf{U})}$ always holds. Therefore on $\mathcal{A}_{L,k}^{\mathsf{(U)}}$, we have that the left subtree of $T_k$ never hits the termination condition~\eqref{eq:terminate}. As a result, $T_k^{(\mathsf{L})} \subset T_k$ also holds, and thus $\mathcal{B}_{L,k}^{(\mathsf{L})}$ implies $\mathcal{B}_{L,k}$. A similar relationship holds for the right tree. It follows that
\begin{align*}
\mathbb{P}\left(\mathcal{A}_{L,k} \cap \mathcal{B}_{L,k} \cap \mathcal{A}_{R,k} \cap \mathcal{B}_{R,k}\right)
\geq & \mathbb{P}\left(\mathcal{A}_{L,k}^{(U)} \cap \mathcal{B}_{L,k}^{(L)} \cap \mathcal{A}_{R,k}^{(U)} \cap \mathcal{B}_{R,k}^{(L)}\right)\\
\geq & \left[\mathbb{P}\left(\mathcal{A}_{L,k}^{(U)}\right) + \mathbb{P}\left(\mathcal{B}_{L,k}^{(L)}\right)-1\right]^2.
\end{align*}
Thus, to prove~\eqref{eq:conditions.ab} it suffices to show the following two claims:
\begin{equation}
\label{eq:condition.a}
\mathbb{P}\left(\mathcal{A}_{L,k}^{(\mathsf{U})}\right) \geq 1-\tfrac{1}{4}e^{-2\alpha H-\epsilon H/4},
\end{equation}
\begin{equation}
\label{eq:condition.b}
\mathbb{P}\left(\mathcal{B}_{L,k}^{(\mathsf{L})}\right) \geq e^{-2\alpha H-\epsilon H/4}.
\end{equation}
We start with the proof of~\eqref{eq:condition.a}. First, we can control the probability $p_\mathsf{sel}$ defined in~\eqref{eq:def.p} from above via the large deviation bound~\eqref{eq:LDXY}:
\begin{equation}
p_\mathsf{sel}\leq \mathbb{P}\left\{\sum_{j\leq H}Y_j- \sum_{j\leq H}X_j\geq 0\right\} \leq \exp\left(-\alpha H\right).
\end{equation}

Let $W_i^{(\mathsf{U})}$ denote the number of vertices that are used in the exploration process at the $i$'th epoch (before selection) in the left subtree of $T_k^{(\mathsf{U})}$. Let $Z_{i}^{(\mathsf{U})}=|V_i^{(\mathsf{U})}|$ denote the number of leaf nodes at depth $iH$ that are selected. We have for all $i\leq L$,
\begin{align*}
\mathbb{E}(W_i^{(\mathsf{U})}) = & \mathbb{E}(Z_{i-1}^{(\mathsf{U})})\cdot \sum_{j=1}^H d^j\\
= & \left[\left(n\cdot\frac{d}{n}\right)^{H}p_\mathsf{sel}\right]^{i-1}\cdot \sum_{j=1}^H d^j\\
\leq & \left[d e^{-\alpha }\right]^{H(i-1)}\cdot \sum_{j=1}^H d^j\\
\leq & \left[d e^{-\alpha }\right]^{H(i-1)}d^H \frac{d}{d-1}.
\end{align*}
Recall that $de^{-\alpha}=(1+\epsilon)^2$. 
Therefore, the expected total number of vertices in the left tree
\begin{align}
\nonumber\mathbb{E}\left(1+\sum_{i=1}^L W_i^{(\mathsf{U})}\right)\leq & 1+\sum_{i=1}^L(1+\epsilon)^{2H(i-1)}d^H \frac{d}{d-1}\\
\nonumber \stackrel{(a)}{\leq} & \sum_{i=0}^L(1+\epsilon)^{2H(i-1)}d^H \frac{d}{d-1}\\
\nonumber \stackrel{(b)}{\leq} & \frac{d}{d-1}d^H(1+\epsilon)^{2H(L-1)}\frac{1}{1-(1+\epsilon)^{-H}}\\
\label{eq:explored.upper}\stackrel{(c)}{\leq} & \frac{2d}{d-1}(1+\epsilon)^{2HL}e^{H\alpha },
\end{align}
where (a) is from $(1+\epsilon)^{-2H}d^H d/(d-1)= e^{\alpha H}d/(d-1)\geq 1$; (b) is by bounding the finite geometric sum by an infinite sum; (c) is by choosing $H$ to be a sufficiently large constant so that $(1+\epsilon)^{-H} \le 1/2$.
Therefore,
\begin{align*}
1-\mathbb{P}\left(\mathcal{A}_{L,k}^{(\mathsf{U})}\right)
\stackrel{(a)}{\leq} & \mathbb{P}\left\{1+\sum_{i=1}^L W_i^{(\mathsf{U})} \geq (1+\epsilon)^{2HL}\exp(3H\alpha +\epsilon H)\right\}\\
\stackrel{(b)}{\leq} & \frac{\mathbb{E}\left(1+\sum_{i=1}^L W_i^{(\mathsf{U})}\right)}{(1+\epsilon)^{2HL}\exp(3H\alpha +\epsilon H)}\\
\stackrel{(c)}{\leq} & \frac{2d}{d-1}\exp(-2H\alpha -\epsilon H)\\
\stackrel{(d)}{\leq} & \frac{1}{4}\exp(-2H\alpha -\epsilon H/4),
\end{align*}
where (a) is from the definition of $\mathcal{A}_{L,k}^{(\mathsf{U})}$; (b) is from Markov's inequality; (c) is from~\eqref{eq:explored.upper}; (d) is because by the assumption $de^{-\alpha }=(1+\epsilon)^2$, and that $e^{-\alpha }=B(\calP, \calQ)^2\leq 1$, we have $d\geq (1+\epsilon)^2$. Thus $2d/(d-1)\leq 2(1+\epsilon)^2/(2\epsilon+\epsilon^2)\leq \exp(3\epsilon H/4)/4$ by choosing $H$ large enough.


Next we show~\eqref{eq:condition.b}. As before, let $Z_{i}^{(\mathsf{L})}=|V_i^{(\mathsf{L})}|$ denote the number of leaf nodes at depth $iH$ that are selected in the left subtree of $T_k^{(\mathsf{L})}$. We have
\begin{equation}
\label{eq:couple.b}
\mathbb{P}\left(\mathcal{B}_{L,k}^{(\mathsf{L})}\right) = \mathbb{P}\left\{Z_{L}^{(\mathsf{L})}\geq (1+3\epsilon/4)^{2HL}\right\}.
\end{equation}
We will bound $\mathbb{P}\{\calB_{L,k}^{(\mathsf{L})}\}$ by analyzing the first and second moments of $Z_L^{(\mathsf{L})}$. Note that for all $i$,
\[
\mathbb{E}\left(Z_{i}^{(\mathsf{L})}\right) = \left[\left((1-2\gamma)d\right)^{H}p_\mathsf{sel}\right]^i \triangleq \mu^i.
\]
We claim the following inequalities on $p,\mu$, and the second moment of $Z_L^{(\mathsf{L})}$.
\begin{equation}
\label{eq:p.lower}
p_\mathsf{sel} \geq \exp\left(- \alpha H - \epsilon H/16\right),
\end{equation}
\begin{equation}\label{eq:mu.lower}
\mu\geq 2\left(1+\tfrac{3}{4}\epsilon\right)^{2H},
\end{equation}
\begin{equation}
\label{eq:2nd.moment}
\mathbb{E}\left(\left[Z_{L}^{(\mathsf{L})}\right]^2\right)\leq \mu^{2L}+\left[(1-2\gamma)d\right]^{2H}\frac{(1-2\gamma)d}{(1-2\gamma)d-1}\cdot\frac{\mu^{2L-2}}{1-\mu^{-1}}.
\end{equation}
Assuming that~\eqref{eq:p.lower},~\eqref{eq:mu.lower},~\eqref{eq:2nd.moment} all hold, we first finish the proof of~\eqref{eq:condition.b}. 
By combining~\eqref{eq:couple.b} and~\eqref{eq:mu.lower}, we have
\begin{equation}\label{eq:half.mean}
\mathbb{P}\left\{\mathcal{B}_{L,k}^{(\mathsf{L})}\right\}
\geq \mathbb{P}\left\{Z_{L}^{(\mathsf{L})}>\tfrac{1}{2}\mathbb{E}(Z_{L}^{(\mathsf{L})})\right\}.
\end{equation}
By the Paley-Zygmund inequality,
\begin{equation}
\label{eq:PZ}
\mathbb{P}\left\{Z_{L}^{(\mathsf{L})}>\tfrac{1}{2}\mathbb{E}(Z_{L}^{(\mathsf{L})})\right\}
\geq 
\frac{1}{4}\frac{\left[\mathbb{E}(Z_{L}^{(\mathsf{L})})\right]^2}{\mathbb{E}\left(\left[Z_{L}^{(\mathsf{L})}\right]^2\right)}
\geq \frac{1}{4}\frac{1}{1+\left[(1-2\gamma)d\right]^{2H}\frac{(1-2\gamma)d}{(1-2\gamma)d-1}\frac{1}{\mu(\mu-1)}},
\end{equation}
where the last inequality follows from \eqref{eq:2nd.moment}. 
From~\eqref{eq:mu.lower}, we can choose $H$ large enough such that $\mu-1\geq\mu/2$. Therefore, the right-hand side of~\eqref{eq:PZ} is bounded from below by
\begin{align*}
& \frac{1}{4}\frac{1}{1+ 2\frac{(1-2\gamma)d}{(1-2\gamma)d-1}\left[(1-2\gamma)d\right]^{2H}\mu^{-2}}\\
\stackrel{(a)}{\geq} & \frac{1}{4\left(1+ 2\frac{(1-2\gamma)d}{(1-2\gamma)d-1}\right)} p^2\\
\stackrel{(b)}{\geq} & \frac{1}{4\left(1+ 2\frac{(1-\epsilon/16)(1+\epsilon)^2}{(1-\epsilon/16)(1+\epsilon)^2-1}\right)}\exp\left(-2\alpha H-\epsilon H/8\right)\\
\stackrel{(c)}{\geq} & \exp\left(-2\alpha H-\epsilon H/4\right),
\end{align*}
where (a) is from the definition of $\mu$, and $p_\mathsf{sel}\leq 1$; (b) is from~\eqref{eq:p.lower}, and $\gamma\leq \epsilon/32$, $d\geq (1+\epsilon)^2$ (which follows from \prettyref{eq:impossibility}); (c) is by choosing $H$ large enough so that the fractional factor is absorbed into $\exp(\epsilon H/8)$. Combine the display above with~\eqref{eq:half.mean} and~\eqref{eq:PZ} to finish the proof of~\eqref{eq:condition.b}.

It remains to prove \eqref{eq:p.lower}, \eqref{eq:mu.lower} and \eqref{eq:2nd.moment}. 

{\it Proof of~\eqref{eq:p.lower}:} 
By~\eqref{eq:LD.reverse}, we have that for $\zeta$ chosen as in~\eqref{eq:params},
\[
p_\mathsf{sel} \geq \exp\left(-\alpha H - \zeta H+ o(H) \right).
\]
\eqref{eq:p.lower} follows by choosing $H$ to be a large enough constant only depending on $\epsilon, \calP, \calQ$. 

{\it Proof of~\eqref{eq:mu.lower}:} By definition of $\mu$ and the inequality~\eqref{eq:p.lower}, we have
\begin{align*}
\mu\geq &\left[(1-2\gamma)d\right]^H\exp\left(-H\alpha -\epsilon H/16\right)\\
\geq &\left[(1-\epsilon/16)(1+\epsilon)^2\right]^H\exp\left(-\epsilon H/16\right)\geq 2\left(1+\tfrac{3}{4}\epsilon\right)^{2H}
\end{align*}
by choosing $H$ to be a large enough constant. 

{\it Proof of~\eqref{eq:2nd.moment}:} 
Following the arguments in~\cite[Theorem~2.1.6]{durrett2007random}, we can control the second moment of $Z_{i}^{(\mathsf{L})}$ with
\begin{equation}
\label{eq:variance.outer}
\mathbb{E}\left(\left[Z_{i}^{(\mathsf{L})}\right]^2\right)\leq \mu^{2i}+\sigma^2\frac{\mu^{2i-2}}{1-\mu^{-1}},
\end{equation}
where $\sigma^2=\text{Var}(Z_{1}^{(\mathsf{L})})\leq \mathbb{E}([Z_{1}^{(\mathsf{L})}]^2)$. For completeness we include a short proof of~\eqref{eq:variance.outer} here. 
First, note that
\[
\mathbb{E}\left(Z_{i}^{(\mathsf{L})} \mid Z_{i-1}^{(\mathsf{L})}\right)=\mu Z_{i-1}^{(\mathsf{L})},\;\;\;\text{and}
\]
\[
\text{Var}\left(Z_{i}^{(\mathsf{L})} \mid Z_{i-1}^{(\mathsf{L})}\right)=\sum_{j\leq Z_{i-1}^{(\mathsf{L})}}\sigma^2 = Z_{i-1}^{(\mathsf{L})} \sigma^2.
\]
Therefore
\begin{align*}
\mathbb{E}\left(\left[Z_{i}^{(\mathsf{L})}\right]^2\mid Z_{i-1}^{(\mathsf{L})}\right) = &\mu^2\left(Z_{i-1}^{(\mathsf{L})}\right)^2+ \text{Var}\left(Z_{i}^{(\mathsf{L})} \mid Z_{i-1}^{(\mathsf{L})}\right)
=  \mu^2\left(Z_{i-1}^{(\mathsf{L})}\right)^2 + Z_{i-1}^{(\mathsf{L})} \sigma^2.
\end{align*}
Take expectation on both sides. We have
\[
\mathbb{E}\left(\left[Z_{i}^{(\mathsf{L})}\right]^2\right) = \mu^2\mathbb{E}\left(\left[Z_{i-1}^{(\mathsf{L})}\right]^2\right)+\mu^{i-1}\sigma^2.
\]
The induction above with $Z_{0}^{(\mathsf{L})}=1$ yields
\[
\mathbb{E}\left(\left[Z_{i}^{(\mathsf{L})}\right]^2\right) = \mu^{2i}+\sigma^2\sum_{j=i-1}^{2i-2} \mu^{j} \leq \mu^{2i}+\sigma^2\frac{\mu^{2i-2}}{1-\mu^{-1}}.
\]
To bound $\sigma^2$, we upper bound $Z_{1}^{(\mathsf{L})}$ by the number of leaves at depth $H$ before selection. Following the same argument as in the derivation of~\eqref{eq:variance.outer}, we have
\begin{equation}
\label{eq:variance.inner}
\sigma^2\leq \left[(1-2\gamma)d\right]^{2H}+\left[(1-2\gamma)d\right]^{2H-1}\frac{1}{1-[(1-2\gamma)d]^{-1}} = \left[(1-2\gamma)d\right]^{2H}\frac{(1-2\gamma)d}{(1-2\gamma)d-1}.
\end{equation}
Combine~\eqref{eq:variance.outer} and~\eqref{eq:variance.inner} to yield~\eqref{eq:2nd.moment}.
\end{proof}

\section{Sprinkling stage}\label{sec:sprinkling}


Recall that we have constructed a family of disjoint sets $L_k $ of left vertices 
and $R_k $ of right vertices for $k \in \calK_1 $ 
such that each pair of vertices $i \in L_k$ and $j \in R_k$ are connected via an alternating
path $\path$ of length $\ell$ through red edge $(k,k')$ in $G_1$, 
where $|L_k|\geq s,|R_k| \geq s$ and $K_1 \triangleq |\calK_1| $.
Crucially the construction of $L_k$ and $R_k$ does not involve the vertices in $V$. 
Recall from Algorithm~\ref{alg:cycle_finding} that $G_2$ is an unweighted subgraph of $G$ that does not contain any edges in $V^c\times (V^c)'$. A blue edge $e$ appears in $G_2$ if and only if it appears in $G$, and $\log(\calP/\calQ)(W_e)\geq \tau_\mathsf{blue}$. Therefore, the blue edges in $G_2$ are independently generated with probability $\eta/n$, where $\eta=d\cdot \calQ[\log(\calP/\calQ)(W_e)\geq \tau_\mathsf{blue}]$.

Recall from \prettyref{alg:cycle_finding} 
that 
\[
V^*=\{i\in V:\log(\calP/\calQ)(W_{i,i'})\leq \tau_\mathsf{red}\}
\]
is a subset of the reserved vertices whose incident red edge weight is below threshold.
In this section, we connect the alternating paths 
between $L_k$ and $R_k$ through vertices in $V^*$ and $(V^*)'$ to form long alternating cycles in $G$. This scheme is referred to as ``sprinkling'', which is detailed in \prettyref{alg:sprinkling}.



\begin{algorithm}[H]
\caption{Sprinkling} \label{alg:sprinkling}
\begin{algorithmic}[1]
\State{\bfseries Input:} Parameters $n,\eta,s$, sets $V^*\subset V\subset [n]$, a bipartite graph $G_2$ whose blue edges are independently generated with probability $\eta/n$, and disjoints sets $\{L_k\}_{k\in \calK_1}$ of left vertices and $\{R_k\}_{k\in \calK_1}$ of right vertices with $|L_k|\geq s$ and $|R_k|\geq s$ for all $k\in \calK_1$.

\State Define
\begin{align*}
A_k' = & \left\{v'\in (V^*)': \exists u\in L_k, \text{ s.t. } (u,v')\in E(G_2)\right\},\\
B_k = & \left\{v\in V^*: \exists u'\in R_k, \text{ s.t. }  (u',v)\in E(G_2)\right\}.
\end{align*}

\State For each $v\in V^*$, let
\[
d_v = \sum_{k \in \calK_1} \indc{  v'\in A_k' } + \sum_{k \in \calK_1} \indc{ v\in B_k }.
\]
Define
$
A_{\mathsf{overlap}}= \{v\in V^*: d_v\geq 2\}.
$
For all $k\in\mathcal{K}_1$, define
\[
U_k' = A_k'\backslash A_{\mathsf{overlap}}',\;\;\; V_k = B_k\backslash A_{\mathsf{overlap}}.
\]
Define $\beta=|V^*|/n$, and let
\[
\calK_2=\left\{k\in \calK_1: |U_k'|\geq \frac{\sr\sz\sp}{4}, |V_k|\geq\frac{\sr\sz\sp}{4}\right\}.
\]

\State Define a bipartite graph $G_{\rm super}$ on $\mathcal{K}_2 \times ( \mathcal{K}_2)'$, 
where there is a red edge between $i$ and $i'$ for every $i \in \mathcal{K}_2$
and there is a blue edge between $i$ and $j'$ if and only if  $U_i$ and $V_j'$ are connected via a blue edge in $G_2$. 

\State For each alternating cycle $C_{\rm super}= (i_1, i_1', i_2, i'_2, \cdots, i_{r}, i'_r)$ in $G_{\rm super}$, extend it to an alternating cycle in $G$ as
$$
C=\left(  v_1' , v_1, P_1,  u'_{1}, u_{1},  v_2', v_2, P_2, u_2', u_2,  \cdots,  v_r', v_r, P_r, u_r', u_r  \right),
$$
where $u_k$ and $v_{k+1}'$ are two nodes in $U_{i_k}$ and $V_{i_{k+1} }'$ that are connected by a blue edge;
$P_k$ is an alternating path connecting $L_{i_k}$ and $R_{i_k}$;
and $u'_k$ (resp.\ $v_k$) and $P_k$ are connected by a blue edge according to the definition of $U_{i_k}'$ (resp.\ $V_{i_k}$). 
\end{algorithmic}
\end{algorithm}

See Figure~\ref{fig:sprinkling} for an illustration of the final step of Algorithm~\ref{alg:sprinkling}, which extends each alternating cycle on the super graph into an alternating cycle on $G$. The following theorem provides a sufficient condition under which with high probability, $G_{\rm super}$ contains exponentially many distinct alternating cycles, which correspond to distinct alternating cycles on $G$. Note that the theorem also states that $K_2=|\calK_2|$ is of order $\Omega(K_1)$. Thus, the sprinkling scheme yields $e^{\Omega(K_2)}=e^{\Omega(K_1)}$ distinct alternating cycles, each of length $3K_2/4=\Omega(K_1)$. 

\begin{theorem}[Sprinkling]\label{thm:sprinkling} 
Let 
$$ 
\kappa = \frac{2K_1 \sz \sp }{n},  \quad b= \frac{\sr \sz \sp}{4}, \quad d_{\rm super}=  \frac{1}{32n} K_1 b^2 \sp.
$$
Suppose $ b \ge 4$, $K_1 \ge 8400$, $\kappa \le 1/16^2$, and $d_{\rm super} \ge 256 \log(32 e)$. 
Then conditional on a fixed $V^*$, we have with probability at least $\left( 1-e^{-K_1/32}-\frac{2}{\sr n\kappa^3}\right) \left( 1- e^{-d_{\rm super} K_1/2^{18}}\right)$, $G_{\rm super}$ 
obtained by \prettyref{alg:sprinkling}
contains at least $\exp( K_2/20)$ distinct alternating cycles of length at least $\frac{3}{4} K_2$, and $K_2\ge K_1/16$.
\end{theorem}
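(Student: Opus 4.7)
\textbf{Proof plan for \prettyref{thm:sprinkling}.} We would decompose the argument into two essentially independent tasks, matching the two factors in the probability bound: (i) showing $|\calK_2|\geq K_1/16$, and (ii) producing $\exp(K_2/20)$ distinct alternating cycles of length $\geq 3K_2/4$ in $G_{\rm super}$. Throughout, we will condition on $V^*$ and on the output $\{L_k,R_k\}_{k\in\calK_1}$ from Step~1 of \prettyref{alg:cycle_finding}; this is legitimate since those data depend only on edges inside $V^c\times(V^c)'$, which are disjoint from the $G_2$-edges inspected below.

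\textbf{Step 1 (size of $\calK_2$).} Fix $k\in\calK_1$ and $v\in V^*$. The event $\{v'\in A_k'\}$ depends only on the edges $\{v\}\times L_k$ in $G_2$, while $\{v\in B_k\}$ depends only on $R_k\times\{v'\}$. Because $L_k\subset V^c$ and $R_k\subset (V^c)'$ and both families are pairwise disjoint across clusters, all these edge sets are disjoint across $(v,k)$, so the indicators are jointly independent Bernoullis of parameter $\Theta(\sz\sp/n)$. Hence $|A_k'|$ and $|B_k|$ are binomial with mean $\geq \sr\sz\sp/2=2b$, and a Chernoff bound plus a union bound over $\calK_1$ would give $|A_k'|,|B_k|\geq 2b$ for every $k$ with probability $\geq 1-e^{-K_1/32}$ (using $b\geq 4$ and $K_1\geq 8400$). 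For the overlap, $\Expect d_v\leq \kappa$ and $\Prob\{d_v\geq 2\}\leq \kappa^2$; a third-moment computation of $\sum_{k\in\calK_1}|A_k'\cap A_{\rm overlap}'|^3$ followed by Markov should then yield that at most $K_1/32$ clusters have $|A_k'\cap A_{\rm overlap}'|\geq b$ (and likewise for $|B_k\cap A_{\rm overlap}|$), except on an event of probability $\leq 2/(\sr n\kappa^3)$. Combining, at least $K_1/16$ indices $k$ satisfy $|U_k'|\geq b$ and $|V_k|\geq b$, so $K_2\geq K_1/16$.

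\textbf{Step 2 (cycles in $G_{\rm super}$).} The families $\{U_i\}_{i\in\calK_2}$ and $\{V_j\}_{j\in\calK_2}$ are pairwise disjoint and contained in $V^*$ and $(V^*)'$ respectively. Hence the rectangles $U_i\times V_j'$ are mutually disjoint, and all lie in $V^*\times (V^*)'$, a region of $G_2$ that Step~1 did not inspect. Conditional on $\{U_i,V_j\}$, the blue edges of $G_{\rm super}$ are therefore jointly independent Bernoullis of probability $1-(1-\sp/n)^{|U_i||V_j|}\geq 1-\exp(-b^2\sp/n)$, which would give average blue degree in $G_{\rm super}$ at least $d_{\rm super}$ under our hypotheses. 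Thus $G_{\rm super}$ stochastically dominates a bipartite \ER graph on $\calK_2\sqcup\calK_2'$ with planted perfect matching and average blue degree $\geq d_{\rm super}\geq 256\log(32\e)$---very supercritical. Applying the supercritical bipartite cycle lemma \prettyref{lmm:bip.many.cycles} (announced in \prettyref{sec:Mbad}) would then produce $\exp(K_2/20)$ distinct alternating cycles of length $\geq 3K_2/4$ in $G_{\rm super}$, with failure probability at most $\exp(-d_{\rm super}K_1/2^{18})$; each such super cycle lifts to an alternating cycle in $G$ as in \prettyref{fig:sprinkling}.

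\textbf{Main obstacle.} The delicate point will be the uniform overlap control in Step~1. A naive bound $\Expect|A_k'\cap A_{\rm overlap}'|\leq C\sr\sz\sp\kappa$ combined with a union bound over $k$ would lose an extra factor of $K_1$ in the failure probability. The claimed rate $2/(\sr n\kappa^3)$ instead requires a third-moment computation of $\sum_{k\in\calK_1}|A_k'\cap A_{\rm overlap}'|^3$, exploiting near-independence of the different $A_k'$ and the small-$\kappa$ tail of $d_v$. Once this uniform control is in place, the remaining ingredients (cluster-size concentration, disjointness and independence of $G_{\rm super}$ edges, and the cycle lemma) are largely routine.
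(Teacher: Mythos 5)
Your overall architecture is right and Step~2 matches the paper, but Step~1 contains an intermediate claim that is simply false, and the fix requires replacing your argument with the paper's weaker---but correct---statement.

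You assert that ``a Chernoff bound plus a union bound over $\calK_1$ would give $|A_k'|,|B_k|\geq 2b$ for \emph{every} $k$ with probability $\geq 1-e^{-K_1/32}$''. This cannot work. The mean of $|A_k'|$ is $\Theta(b)$, a constant independent of $n$, while $K_1=\Theta(n)$. Any per-$k$ failure probability obtainable from a Chernoff bound on a binomial with constant mean is a constant (roughly $e^{-\Theta(b)}$), so a union bound over $K_1$ indices gives a failure probability that \emph{grows} with $n$ rather than decays as $e^{-K_1/32}$. In fact the paper only establishes the much weaker per-$k$ bound $\Prob\{|A_k'|\geq 2b\}\geq 1/2$ (via the fact that the median of a binomial is within $1$ of its mean), and then applies Hoeffding to the \emph{count} $\sum_k\indc{|A_k'|\geq 2b,\,|B_k|\geq 2b}$, which is $\Binom(K_1,\geq 1/4)$, to conclude that at least $K_1/8$ indices survive with probability $\geq 1-e^{-K_1/32}$; see \prettyref{lmm:discard_small}. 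Getting ``every $k$'' is both impossible and unnecessary---$K_1/8$ survivors plus at most $K_1/16$ overlap casualties still leaves $K_1/16$, which is all Theorem~\ref{thm:sprinkling} needs.

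For the overlap control you propose a third-moment computation of $\sum_{k}|A_k'\cap A_{\rm overlap}'|^3$. The paper instead bounds $\sum_{v\in V^*} d_v\indc{d_v\geq 2}$---a quantity that equals $\sum_k(|A_k'|-|U_k'|+|B_k|-|V_k|)$---by a stochastic domination $d_v\preceq\tilde d_v\sim\Binom(2K_1,\sz\sp/n)$ followed by Chebyshev on $\sum_v\tilde d_v\indc{\tilde d_v\geq 2}$, using $\Expect[\tilde d_v\indc{\tilde d_v\geq 2}]\leq\kappa^2$ and $\Var[\tilde d_v\indc{\tilde d_v\geq 2}]\leq 2\kappa$; this directly produces the $\frac{2}{\sr n\kappa^3}$ term (see \prettyref{lmm:remove_overlap}). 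Your third-moment route is not obviously wrong, but you give no mechanism by which the cubed quantity controls the number of $k$ with $|A_k'\cap A_{\rm overlap}'|\geq b$; the paper's weighted-count identity $\sum_v d_v\indc{d_v\geq 2}=\sum_k(|A_k'|-|U_k'|)+\sum_k(|B_k|-|V_k|)$ is what makes the Markov/Chebyshev step land, and you should adopt it. Once you fix the ``every $k$'' claim to a fraction-of-$k$ claim and switch to the Chebyshev argument on $\sum_v d_v\indc{d_v\geq2}$, the rest of your plan (disjointness of the $U_i\times V_j'$ rectangles, independence of $G_{\rm super}$ edges, stochastic domination over a bipartite \ER graph of average degree $\geq d_{\rm super}$, and \prettyref{lmm:bip.many.cycles}) coincides with the paper's proof.
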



The proof of \prettyref{thm:sprinkling} consists of two major steps. 
First, we show that with high probability, $\left| U_k' \right| \ge b$ and $\left|V_k \right| \ge b$ for all $k \in \calK_2$ for some $\calK_2$ of size $K_2 \ge K_1/16$. 
Then conditional on $\{U_k, V'_k \}_{k \in \calK_2}$, we show that $G_{\rm super}$ contains exponentially many long alternating cycles with high probability.
In the sequel, we present the detailed proof. 

\subsection{Existence of large $\calK_2$}\label{sec:large.K2}
To show that there exist many indices $k\in \calK_1$ with $\left| U_k' \right| \ge b$ and $\left|V_k \right| \ge b$, we first show in the following Lemma~\ref{lmm:discard_small} that there exist many indices with large $|A_k|$ and $|B_k|$. We then show in Lemma~\ref{lmm:remove_overlap} that there are not many indices who lose a large fraction of vertices when the overlapping vertices are removed. 
\begin{lemma}\label{lmm:discard_small}
If $\sr \sz \sp \ge 4$ and $\sz \sp \le n /4$, then
$$
\mathbb{P}\left\{\sum_{k\in\mathcal{K}_1}\indc{\left|A_k'\right|\geq \frac{\sr \sz \sp} {2},\left|B_k\right|\geq \frac{\sr \sz  \sp} {2}  } \le \frac{K_1}{8}\right\}\leq \exp \left(-K_1/32 \right)
$$
\end{lemma}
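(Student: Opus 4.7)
The plan is to reduce the lemma to two successive applications of the multiplicative Chernoff bound, exploiting the disjointness of $\{L_k\}$ and $\{R_k\}$ to obtain a clean independence structure.

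The key structural observation is that, conditional on $V^*$ and on the stage-one output (which fixes $\{L_k\}_{k\in\calK_1}$ and $\{R_k\}_{k\in\calK_1}$), the indicator
$$
E_k \triangleq \indc{|A_k'|\ge \sr\sz\sp/2} \cdot \indc{|B_k|\ge \sr\sz\sp/2}
$$
depends only on the blue edges of $G_2$ with one endpoint in $L_k$ and the other in $(V^*)'$, together with those with one endpoint in $V^*$ and the other in $R_k$. Since the $L_k$'s are pairwise disjoint and the $R_k$'s are pairwise disjoint, the edge collections associated with distinct indices are themselves pairwise disjoint; hence $\{\indc{E_k}\}_{k\in\calK_1}$ are mutually independent, and within any single $k$ the quantities $|A_k'|$ and $|B_k|$ are independent as well.

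The second step is a single-index lower bound $\Prob(E_k)\ge 1/2$. For any $v'\in (V^*)'$,
$$
\Prob(v'\in A_k') \,=\, 1-(1-\sp/n)^{|L_k|} \,\ge\, 1-e^{-\sz\sp/n} \,\ge\, (7/8)\,\sz\sp/n,
$$
where the last inequality uses the hypothesis $\sz\sp\le n/4$ together with the elementary bound $1-e^{-q}\ge (7/8)q$ valid on $q\in[0,1/4]$. Summing these independent Bernoulli indicators over $(V^*)'$, a set of size $\sr n$, gives $\Expect|A_k'|\ge (7/8)\sr\sz\sp$; the multiplicative Chernoff bound at deviation $\delta=3/7$ then yields $\Prob(|A_k'|<\sr\sz\sp/2)\le \exp(-9\sr\sz\sp/112)$, which by the hypothesis $\sr\sz\sp=4b\ge 16$ is at most $e^{-9/7}<0.28$. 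The identical estimate holds for $|B_k|$, and by the within-$k$ independence, $\Prob(E_k)\ge (1-e^{-9/7})^2 > 1/2$.

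Finally, by the across-$k$ independence, $X\triangleq \sum_{k\in\calK_1}\indc{E_k}$ stochastically dominates $\Binom(K_1,1/2)$. A second multiplicative Chernoff bound at deviation $\delta=3/4$ from the mean $K_1/2$ gives $\Prob(X\le K_1/8)\le \exp(-9K_1/64)\le \exp(-K_1/32)$, as required. There is no serious obstacle in this argument; the only subtlety is ensuring that the single-index success probability exceeds $1/2$ so that the final Chernoff exponent is larger than $1/32$, which is precisely what forces the hypothesis $b\ge 4$, i.e.\ $\sr\sz\sp\ge 16$.
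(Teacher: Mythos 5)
Your high-level architecture matches the paper exactly: condition on $V^*$ and the stage-one output, use disjointness of the $L_k$'s (and of the $R_k$'s) to get mutual independence of the per-index events $E_k$, establish a constant per-index success probability, then finish with a one-line Chernoff/Hoeffding estimate across $k\in\calK_1$.

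Where you diverge from the paper is in the per-index estimate, and this divergence costs you the stated hypothesis. You lower-bound the success probability by writing $\Expect|A_k'|\ge (7/8)\sr\sz\sp$ and then running a multiplicative Chernoff bound at deviation $3/7$, obtaining $\Prob\bigl(|A_k'|<\sr\sz\sp/2\bigr)\le\exp(-9\sr\sz\sp/112)$. As you acknowledge, for this to be below $1-1/\sqrt{2}\approx 0.29$ you need $\sr\sz\sp\gtrsim 16$. But the lemma asserts the conclusion under $\sr\sz\sp\ge 4$, and in the edge case $\sr\sz\sp = 4$ your Chernoff exponent is $9\cdot 4/112\approx 0.32$, giving only $\Prob(|A_k'|\ge 2)\gtrsim 0.28$ and $\Prob(E_k)\gtrsim 0.08$, far short of what your final Hoeffding step requires. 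The paper instead stochastically dominates $|A_k'|$ by $\Binom\bigl(\sr n, 1-(1-\sp/n)^\sz\bigr)$ and appeals to the fact that a binomial's median and mean differ by at most $1$, so the median of $|A_k'|$ is at least $\frac{3}{4}\sr\sz\sp - 1 \ge \frac{1}{2}\sr\sz\sp$ as soon as $\sr\sz\sp\ge 4$, giving $\Prob(|A_k'|\ge\sr\sz\sp/2)\ge 1/2$ with no slack lost to a concentration inequality. This mean--median device is exactly what makes the per-index bound tight for small means, which a crude Chernoff bound cannot do. The paper then bounds $\Prob(E_k)\ge 1/4$ and applies Hoeffding to $\Binom(K_1,1/4)$ at threshold $K_1/8$, which yields $\exp(-2K_1(1/8)^2)=\exp(-K_1/32)$ exactly.

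So: your approach is sound and would suffice in the actual application (where $b\ge 4$, i.e.\ $\sr\sz\sp\ge 16$, is available from Theorem~\ref{thm:sprinkling}), but it proves a weaker lemma than the one stated. To prove the lemma under $\sr\sz\sp\ge 4$, replace the Chernoff step for $|A_k'|$ with the binomial mean--median argument.
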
 

\begin{lemma}\label{lmm:remove_overlap}
If $ \kappa \triangleq 2K_1 \sz \sp/n  \le \frac{1}{16^2}$. Then
$$
\prob{ \sum_{k\in \mathcal{K}_1}\indc{\left|A_k'\right| - \left|U_k'\right|\geq \frac{\sr \sz \sp} {4},\;\text{or }\left|B_k\right| - \left|V_k\right| \geq \frac{\sr \sz \sp} {4} } \le \frac{K_1}{16}  }
\ge 1-\frac{2}{\sr n  \kappa^3 }.
$$
\end{lemma}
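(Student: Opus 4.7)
The plan is to reduce the statement to a tail bound on a single total-overlap random variable and then apply a moment inequality. Let $d_v = \sum_{k \in \mathcal{K}_1} \bigl(\indc{v' \in A_k'} + \indc{v \in B_k}\bigr)$ for $v \in V^*$. By the definitions of $A_{\mathsf{overlap}}$, $U_k'$, and $V_k$ from \prettyref{alg:sprinkling}, a double-counting argument gives
\[
T \;:=\; \sum_{k \in \mathcal{K}_1} \bigl[(|A_k'| - |U_k'|) + (|B_k| - |V_k|)\bigr] \;=\; \sum_{v \in V^*} d_v \indc{d_v \ge 2}.
\]
If the event of the lemma fails, then more than $K_1/16$ indices $k$ contribute at least $\beta s\eta/4$ to $T$, hence $T \ge K_1 \beta s\eta/64 = \beta n\kappa/128$ (using $K_1 s\eta = n\kappa/2$). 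So it suffices to prove $\Pr[T \ge \beta n\kappa/128] \le 2/(\beta n\kappa^3)$.

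The crucial structural observation is that $\{d_v\}_{v \in V^*}$ is a mutually \emph{independent} family. Indeed, for distinct $v_1, v_2 \in V^*$, the variable $d_{v_i}$ depends only on blue edges of $G_2$ of the form $(u,v_i')$ with $u \in \bigcup_k L_k$ or $(v_i,u')$ with $u' \in \bigcup_k R_k$, and these edge sets are disjoint across the two choices of $v_i$. Consequently $T$ is a sum of $|V^*| = \beta n$ i.i.d.\ nonnegative random variables $\xi_v := d_v \indc{d_v \ge 2}$, to which standard concentration tools apply.

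Next I estimate the relevant moments of $\xi_v$. Since $d_v$ is a sum of at most $2K_1$ independent Bernoullis with parameters bounded by $|L_k|\eta/n$ or $|R_k|\eta/n$, and since $|L_k|, |R_k|$ are bounded from above by a constant multiple of $s$ via the termination rule \eqref{eq:terminate} of \prettyref{alg:exploration}, one gets $\E[d_v] \lesssim \kappa$. Standard Bernoulli-sum estimates then yield $\Pr[d_v \ge 2] \lesssim \kappa^2$, $\E[\xi_v] \lesssim \kappa^2$, and $\E[\xi_v^2] \lesssim \kappa^2$. By independence, $\E[T] \lesssim \beta n\kappa^2$ and $\Var(T) \lesssim \beta n\kappa^2$.

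Finally, since $\beta n\kappa/128$ dominates $\E[T]$ in the assumed regime $\kappa \le 1/256$, a Chebyshev-type bound on $T$ delivers the desired estimate. The main technical obstacle is obtaining the precise constant $2/(\beta n\kappa^3)$ rather than just a qualitative $O(1/(\beta n))$ or $O(\kappa)$ bound; I expect this to require separating the rare event $\max_{v \in V^*} d_v \ge 3$ (which occurs with probability at most roughly $\beta n \kappa^3/6$ by a union bound) from the typical regime in which every overlapping vertex has $d_v = 2$ exactly. In the latter regime $T$ reduces to twice the number of doubly-covered vertices, which is a sum of independent Bernoullis amenable to a direct Markov/Chebyshev argument at the required resolution.
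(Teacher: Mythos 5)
Your approach tracks the paper's proof closely on its three structural pillars: the double-counting identity $T = \sum_{v\in V^*} d_v\indc{d_v\ge 2}$, the mutual independence of $\{d_v\}$ (because $G_2$'s blue edges are independent and the edges feeding into distinct $d_v$ are disjoint), and a second-moment/Chebyshev bound. Two points deserve attention.

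First, your worry in the final paragraph about the precise constant $2/(\sr n\kappa^3)$ is unfounded, and the proposed case split on $\max_v d_v \ge 3$ is unnecessary. The paper shows $d_v \preceq \tilde d_v \sim \Binom(2K_1, \sz\sp/n)$, computes $\E[\tilde d_v\indc{\tilde d_v\ge2}] = \E[\tilde d_v] - \P[\tilde d_v=1]\le \kappa^2$, bounds $\Var(\tilde d_v\indc{\tilde d_v\ge2}) \le \E[\tilde d_v^2] \le \kappa + \kappa^2 \le 2\kappa$, and applies Chebyshev at deviation level $\sr n\kappa^2$ to get $\P[T > 2\sr n\kappa^2] \le (2\sr n\kappa)/(\sr n\kappa^2)^2 = 2/(\sr n\kappa^3)$ directly; then $2\sr n\kappa^2 = 4K_1\sr\sz\sp\kappa \le K_1\sr\sz\sp/64$ by the assumption $\kappa \le 1/256$. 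No separation of rare events is needed. Your sharper estimate $\E[\xi_v^2]\le 2\kappa^2$ (which is correct, since $\xi_v^2\le 2\tilde d_v(\tilde d_v-1)$ on $\{\tilde d_v\ge2\}$) would only strengthen the bound.

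Second, the step where you bound $\E[d_v]\lesssim\kappa$ by invoking the termination rule \eqref{eq:terminate} of \prettyref{alg:exploration} to control $|L_k|,|R_k|$ from above is problematic on two counts: (i) Lemma~\ref{lmm:remove_overlap} sits inside \prettyref{thm:sprinkling}, which is a self-contained statement about any input $\{L_k,R_k\}$ to \prettyref{alg:sprinkling} with $|L_k|,|R_k|\ge s$, so appealing to a specific upstream path-construction routine breaks modularity (and would not cover the exponential-model path construction in \prettyref{sec:exponential}); (ii) the bound is not even true in the asserted form — the termination threshold $m$ in \prettyref{alg:exploration} and the leaf count $s=(1+3\epsilon/4)^{2HL}$ differ by a factor $(\frac{1+\epsilon}{1+3\epsilon/4})^{2HL}e^{3H\alpha+\epsilon H}$, which is exponentially large in $HL$, not a constant. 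The paper instead implicitly works with $|L_k|=|R_k|=s$ (subselecting if larger, which can only decrease overlaps and thus is WLOG), so that $d_v\sim\Binom(2K_1, 1-(1-\sp/n)^\sz)$ is exact and the dominance $d_v\preceq \Binom(2K_1,\sz\sp/n)$ follows from $(1-x)^\sz \ge 1-\sz x$.
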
 

Combining Lemma~\ref{lmm:discard_small} and~\ref{lmm:remove_overlap}, we conclude that there exists some $\mathcal{K}_2$ of size $K_2 \ge K_1/16$, such that for all $k\in \calK_2$, $|U_k'|\geq \frac{\sr \sz \sp} {2}-\frac{\sr \sz \sp} {4}=\frac{\sr \sz \sp} {4}=b$, and $|V_k|\geq b$.

\begin{proof}[Proof of Lemma~\ref{lmm:discard_small}]
For each $k\in \mathcal{K}_1$ and each $v'\in (V^*)'$, 
$$
\prob{ \exists u \in L_k, \text{ s.t. } (u, v') \in \calE(G_2) } 
= 1-  \left( 1 - \frac{\sp}{n} \right)^{|L_k|} \geq 1-  \left( 1 - \frac{\sp}{n} \right)^{\sz}.
$$
Therefore, $|A_k'|$ is stochastically dominant over a random variable distributed $\Binom( \sr n, 1-  ( 1 - \frac{\sp}{n} )^{\sz } )$. Since the mean and median of a binomial distribution differ by at most $1$, the median of $|A_k'|$ is lower bounded by
\[
\sr  n \left[ 1-  \left( 1 - \frac{\sp}{n} \right)^{\sz} \right] -1 
\overset{(a)}{\geq} \frac{3 \sr \sz \sp  }{4} -1 \geq \frac{\sr \sz \sp} {2},
\]
where $(a)$ holds due to $(1-x)^\sz \le  e^{-\sz x} \le 1-3\sz x/4$ when $0 \le \sz x \le 1/4$ 
and the assumption that $\sz \sp \le n/4$; the last inequality follows from the assumption 
that $\sr \sz \sp \ge 4$. 
Thus $\prob{ |A_k'|\geq \frac{\sr \sz \sp} {2} } \geq 1/2$. 
Similarly argue that $\mathbb{P}\{|B_k|\geq \frac{\sr \sz \sp} {2} \}\geq 1/2$. 
By independence of $|A_k'|$ and  $|B_k|$, we have
\[
\mathbb{P}\left\{\left|A_k'\right|\geq \frac{\sr \sz \sp } {2},\left|B_k\right|\geq\frac{\sr \sz \sp} {2} \right\} \geq \frac{1}{4}.
\]
Combined with independence across all $k\in \mathcal{K}_1$, we have
\begin{align*}
&\mathbb{P}\left\{\sum_{k\in\mathcal{K}_1}\indc{\left|A_k'\right|\geq \frac{\sr \sz \sp} {2},\left|B_k\right|\geq \frac{\sr \sz  \sp} {2}  } \le \frac{K_1}{8}\right\}\\
\leq & \mathbb{P}\left\{ \Binom\left(K_1,1/4 \right) \le K_1/8\right\} \leq \exp(-K_1/32)
\end{align*}
by Hoeffding's inequality. 
\end{proof}

\begin{proof} [Proof of Lemma~\ref{lmm:remove_overlap}]
Recall that for each $v\in V^*$, 
\[
d_v = \sum_{k\leq K_1} \indc{ v'\in A_k' } + \sum_{k\leq K_1}\indc{ v\in B_k }.
\]
Since $v\in A_\mathsf{overlap}$ if and only if $d_v\geq 2$, we have 
\begin{align*}
 \sum_{v\in V^*}d_v\indc{d_v\geq 2}
= & \sum_{v\in A_\mathsf{overlap}} d_v\\
= & \sum_{v\in A_\mathsf{overlap}} \sum_{k \in \calK_1}\left(\indc{v'\in A_k' } + \indc{ v\in B_k } \right)\\
= & \sum_{k\in \calK_1}\left(\sum_{v\in A_\mathsf{overlap}} \indc{ v'\in A_k' } + \sum_{v\in A_\mathsf{overlap}} \indc{ v\in B_k } \right)\\
= & \sum_{k \in  \calK_1}\left(\left|A_k'\right| - \left|U_k'\right| + \left|B_k\right| - \left|V_k\right|\right) \\
 \geq & \frac{\sr \sz \sp} {4 } \sum_{k\in \calK_1}\indc{ \left|A_k'\right| - \left|U_k'\right| + \left|B_k\right| - \left|V_k\right| \geq \frac{\sr \sz \sp} {4} }\\
\geq & \frac{\sr \sz \sp} {4} \sum_{k\in \calK_1}\indc{ \left|A_k'\right| - \left|U_k'\right|\geq \frac{\sr \sz \sp} {4},\;\text{or }\left|B_k\right| - \left|V_k\right| \geq \frac{\sr \sz \sp} {4} }.
\end{align*}
In other words, $\sum_{v\in V^*}d_v\indc{d_v\geq 2}$ controls the number of $k \in \calK_1$ 
for which $A_k'$ or $B_k$ loses over $\frac{\sr \sz \sp} {4}$ vertices when $A_{\mathsf{overlap}}$ is removed. 
It remains to prove that with high probability 
$$
\sum_{v\in V^*}d_v\indc{d_v\geq 2 } \le 
 \frac{\sr \sz \sp} {4} \times \frac{K_1}{16} .
$$
First, note that the random variables $\{d_v\}_{v\in V^*}$ are independent since the blue edges in $G_2$ are independent. 
Moreover, for each $v\in V^*$,
\begin{align*}
d_v=&\sum_{k \in K_1}\indc{ \exists u\in L_k, s.t. (u,v')\in E(G_2) } + \sum_{k\in K_1}\indc{\exists u'\in R_k, s.t. (v',u)\in E(G_2) }\\
\sim & \Binom\left(2K_1,  1- \left(  1 -\frac{\sp}{n} \right)^\sz \right).
\end{align*}
Using $(1-x)^\sz \ge 1-\sz x$, we get that $1- \left(  1 -\frac{\sp}{n} \right)^\sz  \le \sz \sp  /n$.
Hence, there exist random variables $\widetilde{d}_v\stackrel{i.i.d.}{\sim} \Binom(2K_1, \sz \sp /n )$ such that
\begin{equation}
\label{eq:dv.tilde}
\sum_{v\in V^*}d_v  \indc{d_v\geq 2 }\leq \sum_{v\in V^*}\widetilde{d}_v\indc{\widetilde{d}_v\geq 2 }.
\end{equation}
Note that 
\begin{align*}
\mathbb{E}\left[\widetilde{d}_v\indc{ \widetilde{d}_v\geq 2 } \right] = & \mathbb{E}[\widetilde{d}_v]-\mathbb{P}\left\{\widetilde{d}_v=1\right\}\\
= & 2K_1\frac{\sz \sp}{n}\left[1-\left(1- \frac{\sz \sp }{n} \right)^{2K_1-1}\right] \le  \left(  2K_1 \sz \sp /n \ \right)^2 \triangleq \kappa^2.
\end{align*}
where the last inequality follows from using $ (1-x)^a \ge 1-ax$ for $0 \le x \le 1$ and $a \ge 1$.
Moreover,
 \begin{align*}
 \text{Var}\left[\widetilde{d}_v \indc{\widetilde{d}_v\geq 2 }\right]& \leq \expect{\widetilde{d}_v^2}\leq  
\kappa + \kappa^2 \le 2 \kappa,
\end{align*}
where the last inequality follows from $\kappa \le 1$.
Thus by Chebyshev's inequality and  \prettyref{eq:dv.tilde},
we have that 
with probability 
at least $1-2/ (\sr n \kappa^3) $, 
\[
\sum_{v\in V^*} d_v \indc{d_v\geq 2 }
\le \sum_{v\in V^*}\widetilde{d}_v \indc{\widetilde{d}_v\geq 2 } 
\le 2 \sr n  \kappa^2 =  4 K_1 \sr \sz \sp \kappa \le \frac{K_1 \sr \sz \sp}{4 \times 16},
\]
where the last inequality holds  by the assumption that $\kappa \le \frac{1}{16^2}$.
%
\end{proof}

\subsection{Construction of exponentially many alternating cycles}

In this subsection we show the existence of many long alternating cycles in $G_{\rm super}$. Recall that in $G_{\rm super}$ there is a planted (red) edge between $i$ and $i'$ for every $i \in \calK_2$.
Moreover, there is an unplanted (blue) edge between $i$ and $j'$ for $i \neq j$ if and only if  $B_i$ and $\tilde{B}'_j$ are connected via an edge in $G_2$. 
In the previous steps, we have not inspected any of the edges between $U_i$ and $V_j'$. Therefore
\begin{align}\label{eq:Gsuper.degree}
\mathbb{P}\left\{(i,j') \in E(G_{\rm super})\right\} 
=  1- \left( 1 - \frac{\sp}{n} \right)^{ \left| U_i \right| \cdot \left|V'_j \right|} 
\ge  \frac{1}{2}  \frac{\sp}{n} b^2
\ge  \frac{d_{\rm super}}{K_2},
\end{align}
where the first inequality holds by $(1-x)^a \le 1- ax/2$ for $ 0 \le x \le 1/a$, and the assumptions that 
$ \left| U_i \right| ,  \left|V'_j  \right| \ge b$ and 
$b^2 \sp \le n$; 
the last equality holds by the definition of $d_{\rm super} =\frac{K_1 \eta b^2}{32n}$, and $K_2\geq K_1/16$ as shown in Section~\ref{sec:large.K2}. 
Moreover, from the independence of the blue edges in $G_1$, all edges $(i,j')$ in $G_{\rm super}$ appear independently. 
Thus, $G_{\rm super}$ is a bipartite graph on $\calK_2 \times (\calK_2)'$
with planted red edges between $i'$ and $i$ and unplanted blue edges between $i$ and $j'$ appearing independently with probability 
at least $\frac{d_{\rm super}}{K_2}$. Lemma~\ref{lmm:bip.many.cycles} shows that we have $G_{\rm super}$ contains exponentially many long alternating cycles for $d_{\rm super}$ large enough.

\begin{lemma}\label{lmm:bip.many.cycles}
Let $G$ be a bi-colored bipartite graph on $[n]\times [n]'$ whose $n$ red edges are defined by a perfect matching, and blue edges are generated from a bipartite \ER graph with edge probability $D/n$. If $n\geq 525$ and $D\geq 256\log (32e)$, then with probability at least $1-\exp(-Dn/2^{14})$, $G$ contains $\exp(n/20)$ distinct alternating cycles of length at least $3n/4$.
\end{lemma}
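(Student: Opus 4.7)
The plan is to first recast the problem as counting directed cycles in a random directed graph, and then exhibit exponentially many long cycles via a two-stage exposure with a backbone-plus-sprinkling structure.

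\emph{Reduction.} Contract each planted edge $(i,i')$ of $G$ to a single vertex $i$. What remains is a random directed graph $H$ on $[n]$ in which each of the $n(n-1)$ possible directed arcs appears independently with probability $D/n$. Under this contraction, alternating cycles of length $2k$ in $G$ correspond bijectively to directed cycles of length $k$ in $H$, so it suffices to show that $H$ contains at least $\exp(n/20)$ directed cycles of length at least $\lceil 3n/8 \rceil$ with failure probability at most $\exp(-Dn/2^{14})$.

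\emph{Two-stage construction.} Split the edge probability as $D/n = p_1 + p_2 - p_1 p_2$ with $p_1 \approx p_2 \approx D/(2n)$, and let $H_1, H_2$ be the corresponding independent random digraphs. In the first stage, I use $H_1$ to build a ``backbone'' directed cycle $C^*$ in $H_1$ of length $\ell^* \geq (1-\delta_0)n$ for some small $\delta_0$. Since $D/2 \geq 128\log(32e)$ is already very large, $H_1$ is extremely supercritical; its giant strongly connected component covers all but an exponentially small fraction of $[n]$, and a DFS-based exploration combined with a directed rotation-extension argument produces such a cycle with failure probability $\exp(-\Omega(Dn))$. In the second stage, the independent layer $H_2$ is used to amplify $C^*$ into exponentially many distinct cycles. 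The goal is to locate $\Omega(n)$ essentially vertex-disjoint ``switching gadgets'' along $C^*$, where each gadget consists of a short arc of $C^*$ together with an alternative $H_2$-route connecting its two endpoints (typically a short detour through the off-backbone subgraph). Independent on/off choices over these gadgets yield at least $2^{\Omega(n)} \geq \exp(n/20)$ pairwise distinct cycles, each of length within $O(1)$ of $\ell^*$ and hence at least $3n/8$. The target failure probability $\exp(-Dn/2^{14})$ is matched by combining Chernoff bounds for the backbone length and for the number of switchable gadgets, which is where the specific exponent $Dn/2^{14}$ enters.

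\emph{Main obstacle.} The heart of the argument is the amplification step: engineering switching gadgets that (i) each exist with probability bounded below per candidate location, so that Chernoff delivers $\Omega(n)$ of them with the precise exponent $Dn/2^{14}$; (ii) are sufficiently disjoint that independent on/off toggles give genuinely distinct cycles rather than overlapping ones; and (iii) perturb the cycle length by only a constant per switch, so that all resulting cycles remain of length $\geq 3n/8$. Since isolated single-chord shortcuts have probability only $O(1/n)$ per candidate, the gadget must exploit the dense off-backbone subgraph of $H_2$, whose giant SCC covers $\Theta(n)$ vertices once $D \delta_0/2 > 1$, in order to provide a constant-probability alternative route between nearby points of $C^*$. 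Balancing these three requirements is the main technical challenge. A secondary difficulty is the backbone construction itself: since directed Hamilton cycles only appear at density $(\log n)/n$, $C^*$ cannot span all of $[n]$, so pushing the backbone length up to $(1-\delta_0)n$ in the constant-average-degree regime requires a directed analogue of P\'osa's rotation-extension technique.
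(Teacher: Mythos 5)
Your reduction to directed cycles in a random digraph $H$ with arc probability $D/n$ is correct, and the backbone-plus-sprinkling strategy is a genuinely different route from the paper's. However, the amplification step has a quantitative gap that does not close in the constant-average-degree regime, and it is exactly the one you flag. A gadget of constant length, say a bypass $v_i\to u_1\to\cdots\to u_k\to v_{i+1}$ through a fixed number $k$ of off-backbone vertices, has expected count $(\delta_0 D)^k D/n$ per backbone arc, so over all $(1-\delta_0)n$ arcs you expect only $O((\delta_0 D)^k D)=O(1)$ such shortcuts; you would need $\Omega(n)$ of them. To make a gadget exist with probability $\Omega(1)$ per arc you must route through the giant SCC of the off-backbone subgraph, and the resulting detour has length $\Theta(\log n/\log D)=\Theta(\log n)$ for fixed $D$, consuming $\Theta(\log n)$ of the only $\delta_0 n$ off-backbone vertices available. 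Vertex-disjointness (which you need for independent toggling to yield \emph{simple} cycles) therefore caps you at $O(n/\log n)$ gadgets, and $2^{O(n/\log n)}=e^{o(n)}$ is exponentially short of the required $e^{n/20}$. There is no sweet spot: short gadgets are too rare, long gadgets are too greedy. The backbone construction itself is a secondary concern — a P\'osa-type rotation-extension would usually give only polynomial failure bounds, but one can get $e^{-\Omega(Dn)}$ via the expander condition (every pair of linear-sized vertex sets is joined by an arc, then DFS produces a $(1-\delta_0)n$-cycle).

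The paper sidesteps the amplification bottleneck entirely: it fixes a code $\mathbb{V}$ of $e^{n/20}$ size-$(n/2)$ subsets of $[n]$ with pairwise symmetric difference $\geq n/3$ (a volume/packing bound), and shows by the expander-plus-DFS argument that each induced subgraph $G[V\times V']$ contains an alternating cycle visiting at least $3n/8$ left vertices, with per-subset failure probability $e^{-\Omega(Dn)}$. Because each cycle uses more left vertices than the maximum overlap $n/3$ between distinct code words, the cycles coming from different subsets are automatically distinct, and the union bound over $\mathbb{V}$ is affordable once $D$ exceeds the stated constant. The exponential count thus comes from the size of the code, not from a combinatorial explosion of toggles on one backbone. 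To salvage your route you would need a gadget that simultaneously has $\Omega(1)$ probability per arc and $O(1)$ off-backbone vertex cost, or a different multiplication mechanism; I do not see how to achieve either at average degree $O(1)$.
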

\begin{proof}
In order to 
show the existence of exponentially many long cycles in $G$, 
we construct exponentially many subsets of $[n]$ with relatively small overlaps.
For ease of presentation, we assume $n$ is even; otherwise we replace $n/2$ by $\lfloor n/2\rfloor$ in the following proof.

First, it is well-known (by a volume argument) that there exists a collection $\mathbb{V}$ of subsets of $[n]$ of cardinality $n/2$, 
such that
for any distinct $S,T\in\mathbb{V}$, $|S\symdiff T| \geq n/3$ (so that $|S\cap T|\leq n/3$) and 
\begin{equation}
|\mathbb{V}| \geq \frac{\binom{n}{n/2}}{\sum_{i=0}^{n/3} \binom{n}{i}} \geq \binom{n}{n/2} e^{-n (\log 3 - \frac{2}{3}\log 2)} \geq e^{n/20},
\label{eq:bbV}
\end{equation}
where the second inequality follows from
the Chernoff bound 
$2^{-n}\sum_{i=0}^{n/3}{\binom{n}{i}} =\prob{\Binom(n,1/2)\leq n/3}\leq e^{-n D(\Bern(1/3) \|\Bern(1/2) ) };$
the last inequality is from the bound on the binomial coefficient $\binom{n}{n/2} \ge \frac{1}{\sqrt{2n}} 2^n$
by Stirling's approximation~\cite{robbins1955remark}, and the fact that $\frac{1}{\sqrt{2n}}\exp(-n(\log 3-\tfrac{5}{3}\log 2))\geq e^{n/20}$ for all $n\geq 525$.

Next we show that for each size-$(n/2)$ 
subset $V$ of $[n]$, $G[V\times V']$ contains a long alternating cycle with high probability. By the same argument as in~\cite[Theorem~6.8]{frieze2016introduction}, we claim that the graph $G[V\times V']$ contains an alternating cycle of length at least $3n/4$,
if for all subsets $S_1 \times S_2 \subset V\times V'$ such that $|S_1|, |S_2| \ge \frac{n}{32}-1 \ge \frac{n}{64}$, 
there is at least one pair of $u\in S_1, v'\in S_2$ that are connected by a blue edge.
For completeness we include a proof of this claim below.

The claim is shown by constructing a long alternating path on $G[V\times V']$ with the depth-first search (DFS) algorithm. Trace the DFS algorithm with the variables
\begin{enumerate}
\item $U=$the set of unexplored left vertices;
\item $D=$the set of dead (fully explored) left vertices;
\item $\path=(v_1', v_1, v_2', v_2,..., v_r',v_r)=$ the current path.
\end{enumerate}
Initialize at $U=V\backslash \{v_1\}, D=\emptyset$ and $\path=(v_1',v_1)$ where $v_1$ is an arbitrary member of $V$, e.g., the one with the smallest index. At each step of the DFS algorithm, we proceed according to the following two cases:
\begin{enumerate}
\item If there is some $u'\in U'$ such that $(v_r,u')$ is a blue edge, we update
\[
v_{r+1}\leftarrow u; \;\;\; \path\leftarrow (\path, u',u);\;\;\; U\leftarrow U\backslash \{u\};\;\;\; r\leftarrow r+1.
\]
\item If no vertex in $U'$ is incident to $v_r$ in $G$, we update
\[
D\leftarrow D\cup \{v_r\}; \;\;\; \path\leftarrow \path\backslash \{v_r,v_r'\}; \;\;\; r\leftarrow r-1.
\]
\end{enumerate}
By definition of the DFS algorithm, a vertex $v$ is only added to $D$ if $v$ is not incident to any vertex in $U'$. Since the algorithm never adds any new vertices to $U$, the set $D$ and $U'$ are always disconnected in $G$ at any stage of the algorithm.
Furthermore the size of $U$ is non-increasing, and the size of $D$ is non-decreasing, such that $|V|=r+|D|+|U|$ always holds. Therefore at some time point in the DFS algorithm, we have $|D|=|U|=|U'|$. Assuming that all pairs of $S_1\times S_2\subset V\times V'$ of size $S_1\geq n/32-1,S_2\geq n/32-1$ are connected by at least one blue edge, we must have $|D|=|U
|\leq n/32-1$, hence $r=n/2-|D|-|U|\geq 7n/16$.

Apply the assumption again with $S_1=\{v_{r-n/32+1},..., v_{r}\}$, $S_2= \{v_1',..., v_{n/32}'\}$. There exists $v_i\in S_1,v_j'\in S_2$ such that $(v_i,v_j')$ is a blue edge in $G$. We have constructed a cycle
\[
(v_j', v_j, v_{j+1}', v_{j+1},..., v_i', v_i, v_j')
\]
 of length at least $2|i-j|\geq 2(r-n/16)\geq 3n/4$. It follows that
\begin{align}
\nonumber&\mathbb{P}\left\{G[V,V'] \text{ contains an alternating cycle of length }3n/4\right\}\\
\nonumber\geq & \mathbb{P}\left\{\forall S_1\times S_2\subset V\times V' \text{ such that }|S_1|, |S_2| \ge \frac{n}{64}, \exists u\in S_1,v'\in S_2, \text{such that }(u,v')\in \mathcal{E}\left(G\right)\right\}\\
\nonumber\geq &1-{n/2\choose n/64}^2 \left(1- \frac{D}{n} \right)^{(n/64)^2}\\
\nonumber \geq & 1-\left[(32 e)^2\left(1-\frac{D}{n}\right)^{n/64}\right]^{n/64}\\
\nonumber\geq & 1-\left[\exp\left(2+2 \log 32- \frac{D}{64} \right)\right]^{n/64}\\
\label{eq:one.long.cycle}\geq & 1-\exp\left(- Dn/2^{13}\right).
\end{align}
where the last inequality holds 	by the assumption that $D/128 \ge 2+2\log 32$. 

Finally, since $D/2^{14}\ge 1/20$, combining \eqref{eq:one.long.cycle} with \prettyref{eq:bbV} and applying a union bound, we get that
with probability  at least $1-e^{- Dn/2^{14}}$,
$G[V\times V']$ contains an alternating cycle of length $3 n/4$ for every subset $V \in \mathbb{V}$. 
Since the number of left vertices visited by each alternating cycle is at least $3n/8 $, which exceeds the maximum overlap (at most $n/3$ by construction) between distinct subsets in $\mathbb{V}$, these alternating cycles must be distinct. This complete the proof.
\end{proof}

We now finish the proof of Theorem~\ref{thm:sprinkling} by combining the previous results. First, combining \prettyref{lmm:discard_small} and \prettyref{lmm:remove_overlap} yields that with probability at least $1-e^{-K_1/32}-\frac{2}{\sr n\kappa^3}$, there exists a subset $\calK_2 \subset \calK_1$
such that $|\calK_2|=K_2 \ge K_1/16$, and 
 $\left| U_k \right| \ge b$ and $\left|V'_k \right| \ge b$ for all $ k \in \calK_2$. Thus~\eqref{eq:Gsuper.degree} holds. Conditioning on $\{U_k, V'_k \}_{k \in \calK_2}$ and applying Lemma~\ref{lmm:bip.many.cycles}, we get that with probability at least $1-\exp(-d_{\rm super}K_2/2^{14})$, $G_{\rm super}$ contains $\exp(K_2/20)$ distinct alternating cycles of length at least $3K_2/4$. Thus the conclusion of  \prettyref{thm:sprinkling} readily follows.

\section{Proof of \prettyref{lmm:Mbad} under the sparse model}
\label{sec:impossibility.proof}


In this section, we prove \prettyref{lmm:Mbad}, which, as mentioned in Section~\ref{sec:neg.proof.outline}, completes the proof of \prettyref{thm:impossibility} for the sparse model and, in turn, also for the dense model in view of the reduction in \prettyref{app:reduction_dense_sparse}.

To prove~\prettyref{lmm:Mbad}, we apply Algorithm~\ref{alg:cycle_finding} on $G$ with $w_e=\log(\calP/\calQ)(W_e)$ for $e$ in $G$. Define thresholds
\begin{align}
\tau_\mathsf{red} &\triangleq \inf \left\{ x : \calP \left( \log \frac{\calP}{\calQ} \le x  \right) \ge 1/2 \right\}  \label{eq:def_tau_prime}\\
\tau_\mathsf{blue} & \triangleq  \sup \left\{ x: \calQ  \left( \log \frac{\calP}{\calQ} \ge x  \right) \ge 1/2 \right\}. \label{eq:def_tau}
\end{align}
Note that $\tau_\mathsf{red}$ and $\tau_\mathsf{blue}$ are well defined under the assumption that $\calP \ll \calQ$ and $\calQ \ll \calP$, which can be assumed WLOG in view of the reduction argument in \prettyref{app:acneg}.


Assume that Algorithm~\ref{alg:cycle_finding} succeeds. It returns at least $e^{c_6 K_1}$ distinct alternating cycles $C$ of the form 
\[
C=\left(  v_1' , v_1, P_1,  u'_{1}, u_{1},  v_2', v_2, P_2, u_2', u_2,  \cdots,  v_r', v_r, P_r, u_r', u_r  \right),
\]
where $r\geq c_7K_1$ for universal constants $c_6,c_7$. Here each $P_k$ is an alternating path of length $\ell\triangleq4HL$ in $G_1$, with $\Delta(P_k)\geq \zeta\ell/2-\tau_\mathsf{red}$. All the other edges in $C$ are contained in $G_2$. Since $K_1=c_5n$, it follows that $|C|\ge r\ell \ge n c_2$ for some constant $c_2$. Let $m$ be the perfect matching such that $m \symdiff m^*=C$. By choosing $\delta \le c_2$, we get that 
$m \in \Mbad$. Moreover, by construction 
$$
\Delta(C) \ge 3 r  \tau_\mathsf{blue} - 2r \tau_\mathsf{red}  + \sum_{k=1}^r \Delta\left( P_{k} \right)
\ge 3 r  \tau_\mathsf{blue} - 3r \tau_\mathsf{red}+ \zeta  r  \ell/2  \ge   \zeta  r \ell/4 \geq n c_4,
$$
for some constant $c_4>0$, where the last inequality holds by choosing $\ell$ large enough such that $ \zeta \ell \ge 12( \tau_\mathsf{red} - \tau_\mathsf{blue})$. 
Since there are at least $e^{c_6 K_1 } = e^{c_5c_6 n }$ distinct such alternating cycles $C$, the desired \prettyref{eq:prob_bad_desired} follows. To complete the proof, it suffices to show that Algorithm~\ref{alg:cycle_finding} succeeds with probability at least $1-O(1/n)$.

\paragraph{Path construction} Let the family of disjoint sets $L_k\subset V^c$ and $R_k\subset (V^c)'$ be defined in Section~\ref{sec:many.good.trees}. By Theorem~\ref{thm:exploration}, with probability $1-e^{\Omega(n)}$, there exists $\mathcal{K}_1\subset\mathcal{K}\subset A^c$, such that $K_1=|\mathcal{K}_1|= c_5 n$ with constant
\[
c_5=\frac{\gamma}{16(1+\epsilon)^{2HL}\exp\left(7H\alpha +3\epsilon H/2\right)}.
\]
For all $k\in \mathcal{K}_1$, we have $|L_k|\geq s$, $|R_k|\geq s$ where $s=(1+3\epsilon/4)^{2HL}$. 
Moreover, each pair of vertices $u\in L_k$ and $v\in R_k$ are connected via an alternating path $\path$ of length $\ell=4HL+1$ through red edge $(i_k,i_k')$ and $\Delta(\path)\geq 2\zeta HL-\tau_\mathsf{red}$.

\paragraph{Sprinkling} 
We need to check that the sprinkling step yields $e^{c_6 K_1}$ distinct alternating cycles of the form
\[
C=\left(  v_1' , v_1, P_1,  u'_{1}, u_{1},  v_2', v_2, P_2, u_2', u_2,  \cdots,  v_r', v_r, P_r, u_r', u_r  \right)
\]
for $r\geq c_7 K_1$. We show this using Theorem~\ref{thm:sprinkling}. We start by specifying the parameters $\beta$ and $\eta$ that appear in the statement of Theorem~\ref{thm:sprinkling}.

Recall from Algorithm~\ref{alg:cycle_finding} that $V^*=\{i \in V: \log(\calP/\calQ)(W_{i,i'}) \le \tau_\mathsf{red}\}$, and $W_{i,i'}\sim \calP$ for all $i$. By definition of $\tau_\mathsf{red}$ given in~\eqref{eq:def_tau_prime} and the right-continuity of the cumulative distribution function, we have $\mathbb{P}\{i\in V^*\}\geq 1/2$ for all $i$. 
By the independence of the edge weights, Hoeffding's inequality yields
\begin{equation}\label{eq:V*.large}
\mathbb{P}\left\{|V^*|< \frac{\gamma n}{4}\right\} \leq \mathbb{P}\left\{\Binom\left(\gamma n, 1/2\right)\leq \frac{\gamma n}{4}\right\}\leq e^{-\gamma n/8}.
\end{equation}
Therefore with probability at least $1-e^{-\gamma n/8}$, we have $\beta=|V^*|/n\geq \frac{\gamma}{4}$.

To bound the edge probability $\eta/n$ of the blue edges in $G_2$, note that $e$ appears in $G_2$ as a blue edge if and only if it is a blue edge in $G$, and $\log(\calP/\calQ)(W_e)\geq \tau_\mathsf{blue}$. Therefore $\eta/n \leq d/n$, and
\begin{align}
\frac{\eta}{n}=\frac{d}{n}\cdot \calQ\left(\log\frac{\calP}{\calQ}\geq \tau_\mathsf{blue}\right) \geq \frac{d}{2n}, \label{eq:eta_choice_sparse}
\end{align}
where the inequality is from the definition of $\tau_\mathsf{blue}$ given in~\eqref{eq:def_tau} and right-continuity of the cumulative distribution function. In summary, we have parameters
\[
\beta\geq \frac{\gamma}{4},\quad \frac{d}{2}\leq \eta \leq d,\quad s=(1+3\epsilon/4)^{2HL},\quad K_1= c_5 n.
\]
Next, we check that these parameters lead to $b,K_1,\kappa$ and $d_{\rm super}$ that satisfy the assumptions of Theorem~\ref{thm:sprinkling}.
Indeed, since $d \ge 1,$
\[
b=\frac{\beta s\eta}{4}\geq \frac{1}{32}\gamma d(1+3\epsilon/4)^{2HL}\geq \frac{1}{32}\gamma (1+3\epsilon/4)^{2HL}\geq 4
\]
by choosing $H$ large enough; $K_1 =c_5n \geq 8400$ for large enough $n$;
\begin{align*}
\kappa = \frac{2K_1 s\eta}{n} \leq & \frac{(1+3\epsilon/4)^{2HL} d\gamma}{8(1+\epsilon)^{2HL}\exp\left(7H \alpha +3\epsilon H/2\right)}\\
\stackrel{(a)}{\leq} & \frac{(1+3\epsilon/4)^{2HL} (1+\epsilon)^2\gamma}{8(1+\epsilon)^{2HL}\exp\left(6H \alpha +3\epsilon H/2\right)} \stackrel{(b)}{\leq} \frac{1}{16^2},
\end{align*}
where (a) is from $de^{-\alpha }=(1+\epsilon)^2$, (b) holds by choosing $H$ large enough;
\begin{align*}
d_{\rm super}=\frac{1}{32n}K_1 b^2\eta = & \frac{2K_1 \beta^2 s^2\eta^3}{32^2 n}\\
\geq & \frac{\gamma^3 (1+3\epsilon/4)^{4HL}d^3}{32^4(1+\epsilon)^{2HL}\exp\left(7H\alpha +3\epsilon H/2\right)}\\
= & \left[\frac{(1+3\epsilon/4)^{2}}{1+\epsilon}\right]^{2HL}\frac{\gamma^3(1+\epsilon)^6}{32^4\exp\left(4H\alpha +3\epsilon H/2\right)} 
\geq 256\log (32 e),
\end{align*}
where the last inequality is by choosing $L$ to be a large enough constant, since $(1+3\epsilon/4)^{2}/(1+\epsilon)>1$.
We have checked that all the assumptions of Theorem~\ref{thm:sprinkling} are satisfied. 
Thus for all $V^*$ with $|V^*|\geq \gamma n/4$, \prettyref{thm:sprinkling} gives that conditional on $V^*$, $G_{\rm super}$ contains at least $e^{K_2/20}\geq e^{c_6 K_1}$ distinct alternating cycles of length at least $3K_2/4 \geq c_7 K_1$ for universal constants $c_6, c_7$, with (conditional) probability at least
\[
\left(1-e^{-K_1/32}-\frac{2}{\beta n\kappa^3}\right)\left(1-d_{\rm super} K_1/2^{18}\right)= \left(1-e^{-\Omega(n)}-O(1/n)\right)\left(1-e^{-\Omega(n)}\right)=1-O(1/n).
\]
Combined with $\prob{|V^*|<\gamma n/4}=e^{-\Omega(n)}$ from~\eqref{eq:V*.large}, we have shown that the sprinkling step in \prettyref{alg:cycle_finding} goes through for constants $c_6,c_7$ with probability $1-O(1/n)$.

\section{Exponential model}\label{sec:exponential}

In this section, we focus on the special case of complete graph with exponential weights,
where $d=n$, $\calP=\exp(\lambda)$, and $\calQ=\exp(1/n)$, 
and prove the lower bound to the optimal reconstruction error given in 
\prettyref{eq:risk-lb-exp} in \prettyref{thm:exp}. 


As a convention, we call an alternating path a $(2\ell-1)$-alternating path if it consists of $\ell$ red edges and $\ell-1$ blue edges.
Recall that $\Delta(P)= \sum_{e \in \sfb(P)} \log \frac{ \calP}{\calQ} (W_e) -\sum_{e \in \sfr(P)} \log \frac{\calP}{\calQ} (W_e) $ for path $P$ in $G$.
The following result shows that with high probability there exist many disjoint $(2\ell-1)$-alternating $P$
with large $\Delta(P)$.

\begin{theorem}\label{thm:disjoint_paths}
Suppose that $\lambda=4-\epsilon$. There exists absolute constants 
$\epsilon_0, c_1, c_2, c_3>0$, and $n_0=n_0(\epsilon)$,
such that 
for all $\epsilon \le \epsilon_0$, $ c_1/\epsilon
\le \ell \le e^{-c_1/\sqrt{\epsilon}} \sqrt{n}$,
 and $n \ge n_0$, 
with probability at least $\frac{1}{2} - \frac{\ell^2 e^{c_2/\sqrt{\epsilon}} }{n}$, 
there is a set  $S^* $ of disjoint $(2\ell-1)$-alternating paths $P$ with
\begin{equation}
\left| S^* \right | \ge \frac{n}{\ell^2 e^{ c_3/\sqrt{\epsilon} } },
\label{eq:Sstar}
\end{equation}
such that for every $\ell/3 \le \ell' \le \ell$ and 
every $(2\ell'-1)$-alternating subpath $Q$ of $P$, it holds that 
\begin{equation}
\Delta(Q) \ge  (\lambda-1/n) \zeta_0 \epsilon \ell'. 
\label{eq:wQ}
\end{equation}
with $\zeta_0=\frac{1}{96}$. 
\end{theorem}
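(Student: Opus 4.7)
The plan is to follow the program of~\cite{ding2015percolation}: (i) use a first/second moment method to lower bound the number of ``good'' $(2\ell-1)$-alternating paths, namely those satisfying the uniformity condition~\eqref{eq:wQ} on every sub-path of length in $[\ell/3,\ell]$; (ii) invoke Paley--Zygmund to pass from moment bounds to an in-probability statement; (iii) apply Tur\'an's theorem on the vertex-sharing conflict graph to extract a disjoint sub-collection of the required size.

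I would first recast~\eqref{eq:wQ} as a random-walk-bridge condition. Since in the exponential model $\log(\calP/\calQ)(w) = \log(\lambda n) - (\lambda - 1/n)w$, the event $\Delta(Q) \ge (\lambda - 1/n)\zeta_0\epsilon\ell'$ on a $(2\ell'-1)$-alternating sub-path $Q$ reads
\[
\sum_{e \in \sfr(Q)} W_e \;-\; \sum_{e \in \sfb(Q)} W_e \;\ge\; \zeta_0\epsilon\,\ell' \;+\; \frac{\log(\lambda n)}{\lambda - 1/n}.
\]
Equivalently, the partial-sum walk of signed weights (with increments $\mathrm{Exp}(\lambda) - \mathrm{Exp}(1/n)$) must lie above a line of slope $\zeta_0\epsilon$ for every prefix length in $[\ell/3,\ell]$. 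This is precisely the setting in which the Erlang tail bounds of~\prettyref{app:erlang} and the Exp-minus-one bridge estimates of~\prettyref{app:bridge} apply.

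For the first moment, the number of $(2\ell-1)$-alternating paths is $\sim n^\ell$, and each has the same probability of lying in $S$. At $\lambda = 4 - \epsilon$ we have $\sqrt{d}B(\calP,\calQ) = 1 + \Theta(\epsilon)$, so each step of the bridge gains drift $\Theta(\epsilon)$; combined with the $\Theta(1)$ per-step fluctuations, the standard critical-regime analysis yields that the probability of a fixed path satisfying the uniformity condition at every scale in $[\ell/3,\ell]$ is of order $n^{-\ell} e^{-C_0/\sqrt{\epsilon}}$ up to polynomial factors, where the $e^{-C_0/\sqrt{\epsilon}}$ reflects the sharp balance between the quadratic variance and linear drift of the bridge at criticality. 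Multiplying, $\Expect|S| \gtrsim n/(\ell^2 e^{C_0/\sqrt{\epsilon}})$.

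The second moment computation is the main obstacle, and is where the uniformity constraint pays off. Writing $\Expect|S|^2 = \sum_{P,P'}\Prob(P,P'\in S)$, I would stratify over the overlap $P\cap P'$, which is a union of alternating sub-paths. Conditional on the weights along the overlap, the bridge events on $P\setminus P'$ and $P'\setminus P$ are independent. The uniformity constraint guarantees that the bridge probability is approximately \emph{multiplicative in length, at every scale}, so that $\Prob(P,P'\in S) \lesssim \Prob(P\in S)\Prob(P'\in S)\cdot e^{O(|P\cap P'|/\sqrt{\epsilon})}$; summing the resulting geometric series over overlap sizes gives $\Expect|S|^2 \lesssim (\Expect|S|)^2$ under the assumption $\ell \le e^{-c/\sqrt{\epsilon}}\sqrt{n}$. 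Paley--Zygmund then yields $|S| \ge \tfrac12 \Expect|S|$ with probability at least $\tfrac12 - O(\ell^2 e^{c_2/\sqrt{\epsilon}}/n)$. Finally, forming the conflict graph on $S$ (adjacency $=$ sharing a vertex) and arguing via a first-moment computation that its average degree is $O(\ell^2|S|/n)$, Tur\'an's theorem extracts an independent set of size $\Omega(|S|/(1+\ell^2|S|/n)) = \Omega(n/(\ell^2 e^{c_3/\sqrt{\epsilon}}))$, matching~\eqref{eq:Sstar}. The hard part is the second moment: without the uniformity requirement the excess weight $\Delta(P)$ could concentrate on a few edges, giving uncontrolled correlations between any two paths overlapping near that piece; requiring the bridge condition at every scale $\ge \ell/3$ forces an essentially even distribution of excess, making the two-path events almost factorize. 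Establishing this rigorously will require combining the two-sided Erlang estimates of~\prettyref{app:erlang} with the bridge monotonicity of~\prettyref{app:bridge}, together with a careful case analysis over the connected components of $P\cap P'$.
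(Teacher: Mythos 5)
Your high-level program---first/second moment, Paley--Zygmund, Tur\'an on the vertex-conflict graph---is indeed the paper's route, and you have identified the right tools (Erlang tails in~\prettyref{app:erlang}, bridge estimates in~\prettyref{app:bridge}). But there is a structural idea in the paper that your plan skips, and it is precisely the idea that makes the second moment go through.

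You propose to define the set $S$ of good paths directly by requiring~\eqref{eq:wQ} to hold on every long subpath. The paper does not do this. Instead, $S$ is defined (\prettyref{def:light_uniform}) by two \emph{separate} conditions: $(a,b,\eta)$-\emph{lightness}, which constrains only the totals $\wtr(P)$ and $\wtb(P)$ to be near $a\ell\approx 2\ell/\lambda$ and $b(\ell-1)$; and $A$-\emph{uniformity}, which constrains only the \emph{normalized} cumulative increments $\sum_{j\le k}\phi_j\cdot\ell/\wtr(P)$ (and their blue analogues) to stay within $\pm A$ of the identity, with $A\asymp 1/\sqrt{\epsilon}$. The estimate~\eqref{eq:wQ} is then \emph{derived} from lightness plus uniformity by a short deterministic computation (see~\eqref{eq:verify_Delta_Q}). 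The payoff of this decomposition is that $\wtr(P)$ and the normalized increments $W_e/\wtr(P)$ are \emph{independent} (\prettyref{lmm:expwalk}), by scale-invariance of the exponential; this factors $\prob{P\in S}$ into a lightness term times the bridge-range probability $p_\ell$, and, in the second moment, lets one control the conditional probability of $A$-uniformity of $P'$ given the overlap weights and given $\wtr(P'),\wtb(P')$ using \prettyref{lmm:uniform.conditional}. If you instead condition the joint event ``\eqref{eq:wQ} at all long windows'' on the overlap weights, the event inextricably couples totals with allocation, and there is no analogue of \prettyref{lmm:uniform.conditional} to invoke; your intuition that enforcing the bridge condition at every scale makes two-path events nearly factorize is the correct \emph{reason} the approach works, but it is an a posteriori description of what lightness plus $A$-uniformity buys, not a self-contained event one can compute moments with.

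Two smaller corrections. First, \eqref{eq:wQ} concerns all sliding windows (every $(2\ell'-1)$-alternating subpath), not only prefixes. Second, the first-moment probability $\prob{P\in S}$ does \emph{not} carry an $e^{-\Theta(1/\sqrt{\epsilon})}$ penalty as your sketch claims: with $A\asymp 1/\sqrt{\epsilon}$ and the paper's choice of $a,b$ one has $\expect{|S|}\gtrsim n/\ell$ with no $\epsilon$-exponential loss (\prettyref{lmm:impossible_first_moment}); the factor $e^{-\Theta(1/\sqrt{\epsilon})}$ in~\eqref{eq:Sstar} enters entirely through the $e^{\Theta(A)}$ factors appearing in the second-moment overlap bounds (see \eqref{eq:PE} and~\eqref{eq:def_kappa}), which then survive into the edge count of the conflict graph and hence into the Tur\'an step.
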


\prettyref{thm:disjoint_paths} provides the needed ingredient for proving the negative part of \prettyref{thm:exp}.
The proof of the positive part is deferred till \prettyref{app:ode}.

\begin{proof}[Proof of \prettyref{thm:exp}: negative part]
As mentioned in Section~\ref{sec:negative}, Theorem~\ref{thm:exp} is a direct consequence of~\prettyref{lmm:Mgood}
and~\prettyref{lmm:Mbad} with $c_0, c_2= e^{-O(1/\sqrt{\epsilon})}$. 
Therefore we only need to prove Lemma~\ref{lmm:Mbad} with the desired $c_0, c_2$.
We choose 
\begin{equation}
\gamma = \frac{\epsilon}{8}, \quad \zeta_0 = \frac{1}{192}, \quad \tau = e^{\tau_0/\sqrt{\epsilon}}, \quad \ell =\frac{18\tau}{\zeta_0 \epsilon}
\label{eq:exp-parameters}
\end{equation}
for some constant $\tau_0$ to be specified later. 
Without loss of generality, we assume that $\ell$ is a multiple of $3$. By the same argument that we used in the proof of \prettyref{thm:impossibility} to reduce condition~\eqref{eq:impossibility} to~\eqref{eq:impossible-assumption}, we also assume here that $\lambda=4-\epsilon$.

In Step 1 of the two-stage cycle finding scheme, we first apply~\prettyref{thm:disjoint_paths} to 
find a set $S^*$ of disjoint $(2\ell-1)$-alternating paths 
in $G_1=G[V^c\times (V^c)']$ with $|V^c|=(1-\gamma) n$.
Specifically, 
by shrinking every edge weight in $G_1$ by a multiplicative factor $1-\gamma$,
we arrive at an instance of the exponential model with $n'$ left (right) vertices, planted weight distribution $\exp(\lambda')$ and 
null weight distribution $\exp(\frac{1}{n'})$, where $n'=(1-\gamma) n$ and $\lambda'=\frac{\lambda}{1-\gamma}$. 
Since   $\lambda=4-\epsilon$ and
$\gamma=\epsilon/8$, it follows that 
$\lambda' \le 4- \epsilon/2 \equiv 4-\epsilon'$. Replacing 
$(n,\lambda,\epsilon)$ by $(n',\lambda',\epsilon')$, 
the same conclusion of \prettyref{thm:disjoint_paths} holds for $G_1$ (without weight shrinkage) with $\zeta_0 = \frac{1}{192}$. 

For any alternating path $\path$ in $S^*$, it is centered at a red edge $(k,k')$ with a $(\ell-1)$-alternating subpath on each side.
Let $L_k$ denote the set of left vertices in the first
 $(2\ell/3)$-segment of $\path$
and $R_k$ denote the right vertices in the last
 $(2\ell/3)$-segment of $\path$. 
Then we have 
$|L_k|=|R_k| =s$, where $s= \ell/3$. Moreover, each pair of vertices $u \in L_k$
and $v \in R_k$ is connected via a $(2\ell'-1)$-subpath $Q$ of $\path$ through the
red edge $(k,k')$ consisting of $\ell'$ red edges and $\ell'-1$ blue edges,
where $\ell' \ge \ell/3$ and $\Delta(Q) \ge   (\lambda-1/n) \zeta_0 \epsilon \ell'$. 
Let $\calK_1$ denote the collection of such indices $k$, where $K_1\triangleq |\calK_1| = |S^*|$. 
It follows from  \prettyref{thm:disjoint_paths} that 
with probability at least $1- O(1/n)$, $K_1 \ge c_5 n$ with constant
$$
c_5 \triangleq  \frac{1}{\ell^2 e^{ c_3/\sqrt{\epsilon} } }. 
$$

Following Step 2 of \prettyref{alg:cycle_finding}, we connect $\{L_k, R_k\}_{k \in \calK_1}$ to form alternating cycles in $G$ via sprinkling.
Choose 
$$
\tau_\mathsf{red} = \log (n \lambda), \quad \text{ and } \quad \tau_\mathsf{blue}= \log (n\lambda) - \left( \lambda-1/n \right) \tau.
$$
Note that $\log \frac{\calP}{ \calQ} (W_e) =\log (n\lambda)- (\lambda-1/n) W_e$. Thus $V^*=V$
and equivalently subgraph $G_2$ is the subgraph of $G$ that contains  every red edge in $V \times V'$, 
and every blue edge $e \in [n]\times [n]'\backslash(V^c \times V^{c'})$ if $W_e \le \tau$.
Then we apply Theorem~\ref{thm:sprinkling} with $V^*=V$
to show there exist exponentially many distinct alternating cycles via  sprinkling. 
We start by specifying the parameters $\beta$ and $\eta$ that appear in the statement of Theorem~\ref{thm:sprinkling}.
Note that $\beta=\frac{|V^*|}{n}=\gamma$ and the average blue degree is
\begin{align}
\eta = n \prob{ \exp (1/n) \le \tau } = n \left( 1- e^{-\tau /n}  \right). 
\label{eq:eta_choice_exp}
\end{align}
Using $e^{-x} \ge 1-x$ and $ e^{-x} \le 1- x/2$ for $x \in [0,1]$,
and $\tau/n \le 1$ for all sufficiently large $n$, we have
$ \tau/2 \le  \eta \le \tau$.  Next, we check the parameters above
lead to $b, K_1, \kappa$, and $d_{\rm super}$ that satisfy the assumptions of 
Theorem~\ref{thm:sprinkling}.
In particular, 
$$
b= \frac{\beta s \eta}{4} \ge \frac{ \gamma \ell \tau}{24} = \frac{\epsilon \ell \tau}{192} \ge 4,
$$
by choosing the constant  $\tau_0$ in \prettyref{eq:exp-parameters} sufficiently large; 
$K_1=c_5 n  \ge 8400$ for all large enough $n$;
$$
\kappa = \frac{2K_1 s \eta}{n} \le \frac{ 2  \ell \tau }{ 3 \ell^2 e^{ c_3/\sqrt{\epsilon} } }
= \frac{ \zeta_0 \epsilon } {27 e^{ c_3/\sqrt{\epsilon} } } \le \frac{1}{16^2}
$$
for all sufficiently small $\epsilon$; and
\begin{align}
d_{\rm super}= \frac{1}{32n} K_1 b^2 \eta \ge \frac{ 1}{32 \ell^2 e^{ c_3/\sqrt{\epsilon} } } \left(  \frac{\epsilon \ell \tau}{192} \right)^2   \frac{\tau}{2} 
= \frac{ \epsilon^2 e^{3 \tau_0/\sqrt{\epsilon}} }{ 64 \cdot (192)^2 e^{ c_3/\sqrt{\epsilon} } } \ge 256 \log (32 e),
\label{eq:avg_deg_sup_exp}
\end{align}
by choosing $\tau_0$ to be a sufficiently large constant. Having verified all assumptions of Theorem~\ref{thm:sprinkling}, we conclude that $G_\text{super}$ contains at least $e^{K_2/20}\geq e^{c_6 K_1}$ distinct alternating cycles of length at least $3K_2/4\geq c_7 K_1$ for universal constants $c_6, c_7$, with probability at least
\[
\left(1-e^{-K_1/32}-\frac{2}{\beta n\kappa^3}\right)\left(1-d_{\rm super} K_1/2^{18}\right)= \left(1-e^{-\Omega(n)}-O(1/n)\right)\left(1-e^{-\Omega(n)}\right)=1-O(1/n).
\]
In conclusion, the sprinkling step yields $e^{c_6 K_1}$ distinct alternating cycles (in fact here one such cycle suffices) $C \in \calC$ of the form
\[
C=\left(  v_1' , v_1, P_1,  u'_{1}, u_{1},  v_2', v_2, P_2, u_2', u_2,  \cdots,  v_r', v_r, P_r, u_r', u_r  \right)
\]
for $r\geq c_7 K_1$. 
It follows that $|C| \ge 2r \ell/3 = c_2 n$,
where 
$$
c_2 \triangleq 2 c_7 c_5 \ell /3 =  \frac{2c_7 }{3 \ell e^{ c_3/\sqrt{\epsilon} } }
=   \frac{c_7\zeta_0 \epsilon }{27  e^{ (c_3+\tau_0)/\sqrt{\epsilon} } }.
$$
 Moreover, by construction of $G_2$, 
 $$
\Delta(C) \ge 3 r  \tau_\mathsf{blue} - 2r \tau_\mathsf{red}  + \sum_{k=1}^r \Delta\left( P_{k} \right)
\ge - 3 r(\lambda-1/n)  \tau +  (\lambda-1/n) \zeta_0 \epsilon r \ell  /3  \ge   (\lambda-1/n)  r \zeta_0 \epsilon \ell  /6,
$$
where the last inequality holds by the choice of $\zeta_0 \epsilon \ell = 18 \tau$ in \prettyref{eq:exp-parameters}.
It follows that 
$$
\frac{\mu_W(\Mbad)}{\mu_W(M^*)}
\ge \left| \calC \right| e^{ \Delta(C) }
\ge e^{c_6 K_1} e^{ (\lambda- 1/n)  c_7 K_1 \zeta_0 \epsilon \ell  /6 } \ge \exp (c_0 n),
$$
where 
$$
c_0 \ge   c_7 K_1 \zeta_0 \epsilon \ell  /6  \ge \frac{c_7 \zeta_0 \epsilon \ell }{ 6 \ell^2 e^{ c_3/\sqrt{\epsilon} } }
=  \frac{c_7 \zeta_0^2  \epsilon^2 }{ 6  \cdot 18  e^{ (c_3+\tau_0) /\sqrt{\epsilon} } }.
$$
Theorem~\ref{thm:exp} then readily follows by combining Lemmas~\ref{lmm:Mgood}
and~\ref{lmm:Mbad}. 
\end{proof}


\medskip
In the sequel, we proceed to prove~\prettyref{thm:disjoint_paths}.
A more direct approach is to define 
$S$ as the set of 
$(2\ell-1)$-alternating paths $P$ with large $\Delta(P)$ and show that $\expect{|S|}$ is large while
$\var(|S|) \ll \left( \expect{|S|} \right)^2$ so that 
$|S|$ concentrates on its mean. 
Unfortunately, this idea fails as the second moment of 
$|S|$ blows up for $\ell=\Theta(n)$. 
This is because
conditioning on finding an
alternating path $P$ with large $\Delta(P)$, it is very likely to have a large number
of paths $P'$ with large $\Delta(P')$ overlapping with $P$. 
These clusters of overlapping paths induce an excessive contribution to the second moment.
To address this issue,  we adopt the notion of \emph{uniformity} introduced in~\cite{ding2015supercritical} for studying minimum mean-weight cycles.

Recall that for a set $T$ of edges, $\sfr(T)$ and $\sfb(T)$ denote the set of red and blue edges in $T$, respectively. Furthermore, define
\begin{align*}
\wt_\sfr(T) &= \sum_{e \in \sfr(T)} W_e,\\
\wt_\sfb(T) &= \sum_{e \in \sfb(T) } W_e.
\end{align*}

Let $\calP_\ell$ denote the set of $(2\ell-1)$ alternating paths with $\ell$ red edges and $\ell-1$ blue edges. 
We agree upon that each $P \in \calP_\ell$ is oriented so that it starts in the left vertex set and ends in the right vertex set.
Let $\phi_1, \phi_2, \ldots, \phi_{|\sfr(P)|}$ (resp.~$\psi_1, \psi_2, \ldots, \psi_{|\sfb(P)| } $) 
denote the sequence of red (resp.~blue) edge weights in this order. 
Define
\begin{align*}
\dev\sfr(P)  &= \sup_{1 \le k \le |\sfr(P)| } \left|   \sum_{j=1}^k  \frac{ \phi_j  |\sfr(P)| }{\wtr(P) }  - k   \right|, \\
\dev\sfb(P)  &= \sup_{1 \le k \le |\sfb(P)| } \left|   \sum_{j=1}^k  \frac{ \psi_j  |\sfb(P)| }{\wtb(P) }  - k   \right|
\end{align*}
which characterize the maximum fluctuation of edge weights on path $P$. 

\begin{definition}[Lightness and uniformity]\label{def:light_uniform}
We say an alternating path $P$ is \emph{$\left(a, b, \eta \right)$-light}, if 
\begin{align*}
\left| \wtr(P) -  a \cdot |\sfr(P)|  \right| & \le \eta/2  \\
\left| \wtb(P) - b \cdot |\sfb(P)|  \right|  & \le \eta/2;
\end{align*}
and \emph{$A$-uniform},  if $\dev\sfr(P) \le A$ and $\dev\sfb(P) \le A$. 
\end{definition}

Let us proceed to the definition of $S$. 
Define
$$
S \triangleq \left\{ P \in \calP_\ell: \text{ $P$ is $\left( a , b,
\eta \right)$-light and $A$-uniform} \right\}.
$$
with 
\begin{equation}
a= \frac{2}{\lambda}, \quad b = \frac{2-\zeta}{\lambda}.
\label{eq:ab}
\end{equation}
(We will choose $\eta=1,$ $\zeta=\frac{\epsilon}{4}$, and $A=\Theta(\frac{1}{\sqrt{\epsilon}})$ later.)
Then for any $P \in \calP_\ell$, we have
$$
\Delta(P) = - (\lambda - 1/n ) \left[ \wtb(P)  - \wtr(P) \right] \ge - (\lambda - 1/n )  \left[ b(\ell-1) - a \ell  - \eta \right] \ge (\lambda - 1/n ) \frac{\epsilon \ell}{96}.
$$
Furthermore, using the $A$-uniformity of $P$, it is not hard to verify that for every $(2\ell'-1)$-alternating subpath $Q$ of $P$ with $\ell/3 \le \ell' \le \ell$,
$\Delta(Q) \ge(\lambda - 1/n )  \frac{\epsilon \ell'}{96}$ (see forthcoming~\prettyref{eq:verify_Delta_Q}). 
Thus, to prove~\prettyref{thm:disjoint_paths}, the key remaining challenge is to show 
$S$ contains a large vertex-disjoint subcollection $S^*$ with $|S^*| =\Omega( n/\ell^2).$

The intuition that we require the mean weight of 
red (blue) edges in $P$ to be around $2/\lambda$ is as follows.
Given $P \in \calP_\ell$, $\wtr(P)$ is distributed as a sum of $\ell$ i.i.d.\ $\exp(\lambda)$
random variables, while $\wtb(P)$ is distributed as a sum of $\ell-1$ i.i.d.\ $n\cdot \exp(1)$. 
Conditional on $\wtr(P)$ being close to $\wtb(P)$, we expect\footnote{Indeed, using the density of sum of exponentials (see \prettyref{eq:erlang-pdf} in \prettyref{app:erlang}), 
the probability that $\wtr(P)/\ell$ and $\wtr(P)/(\ell-1)$ are both close to a given value $x$ is proportional to $x^{2\ell-3} e^{-\ell x(\lambda + \frac{\ell-1}{n \ell})}$, which, for large $n$ and $\ell$, is approximately maximized at $x = \frac{2}{\lambda}$.} that the mean weight for
both $\wtr(P)$ and $\wtb(P)$ are close to $2/\lambda$.
Moreover, we require the mean weight of blue edges to be slightly below $2/\lambda$ so that $\Delta(P)$ is positive.

Note that we further restrict the alternating path $P$ in $S$
to be uniform, in the sense that 
the mean weight of red (resp.\ blue) edges in every subpath $Q$ of $P$
concentrates around $a$ (resp.\ $b$). As we will see in the next section,
the uniformity can be interpreted as requiring an exp-minus-one random walk
conditioned on returning to the origin to have a restricted range, which
is shown to hold with a sufficiently large probability. This implies that  
after restricting to uniform alternating paths, the first moment
$\expect{|S|}$ is still
large.  Moreover, the number of uniform alternating paths has small
enough variance for the second moment method to go through.

%

\subsection{Exponential random walks }

In order to study the pathwise fluctuation of the edge weights on a given path, let us consider the following problem. Let $X_1, X_2, \ldots, X_\ell \iiddistr \exp(\mu)$ and let $X=\sum_{i=1}^\ell X_i$. Define a process
$$
R_j=\sum_{i=1}^j \left( \frac{X_i}{X} \ell  -1 \right), \quad 0 \le j \le \ell.
$$
Thanks to the property of the exponential distribution, conditional on any any realization of $X$, 
$\{ \frac{X_i}{X}: 1 \le i \le \ell\}$ are uniformly distributed on the $(\ell-1)$-dimensional probability simplex,
regardless 
of the value of $\mu$. In particular,  $\{ \frac{X_i}{X}: 1 \le i \le \ell\}$ and $X$ are independent.

\begin{lemma}
\label{lmm:expwalk}
The process $(R_j: 0 \le j \le \ell)$ is independent of $X$. Furthermore, $(R_j: 0 \le j \le \ell)$
is distributed as an exp-minus-one random walk started from the origin and conditioned to return to the origin at time $\ell$ (known as the \emph{exp-minus-one $\ell$-bridge}). 
\end{lemma}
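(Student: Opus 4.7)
The plan rests on one classical structural fact: if $X_1,\ldots,X_\ell \iiddistr \exp(\mu)$, then the normalized vector $(X_1/X,\ldots,X_\ell/X)$ (with $X=\sum_i X_i$) is uniformly distributed on the $(\ell-1)$-simplex, independent of $X$, and—crucially—independent of the rate parameter $\mu$. This is a direct consequence of the $\exp(\mu)\eqlaw \frac{1}{\mu}\exp(1)$ scaling together with the Dirichlet representation $(X_1,\ldots,X_\ell)/X \sim \mathrm{Dir}(1,\ldots,1)$, and it is precisely the independence property invoked in the paragraph preceding the lemma.

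For the first assertion, the plan is immediate: the process $(R_j:0\le j\le \ell)$ is a deterministic function of the ratios $(X_i/X)_{i=1}^\ell$ alone, so the independence of $(X_i/X)$ and $X$ transports directly to independence of $(R_j)$ and $X$.

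For the second assertion, I would set up the following explicit coupling with the exp-minus-one $\ell$-bridge. Let $Y_1,\ldots,Y_\ell \iiddistr \exp(1)$ and form the exp-minus-one random walk $S_j \triangleq \sum_{i=1}^j (Y_i-1)$. The bridge is by definition the law of $(S_j)$ conditioned on $S_\ell=0$, i.e.\ on $T \triangleq \sum_i Y_i = \ell$. Applying the structural fact to $(Y_i)$, the ratios $(Y_i/T)$ are uniform on the simplex independent of $T$, so conditionally on $T=\ell$ the vector $(Y_1,\ldots,Y_\ell)$ has the same law as $(\ell\cdot X_1/X,\ldots,\ell\cdot X_\ell/X)$ (this is where invariance of the simplex law in $\mu$ is used—one can take the $X_i$'s with any rate). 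Substituting into the definition of $S_j$ yields
\[
(S_j \mid S_\ell=0) \;\eqlaw\; \sum_{i=1}^{j}\!\Bigl(\tfrac{X_i}{X}\ell - 1\Bigr) \;=\; R_j,
\]
jointly in $0\le j\le \ell$, which is exactly the stated identification.

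I do not anticipate a real technical obstacle: once the Dirichlet/simplex invariance is invoked the result is essentially bookkeeping, and the coupling above is the clean way to package it. The only mild subtlety is to state conditioning on the measure-zero event $\{S_\ell=0\}$ unambiguously, which is handled by the disintegration of the law of $(Y_1,\ldots,Y_\ell)$ along the map $(y_1,\ldots,y_\ell)\mapsto \sum y_i$; this disintegration is well defined because the joint density of $(Y_i)$ is continuous, and the resulting regular conditional law is exactly the uniform law on the simplex scaled by $\ell$, matching the distribution of $(\ell X_i/X)$.
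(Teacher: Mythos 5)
Your argument is correct and is essentially the paper's proof: both rest on the Dirichlet/simplex representation of $(X_i/X)$ being independent of $X$ (and of $\mu$), and both then identify the process by conditioning the total to equal $\ell$. Your explicit coupling with $Y_i \iiddistr \exp(1)$ and $S_j=\sum_{i\le j}(Y_i-1)$ is a tidy way to package the second step, but it is the same computation the paper performs when it writes $(R_j)\overset{d}{=}(R_j\mid X=\ell)\overset{d}{=}(\sum_{i\le j}(X_i-1)\mid \sum_i(X_i-1)=0)$.
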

\begin{proof}
Since $X=\sum_{i=1}^\ell X_i$ follows the $\text{Erlang}(\ell,\mu)$ distribution as defined in~\prettyref{app:erlang},
the joint conditional density of $X_1,...,X_\ell$ given $X$ takes the form
\[
f\left(x_1,...,x_\ell\mid x\right)=\frac{\prod_{i=1}^\ell\mu\exp\left(-\mu x_i\right)\indc{x=\sum_{i=1}^\ell x_i}}{\mu^\ell x^{\ell-1}\exp(-\mu x)/(\ell-1)!}=\frac{(\ell-1)!}{x^{\ell-1}}\indc{x=\sum_{i=1}^\ell x_i}.
\]
Therefore, the distribution of $(X_j/X: 1\leq j\leq \ell)$ conditional on $X$ is uniform on the $\ell$-dimensional simplex and does not depend on $X$. Since $(R_j:0\leq j\leq \ell)$ is a function of $(X_j/X: 1\leq j\leq \ell)$, it is also independent of $X$. We have
\begin{align*}
\left(R_j: 0\leq j\leq \ell \right)\overset{d}{=} & \left(R_j: 0\leq j\leq \ell \right) \text{ conditional on }X=\ell\\
\overset{d}{=} & \left( \sum_{i=1}^j\left(X_i-1\right): 0\leq j\leq \ell \right) \text{ conditional on } \sum_{i=1}^\ell \left(X_i-1\right) = 0
\end{align*}
is an exp-minus-one $\ell$-bridge.
\end{proof}

%

Fix an alternating path $P \in \calP_{\ell}$. 
Applying \prettyref{lmm:expwalk}, we conclude that, crucially, 
$\wtr(P)$ and  $\{ W_e/\wtr(P): e \in \sfr(P)\}$ are independent. Furthermore, $\{ W_e/\wtr(P): e \in \sfr(P)\}$ has the same distribution as $\{ \frac{X_i}{X}: 1 \le i \le \ell\}$. Therefore
\begin{equation}
\dev\sfr(P) \; 
\overset{d}{=}  \; \max_{0 \leq  j \leq \ell} \left| R_j \right|.
\label{eq:interpolate-subpath-walk}
\end{equation}
The similar conclusion applies to blue edge weights $\wtb(C)$ and $\{ W_e/\wtb(C): e \in b(C)\}$.

Adapted from \cite[Lemma 2.3]{ding2013scaling},
the following lemma bounds the probability that the range of an exp-minus-one $\ell$-bridge is at most $A$;
this result is 
crucial for lower bounding the first moment of $A$-uniform alternating paths. 
More precise version of the results can be also found in \cite[Lemma 3.9 and Equation (8)]{ding2015supercritical}.
\begin{lemma}\label{lmm:brownian_bridge}
There exist universal constants $c_0, c'_0>0$ such that for all $A \ge 1$ and $\ell \ge A^2$
\begin{align}
 \exp\left( - \frac{c_0 \ell}{A^2} \right)  \le  \prob{\max_{0 \leq  j \leq \ell} \left| R_j \right|  \le A } \le  \exp\left( - \frac{c'_0 \ell}{A^2} \right).
 \label{eq:lambdaA}
\end{align}
\end{lemma}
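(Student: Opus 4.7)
The plan is to reduce to the unconditioned exp-minus-one random walk via the representation from \prettyref{lmm:expwalk}, and then to use a block decomposition together with CLT/local-CLT estimates to obtain matching upper and lower bounds. By \prettyref{lmm:expwalk}, it suffices to analyze $(S_j)_{j=0}^{\ell}$ conditioned on $S_\ell=0$, where $S_j=\sum_{i=1}^j(X_i-1)$ with $X_i\iiddistr\exp(1)$; write $M=\max_{0\le j\le \ell}|S_j|$.

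For the upper bound I would first establish the unconditional estimate $\Prob\{M\le A\}\le e^{-c_1 \ell/A^2}$. Partition $\{0,\ldots,\ell\}$ into $K=\lfloor \ell/(cA^2)\rfloor$ blocks with endpoints $t_k=kcA^2$; the increment $S_{t_{k+1}}-S_{t_k}$ is a centered sum of $cA^2$ i.i.d.\ random variables with variance $1$, so by the CLT (or Berry--Esseen, since the increments have all exponential moments), choosing $c$ large enough ensures $\Prob\{|S_{t_{k+1}}-S_{t_k}|>2A\}\ge q$ for a universal $q>0$, regardless of $S_{t_k}$. On this event, $\max(|S_{t_k}|,|S_{t_{k+1}}|)>A$, so iterating via the Markov property gives $\Prob\{M\le A\}\le (1-q)^K\le e^{-c_1\ell/A^2}$. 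Then I transfer from the unconditional walk to the bridge by writing
\[
\Prob\{\max_j|R_j|\le A\}=\frac{\Prob\{M\le A,\,S_\ell\in dx\}}{\Prob\{S_\ell\in dx\}}\bigg|_{x=0}.
\]
Since $S_\ell+\ell$ is $\mathrm{Erlang}(\ell,1)$, Stirling yields $f_{S_\ell}(0)=\Theta(1/\sqrt{\ell})$; meanwhile a (one-step) local CLT for $S_\ell$ given the trajectory bounds the conditional density $f_{S_\ell\mid M\le A}(0)$ by $O(1/\sqrt{\ell})$ as well. Hence $\Prob\{\max_j|R_j|\le A\}\le C(\sqrt{\ell}/A)\,e^{-c_1\ell/A^2}$, and the polynomial prefactor is absorbed into a smaller exponential constant using $\sqrt{t}\le e^{t/(2e)}$ for $t=\ell/A^2\ge 1$, producing the bound $e^{-c_0'\ell/A^2}$.

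For the lower bound I would exhibit an explicit event of probability $\ge e^{-c_0\ell/A^2}$ on which the bridge stays in $[-A,A]$. Working with the same block endpoints $t_k$, require simultaneously (i) $|R_{t_k}|\le A/3$ for every $k=0,\ldots,K$, and (ii) $\max_{t_k\le j\le t_{k+1}}|R_j-\text{(linear interpolation)}|\le A/3$ within each block. Condition (i) holds with probability $\ge p_1>0$ per step by a conditional local CLT for the bridge marginals (the Gaussian density at the origin is nondegenerate on the scale $\sqrt{cA^2}=\Theta(A)$), and condition (ii) holds with conditional probability $\ge p_2>0$ per block uniformly in the endpoints by a Kolmogorov/Doob maximal inequality applied to the short bridge connecting $R_{t_k}$ to $R_{t_{k+1}}$. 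Multiplying over the $K=\Theta(\ell/A^2)$ blocks yields $\Prob\{\max_j|R_j|\le A\}\ge (p_1 p_2)^K\ge e^{-c_0\ell/A^2}$.

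The principal difficulty is quantitative: one needs local-CLT estimates for $f_{S_\ell}$ that are uniform on the full range $\ell\ge A^2\ge 1$ (not just in asymptotic regimes), together with the comparison of the bridge and unconditional trajectory laws on the event $\{M\le A\}$. These are exactly the estimates carried out in \cite{ding2013scaling,ding2015supercritical} for the minimum mean-weight cycle problem, which is why the statement cites those references; my plan is to invoke them in the form sketched above rather than redo them here.
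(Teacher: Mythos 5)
The paper does not actually prove this lemma: it explicitly defers to \cite[Lemma~2.3]{ding2013scaling} and \cite[Lemma~3.9 and Eq.~(8)]{ding2015supercritical}, which is precisely what you do in your final paragraph. So at the level of the argument that is actually recorded in the paper, your route and the paper's coincide, and your block/CLT sketch is a reasonable account of the mechanism behind those cited results.

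That said, one step of your sketch is stated in a way that would not survive scrutiny. In the upper-bound transfer you claim a ``one-step'' local CLT shows $f_{S_\ell\mid M\le A}(0)=O(1/\sqrt{\ell})$. Conditioning on the trajectory up to time $\ell-1$, the one-step transition kernel of $S_{\ell-1}\mapsto S_\ell$ is the shifted $\exp(1)$ density, whose value at $0$ given $S_{\ell-1}=s\in[-A,A]$ is $e^{s-1}$, which is as large as $e^{A-1}$; no uniform $O(1/\sqrt{\ell})$ bound follows from a one-step argument. Moreover $\{M\le A\}$ forces $S_\ell\in[-A,A]$, so $f_{S_\ell\mid M\le A}(0)$ is generically of order $1/A$ (the quasi-stationary profile), not $1/\sqrt{\ell}$; the two claims you make are also inconsistent with each other, since a true $O(1/\sqrt{\ell})$ bound would give a ratio $O(1)$, not the $O(\sqrt{\ell}/A)$ prefactor you then write. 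The standard repair is to split the walk at $\ell/2$, bound $\Prob\{\max_{j\le\ell}|S_j|\le A, S_\ell\in dx\}$ by $\Prob\{\max_{j\le\ell/2}|S_j|\le A\}\cdot\sup_y f_{S_\ell-S_{\ell/2}}(y)\,dx$, and use the local CLT on the \emph{second half} to get the $O(1/\sqrt{\ell})$ smoothing; this yields an $O(1)$ prefactor and needs no control of the conditioned endpoint density. A parallel caveat applies to the lower bound: the per-block conditional Gaussian heuristic $p_1>0$ breaks down for the last $O(1)$ blocks, where the remaining horizon $\ell-t_k$ is comparable to the block size and the bridge drift is strong; this is exactly the boundary bookkeeping that the cited lemmas carry out carefully. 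Since you ultimately invoke those lemmas rather than proving them, these are flaws in the expository sketch rather than in the logical chain, but they are worth flagging because as written the ``one-step local CLT'' claim is simply false.
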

Applying \prettyref{eq:lambdaA} separately to both red and blue edges, we conclude that for any $P \in \calP_\ell$,
\begin{equation}
p_\ell \triangleq \prob{P \text{ is $A$-uniform}} 
\ge
\exp\left( - \frac{2c_0 \ell}{A^2} \right).
\label{eq:Auniform}
\end{equation}
Furthermore, the event that 
$P$ is $A$-uniform is independent of $\{ \wtr(P), \wtb(P)\}$ and hence also the event that $P$ is $(a,b)$-light.

\subsection{First moment estimates}
\label{sec:first}
In the remainder of this section, 
all the expectations are conditioned on $M^*= m$ for some fixed matching $m \in \calM$, e.g., the identity.

\begin{lemma}\label{lmm:impossible_first_moment}
Suppose $\lambda \le 4$ and $\eta \le 1$.
There exist a universal constant $c_0>0$, such that 
for all $A \ge 1$ and $\ell \ge  A^2$,
\begin{align}
\expect{\left| S \right|} 
\geq & ~ n
 \frac{ \eta^2 \lambda  }{ 8e^{3} b \ell }  
 \left( 2b e^{- \frac{b}{n} } \right)^{\ell-1} e^{- \frac{\ell^2}{n}} p_\ell  
\label{eq:firstmoment-precise}
 \\
\geq & ~ 
n  \frac{\eta^2 \lambda }{ 16 e^3 b^2 \ell}
\left( 2b \e^{- \frac{c_0}{A^2} - \frac{\ell +b}{n} } \right)^{\ell}.
\label{eq:firstmoment-crude}
\end{align}
\end{lemma}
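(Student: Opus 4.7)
The plan is a direct first-moment calculation that proceeds via linearity of expectation and exploits the independence structure supplied by \prettyref{lmm:expwalk}. Every $P\in\calP_\ell$ is identified with an ordered tuple of $\ell$ distinct left vertices $v_1,\dots,v_\ell$ (the red edges are $(v_i,v_i')$ and the blue edges are $(v_i',v_{i+1})$), so
\[
|\calP_\ell|=n(n-1)\cdots(n-\ell+1) \ge n^\ell e^{-\ell^2/n},
\]
using $\ln(1-i/n)\ge -2i/n$ for $i\le \ell\le n/2$. By linearity, $\Expect[|S|] = |\calP_\ell|\cdot \Prob\{P\in S\}$ for any fixed such $P$.

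For fixed $P$, the event that $P$ is $(a,b,\eta)$-light depends only on the scalar sums $\wtr(P)$ and $\wtb(P)$, whereas the event that $P$ is $A$-uniform depends only on the normalized fractions $\{W_e/\wtr(P)\}_{e\in\sfr(P)}$ and $\{W_e/\wtb(P)\}_{e\in\sfb(P)}$. \prettyref{lmm:expwalk} gives that, separately on the red and blue sides, the sum is independent of the normalized fractions; since red and blue edge weights are themselves independent, all three events are mutually independent, so that
\[
\Prob\{P\in S\}=p_\ell\cdot\Prob\{|\wtr(P)-a\ell|\le\eta/2\}\cdot\Prob\{|\wtb(P)-b(\ell-1)|\le\eta/2\}.
\]

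The red sum is $\mathrm{Erlang}(\ell,\lambda)$, whose density $\frac{\lambda^\ell x^{\ell-1}e^{-\lambda x}}{(\ell-1)!}$ is unimodal with mode at $(\ell-1)/\lambda\le a\ell$; hence on $[a\ell-\eta/2,a\ell+\eta/2]$ it is minimized at the right endpoint. Lower bounding the probability by $\eta$ times this minimum and using $\lambda a=2$ together with the hypothesis $\lambda\eta\le 4$ to absorb the $e^{-\lambda\eta/2}$ correction into the constant gives
\[
\Prob\{|\wtr(P)-a\ell|\le\eta/2\}\ \gtrsim\ \eta\cdot\frac{\lambda(2\ell)^{\ell-1}e^{-2\ell}}{(\ell-1)!}.
\]
An identical computation for the blue sum, which is $\mathrm{Erlang}(\ell-1,1/n)$, yields
\[
\Prob\{|\wtb(P)-b(\ell-1)|\le\eta/2\}\ \gtrsim\ \eta\cdot\frac{(b(\ell-1))^{\ell-2}e^{-b(\ell-1)/n}}{n^{\ell-1}(\ell-2)!}.
\]

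Multiplying the three factors together with $|\calP_\ell|$ and applying Stirling to both $(\ell-1)!$ and $(\ell-2)!$ (the residual $\sqrt{\ell}\cdot\sqrt{\ell-1}$ factors combine to give the $1/\ell$ in the statement) produces \eqref{eq:firstmoment-precise}. The bound \eqref{eq:firstmoment-crude} is then immediate: substitute $p_\ell\ge \exp(-2c_0\ell/A^2)$ from \eqref{eq:Auniform}, use $(2b)^{\ell-1}=(2b)^\ell/(2b)$ (which converts the prefactor $1/(8e^3 b\ell)$ into $1/(16e^3 b^2\ell)$) and $(\ell-1)b/n\le \ell b/n$, and absorb the factor of $2$ into $c_0$. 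There is no conceptual obstacle; the only step requiring care is tracking constants through Stirling tightly enough to fit inside the explicit $1/(8e^3)$, which requires a form such as $(\ell-1)!\le\sqrt{2\pi\ell}(\ell/e)^{\ell-1}$ rather than a cruder $(\ell-1)!\le e\sqrt{\ell}(\ell/e)^{\ell-1}$, and checking that the endpoint correction $e^{-\lambda\eta/2}$ does not eat too much of the budget.
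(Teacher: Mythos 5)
This is the paper's proof: count $|\calP_\ell|$, factor out $p_\ell$ using the independence supplied by \prettyref{lmm:expwalk}, and lower-bound the two Erlang integrals by (density)$\times$(interval length). Two arithmetic slips are worth flagging, though neither is a conceptual gap. For the blue sum, $\text{Erlang}(\ell-1,1/n)$ has its mode at $(\ell-2)n$, which is far to the \emph{right} of $b(\ell-1)=\Theta(\ell)$; hence the density is \emph{increasing} on the window and is minimized at the \emph{left} endpoint, not the right as on the red side. The ``identical computation'' is therefore not literal, and $\eta\,f(b(\ell-1))$ (density at the center times the full width) is not by itself a lower bound; integrating only over the right half $[b(\ell-1),\,b(\ell-1)+\eta/2]$ gives the valid $(\eta/2)f(b(\ell-1))$, which is what your $\gtrsim$ implicitly relies on. For the red sum, the paper does not evaluate at $a\ell+\eta/2$ at all: it restricts to the half-interval $[a\ell-\eta/2,\,a\ell]$, where (after $u=\lambda x$) the integrand $u^{\ell-1}e^{-u}$ is decreasing because its mode $\ell-1$ sits left of $2\ell-\lambda\eta/2$ (using $\lambda\eta/2\le 2\le \ell+1$ from $\lambda\le 4,\,\eta\le1$), and evaluates at the right endpoint $a\ell$. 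This yields the clean prefactor $\eta\lambda/2$ with no endpoint correction. Your full-interval version loses a net factor $\approx e^{-\lambda\eta/4}$ after the polynomial $(1+\lambda\eta/(4\ell))^{\ell-1}$ recovers half of the $e^{-\lambda\eta/2}$; at $\lambda\eta=4$ this is $\approx e^{-1}<1/2$, and no sharpening of Stirling's constant alone will make that fit inside the explicit $1/(8e^3)$ --- the half-interval restriction is the fix, not a tighter factorial bound.
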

\begin{proof}
Recall that the planted edge weights are i.i.d.\ $\exp(\lambda)$ 
and unplanted edge weights are i.i.d.\ $\exp(1/n)$. 
Recall that for $\ell \ge 1$, $\calP_\ell$ denotes the set of alternating paths of length $2\ell-1$ with $\ell$ red edges and $(\ell-1)$ blue edges.
We have that 
$$
|\calP_\ell| = n (n-1) \cdots (n-\ell +1).
$$
To see this, note that
there are $n(n-1) \cdots (n-\ell +1)$ different choices for $\ell$ left vertices on $P$. 
The right vertices are automatically fixed according to the
underlying true matching $M^*$. 
Write $|\calP_\ell|
= n^{\ell}
\exp\sth{\sum_{k=0}^{\ell} \log \pth{1- \frac{k}{n}}}$. Note that by monotonicity,
\[
- \frac{1}{\ell} \sum_{k=0}^{\ell -1} \log \pth{1- \frac{k}{n}} \leq \frac{n}{\ell} 
\int_0^{\ell/n} - \log(1-x)\dx  = F\pth{\frac{\ell}{n}}
\]
where 
\begin{equation}
F(\delta) \triangleq \frac{1}{\delta}(\delta + (1-\delta) \log(1-\delta))
\label{eq:Fdelta}
\end{equation}
is increasing in $\delta \in [0,1]$ 
and satisfies $F(\delta) \leq \delta$ for all $\delta \in [0,1]$.
Then 
 $$
 \exp\sth{\sum_{k=0}^{\ell-1} \log \pth{1- \frac{k}{n}}} 
 \ge \exp\sth{-\ell^2/n}.
 $$
 In conclusion, we get that
\begin{equation}
|\calP_\ell|
 \ge 
 \left(n \e^{ -\ell/n} \right)^{\ell}.
\label{eq:Cell-lb}
\end{equation}


Fix an alternating path $P \in \calP_\ell$. 
Next we bound the probability that $P$ is $(a,b,\eta)$-light. 
Recall that 
$\sfr(P)$ denote the set of red (planted) edges and 
$\wtr(P)$ denote the total of their weights. 
Then $|\sfr(P)|=\ell$ and 
$\wtr(P)\sim \text{Erlang}(\ell,\lambda)$. Using the Erlang density function in \prettyref{eq:erlang-pdf},  we have
\begin{align*}
\prob{a \ell- \frac{\eta}{2} \le \wtr(P) \le a \ell } 
 & = \int_{a \lambda \ell- \frac{ \lambda \eta }{2}}^{ a \lambda \ell } 
 \frac{x^{\ell-1} \e^{- x}}{(\ell-1) !} \dx \\
 & \ge \frac{ \eta \lambda }{2 (\ell-1) !} 
  x^{\ell-1} \e^{- x} \big\vert_{x= a \lambda \ell}\\
  & = \frac{\eta  \lambda}{2(\ell-1) !} \left(a \lambda \ell\right)^{\ell-1} \e^{-a \lambda \ell}
  =\frac{\eta  \lambda}{2(\ell-1) !} \left( 2 \ell\right)^{\ell-1} \e^{-2 \ell},
\end{align*}
where 
the inequality holds because 
$x^{\ell-1} \e^{- x}$ is decreasing for $x \ge \ell-1$
and by the assumptions $a\lambda = 2$, $\lambda\le 4$ and $\eta \le 1$ so that $a \lambda \ell-\frac{\eta \lambda }{2} \geq \ell-1$.

Similarly, recall that 
$\sfb(P)$ denote the set of blue (unplanted) edges and 
$\wtb(P)$ denote the total of their weights. 
Then $|\sfb(P)|=\ell-1$ and $\wtr(P)\sim \text{Erlang}(\ell-1,\frac{1}{n})$. Thus
\begin{align*}
\prob{b  (\ell -1) 
\le \wtb(P) \le b (\ell -1) + \frac{\eta}{2} } 
& = \int_{ \frac{b (\ell -1) }{n} }^{  \frac{b (\ell -1) }{n} + \frac{\eta}{2n} } 
\frac{ x^{\ell-2} \e^{- x } }{(\ell-2) !} \dx \\
 & \ge  \frac{1}{ (\ell-2) ! } 
 \left( \frac{ b (\ell -1) }{n} \right)^{\ell-2} 
\int_{ \frac{b (\ell -1) }{n} }^{  \frac{b (\ell -1) }{n} + \frac{\eta}{2n} } 
 \e^{- x } \dx \\
  & = \frac{1}{(\ell-2) ! } \left( \frac{b(\ell-1)}{n}  \right)^{\ell-2}  \e^{- \frac{b (\ell -1)}{ n}} \left( 1 - \e^{-\frac{\eta}{2n}} \right) \\
  & \ge  \frac{1}{(\ell-2) !}  \left( \frac{b(\ell-1)}{n}  \right)^{\ell-2}  \e^{- \frac{b (\ell -1)}{ n}}  \frac{\eta}{4n},
\end{align*}
where the last inequality holds because 
$  1 - \e^{-x}  \ge x/2$ for $0\le x\le 1$.

Since $\wtr(P)$ and $\wtb(P)$ are independent, 
it follows from the last two displayed equations that
\begin{align*}
\prob{P \text{ is $(a,b,\eta)$-light}} 
& \ge \prob{ 
a \ell- \frac{\eta}{2} 
\le \wtr(P) \le a \ell , b(\ell -1)
\le \wtb(P) \le b(\ell -1) + \frac{\eta}{2}  } \\
& \ge \frac{ \eta^2 \lambda \ell }{ 8 b \ell! (\ell-1)!  }  
\left(   \frac{ 2 b  \ell (\ell-1) }{ n } \right)^{\ell-1} \e^{-2 \ell - \frac{b(\ell-1)}{ n} } \\
& \overset{(a)}{\ge} 
\frac{ \eta^2 \lambda }{ 8e b \sqrt{\ell(\ell-1)} }  
\left(   \frac{ 2b e^2 }{ n } \right)^{\ell-1} \e^{-2 \ell - \frac{b(\ell-1)}{ n} } 
\\
& \ge
\frac{ \eta^2 \lambda  }{ 8e^{3} b \ell }  
\left(  \frac{2b}{n} e^{ - \frac{b}{n} } \right)^{\ell-1}
\numberthis \label{eq:light}
\end{align*}
where $(a)$ holds due to $n! \le \e n^{n+1/2} \e^{-n}$.

Finally, combining \prettyref{eq:Cell-lb}, \prettyref{eq:Auniform}, \prettyref{eq:light} and 
using the independence of the events 
$\{P \text{ is $(a, b,\eta)$-light}\}$ and $\{P \text{ is $A$-uniform}\}$, we get 
 \begin{align*}
 \expect{|S|} 
& =  |\calP_\ell| \cdot \prob{P \text{ is $(a, b,\eta)$-light}} \cdot \prob{P \text{ is $A$-uniform}} \\
 & \geq 
 \left(n \e^{ - \ell/n } \right)^{\ell} \cdot
 \frac{ \eta^2 \lambda  }{ 8e^{3} b \ell }  
\left(   \frac{ 2 b }{ n  } e^{ - \frac{b}{n} } \right)^{\ell-1}
\cdot 
 p_\ell \\
& \ge 
n  \frac{\eta^2 \lambda }{ 16 e^3 b^2 \ell}
\left( 2b \e^{- \frac{c_0}{A^2} - \frac{\ell +b}{n} } \right)^{\ell}.
 \end{align*}
\end{proof}

\subsection{Second-moment estimates}
\label{sec:second}

\begin{lemma}\label{lmm:impossible_second_moment}
There exist absolute constants $C_1,C_2$, and $\epsilon_0$ such that the following holds. 
Let $\lambda = 4-\epsilon$ for some $0<\epsilon \leq \epsilon_0$. 
Then there exists $n_0=n_0(\epsilon)$ such that upon choosing 
$A = \ceil{\frac{C_1}{\sqrt{\epsilon}}}$, 
$\zeta=\epsilon/4$, 
and $\eta = 1$, for any $\ell$ with $A^2 \le \ell \le e^{-C_2 A} \sqrt{n}$ and $n\geq n_0$, it holds that
$$
\var(|S|) \le \expect{|S|}
\times 
\left( 1+   \frac{ e^{C_2/\sqrt{\epsilon} } \ell^2  }{ n} 
\expect{|S|}  \right).
$$
\end{lemma}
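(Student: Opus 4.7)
The strategy is a truncated second-moment calculation, in the spirit of \cite{ding2015supercritical}. I will decompose
\[
\Expect[|S|^2] = \Expect[|S|] + \sum_{P\neq P'\in\calP_\ell}\prob{P\in S,\,P'\in S}
\]
and split the pair sum by whether $E(P)\cap E(P') = \emptyset$. Disjoint pairs have independent edge weights, so $\{P\in S\}$ and $\{P'\in S\}$ are independent and their contributions cancel against $\Expect[|S|]^2$ inside the variance. Hence the task reduces to showing
\[
\sum_{P\neq P',\, E(P)\cap E(P')\neq\emptyset}\prob{P\in S,\,P'\in S}\;\le\;\frac{e^{C_2/\sqrt{\epsilon}}\ell^2}{n}\,\Expect[|S|]^2.
\]

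For an overlapping pair, set $I = E(P)\cap E(P')$ and $r=|\sfr(I)|\ge 1$. Because $P$ and $P'$ are alternating with red/blue determined by the planted matching, $I$ is a disjoint union of alternating subpaths. A standard enumeration (choose the $r$ shared left vertices inside $P$, their positions inside $P'$, and then fill the remaining $\ell-r$ positions of $P'$ with fresh left vertices from the $n-\ell$ available ones) bounds the number of overlapping pairs with shared red count $r$ by at most $|\calP_\ell|^2\cdot(\ell^2/n)^r \cdot C^r/(r!)^2$ for a universal $C$: each shared red edge saves a combinatorial factor of order $n$ relative to the disjoint count.

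For the joint probability, condition on the weights $w_I$ on $I$; the weights on $P\setminus I$ and on $P'\setminus I$ are then mutually independent. The $A$-uniformity of $P$ pins $\wtr(I)$ and $\wtb(I)$ to their nominal values $ar$ and $b|\sfb(I)|$ within additive error $O(A)$, so the $(a,b,\eta)$-light events for $P'$ reduce, up to a window shift of size $O(A)$, to Erlang-density estimates for $\wtr(P'\setminus I)$ and $\wtb(P'\setminus I)$ in intervals of length $\eta=1$. Because $A\le\sqrt{\ell}$ by the hypothesis $\ell\ge A^2$, these shifted interval probabilities are within a multiplicative factor $e^{O(1)}$ of those obtained by applying \prettyref{lmm:impossible_first_moment} with $\ell-r$ in place of $\ell$. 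The uniformity constraints on $P'$ along $I$, given those on $P$, are handled through the bridge identification \prettyref{eq:interpolate-subpath-walk} from \prettyref{lmm:expwalk} and the range bound \prettyref{lmm:brownian_bridge}: forcing an exp-minus-one bridge to match the uniform behavior of $P$ over an $r$-subinterval costs at most $e^{O(r/A^2)}$. Combining with the large-deviation analysis of the saddle point in the joint Erlang integrals, the ratio of the joint probability to the product of marginals per overlapping pair with shared red count $r$ is controlled by $e^{O(r A)}\cdot e^{O(r/A^2)}$.

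Multiplying the combinatorial count by this per-pair bound produces a geometric series in $r$ with common ratio at most $\ell^2\, e^{O(A)}/n$. Under the hypothesis $\ell^2\le e^{-2C_2 A}n$ with $A=\lceil C_1/\sqrt{\epsilon}\rceil$ and $C_2$ chosen sufficiently large, this ratio is $e^{-\Omega(1/\sqrt{\epsilon})}$ and the series is dominated by its $r=1$ term, yielding the claimed bound. The main obstacle will be faithfully decoupling the two $A$-uniformity conditions along the shared subpath $I$, since uniformity is a pathwise range constraint that does not factorize naturally across $I$ and $P\setminus I$; the key technical input, following \cite{ding2015supercritical,ding2013scaling}, is the exact identification of the normalized partial weight fluctuations with an exp-minus-one bridge together with \prettyref{lmm:brownian_bridge}, which turn such conditioning into the mild $e^{O(r/A^2)}$ cost above. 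The choice $A=\Theta(1/\sqrt{\epsilon})$ is calibrated precisely so that this per-edge cost, together with the combinatorial saving $\ell^2/n$, leaves a net $e^{O(1/\sqrt{\epsilon})}$ factor in the final bound, which in turn is what drives the infinite-order phase transition in \prettyref{thm:exp}.
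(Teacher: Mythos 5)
Your overall strategy matches the paper's: decompose by overlap, condition on shared edge weights, use Erlang estimates plus the exp-minus-one bridge bound for the uniformity constraint. But the way you organize the overlap sum conceals a genuine error.

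You parameterize an overlapping pair $(P,P')$ by the shared red count $r$ alone, claim the per-pair ratio $\prob{P'\in S\mid P\in S}/\prob{P'\in S}$ is $e^{O(rA+r/A^2)}$, and conclude that the $r$-series has ratio $\ell^2 e^{O(A)}/n\ll 1$ and is dominated by $r=1$. This is not right. The probability ratio depends crucially on how the $r$ shared red edges decompose into connected alternating components (call the number of components $k$). For $k=r$ (all isolated shared reds), $I$ contains no blue edges, and the ratio is indeed only $(e/2)^r e^{O(rA)}$ as you claim. But for $k=1$ (one connected shared subpath), $I$ contains $r-1$ blue edges, so $P'\setminus I$ is missing $r-1$ blue edges, and the Erlang estimate for its blue weight loses a factor $(n/(2b))^{r-1}$ relative to the marginal. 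Concretely, for $k=1$ the ratio is $\approx (n/(2b))^{r-1}(e/2)^r$, which grows polynomially in $n$, not $e^{O(rA)}$. Your combinatorial count $(\ell^2/n)^r C^r/(r!)^2$, on the other hand, is an overcount dominated by the $k=r$ configurations; for $k=1$ the correct count fraction is only $\sim\ell^2 n^{-r}$. These two errors do not cancel: multiplying your count (which is a $k=r$-sized count) by your probability ratio (which is a $k=r$-sized ratio) misses the $k=1$ contribution, which when computed correctly equals $\sim(\ell^2/n)\cdot(2b)^{-(r-1)}\cdot e^{O(A)}$ per $r$. Since $2b=(4-\epsilon/2)/(4-\epsilon)\approx 1+\epsilon/8$, the $r$-series for $k=1$ has common ratio $\approx 1/(2b)=1-\Theta(\epsilon)$, \emph{not} $\ll 1$. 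Contributions from overlap length up to $\Theta(1/\epsilon)$ are essential, and their sum contributes an extra factor $\sim\epsilon^{-2}$. That factor happens to be absorbable into $e^{C_2/\sqrt\epsilon}$, so the \emph{statement} of the lemma survives, but your argument does not establish it.

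The paper handles this by parameterizing the overlap by both $m$ (total overlap length) and $k$ (number of intersection components), computing the count $\ell^k\binom{m-1}{k-1}\binom{m'+1}{k}n^{(m'-(k-1))/2}$ and the matching Erlang/bridge probability bound for each $(k,m)$ separately, and then observing that the inner $k$-sum has small ratio $\sim\ell^2 e^{O(A)}/n$ while the outer $m$-sum has ratio $\kappa\approx e^{2c/A^2}/\sqrt{2b}\le 1-\epsilon/100$, giving $\sum_m m\kappa^{m-1}\lesssim\epsilon^{-2}$. The calibration $A=\Theta(1/\sqrt\epsilon)$ is precisely what makes $e^{2c/A^2}/\sqrt{2b}<1$ hold, which is where your instinct about $A$ is correct; but the series being ``near-critical'' rather than rapidly converging is the phenomenon your write-up loses. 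To repair the argument you would need to split your overlap sum into the two-parameter family $(k,r)$ and track how the $n$-dependence transfers between count and probability as $k$ varies, exactly as in the paper's equations~\eqref{eq:counting} through~\eqref{eq:final_second_estimate}.
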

\begin{remark}
In the above estimate it is crucial to get $\frac{\Var(|S|)-\expect{|S|}}{\expect{|S|}^2}= O\left( \frac{\ell^2}{n}\right)$,
so that we can extract from $S$  a vertex-disjoint subcollection $S^*$ of $|S^*| =\Omega( n/\ell^2)$ in~\prettyref{sec:Turan} by applying Tur\'an's theorem.
This turns out to be instrumental to ensure that the super graph in the sprinkling stage is 
supercritical as shown in~\prettyref{eq:avg_deg_sup_exp}.
As a result, we need to be careful with terms that are polynomial in $\ell$ in the second moment computation.
\end{remark}

\begin{proof}
Note that 
\begin{align*}
\mathrm{Var} \left( |S| \right)
& =\expect{|S|^2} - \left( \expect{|S|} \right)^2 \\
& = \sum_{P, P' \in \calP_\ell} 
\left( \prob{ P \in S , P' \in S }
- \prob{ P \in S } \prob{ P' \in S }\right) \\
& \overset{(a)}{=} \sum_{P \in \calP_\ell} \sum_{P' \in \calP_\ell: |P \cap P'| \ge 1}
\left( \prob{ P \in S , P' \in S }
- \prob{ P \in S } \prob{ P' \in S } \right) \\
& \le \sum_{P \in \calP_\ell} \sum_{P' \in \calP_\ell: |P \cap P'| \ge 1}
\prob{ P \in S , P' \in S } \\
&= \sum_{P \in \calP_\ell } \prob{  P \in S }
\pth{1 + 
\sum_{P' \in \calP_\ell: |P \cap P'| \ge 1, P'\neq P} \prob{ P' \in S \;\big\vert 
\; P \in S }}, \numberthis \label{eq:var1}
\end{align*}
where $(a)$ holds because the weights in $P$ and those in $P'$ 
are mutually independent if $P \cap P' =\emptyset$.

Fix any $P \in\calP_\ell$. 
Recall that $P$ is alternating paths with $\ell$ red edges and $\ell-1$ blue edges which starts from a left vertex and ends with a right vertex, and whose first and last edge are both red.
We group the summands of the inner sum according to how $P'$ overlaps with $P$ (see Figure~\ref{fig:count.P'}).
For each $P' \neq P$ with $|P \cap P'| \ge 1$:
\begin{itemize}
\item  $P'\cap P$ consists of $k$ disjoint \emph{alternating} paths $\calP=(P_1, \ldots, P_k)$ for some $k \ge 1$. 
Let $|P_i|=m_i$ and $m=\sum_{i=1}^{k} m_i < 2\ell$. 
Note that the first edge and the last edge in $P_i$ must be red.
Then $m_i\geq 1$ must be  odd  
and $P_i$ has $(m_i+1)/2$ red edges and $(m_i-1)/2$ blue edges.  

\item $P'\backslash P$ consists of $k+1$ disjoint \emph{alternating} paths $\calP'\triangleq (P'_1, \ldots, P'_{k+1})$,
where we allow $P'_1$ and $P'_{k+1}$ to be possibly empty. 
Let $|P'_i|=m'_i$ and $m'=\sum_{i=1}^{k+1} m'_i = 2\ell-1-m$.  
Note that for $i \in \{1, k+1\}$, $m'_i$ must be even and $P'_i$  has $m'_i/2$ red edges and 
$m'_i/2$ blue edges.  For $2 \le i \le k$, 
the first edge and the last edge in each $P'_i$ must be blue.
Hence, $m'_i \ge 1 $ must be odd  and  $P'_i$ has $(m'_i-1)/2$ red edges and $(m'_i+1)/2$ blue edges.  
\end{itemize}

\begin{figure}[ht]
  \begin{center}
  
  \begin{tikzpicture}[scale=0.9,every edge/.append style = {thick,line cap=round},font=\scriptsize]
  
  \tikzset{->-/.style={decoration={
  markings,
  mark=at position #1 with {
  \arrow[black, scale = 1]{stealth}}
  },postaction={decorate}}}

\draw (1,-0.5) node(1) [smallnode] {};
\draw (2,-0.5) node(2) [smallnode] {};
\draw (3,0) node(3) [smallnode] {};
\draw (4,0) node(4) [smallnode] {};
\draw (5,0) node(5) [smallnode] {};
\draw (6,0) node(6) [smallnode] {};
\draw (7,0) node(7) [smallnode] {};
\draw (8,0) node(8) [smallnode] {};
\draw (9,0.5) node(9) [smallnode] {};
\draw (10,0.5) node(10) [smallnode] {};
\draw (11,0) node(11) [smallnode] {};
\draw (12,0) node(12) [smallnode] {};
\draw (13,0) node(13) [smallnode] {};
\draw (14,0) node(14) [smallnode] {};
\draw (15,0) node(15) [smallnode] {};
\draw (16,0) node(16) [smallnode] {};
\draw (17,0.5) node(17) [smallnode] {};
\draw (18,0.5) node(18) [smallnode] {};
\draw (7,-0.7) node(19) [smallnode] {};
\draw (8,-1) node(20) [smallnode] {};
\draw (9,-1.2) node(21) [smallnode] {};
\draw (10,-1.2) node(22) [smallnode] {};
\draw (11,-1) node(23) [smallnode] {};
\draw (12,-0.7) node(24) [smallnode] {};

\path [red] (1) edge [line width = 2pt] (2);
\path [blue] (2) edge (3);
\path [red] (3) edge [line width = 2pt] (4);
\path [blue] (4) edge (5);
\path [red] (5) edge [line width = 2pt] (6);
\path [blue, bend left = 60] (6) edge[->- = 0.6] (7);
\path [red] (7) edge [line width = 2pt] (8);
\path [blue] (8) edge (9);
\path [red] (9) edge [line width = 2pt] (10);
\path [blue] (10) edge (11);
\path [red] (11) edge [line width = 2pt] (12);
\path [blue, bend left = 60] (12) edge[->- = 0.6] (13);
\path [red] (13) edge [line width = 2pt] (14);
\path [blue] (14) edge (15);
\path [red] (15) edge [line width = 2pt] (16);
\path [blue] (16) edge (17);
\path [red] (17) edge [line width = 2pt] (18);

\path [blue, bend left = 30] (12) edge[->- = .5] (7);
\path [blue] (6) edge[->- = .6] (19);
\path [red] (19) edge [line width = 2pt] (20);
\path [blue] (20) edge[->- = .6] (21);
\path [red] (21) edge [line width = 2pt] (22);
\path [blue] (22) edge[->- = .6] (23);
\path [red] (23) edge [line width = 2pt] (24);
\path [blue] (24) edge[->- = .6] (13);
\path [blue, bend left = 30] (16) edge[->- = .5] (11);

\node at (2,-0.9) {$P_1'$};
\node at (4.5,0.4) {$P_1$};
\node at (9.5,-1.5) {$P_2'$};
\node at (14.5,0.4) {$P_2$};
\node at (13.5,-1) {$P_3'$};
\node at (11.5,0.4) {$P_3$};
\node at (7.5,0.4) {$P_4$};

\node at (9.5,-0.45) {$P_4'$};

 \end{tikzpicture}
  \end{center}
\caption{An example of alternating paths $P$ and $P'$, with $\ell=17, k=4, m=8$. $P'$ is divided into $2k+1=9$ segments (subpaths) $\{P_1',P_2,P_2',...,P_4',P_4,P_5'\}$, where in this example $P_5'$ is empty. The subpaths $P_1,P_2,P_3,P_4$ in the intersection $P\cap P'$ lie on the horizontal line; below this horizontal line are the segments $P_1',P_2',P_3',P_4'$ in $P'\backslash P$; above the horizontal line are the edges in $P\backslash P'$. From the agreement on the orientation, all the red edges are traversed from left to right in both $P$ and $P'$.}%
\label{fig:count.P'}%
\end{figure}

For the sake of enumeration, let us agree on the orientation so that all alternating paths with $2\ell+1$ edges start from a left vertex. This way, all the red edges are traversed from left to right in both $P$ and $P'$. Moreover, each $P_i$ starts from a left vertex, and each $P_i'$ starts from a right vertex for $2 \le i \le k$; 
$P_1$ and $P_{k+1}$ each starts from a left vertex.

Since $P$ is fixed, to specify $P'$, it suffices to specify the alternating paths $\calP'$ that constitute $P'\backslash P$, which
further reduces to  specifying for each $P'_i$ its start and end points, as well as the internal vertices.
	%
	%
\begin{itemize}
	\item We first specify the start and end points of $P'_i$ for $2 \le i \le k$, which are chosen from the vertices on $P$.
It suffices to specify $P_i$ for $1 \le i \le k$. 
For a given sequence of lengths $(m_1, m_2, \ldots, m_k)$, to specify $P_i$ 
for $1 \le i \le k$, it suffices to specify the starting point of each $P_i$. In total, there are at most $\ell^k$ choices for the starting points of $P_i,i\in[k]$. Next, 
since $\sum_{i=1}^k m_i= m$ and $m_i \geq 1$, there are at most $\binom{m-1}{k-1}$ choices of $(m_1, \ldots, m_k)$. 
Hence, in total there are at most 
$\binom{m-1}{k-1} \ell^k$ choices for the start and end points of $P'_i$ for $1 \le i \le k$.

\item
Next, we specify the length configuration $(m'_1, m'_2, \ldots, m'_{k+1})$ of $\calP'$. 
Since  $m'=\sum_{i=1}^{k+1} m'_i$,  $m'_1, m'_{k+1} \ge 0$,
and $m'_i \ge 1$ for $ 2 \le i \le k$, in total there are at most 
$\binom{m'+1}{k}$ choices for $(m'_1, m'_2, \ldots, m'_{k+1})$.

\item
Next we specify the vertices of $P'_1$  if $m'_1 \neq 0$. Note that the end point of $P'_1$ is on $P$ and has already been chosen. 
Thus it remains to choose the starting points of red edges in $P'_1$ which must be in the left vertex set per the agreed-upon orientation. 
Since there are $m'_1/2$ red edges in $P'_1$, there are at most $n^{m'_1/2}$ different choices for the vertices of $P'_1$ that are not on $P$. 
Analogously we can specify the vertices of $P'_{k+1}$ if $m'_{k+1} \neq 0$
and there are at most $n^{m'_{k+1}/2}$ different choices for the vertices of $P'_{k+1}$ that are not on $P$.

%

\item
Finally, we specify the internal vertices of $P'_i$ for each  $2 \le i \le k$. 
Note that there are $\sum_{i=2}^{k} (m_i'-1)/2$ red edges in $\{ P'_i: 2\le i \le k\}$. 
For each red edge, it suffices to specify its starting point, which must be in the left vertex set per the agreed-upon orientation. 
Thus in total there are at most $\prod_{i=2}^{k} n^{(m_i'-1)/2}$  
different choices for the interval vertices of $P'_i$ for $2 \le i \le k$.
\end{itemize}


Putting the above points together, we conclude that there are at most 
\begin{equation}
\ell^k \binom{m-1}{k-1}   \binom{m'+1}{k}
n^{ \frac{ m'- (k-1)}{2 } } 
\label{eq:counting}
\end{equation}
different choices of alternating paths $P'_1, \ldots, P'_{k+1}$ of total length $m'$.


Next we bound each conditional probability in \prettyref{eq:var1} from above.
Note that $P' \in S$ means it is $(a,b,\eta)$-light and $A$-uniform (see \prettyref{def:light_uniform}).
Since $P$ and $P'$ are overlapping, conditioned on the edge weights on $P$, 
the event that $P'$ is $(a,b,\eta)$-light  and the event that $P'$ is $A$-uniform are no longer independent. 
Nevertheless,  
these two events imply that 
the $i$-th alternating path $P'_i$ satisfies:
\begin{align*}
\left(  \left| r(P'_i)  \right|  - 2A \right)  \left( a - \frac{\eta}{2\ell} \right)    
& \le \wtr(P'_i)    \le \left(  \left| r(P'_i)  \right|  + 2A \right)   \left( a + \frac{\eta}{2\ell} \right)     \\
\left(  \left| b(P'_i) \right|  - 2A \right) 
 \left( b-  \frac{\eta}{2(\ell -1) } \right) & \le  \wtb(P'_i)  \le \left(  \left| b(P'_i) \right|  + 2A \right) 
 \left( b+ \frac{\eta}{2(\ell -1) } \right).
\end{align*}
Note that $\sum_{i=1}^{k+1} \left| r(P'_i)  \right| = \frac{m'-(k-1)}{2} \triangleq L $
and  $\sum_{i=1}^{k+1} \left| b(P'_i)  \right| = \frac{m'+(k-1) }{2} \triangleq M $.
Thus, summing over all $1 \le i \le k+1$  yields that 
\begin{align*}
\left(  L  - 2(k+1) A \right) 
 \left( a - \frac{\eta}{2\ell} \right)  & \le  \wtr(P' \backslash P ) \le \left(  L  + 2(k+1) A \right) 
 \left( a + \frac{\eta}{2\ell} \right), \\
 \left( M - 2 (k+1) A \right) 
 \left( b- \frac{\eta}{ (2\ell -1) } \right) & \le \wtb(P'\backslash P )  \le \left( M + 2 (k+1) A \right) 
 \left( b+ \frac{\eta}{ (2\ell -1) } \right).
\end{align*}
Let $\calE$ denote the event such that the last displayed equation holds. Then 
\begin{align*}
 & \prob{ P' \in S \;\big\vert \; P \in S } \\
& = \prob{ \text{$P'$ is  $(a,b,\eta)$-light and $A$-uniform} \mid P \in S }  \\
& \le \prob{ \calE \cap \left\{ \text{$P'$ is $(a,b,\eta)$-light and $A$-uniform} \right\} \mid P \in S }\\
 & = \prob{ \calE \cap \left\{ \text{$P'$ is $(a,b,\eta)$-light } \right\}  \mid P \in S } 
  \times \prob{  \text{$P'$ is $A$-uniform} \mid \calE, \text{$P'$ is $(a,b,\eta)$-light},  P \in S } \\
 & \le \prob{\calE  \mid P \in S }  \times \prob{  \text{$P'$ is $A$-uniform} \mid \calE, \text{$P'$ is $(a,b,\eta)$-light},  P \in S }.
 \numberthis \label{eq:disint1}
 \end{align*}
 We proceed to separately bound the two probability terms in the RHS of the  last displayed equation. 
First, we bound  $\prob{\calE  \mid P \in S }$. 
Note that the edge weights in $P'\backslash P$ are independent from the edge weights in $P$.
Thus, 
$\prob{\calE  \mid P \in S } = \prob{\calE}$. 
Let 
$$
g_\ell(x)=e^{-x} x^{\ell-1} / (\ell-1)! \indc{x\ge 0}
$$
denote the probability density function of $\text{Erlang}(\ell,1)$ (cf.~\prettyref{app:erlang}).
Then we have that for $L \ge 1$, 
\begin{align}
& \prob{   \left(  L  - 2(k+1) A \right)  \left( a - \frac{\eta}{2\ell} \right) \le  \wtr( P'\backslash P ) \le
 \left(  L  + 2(k+1) A \right)  \left( a + \frac{\eta}{2\ell} \right)  } \nonumber  \\
 & = \int_{ \lambda \left( L-2(k+1)A \right) \left( a - \frac{\eta}{2\ell} \right)}^{\lambda \left(  L  + 2(k+1) A \right)  \left( a + \frac{\eta}{2\ell} \right) }  
 g_L(x) \diff x. \nonumber  \\
 & \le  \frac{ \left( \lambda \left(  L  + 2(k+1) A \right)  \left( a + \frac{\eta}{2\ell} \right)  \right)^{L-1} }{(L-1)!} \exp \left(   - \lambda \left( L-2(k+1)A \right) \left( a - \frac{\eta}{2\ell} \right) \right) \nonumber  \\
 & \le \frac{\left( \lambda a L \right)^{L-1} }{(L-1)!} e^{-\lambda La} e^{2(k+1) A (1+\lambda a ) + \frac{\eta L}{2\ell a} (1+ \lambda a  )  }
 =\frac{\left( 2 L \right)^{L-1} }{(L-1)!} e^{-2 L} e^{6(k+1) A + \frac{3\eta \lambda L}{4\ell }   }
, \label{eq:prob_L}
\end{align}
where the inequality holds due to 
$$
\int_{u}^v g_L(x)  \diff x \le \frac{v^{L-1}}{(L-1)!} \int_{u}^v e^{-x}  \diff x \le \frac{v^{L-1}}{(L-1)!} e^{-u}.
$$
Similarly,
\begin{align}
& \prob{   \left( M - 2 (k+1) A \right) 
 \left( b- \frac{\eta}{ (2\ell -1) } \right) \le \wtb( P'\backslash P ) \le \left(  M+ 2(k+1) A \right)  \left( b + \frac{\eta}{2\ell -1} \right) } \nonumber  \\
& \le  \int_{0}^{ \left(  M+ 2(k+1)A \right) \left( b + \frac{\eta}{2\ell} \right) /n}    g_M ( x) \diff x\nonumber  \\
 & \le \frac{1 }{ n^{M} M! }  \left(  \left( M+ 2(k+1)A \right)  \left( b + \frac{\eta}{2\ell} \right)   \right)^{M} \nonumber  \\
 & \le \frac{ (Mb)^M }{ n^{M} M!} \exp \left( 2 (k+1) A + \frac{\eta M}{2b \ell} \right), \label{eq:prob_M}
\end{align}
where the second inequality holds because
$$
\int_0^{u} g_\ell(x) \diff x \le \frac{1}{(\ell-1)!} \int_0^u x^{u-1}  d x =  \frac{u^{\ell}}{\ell!} , \quad \forall u \ge 0.
$$
Note that $ \wtr( P'\backslash P )$ and $ \wtb( P'\backslash P )$ are independent.
Combining \prettyref{eq:prob_L} and \prettyref{eq:prob_M} gives that\footnote{We emphasize that it is crucial to keep the polynomials terms in \prettyref{eq:PE} so that
in \prettyref{eq:binom_bound_ell} we can get the upper bound $4 (2\ell)^{k-1}$, which in turn yields the desired  $\frac{\ell^2}{n}$ factor in~\prettyref{eq:final_second_estimate}.}
\begin{align*}
 \prob{\calE} & \le 
 \left( \frac{\left(2 L \right)^{L-1} }{(L-1)!} \indc{L\ge 1} + \indc{L=0}\right) e^{-2 L} e^{ 6(k+1) A + \frac{3\eta \lambda L}{4\ell }  }
 \times \frac{ (Mb)^M }{ n^{M} M!} \exp \left( 2 (k+1) A + \frac{\eta M}{2b \ell} \right) \\
 & \le   \left( \frac{\indc{L\ge 1}}{2\sqrt{L}}  + \indc{L=0} \right) \frac{1}{ \sqrt{M} } \left( \frac{2}{e} \right)^L
 \left(  \frac{eb}{n}  \right)^M \exp \left(  8(k+1) A  + 6 \eta \right)  \\
 & =  \left( \frac{\indc{k\le m'}}{ \sqrt{(m')^2-(k-1)^2} } + \frac{ \indc{k=m'+1} }{\sqrt{m'} } \right)   n^{ - \frac{m'+(k-1) }{2} } \left( 2 b  \right)^{m'/2}
 \left(  \frac{b e^{2} }{2}  \right)^{\frac{k-1}{2}}  \exp \left( 8 (k+1)A+ 6 \eta \right), 
\numberthis \label{eq:PE}
\end{align*}
where in the first  inequality we bound the LHS of \prettyref{eq:prob_L} by $1$ when $L=0$;
the second inequality follows due to $n! \ge \sqrt{n} (n/e)^n$,
$a \lambda = 2$, $b\lambda \ge 1$, and $\lambda \le 4$. 
\medskip

Next, we bound $\prob{  \text{$P'$ is $A$-uniform} \mid \calE, \text{$P'$ is $(a,b,\eta)$-light},  P \in S }$.
For ease of notation, let us denote $\calE'=\calE \cap \{ \text{$P'$ is $(a,b,\eta)$-light}\} \cap \{P \in S\}$.
Then we have 
\begin{align*}
&\prob{  \text{$P'$ is $A$-uniform} \mid \calE' } \\
\overset{(a)}{=} & \mathbb{E}\left[\prob{ \text{$P'$ is $A$-uniform} \mid \wtr(P'), \wtb(P'), \{W_e :e\in P\} } \mid \calE'\right]\\
\overset{(b)}{\leq}& \sup_{\alpha: |\alpha -a | \le \frac{\eta}{2\ell} } 
 \sup_{\beta: |\beta- b| \le \frac{\eta}{2(\ell-1) } } 
 \sup_{w_e: e\in P}   \prob{  \text{$P'$ is $A$-uniform} \mid  \wtr(P') = \alpha \ell, \wtb(P') = \beta \ell, \{ W_e=w_e: e \in P\}  },
\end{align*}
where $(a)$ is because $\calE'$ is measurable with respect to $\{\wtr(P'),\wtb(P'),\{W_e:e\in P\} \}$, and (b) is because on $\calE'$, $P'$ is $(a,b,\eta)$-light so that
$|\wtr(P') -a\ell | \le \eta/2 $ and $|\wtb(P')- b\ell | \le \eta/2$.

Fix any $\alpha$ such that $|\alpha -a | \le \frac{ \eta}{2\ell} $
and any $\beta$ such that $| \beta- b| \le \frac{\eta}{2(\ell-1) } $. 
Note that $\eta=1$, $a=2/\lambda$ and $b=(2-\zeta)/\lambda$ with $\zeta=\epsilon/4$
and $\lambda=4-\epsilon$. By choosing $\epsilon_0$ to be sufficiently small and $n_0$ to be
sufficiently large,
we have $1/4 \le \alpha, \beta \le 1$  for all $\epsilon \le \epsilon_0$ and $n \ge n_0$. 
We proceed to bound 
$\prob{  \text{$P'$ is $A$-uniform} \mid \wtr(P') = \alpha \ell, \wtb(P') = \beta \ell, \{ W_e: e \in P\} }$. 
Applying~\cite[Lemma 3.3]{ding2015percolation} (restated as \prettyref{lmm:uniform.conditional} in \prettyref{app:bridge}) with $\rho=\alpha$ or $\beta$, 
we get that 
\begin{align*}
& \prob{ \devr(P') \le A
  \mid    \wtr( P') = \alpha \ell ,  \{ W_e : e \in P \} }
  \le 
  \sqrt{m} e^{ c( A k + m/A^2)  }  \prob{ \devr(P') \le A } \\
 & 
  \prob{ \devb(P') \le A
  \mid    \wtb( P') = \beta \ell ,  \{ W_e : e \in P \} }
  \le 
  \sqrt{m} e^{ c( A k + m/A^2)  } \prob{ \devb(P') \le A },
\end{align*}
where $c$ is a universal constant.
Hence, we get that 
\begin{align}
& \prob{  \text{$P'$ is $A$-uniform} \mid \wtr(P') = \alpha \ell, \wtb(P') = \beta \ell, \{ W_e: e \in P\} } \nonumber \\
& \le m e^{ 2c( A k + m/A^2)  } p_\ell, \label{eq:Auniform-cond}
\end{align}
where $p_\ell=\prob{\text{$P'$ is $A$-uniform}}=\prob{ \devr(P') \le A } \times \prob{ \devb(P') \le A }$.

Combining \prettyref{eq:counting}, \prettyref{eq:disint1}, \prettyref{eq:PE}, and \prettyref{eq:Auniform-cond}, we get
\begin{align}
& \sum_{P' \in \calP_\ell: |P \cap P'| \ge 1, P \neq P'} 
\prob{\text{$P'$ is  $(a, b,\eta)$-light and $A$-uniform} \mid P \in S } \nonumber \\
& \le \sum_{m=1}^{2\ell-2} \sum_{k=1}^{m}    
\binom{m-1}{k-1}  \binom{m'+1}{k} \ell^k n^{ \frac{ m'- (k -1) }{2} }
\nonumber \\
& ~~~ \times
\left( \frac{\indc{k\le m'}}{ \sqrt{(m')^2-(k-1)^2} } + \frac{ \indc{k=m'+1} }{ \sqrt{m'} } \right) n^{ - \frac{m'+(k-1) }{2} } \left(2b   \right)^{m'/2}
 \left(  \frac{b e^{2} }{2}  \right)^{\frac{k-1}{2}}  e^{8(k+1) A + 6 \eta } \nonumber \\
 & ~~~ \times  m e^{ 2c( A k + m/A^2)  } p_\ell. \label{eq:variance_bound_1}
 \end{align}
 
 To further bound the RHS, we claim that when $\ell \ge e^2$, 
\begin{align}
  \binom{m'+1}{k}  \left( \frac{\indc{k\le m'}}{ \sqrt{(m')^2-(k-1)^2} } + \frac{ \indc{k=m'+1} }{ \sqrt{m'} } \right)   \le 4 (2\ell)^{k-1}.  \label{eq:binom_bound_ell}
 \end{align}
 To see this, note that if $k \ge (m'+1)/2$, then $\binom{m'+1}{k} \le \left(  \frac{e(m'+1)}{k} \right)^k \le (2e)^k$ and the claim holds as  $(m')^2 - (k-1)^2\ge 1$
 when $k \le m'$ and $m' \ge 1$. 
 If $k \le (m'+1)/2$, then $ \sqrt{(m')^2-(k-1)^2} \ge (m'+1)/4$.  Thus,
 $$
   \binom{m'+1}{k}  \frac{1}{ \sqrt{(m')^2-(k-1)^2} }   \le 4 (m'+1)^{k-1} \le 4 (2\ell)^{k-1}.
 $$
 Therefore, plugging \prettyref{eq:binom_bound_ell} into \prettyref{eq:variance_bound_1}, we get that 
\begin{align*}
& \sum_{P' \in \calP_\ell: |P \cap P'| \ge 1, P \neq P'} 
\prob{\text{$P'$ is  $(a, b,\eta)$-light and $A$-uniform} \mid P \in S } \\
& \leq 4 p_\ell \ell c_1 \e^{ (16 +2c) A  + 6 \eta } \\
&~~~\times \sum_{m=1}^{2\ell-1}  m  \left( 2  b  \right)^{\ell-1/2-m/2}  e^{2c m/A^2}
\sum_{k=1}^{m}  \binom{m-1}{k-1}
\left( \frac{2\ell^2 \sqrt{b} }{n \sqrt{2} } e^{(8+2c) A +1} \right)^{k-1} \\
 & = 4 p_\ell \ell  \e^{  A (16+2c ) + 6 \eta + 2c/A^2 } \left( 2b\right)^{\ell-1}
 \sum_{m=1}^{2\ell-1} m \kappa^{m-1},
\end{align*}
where
\begin{align}
\kappa \triangleq   \frac{e^{2c/A^2} }{ \sqrt{ 2b  }} \left( 1 + \frac{\ell^2 \sqrt{2b} }{n } e^{(8+2c)A + 1 }   \right). \label{eq:def_kappa}
\end{align}
Note that $b=\frac{2-\zeta}{\lambda}$ with $\zeta=\frac{\epsilon}{4}$ 
and $\lambda=4-\epsilon$. Also, 
$
A = \ceil{\frac{C_1}{\sqrt{\epsilon}}}
$
and $A^2 \le \ell \le \sqrt{n} e^{-C_2 A}$,
where $C_1, C_2 $ are absolute constants. 
By choosing $C_1, C_2, n_0$ to be sufficiently large 
and $\epsilon_0$ to be sufficiently small,  
we get that 
$\kappa \le 1- \epsilon/100$  for all $n \ge n_0$ and $\epsilon \ge \epsilon_0$.
Hence, 
$$ 
\sum_{m=1}^{\infty} m \kappa^{m-1}  \le \left( 
\sum_{m=1}^{\infty} \kappa^{m}  \right)' = \left( 
\frac{\kappa}{1-\kappa} \right)'= \frac{1}{(1-\kappa)^2}
\le 10^4 \epsilon^{-2}.  
$$
Recall that $\eta=1$. Then we conclude that for universal constants $c_2, c_3>0$, 
\begin{align}
& \sum_{P' \in \calP_\ell: |P \cap P'| \ge 1, P \neq P'}
\prob{\text{$P'$ is  $(a,b,\eta)$-light and $A$-uniform} \mid  P \in S } \nonumber  \\
 & \le  p_\ell \ell \e^{ c_2/\sqrt{\epsilon} } \left( 2 b \right)^{\ell}
 \le \frac{  \ell^2 e^{c_3/\sqrt{\epsilon} } }{ n } \expect{|S|} 
 \label{eq:final_second_estimate}
\end{align}

Substituting the last displayed equation back to \prettyref{eq:var1}, we get
$$
\var(|S|) \le \expect{|S|}
\times 
\left( 1+   \frac{ \ell^2 e^{c_3/\sqrt{\epsilon}} }{ n} 
\expect{|S|}  \right).
$$
\end{proof}

\subsection{Extracting many vertex-disjoint alternating paths via Tur\'an's Theorem} \label{sec:Turan}
To extract many disjoint alternating paths from $S$, following~\cite{ding2015percolation}, we first construct a graph under which
an independent set corresponds to a set of disjoint alternating paths, and then
use Tur\'an's theorem to prove the existence of a large independent set. 

Specifically, we define a graph $H$ on the set $S$ of alternating paths, where $P$ and $P'$ are adjacent 
if they share at least one common vertex. 
Thus any collection of vertex-disjoint paths is an independent set in $H$.
 The following result due to Tur\'an (see e.g.~\cite[Theorem 1, p.~95]{Alon2016}), 
provides a lower bound to the size of the largest independent set in a general graph.

\begin{lemma}[Tur\'an's Theorem]
\label{lmm:turan}
Let $G=(V, E)$ be a finite, simple graph. Then $G$ contains an independent subset of size at least 
$|V|^2 / ( 2|E| + |V|) $. 
\end{lemma}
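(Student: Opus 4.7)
The plan is to use the short probabilistic argument of Caro and Wei. I would first sample a uniformly random permutation $\pi$ of the vertex set $V$, and define the random subset $I \subseteq V$ consisting of those vertices $v$ such that $\pi(v) < \pi(u)$ for every neighbor $u$ of $v$ in $G$. By construction $I$ is independent: if two adjacent vertices $u$ and $v$ both lay in $I$, then each would have to precede the other in $\pi$, a contradiction. Thus producing an independent set of the claimed size reduces to lower-bounding $\mathbb{E}[|I|]$.

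Next, for each $v \in V$ with degree $d(v)$, observe that the event $\{v \in I\}$ depends only on the relative order of $v$ and its $d(v)$ neighbors in $\pi$. By the symmetry of the uniform random permutation, the probability that $v$ precedes all of its $d(v)$ neighbors is exactly $\tfrac{1}{d(v)+1}$. Linearity of expectation then gives
\[
\mathbb{E}[|I|] \;=\; \sum_{v \in V} \frac{1}{d(v)+1}.
\]
Applying the Cauchy--Schwarz inequality in the form $(\sum_v 1)^2 \leq (\sum_v (d(v)+1))(\sum_v \tfrac{1}{d(v)+1})$, together with the handshake identity $\sum_{v} d(v) = 2|E|$, yields
\[
\mathbb{E}[|I|] \;\geq\; \frac{|V|^2}{\sum_{v \in V}(d(v)+1)} \;=\; \frac{|V|^2}{2|E|+|V|}.
\]
Since some realization of $\pi$ must achieve at least the expected value, there exists an independent set of cardinality at least $|V|^2/(2|E|+|V|)$, as desired.

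I do not anticipate a genuine obstacle here: this is the standard two-line probabilistic proof of the Caro--Wei bound, and the only nonroutine ingredient is recognizing that a uniform random ordering delivers the harmonic sum $\sum_v 1/(d(v)+1)$ automatically. In the application to the planted matching problem, this harmonic sum is controlled through the second-moment estimate of \prettyref{lmm:impossible_second_moment}, which upper-bounds $2|E(H)|+|V(H)| \lesssim \Expect[|S|] + \tfrac{\ell^2 e^{O(1/\sqrt{\epsilon})}}{n} \Expect[|S|]^2$, and is then translated into the desired lower bound $|S^*| = \Omega(n/\ell^2)$ in \prettyref{eq:Sstar}.
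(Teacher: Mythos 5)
Your proof is correct and is precisely the probabilistic Caro--Wei argument that the paper implicitly relies on by citing \cite[Theorem 1, p.~95]{Alon2016}: a uniform random ordering yields $\mathbb{E}[|I|]=\sum_v 1/(d(v)+1)$, and Cauchy--Schwarz with the handshake identity gives the stated bound. Since this matches the cited source's approach, nothing further is needed.
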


Next we lower bound the number of vertices $|V(H)|$ and 
upper bound the number of edges $|E(H)|$ using~\prettyref{lmm:impossible_first_moment}
and \prettyref{lmm:impossible_second_moment},
and then apply Tur\'an's theorem to finish the proof of
\prettyref{thm:disjoint_paths}.

\begin{proof}[Proof of \prettyref{thm:disjoint_paths}] 
Throughout the proof, $A = \ceil{\frac{C_1}{\sqrt{\epsilon}}}$, $ A^2  \le \ell \le e^{-C_2 A} \sqrt{n} $,  
$\eta=1$, $a=2/\lambda$, $b=\frac{2-\zeta}{\lambda}$ with $\lambda=4-\epsilon$ and 
$\zeta = \frac{\epsilon}{4}$.

We start by verifying that, by definition of $S$, each $(2\ell-1)$-alternating path $P \in S$ satisfies the subpath requirement in \prettyref{eq:wQ}.
Recall that $P$ is oriented so that it starts with a left vertex. In this order, let $\phi_1, \phi_2, \ldots, \phi_\ell$ denote the sequence of red edge weights 
and $\psi_1, \psi_2, \ldots, \psi_{\ell-1}$ the blue edge weights.
Fix any $\ell'$ such that $\ell/3 \le \ell' \le \ell$
and a $(2\ell'-1)$-alternating subpath $Q$ of $P$ that has $\ell'$ red edges and $\ell'-1$ blue edges. 
Then $Q$ consists of red edge weights $\phi_{k+1}, \ldots, \phi_{k+\ell'}$
and blue edge weights $\psi_{k+1}, \ldots, \psi_{k+\ell'-1}$ in order for some
$0\le k \le \ell-\ell'$.  In particular,
\begin{align*}
\wtr(Q) & = \sum_{i=1}^{\ell'} \phi_{k+i} = \sum_{i=1}^{k+\ell'} \phi_i  - \sum_{i=1}^k \phi_i \\
\wtb(Q) & = \sum_{i=1}^{\ell'-1 } \psi_{k+i} = \sum_{i=1}^{k+\ell'-1 } \psi_i - \sum_{i=1}^{k} \psi_i.
\end{align*}
Since $P$ is $(a, b,\eta)$-light and $A$-uniform with $\eta=1$, 
it follows that $ \sum_{i=1}^{k+\ell} \phi_i \ge \left( k+\ell' - A \right) \frac{\wtr(P)}{\ell}$, $\sum_{i=1}^{k} \phi_i \le \left( k + A \right) \frac{\wtr(P)}{\ell}$, 
and hence
$$
\wtr(Q)  \ge \left( \ell' - 2A \right) \frac{\wtr(P)}{\ell}  \ge  \left( \ell' - 2A \right)  \left( a -  \frac{1}{2\ell} \right). 
$$
Analogously, we have
\begin{align*}
\wtb(Q) & \le  \left( \ell' + 2 A \right) \left( b +  \frac{1}{2(\ell-1)}   \right). 
\end{align*}
Since $\log \frac{\calP}{\calQ} (W_e) = \log (n\lambda) - (\lambda-1/n) W_e$, it follows that 
\begin{align}
\Delta(Q) & = - (\lambda-1/n)  \left[ \wtb(Q)- \wtr(Q)  \right]  \nonumber \\ 
& \ge - (\lambda-1/n) \ell' \left( b - a + \frac{1}{2\ell} + \frac{1}{2(\ell-1)}   \right)
- 2(\lambda-1/n) A \left( b +  \frac{1}{2(\ell-1)} + a - \frac{1}{2\ell} \right) \nonumber \\
&  \ge - (\lambda-1/n)\left[ - \ell' \frac{\epsilon}{4\lambda} + \frac{8A}{\lambda}  + 2  \right] 
\ge  (\lambda-1/n) \frac{\epsilon}{96} \ell', \label{eq:verify_Delta_Q}
\end{align}
where the last inequality holds as $\ell' \ge \ell/3$, $\lambda=4-\epsilon$, and 
$\ell \ge A^2$, and $A \ge C_1/\sqrt{\epsilon}$.

Next we apply Tur\'an's theorem to the graph $H$ to extract a vertex-disjoint subcollection $S^*$ of $S$.
In view of \prettyref{eq:firstmoment-crude} and $A = \ceil{\frac{C_1}{\sqrt{\epsilon}}}$, by choosing $C_1, n_0$ to be sufficiently large and $\epsilon_0$ to be sufficiently small,
for all $0<\epsilon \le \epsilon_0$ and $n \ge n_0$, 
$$
\expect{|V(H)|}=\expect{ \left| S \right| } \ge \frac{c_4 n}{\ell} e^{ c_4 \epsilon \ell } 
$$
for a universal constant $c_4>0$. 
By Chebyshev's inequality and \prettyref{lmm:impossible_second_moment},
$$
\prob{ |S| \le \frac{1}{2} \expect{|S|} }
\le \frac{4 \var(|S|) }{  \left( \expect{|S|} \right)^2 }
\leq \frac{ \ell^2 e^{c_2/\sqrt{\epsilon}} }{n} 
$$
for a universal constant $c_2>0$. 
By \prettyref{eq:final_second_estimate},
\begin{align*}
\expect{\left|E(H) \right|}
&= \sum_{P \in \calP_\ell } \prob{  P \in S }
\sum_{P' \in \calP_\ell: |P \cap P'| \ge 1, P'\neq P} \prob{ P' \in S \;\big\vert \; P \in S }  \\
& \le  \frac{ \ell^2 e^{c_2/\sqrt{\epsilon} } }{ n}  \left( \expect{|S|} \right)^2.
\end{align*}
By Markov's inequality, 
$$
\prob{\left|E(H) \right| \ge 2 \expect{\left|E(H) \right|} } \le 1/2.
$$
Hence, with probability at least $1/2- \frac{ \ell^2 e^{c_2/\sqrt{\epsilon}} }{n} $, we have that 
$|S| \ge \frac{1}{2} \expect{|S|} $ and $|E(H) | \le 2\expect{|E(H) |}$, so that \prettyref{lmm:turan} implies the existence of $S^* \subset S $ of disjoint alternating paths 
such that 
$$
\left| S^* \right| \ge \frac{ \frac{1}{4} \left( \expect{|S|} \right)^2 }{ 4 \expect{|E(H) |} + \frac{1}{2} \expect{|S| } }
\ge  \frac{n}{\ell^2e^{c_3/\sqrt{\epsilon} }  } 
$$
for a universal constant $c_3>0$. 
\end{proof}


\appendix

\section{Large deviation estimates}
\label{app:LD}
\begin{lemma}
Let $\calP,\calQ$ be two probability distributions such that $\calP\ll \calQ$. Let $X_i$'s and $Y_i$'s be two independent sequences of random variables, where $X_i$'s are i.i.d.\ copies of $
\log(\calP/\calQ)$ under distribution $\calP$ and $Y_i $'s are i.i.d.\ copies of $\log(\calP/\calQ)$ under
distribution $\calQ$. For all $x\geq 0$ and positive integer $\ell$, we have
\begin{equation}
\prob{\sum_{i=1}^\ell (Y_i - X_i) \geq x\ell }\leq \exp \left(-\ell (\alpha+x/2) \right),
\label{eq:LDXY}
\end{equation}
where $\alpha = -2\log B(\calP,\calQ)$ as defined in~\eqref{eq:alpha}. Furthermore, if we further assume that $\calQ\ll\calP$, then for all $0\leq x\leq D(P\| Q)+D(Q\| P)$ and positive integer $\ell$, we have
\begin{equation}\label{eq:LD.reverse}
\prob{\sum_{i=1}^\ell (Y_i - X_i) \geq x\ell } \geq \exp\left(-\ell (\alpha +x+o(1))\right).
\end{equation}
\end{lemma}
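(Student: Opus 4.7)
Both inequalities are classical Chernoff/Cram\'er-type estimates for the log-likelihood-ratio sum; the upper bound is a one-line Markov argument, while the lower bound requires a standard exponential tilting. For \eqref{eq:LDXY}, I would apply Markov's inequality at the tilt parameter $s = 1/2$:
\[
\prob{\sum_{i=1}^\ell (Y_i - X_i) \geq x\ell} \leq e^{-x\ell/2}\, \bigl(\expect{e^{Y_1/2}}\bigr)^\ell\, \bigl(\expect{e^{-X_1/2}}\bigr)^\ell.
\]
A one-line change of variables gives $\expect{e^{Y_1/2}} = \int \sqrt{f/g}\cdot g\,d\mu = \int \sqrt{fg}\,d\mu = B(\calP,\calQ)$ and analogously $\expect{e^{-X_1/2}} = B(\calP,\calQ)$, so the product of the one-variable MGFs equals $B(\calP,\calQ)^{2\ell} = e^{-\alpha\ell}$, immediately yielding \eqref{eq:LDXY}.

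For \eqref{eq:LD.reverse}, write $Z = Y - X$ and $\Lambda(s) = \log\phi(s)$ with $\phi(s) = \expect{e^{sZ}} = \bigl(\int f^s g^{1-s}\,d\mu\bigr)\bigl(\int f^{1-s}g^s\,d\mu\bigr)$. A direct differentiation gives $\Lambda(0) = \Lambda(1) = 0$, $\Lambda(1/2) = -\alpha$, $\Lambda'(1/2) = 0$, and $\Lambda'(1) = D \triangleq D(\calP\|\calQ) + D(\calQ\|\calP)$; the last two identities use the extra assumption $\calQ \ll \calP$. Convexity of $\Lambda$ then implies that $s = 1/2$ is the global minimizer of $\Lambda$ on $[0,1]$, and that the tilted-mean function $\mu(s) \triangleq \Lambda'(s)$ is nondecreasing on $[1/2,1]$ with range $[0,D]$. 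For $x \in [0,D]$ and any small $\delta > 0$, pick $s^* \in [1/2,1]$ with $\mu(s^*) \in [x+\delta, x+2\delta]$ by the intermediate value theorem. I then perform the standard change of measure to the product tilted law $\tilde P_{s^*}^{\otimes \ell}$ with $d\tilde P_{s^*}/dP = e^{s^*Z - \Lambda(s^*)}$, under which $\sum Z_i/\ell$ concentrates at $\mu(s^*)$ by the weak law of large numbers; truncation on the event $\{x\ell \leq \sum Z_i \leq (x+3\delta)\ell\}$ yields
\[
\prob{\sum_{i=1}^\ell Z_i \geq x\ell} \geq e^{-\ell(s^*(x + 3\delta) - \Lambda(s^*))}\bigl(1 - o_\ell(1)\bigr).
\]

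The key (and easy) estimate is $s^*(x + 3\delta) - \Lambda(s^*) \leq \alpha + x + 3\delta$: since $s^* \leq 1$, the first term is at most $x + 3\delta$, and since $s = 1/2$ minimizes $\Lambda$ on $[0,1]$, $-\Lambda(s^*) \leq -\Lambda(1/2) = \alpha$. Sending $\delta \to 0$ slowly with $\ell$ (say $\delta = \ell^{-1/4}$, controlling the LLN error by Chebyshev using that $\mathrm{Var}_{\tilde P_{s^*}}(Z) = \Lambda''(s^*) < \infty$) absorbs all $\delta$-terms into the $o(1)$ in \eqref{eq:LD.reverse}. The main (modest) obstacle is just the uniform smoothness of $\phi$ on the closed interval $[1/2,1]$, which follows from $\calP \ll \calQ \ll \calP$; the boundary cases $x = 0$ (take $s^* = 1/2$ and replace the LLN by the $\sqrt{\ell}$-scale CLT for the mean-zero random walk under $\tilde P_{1/2}$) and $x = D$ (take $s^* = 1$) fit into the same framework.
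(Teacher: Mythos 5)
Your proposal is correct and follows essentially the same route as the paper: the upper bound is the Chernoff/Markov bound with tilt parameter $1/2$ (which gives $B(\calP,\calQ)^{2\ell}=e^{-\alpha\ell}$), and the lower bound rests on the same key observation that the optimal tilt lies in $[1/2,1]$ when $0\le x\le D(\calP\|\calQ)+D(\calQ\|\calP)$, whence $s^*x-\Lambda(s^*)\le x+\alpha$. The only cosmetic difference is that you prove the Cram\'er-type lower bound explicitly via change of measure and a WLLN/truncation argument, whereas the paper cites Cram\'er's theorem as a black box and then bounds the rate function $F(x)\le\alpha+x$.
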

\begin{proof}
The proof of~\eqref{eq:LDXY} follows from standard large deviation analysis (cf.~\cite[Appendix B]{bagaria2020hidden}).
Let $F$ denote the Legendre transform of the log moment generating function of $Y_1-X_1$, i.e.,
\[
F(x) =  \sup_{\theta \ge 0} \sth{ \theta x  - \psi_\calP(-\theta) -\psi_\calQ(\theta) },
\]
where $\psi_\calP(\theta)=\expect{e^{\theta X_1}}$ and 
$\psi_\calQ(\theta)=\expect{e^{\theta Y_1}}$.
Then from the Chernoff bound we have the following large deviation inequality:
\begin{equation}\label{eq:LD.plain}
\prob{\sum_{i=1}^\ell (Y_i - X_i) \geq x\ell }\leq \exp \left(-\ell F(x) \right).
\end{equation}
Note the following facts:
\begin{enumerate}
	\item $F(0) = -  \psi_\calP( -1/2) - \psi_\calQ( 1/2)= - 2 \log \int \sqrt{\calP \calQ}=\alpha$;
	\item $F(x) \geq F(0) +x/2$, for all $x\geq 0$.
\end{enumerate}
Combining these facts with~\eqref{eq:LD.plain} yields~\eqref{eq:LDXY}.

Next we prove~\eqref{eq:LD.reverse}. By Cram\'er's theorem, we have 
\begin{equation}
\label{eq:cramer}
\prob{\sum_{i=1}^\ell (Y_i - X_i) \geq x\ell }= \exp \left( -F(x)\ell + o(\ell)  \right),
\end{equation}
where $o(\ell)$ converges to $0$ as $\ell$ grows to infinity. 
Second, we have 
$$
F(x) \le \alpha  + x , \quad \forall 0 \le x \le D(\calP\|\calQ) + D(\calQ\|\calP). 
$$
To see this, note that $\psi_\calP(\theta) =  \psi_\calQ(1+\theta)$. Thus, the optimal $\theta$ is given by 
$$
x + \psi'_\calQ(1-\theta) - \psi_\calQ'(\theta)  =0. 
$$
Note that $\psi_\calQ'(0)=-D(\calQ\|\calP)$ and $\psi_\calQ'(1)=D(\calP\|\calQ)$. Moreover, since $\psi_\calQ(\theta)$
is convex, it follows that 
$\psi_\calQ'(\theta)$ is non-decreasing in $\theta$. 
Thus the optimal $\theta$ must lie in $[1/2, 1]$ when $0 \le x \le D(\calP\|\calQ) + D(\calQ\|\calP)$. Hence,
\[
F(x) = \sup_{\theta \in [1/2, 1] } \sth{ \theta x  - \psi_\calP(-\theta) -\psi_\calQ(\theta) }
\le x + \sup_{\theta \in [1/2, 1] } \sth{- \psi_\calP(-\theta) -\psi_\calQ(\theta) } 
=x+F(0)
= x+ \alpha .
\]
Combine with~\eqref{eq:cramer} to finish the proof of~\eqref{eq:LD.reverse}.
\end{proof}

\section{Erlang distribution and Chernoff bounds}
\label{app:erlang}
The sum of $\ell$ \iid $\exp(\lambda)$ random variables has an Erlang distribution with parameters $\ell$ and $\lambda$, 
denoted by $\text{Erlang}(\ell,\lambda)$, whose density is given by 
\begin{equation}
f(x) = \frac{\lambda^{\ell} x^{\ell-1} \e^{-\lambda x}}{(\ell-1) !}, \quad x \geq 0
\label{eq:erlang-pdf}
\end{equation}

\begin{theorem}\label{thm:Chernoff_Erlang}
Let $X_i \iiddistr \exp(1)$. Then
\begin{align*}
& \prob{\sum_{i=1}^n X_i \ge n \xi } \le  \exp \left( -n \left( \xi - \log (\xi ) -1 \right) \right), \quad \forall \xi >1  \\
& \prob{\sum_{i=1}^n X_i \le n \xi } \le  \exp \left( -n \left( \xi - \log (\xi ) -1 \right) \right), \quad \forall \xi <1 
\end{align*}
\end{theorem}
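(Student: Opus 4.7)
The plan is to apply the standard Chernoff bound separately to each tail, exploiting the explicit moment generating function of the exponential distribution. Specifically, for $X \sim \exp(1)$ one has $\mathbb{E}[e^{\theta X}] = (1-\theta)^{-1}$ for $\theta < 1$ and $\mathbb{E}[e^{-\theta X}] = (1+\theta)^{-1}$ for $\theta > -1$. Since the $X_i$ are i.i.d., this immediately yields product-form MGFs and reduces the problem to a one-dimensional convex optimization in the tilt parameter $\theta$.

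For the upper tail $\xi > 1$, I would apply Markov's inequality to $e^{\theta \sum_i X_i}$ with $\theta \in (0,1)$, obtaining
\[
\mathbb{P}\left\{\sum_{i=1}^n X_i \ge n\xi \right\} \le \exp\bigl( n\bigl[ -\theta \xi - \log(1-\theta) \bigr] \bigr).
\]
Differentiating the exponent in $\theta$ gives the optimizer $\theta^* = 1 - 1/\xi$, which lies in $(0,1)$ because $\xi > 1$. Substituting back produces an exponent of $-(\xi - 1) + \log\xi = -(\xi - \log\xi - 1)$, which is exactly the claimed rate.

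The lower tail $\xi < 1$ follows symmetrically: applying Markov to $e^{-\theta \sum_i X_i}$ for $\theta > 0$ gives
\[
\mathbb{P}\left\{\sum_{i=1}^n X_i \le n\xi \right\} \le \exp\bigl( n\bigl[ \theta \xi - \log(1+\theta) \bigr] \bigr),
\]
and the optimizer $\theta^* = 1/\xi - 1$ is positive precisely because $\xi < 1$. Plugging in yields the exponent $(1 - \xi) + \log\xi = -(\xi - \log\xi - 1)$, so both tails share the same Cram\'er rate function $I(\xi) = \xi - \log\xi - 1$.

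There is essentially no obstacle here: this is the textbook Cram\'er large-deviation computation for the exponential distribution, and the only thing to verify beyond the optimization is that $I(\xi) > 0$ for $\xi \ne 1$, which is immediate from the strict convexity of $x \mapsto x - \log x$ on $(0,\infty)$ and the fact that $I(1) = 0$. Both optimizers $\theta^*$ remain in the domain where the MGFs are finite, so no truncation or additional care is required.
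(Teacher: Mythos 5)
Your proof is correct, and it is the standard Cram\'er/Chernoff computation: both optimizers $\theta^* = 1-1/\xi$ (upper tail) and $\theta^* = 1/\xi - 1$ (lower tail) lie in the region where the respective moment generating functions are finite, and substituting back gives exactly the rate function $I(\xi) = \xi - \log\xi - 1$. The paper states this theorem in its appendix without proof, treating it as a textbook Chernoff bound for the Erlang distribution, so there is nothing to compare against; your argument is precisely what would be expected.
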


\section{Exp-minus-one random bridge}
\label{app:bridge}
Let $X_1, X_2, \ldots, X_\ell \iiddistr \exp(\mu)$ and let $X=\sum_{i=1}^\ell X_i$. 
Recall from \prettyref{lmm:expwalk} the exp-minu-one $\ell$-bridge $R$ is defined as 
$$
R_j=\sum_{i=1}^j \left( \frac{X_i}{X} \ell  -1 \right), \quad 0 \le j \le \ell.
$$
Define $\dev(R)=\max_{0 \le j \le \ell} \left| R_j \right| $. 
The following result adapted from \cite[Lemma 3.3]{ding2015percolation} 
(which is a slight extension of~\cite[Lemma 3.2]{ding2013scaling})
bounds the probability of $\dev(R) \le A$
conditional on the total weight of $X_i$ and the value of $X_i$
for a set of indices $i$ in a union of intervals,  
in terms of the unconditional probability $p_\ell \triangleq \prob{ \dev(R) \le A}$. 
This result is crucial for the second moment computation in \prettyref{sec:second}.


\begin{lemma}
\label{lmm:uniform.conditional}
Let $1/4 \le \rho \le 1$ and $1 \le A \rho \le \sqrt{\ell}$. 
Consider the integer intervals $[a_1, b_1], \ldots, [a_k, b_k]$
such that $1 \le a_1 \le b_1 \le \cdots \le a_k \le b_k \le \ell$
and $m=\sum_{i=1}^k (b_i-a_i +1)$. 
Write $J= \cup_{i=1}^k [a_i, b_i]$. Then
$$
\prob{ \dev(R)  \le A \mid \sum_{i=1}^\ell X_i = \rho \ell, \{ X_j= x_j, j \in J \} } 
\le c_1 A \sqrt{m \wedge (\ell-m) } p_\ell 10^{100 k A} e^{c_0 m/A^2},
$$
where $c_0, c_1>0$ are two universal constants. 
\end{lemma}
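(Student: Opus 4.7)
The plan is to reduce the conditional probability to a statement about only the free increments of the bridge, and then to compare this to the unconditional probability $p_\ell$ via a combination of endpoint enumeration and bridge-in-tube estimates analogous to those behind Lemma~\ref{lmm:brownian_bridge}.

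First, I would reformulate the conditioning. Given $\sum_{i=1}^\ell X_i = \rho \ell$ together with $\{X_j = x_j : j \in J\}$, the remaining increments $\{X_i : i \in [\ell] \setminus J\}$ are iid $\exp(\mu)$ conditioned on their total being $\rho\ell - \sum_{j \in J} x_j$. Let $I_0, I_1, \ldots, I_k$ be the (at most $k+1$) maximal subintervals of $[0,\ell] \setminus J$, with lengths $L_0, \ldots, L_k$ summing to $\ell - m$. Restricted to each $I_r$, the walk $R$ is an exp-minus-one walk of length $L_r$ whose starting height and total increment are determined by the $x_j$'s and by the values of $R$ at the boundary of $I_r$. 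Crucially, the constraint $R_j \in [-A, A]$ on each index $j \in J$ is deterministic given the conditioning, so it either holds outright or contributes $0$ to the bound.

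Next, I would disintegrate over the $2k$ internal boundary heights of the free intervals. Each such height must lie in $[-A, A]$ for $\dev(R) \le A$, and a local central limit estimate bounds its density at any point by $O(1/\sqrt{L_r})$; discretizing on a fine scale and union-bounding yields a combined enumeration factor of order $10^{O(kA)}$. Conditional on these heights and on the total free increment, the walk on each $I_r$ becomes an exp-minus-one bridge of length $L_r$ with specified endpoints. By the exp-bridge small-range estimate (the same tool that yields the upper bound in Lemma~\ref{lmm:brownian_bridge}), the probability it stays in the tube $[-A, A]$ is at most $C L_r^{-1/2} \exp(-c L_r/A^2)$. Multiplying across the $k+1$ free intervals gives a bound of the form $C^{k+1} \exp(-c(\ell - m)/A^2)$ up to the polynomial factors.

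Comparing with $p_\ell \asymp \exp(-c \ell/A^2)$ from Lemma~\ref{lmm:brownian_bridge}, the free-part exponential rewrites as $p_\ell \exp(c_0 m/A^2)$, producing the stated factor $e^{c_0 m/A^2}$; the square-root factor $\sqrt{m \wedge (\ell - m)}$ results from tracking the best $1/\sqrt{L_r}$ contribution, together with a dual argument (applied to the fixed side after swapping roles) that yields the $1/\sqrt{m}$ alternative. The main technical obstacle is making the local CLT and bridge estimates \emph{uniform} in the endpoint heights $(u,v) \in [-A,A]^2$ and in the prescribed total increment: one needs that for any such data the probability that the conditioned exp-minus-one bridge stays in $[-A,A]$ is controlled by $p_\ell$ up to the claimed factors. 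This is exactly the single-interval version established in \cite[Lemma~3.2]{ding2013scaling} and extended in \cite[Lemma~3.3]{ding2015percolation}, and the multi-interval statement follows by induction on $k$ using the Markov-bridge decomposition above.
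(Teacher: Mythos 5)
Note first that the paper does not prove Lemma~\ref{lmm:uniform.conditional}: it is presented in Appendix~\ref{app:bridge} as a direct adaptation of \cite[Lemma~3.3]{ding2015percolation}, which is in turn a slight extension of \cite[Lemma~3.2]{ding2013scaling}, and no argument is supplied here. Your proposal is therefore a blind reconstruction of a proof whose substance lives in those references, and you correctly identify them as the essential ingredient.

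Within the sketch, two points are not merely cosmetic. The line ``comparing with $p_\ell \asymp \exp(-c\ell/A^2)$'' is circular: Lemma~\ref{lmm:brownian_bridge} gives two distinct constants $c_0 > c'_0$ in the two directions, so an upper bound of $\exp(-c'_0(\ell-m)/A^2)$ on the free-part tube probability does not by itself yield a factor $p_\ell\,e^{c_0 m/A^2}$. One must instead \emph{lower-bound} $p_\ell$ by the same restricted decomposition (prescribing the same boundary heights and decomposing into the same per-interval bridge events) so that the exponential rates cancel; that cancellation in turn requires the bridge-in-tube probability to be controlled \emph{uniformly} over all prescribed endpoint heights in $[-A,A]^2$, which is exactly the content of \cite[Lemma~3.2]{ding2013scaling} and \cite[Lemma~3.3]{ding2015percolation} and exactly what your sketch treats as a black box. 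Second, your derivation of the $10^{100kA}$ factor does not actually produce it: integrating the $O(L_r^{-1/2})$ density over a range of size $O(A)$ for each of $2k$ boundary heights gives at most $A^{2k}=e^{O(k\log A)}$, not $e^{O(kA)}$; the $e^{O(kA)}$ scale in the cited lemma comes from the endpoint-uniformization cost just described, not from height enumeration. Finally, the claim that the constraints at indices $j\in J$ are ``deterministic given the conditioning'' is imprecise --- they are deterministic only given the random boundary heights --- although dropping them is still valid for an upper bound. In short, the skeleton of your reduction is sound, but the quantitative heart of the lemma is the uniform endpoint-constrained tube estimate, which you (and the paper) defer entirely to the references.
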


\newcommand{\dV}{\mathrm{d}V}
\newcommand{\dW}{\mathrm{d}W}
\newcommand{\dU}{\mathrm{d}U}
\newcommand{\dtU}{\mathrm{d}\tilde{U}}

\section{Minimum-weight matching for the exponential model}
\label{app:ode}
In this section, we prove the positive part \prettyref{eq:risk-ub-exp} in \prettyref{thm:exp}. Namely, in the complete graph case with exponentially distributed weights,
when $\lambda=4-\epsilon$, 
the minimum weighted matching $\Mmin$ (linear assignment), which corresponds to the maximum likelihood estimation, 
misclassifies at most $O\left( \frac{1}{\epsilon^3} \e^{-\frac{2\pi}{\sqrt{\epsilon}}} \right)$ fraction of edges on average.  
This together with the negative part \prettyref{eq:risk-lb-exp} in  \prettyref{thm:exp} shows that  the minimum weighted matching achieves the optimal rate $1/\sqrt{\epsilon}$ of the error exponent. 
Prior work~\cite{Moharrami2020a} 
provides the exact characterization of the asymptotic error of $\Mmin$ in terms of a system of ordinary differential equations when $\lambda<4$.
Our proof follows by analyzing this system of ODEs when $\lambda=4-\epsilon$ for small $\epsilon$, 
and is inspired by the heuristic arguments in  \cite[Section VI]{Semerjian2020}.


\begin{proof}[Proof of \prettyref{thm:exp}: positive part]
First, it has been shown in~\cite[Theorem 2]{Moharrami2020a}  that 
$$
\lim_{n\to \infty}  \expect{\ell \left(M^*,\Mmin \right)}
=4 \int_0^\infty \left(1-U(x) V(x) \right) \left(1-\left( 1- U(x) \right) W(x) \right) V(x) W(x) \,dx,
 $$
 where $(U, V, W)$ is the unique solution to the following system of equations 
 \begin{equation}
\begin{aligned}
&\frac{\dU}{\dx}  = - \lambda U(1-U ) + (1- UV)  \left(1- (1-U) W \right)  \\
&\frac{\dV}{\dx}   = \lambda V (1-U)  \\
& \frac{\dW}{\dx} = - \lambda W U 
\end{aligned}\label{eq:ODE_UVW}
\end{equation}
with initial condition
\begin{align}
& U(0) = \frac{1}{2}, \quad V(0)=W(0)=\delta, \quad \delta \in (0,1) \label{eq:ODE_UVW_initial},
\end{align}
and $\delta$ is the unique value in $(0,1)$ such that $U(x), V(x) \to 1$ as $x \to +\infty$. 

Furthermore, it has been shown in~\cite[Section B]{Moharrami2020a} that $UV<1,$ $(1-U) W<1,$ $0<U,V,W<1$, 
and 
\begin{align}
V(x) &=\delta \exp \left(  \lambda \int_{0}^x \left(1- U(y) \right) \dy \right)  , 
\label{eq:V_expression}\\
W(x) & = V(x) \,\e^{-\lambda x}. 
\label{eq:W_expression}
\end{align}

Therefore, we have  that 
$$
\lim_{n\to \infty}  \expect{\ell \left(M^*,\Mmin \right)}
\le 4 \int_0^\infty  V^2(x) e^{-\lambda x} \,dx,
 $$

 
 Let $x_0=\inf\{x\geq 0: U(x) < \frac{1}{2}\}$ denote the first time that $U(x)$ crosses $1/2$.
Therefore by~\prettyref{eq:V_expression} we have that for all $0 \leq x \le x_0$,  $V(x) \leq  \delta e^{\lambda x/2}$. 
Hence for any $0 \le \tau \le x_0$,
\begin{align*}
 \int_0^\infty  V^2(x) e^{-\lambda x} \,dx
& = \int_0^{\tau}  V^2(x) e^{-\lambda x} \,dx  + \int_{\tau}^{\infty}  V^2(x) e^{-\lambda x} \,dx \\
& \le \delta^2 \tau +  \int_{\tau}^{\infty}   e^{-\lambda x} \,dx \\
& =  \delta^2 \tau + \frac{1}{\lambda} e^{-\lambda \tau}. 
\end{align*}
We claim that $\lambda x_0 \geq 2 \log \frac{\epsilon}{8\delta}$. 
Suppose not. Then $e^{\lambda x_0/2} < \frac{\epsilon}{8\delta}$, in particular, $x_0<+\infty.$
By continuity, we have $U(x_0)=\frac{1}{2}$, $U'(x_0) \leq 0$. But we have
\begin{align*}
U'(x_0) = & ~  -\lambda U(x_0) \left(1-U(x_0) \right) + \left(1-U(x_0) V(x_0) \right) \left( 1-\left(1-U(x_0) \right)W(x_0) \right)\\
= & ~ -\lambda/4 + \left( 1- \frac{1}{2} V(x_0) \right)  \left( 1- \frac{1}{2} W(x_0) \right ) \\
\geq & ~ -\lambda/4 + 1 - V(x_0) \\
\geq & ~  \epsilon/4  - \delta e^{\lambda x_0/2} > \epsilon/8 > 0,
\end{align*}
which is the needed contradiction.
Therefore, by setting $\tau= \frac{2}{\lambda} \log \frac{\epsilon}{8\delta}$, 
we get that 
\begin{align*}
 \int_0^\infty  V^2(x) e^{-\lambda x} \,dx
 \le  \frac{2 \delta^2}{\lambda}   \log \frac{\epsilon}{8\delta} + \frac{1}{\lambda}  
 \left( \frac{8\delta}{\epsilon} \right)^2 
 \le  \frac{c}{\epsilon^3} \exp \left( - \frac{2\pi}{\sqrt{\epsilon}} \right),
 \end{align*}
where $c>0$ is a universal constant and the last inequality follows from the claim that 
\begin{align}
\delta  \le \frac{c'}{\sqrt{ \epsilon} } \exp \left( - \frac{\pi}{\sqrt{\epsilon}} \right), \label{eq:delta_bound}
\end{align}
where $c'>0$ is a universal constant. 

It remains to prove \prettyref{eq:delta_bound}.
In view of $V<1$ and \prettyref{eq:V_expression}, we have that 
\begin{align}
\delta \exp \left(  \lambda \int_{0}^\infty \left(1- U(y) \right) \dy \right) \le 1. \label{eq:delta_upper_1}
\end{align}
To proceed, we derive an upper bound to $U(x)$. Let $\tU(x)$ denote the unique solution of the
following ODE:
\begin{align}
\frac{\dtU}{\dx}  = - \lambda \tU(1-\tU ) +  1, \quad \tU(0) = \frac{1}{2}.  \label{eq:ode_U_upper}
\end{align}
We claim that $U(x) \le \tU(x)$ for all $x \ge 0$. 
To show this, let $f=\tU-U$. 
Then we have (a) $f(0)=0$; (b) $f'(0)>0$; (c) $f'(x)>0$ whenever $f(x)=0$. 
Thus $f(\delta_0)>0$ for some small $\delta_0$. 
Let $x_1 = \inf\{x \geq 0: f(x)<0 \}$. 
Suppose for the sake of contradiction that $x_1<\infty$. 
Then $x_1$ is the first time that $f$ crosses zero. Thus $f'(x_1) \le 0$. 
But by continuity we have $f(x_1)=0$ and hence $f'(x_1)>0$, which is a contradiction.

Solving the ODE~\prettyref{eq:ode_U_upper}, we get that 
$$
\tU(x) = \frac{1}{2} + \frac{1}{2} \sqrt{ \frac{4-\lambda}{\lambda} } \tan \left( \frac{x}{2} \sqrt{\lambda(4-\lambda) } \right), \quad 0 \le x < \frac{\pi}{\sqrt{\lambda(4-\lambda) }}.
$$
Therefore, for some $0 \le a<\frac{\pi}{\sqrt{\lambda(4-\lambda) }}$ to be determined, 
\begin{align*}
\int_{0}^\infty \left(1- U(y) \right) \dy 
& \ge \int_{0}^{a} \left(1- U(y) \right) \dy  \\
& \ge \int_{0}^{a} \left( 1- \tU(y) \right) \dy \\
& = \frac{a}{2} + \frac{1}{\lambda} \log \cos \left(  \frac{a}{2} \sqrt{\lambda(4-\lambda) } \right),
\end{align*}
where the last inequality holds due to $\int_{0}^x \tan (y) \dy = - \log \cos(y)$ for $0 \le x <\pi/2$. 
Combining the last displayed equation with \prettyref{eq:delta_upper_1} yields that
$$
\delta \le  \frac{1}{ \cos \left(  \frac{a}{2} \sqrt{\lambda(4-\lambda) } \right)} \exp \left( - \lambda \frac{a}{2}  \right).
$$
Recall that $\lambda=4-\epsilon$. 
Choose $a = \frac{ \pi -\sqrt{\epsilon} }{ \sqrt{\lambda (4-\lambda)} }$. Then we get
$$
\delta \le \frac{1}{\sin \left( \frac{ \sqrt{ \epsilon} }{2} \right) } 
\exp \left(  - \sqrt{\lambda}  \frac{\pi - \sqrt{\epsilon} }{2 \sqrt{\epsilon} }  \right)
\le  \frac{c'}{\sqrt{ \epsilon} } \exp \left( - \frac{\pi}{\sqrt{\epsilon}} \right),
$$
for a universal constant $c'>0$. 
\end{proof}

\section{Reduction arguments}\label{app:reduction}
\subsection{Reduction from dense to sparse model}
\label{app:reduction_dense_sparse}
In this subsection, we prove 
 \prettyref{thm:impossibility} for the dense model
by reducing it to the sparse model. Suppose \prettyref{thm:impossibility} holds under the sparse model.
Recall that in the dense model, 
the planted weight density $p(x)$ and the null weight density $q(x)=\frac{1}{d} \rho(\frac{x}{d})$, 
where  $p$ and $\rho$ are fixed probability densities on $\reals$ and $d \to \infty$. 
In this case the impossibility condition~\prettyref{eq:impossibility} simplifies to:
	\begin{equation}
	\int_{-\infty}^\infty \sqrt{  p(x) \rho(0)   }  \diff x \geq 1+\epsilon.
	\label{eq:threshold-dense}
	\end{equation}
	
Define a sparse model with parameter $(d',\calP ',\calQ ')$, where
\[
d'=\Gamma\rho(0)\left(1-\frac{\epsilon}{2}\right)^2, \quad \calP '=\calP, \quad \calQ '=\text{Unif}[-\Gamma/2,\Gamma/2], 
\]
and
for some positive constant $\Gamma$ to be specified. 
Note that $d',\calP ',\calQ '$ are all independent of $n$.
Next we show that, given graph $G'$ drawn from the $(d',\calP ',\calQ ')$ model with planted matching $M^*$, there exists a (randomized) mapping $f$ 
such that $G\triangleq f(G')$ is distributed according to the $(d,\calP,\calQ)$ model with the same planted matching $M^*$. 
Note that crucially such a mapping needs to be agnostic to the latent $M^*$.

To construct the mapping $f$, we first express $\calQ$ as a mixture of $\calQ '$ and some other distribution $\overline{\calQ}$,
that is $\calQ=t \calQ '+(1-t)\overline{\calQ}$, where 
$t=\frac{d'}{d}=\frac{\Gamma}{d}\rho(0)\left(1-\frac{\epsilon}{2}\right)^2$ and $\overline{\calQ}$ has density $\overline{q}(x) \triangleq \frac{ q(x)- tq'(x) }{1-t}$.
We claim that $q'$ is well-defined density for large $d$. 
Indeed, note that $q(x)=\frac{1}{d}\rho(\frac{x}{d})$, $q'(x)=\indc{|x|\leq \Gamma/2}/\Gamma$ and $d\diverge$.
By continuity of $\rho$ at $0$, $\rho(0)(1-\frac{\epsilon}{2})^2\leq \rho(\frac{x}{d})$ holds for all $|x| \leq \Gamma/2$, provided that $d$ is sufficiently large.
Thus $tq'(x) \leq q(x)$ for all $x$.

Next, given $G'$, we generate a denser graph $G=f(G')$ as follows. 
For each edge $e$ in $G'$, we leave its edge weight unchanged. 
For each edge $e$ not in $G'$, we connect it in $G$ independently with probability $r/n$ and draw its edge weight $W_e \sim \overline{\calQ}$, where $r\triangleq \frac{d-d'}{1-d'/n}$. The choice of $r$ is such that for each $e\in [n]\times [n]'\backslash M^*$, the probability that $e$ is an unplanted edge in $G$ (namely, $e\in E(G)\backslash M^*$) equals $(1-\frac{d'}{n})\frac{r}{n}+\frac{d'}{n}=\frac{d}{n}$.
Furthermore, for each unplanted $e$ in $G$, 
it is an unplanted edge in $G'$ 
with probability $\frac{d'/n}{d/n}=\frac{d'}{d}=t$. As a result, 
conditioned on $e$ being an unplanted edge in $G$, the distribution of $W_e$ is $t \calQ' + (1-t) \overline{\calQ}= \calQ$. Also, by construction, conditioned on the true matching $M^*$ of $G'$, the edge weights of $G$ are independent.
In other words, $G=f(G')$ is distributed according the dense $(d,\calP,\calQ)$ model with $M^*$ being the planted matching.
Thus, it remains to verify the impossibility for the sparse $(d',\calP ',\calQ ')$ model by verifying the condition of Theorem~\ref{thm:impossibility}. Indeed,
\begin{align*}
\sqrt{d'} B(\calP',\calQ')= & \sqrt{td}\int_{-\Gamma/2}^{\Gamma/2}\sqrt{p(x)\frac{1}{\Gamma}}dx = \left(1-\frac{\epsilon}{2}\right)\sqrt{\rho(0)}\int_{-\Gamma/2}^{\Gamma/2}\sqrt{p(x)}dx\geq 1+\frac{\epsilon}{4},
\end{align*}
where the last inequality holds as a consequence of the condition~\eqref{eq:threshold-dense} and by choosing a large enough $\Gamma$.
This completes the proof of Theorem~\ref{thm:impossibility} for dense models.

\subsection{Reduction for general weight distributions: positive result}
\label{app:acpos}
In Section~\ref{sec:positive}, we proved~\prettyref{thm:first_moment_bound} under the assumption $\calP\ll\calQ$. Here, we prove the theorem for general $\calP,\calQ$. Note that our main positive result~\prettyref{thm:possibility} follows directly from~\prettyref{thm:first_moment_bound}, per the argument in \prettyref{sec:positive}. 

Recall that in \prettyref{eq:B}, $f$ and $g$ denote the densities of $\calP$ and $\calQ$ with respect to a common dominating measure 
$\mu$, respectively. Under this general model, the maximum likelihood estimator $\Mmin$ takes the form
\[
\Mmin\in\arg\max_{M\in \calM} \prod_{e\in M}f(W_e)\prod_{e\notin M}g(W_e).
\]
Let $p=\calP\{g>0\}$. If $p=0$, then $\calP$ and $\calQ$ are mutually singular. In this case, it is easy to see that $\Mmin=M^*$ with probability one.
Next, suppose $p>0.$
Let $\calP'$ denote the distribution $\calP$ conditioned on the support of $\calQ$, with density $f'=f\indc{g>0}/p$. 
Then we have $\calP'\ll\calQ$. 

Let $V=\{i\in [n]:g(W_{i,i'})>0\}$. Therefore, $g(W_{i,i'})=0$ for all $i\in V^c$. We first observe that if a perfect matching $M\in \calM$ does not contain all the red edges $\{(i,i'): i\in V^c\}$, then it has a zero likelihood. 
Thus, $\Mmin$ reduces to:
\begin{equation}\label{eq:ML.decompose}
\Mmin =\left\{(i,i'):i\in V^c\right\} \cup \widehat{M}_{\mathsf{ML},V},
\end{equation}
where for $\calM_V$ defined as the set of perfect matchings on $V\times V'$, 
\[
\widehat{M}_{\mathsf{ML},V}\in \arg\max_{M\in \calM_V} \prod_{e\in M}f(W_e)\prod_{e\notin M}g(W_e).
\]
In other words, $\widehat{M}_{\mathsf{ML},V}$ is the maximum likelihood estimator over the subgraph $G'=G[V\times V']$. Moreover, conditional on the set $V$, all the red edges weights on $G'$ are {\it i.i.d.} following distribution $\calP'$, and all the blue edge weights are ${\it i.i.d.}$~according to $\calQ$. Denote $M^*_V=\{(i,i'):i\in V\}$ for the true matching on $G'$, and let $n'=|V|$. We can therefore apply~\prettyref{thm:first_moment_bound} on $G'$ with $B(\calP',\calQ)=B(\calP,\calQ)/\sqrt{p}$. Under the condition $\sqrt{n}B(\calP,\calQ)\leq 1+\epsilon$, we have
\[
\sqrt{n'}B(\calP',\calQ)=\sqrt{n}B(\calP,\calQ)\sqrt{\frac{n'}{np}}\leq (1+\epsilon)\sqrt{\frac{n'}{np}}.
\]
\prettyref{thm:first_moment_bound} yields
\[
\expect{|\widehat{M}_{\mathsf{ML},V}\symdiff M^*_V| \;\Biggr\lvert\; V}\leq Cn'\max\left\{\log(1+\epsilon'),\sqrt{\frac{\log n'}{n'}}\right\},
\]
where $\epsilon'$ is such that
\[
2\log (1+\epsilon')= 2\log(1+\epsilon)+\log\frac{n'}{np}.
\]
Thus,
\begin{align}
\nonumber\expect{|\widehat{M}_{\mathsf{ML},V}\symdiff M^*_V| \;\Biggr\lvert\; V}\leq & C\max\left\{n'\log(1+\epsilon)+\frac{n'}{2}\log\frac{n'}{np}, \sqrt{n'\log n'}\right\}\\
\label{eq:ML.on.V}\leq & C\max\left\{n\log(1+\epsilon)+\frac{n'}{2}\log\frac{n'}{np}, \sqrt{n\log n}\right\},
\end{align}
where the last inequality is from $n'=|V|\leq n$. Next, we average over $V$ to obtain that
\begin{align*}
\expect{|\widehat{M}_{\mathsf{ML},V}\symdiff M^*_V|}
\leq & C\expect{\max\left\{n\log(1+\epsilon)+\frac{n'}{2}\log\frac{n'}{np}, \sqrt{n\log n}\right\}}\\
\leq & C\left[n\log(1+\epsilon)+\expect{\frac{n'}{2}\log\frac{n'}{np}}+\sqrt{n\log n}\right].
\end{align*}
Under our model, $W_{i,i'}\sim \calP$ for all $i$. Therefore $n'= |V|\sim \Binom(n,p)$.
To bound $\expect{n'\log n'}$, note that for any $u>0,$ and  $x \ge 0$, $\log (x/u) \le x/u -1$ so that $x \log x \le x^2/u + x \log (u/e)$.
Thus,
$$
\expect{ n' \log n' } \le \expect{(n')^2}/ u + \expect{n'} \log (u/e)
 \le \expect{n'} \log   \frac{\expect{(n')^2}}{\expect{n'}} 
\le np \log (n p + 1),
$$
where the first inequality holds by optimally choosing $u=\expect{(n')^2} /\expect{n'}.$
Hence, $\expect{ n'\log\frac{n'}{np} } \le np \log (1 + 1/(np) ) \le 1$
and consequently, 
$$
\expect{|\widehat{M}_{\mathsf{ML},V}\symdiff M^*_V} 
\le C \left[n\log(1+\epsilon)+ 1/2+ \sqrt{n\log n}\right]
\le C_1 \max\left\{n\log(1+\epsilon), \sqrt{n\log n}\right\}
$$
for some universal constant $C_1$. This finishes the proof of~\prettyref{thm:first_moment_bound} for general $\calP,\calQ$.

\subsection{Reduction for general weight distributions: negative results}
\label{app:acneg}
In \prettyref{app:reduction_dense_sparse} we have already reduced the impossibility result Theorem~\ref{thm:impossibility} from the dense model to the sparse model. In this section, we show that for the sparse model $(d,\calP,\calQ)$, we can assume WLOG that $\calP\ll \calQ$ and $\calQ\ll \calP$. In other words, assuming Theorem~\ref{thm:impossibility} holds under the sparse model with $\calP$ and $\calQ$ mutually absolutely continuous, we prove Theorem~\ref{thm:impossibility} for general (fixed) $\calP$, $\calQ$.

Recall that $\mu$ is a common dominating measure of $\calP,\calQ$, and $f,g$ are densities of $\calP,\calQ$ respectively. As in the reduction argument for the positive result, we first define distribution $\calP'$ with density $f\indc{g>0}/p$, and distribution $\calQ'$ with density $g\indc{f>0}/q$, where $p=\calP\{g>0\}$ and $q=\calQ\{f>0\}$ are the normalizing constants. Note that since $B(\calP,\calQ)>0$, both $p,q$ are strictly positive, so that $\calP'$ and $\calQ'$
are well-defined. 
Then we have $\calP'\ll \calQ'$, and $\calQ'\ll \calP'$.

We first reduce the original $(d,\calP,\calQ)$ model to the $(d', \calP,\calQ')$ model where $d'=dq$. Note that for edges $e$ that either do not appear in $G$ (in which case recall that we set $W_e =\star$ for a special symbol $\star$ to signify that $e$ is not in $G$), 
or are such that $f(W_e)=0$, we have $(f/g)(W_e)=0$.
Thus the posterior distribution of $M^*$ under $(d,\calP,\calQ)$ model is identical to the posterior distribution under the $(d', \calP,\calQ')$ model. 

Therefore, we only need to show that the conclusion of \prettyref{thm:impossibility} holds for graph $G$ with weights $(W_e)$ that follow the $(d', \calP,\calQ')$ model.
Let $V=\{i\in[n]:g(W_{i,i'})=0\}$. We can bound the optimal overlap as follows:
\begin{align}
\nonumber\sup_{\widehat{M}}\mathbb{E}\left(\sum_{i\leq n}\indc{(i,i') \in \hat{M}}\right) \leq & \sup_{\widehat{M}}\left[\mathbb{E}\left(V^c\right) +\mathbb{E}\left(\sum_{i\in V}\indc{(i,i') \in \hat{M}}\right)\right]\\
\label{eq:ac.reduction}\leq & np+\mathbb{E}\left[\sup_{\widehat{M}}\mathbb{E}\left(\sum_{i\in V}\indc{(i,i') \in \hat{M}}\;\bigg\rvert \;V\right)\right].
\end{align}
Conditional on $V$, the subgraph $\widetilde{G}=G[V\times V']$ follows the $(d'|V|'/n, \calP',\calQ')$ model. To see that, note that the red edges in $\widetilde{G}$ are distributed {\it i.i.d.} $\calP'$; the blue edges follow distribution $\calQ'$, and appear independently with probability $d'/n = (d'|V|/n)/|V|$. To apply the impossibility result under the $(d'|V|'/n, \calP',\calQ')$ model, we need to establish a lower bound on $|V|$. Let
\[
\calA=\left\{|V|\geq \left(1-\frac{\epsilon}{2}\right)^2pn\right\}.
\]
Recall that under the sparse model, $\calP,\calQ$ are fixed distributions that do not depend on $n$. Therefore $p$ does not depend on $n$, and since $|V|\sim \Binom (n,p)$, we have $\mathbb{P}(\calA)=1-o(1)$.

Furthermore, note that
\[
B(\calP',\calQ')=\frac{\int \sqrt{ fg } \diff\mu }{\sqrt{pq}} = \frac{B(\calP,\calQ)}{\sqrt{pq}}.
\]
Therefore on $\calA$, we have
\[
\sqrt{\frac{d'|V|}{n} } B \left( \calP', \calQ' \right)
=\sqrt{\frac{d|V|}{np} }  B \left( \calP, \calQ \right)
\geq (1+\epsilon)(1-\epsilon/2)\geq 1+\epsilon/3
\]
for $\epsilon \le 1/3$. Therefore, we can invoke the conclusion of Theorem~\ref{thm:impossibility} under the $(d'|V|/n, \calP',\calQ')$ model to deduce that on $\calA$, for some constant $c>0$ and large enough $n$,
\[
\sup_{\widehat{M}}\mathbb{E}\left(\sum_{i\in V}\indc{(i,i') \in \hat{M}} \;\bigg\rvert \;V\right)\leq (1-c)|V|,
\]
where the supremum is over all (possibly random) mappings from $G[V\times V']$ to $\calM_V$, the set of perfect matchings on $V\times V'$. Conditional on $V$, the edges in $G[V\times V']$ are independent of those in  $G\setminus G[V\times V']$. Thus, we can replace the range of the supremum with all (random) mappings from $G$ to $\calM_V$ without changing its value, allowing us to continue upper bounding~\eqref{eq:ac.reduction} with
\[
np + \mathbb{E}\left[(1-c)|V|\mathbf{1}_\calA\right] + n\mathbb{P}\left(\calA^c\right)
\leq \left(p+ (1-c)(1-p)+o(1)\right)n
= \left(1-c(1-p)+o(1)\right)n.
\]
In other words, the fraction of misclassified edges is lower bounded by $c(1-p)+o(1)$. Under the sparse model, $c(1-p)$ is a constant that only depends on $d,\calP,\calQ$. We have proved Theorem~\ref{thm:impossibility} under the $(d',\calP,\calQ')$ model. The conclusion of \prettyref{thm:impossibility} under the $(d,\calP,\calQ)$ model immediately follows.

\section{Finite-order phase transition under the unweighted model}\label{app:unweighted}
In this section, we prove that in the unweighted case $\calP=\calQ$, under the impossibility condition~\eqref{eq:impossibility}, the minimal reconstruction error is of order $\Omega(\epsilon^8)$. The impossibility condition translates to $\sqrt{d}\geq 1+\epsilon$. Similar to the proof of \prettyref{thm:impossibility}, we can assume WLOG that $\sqrt{d}=1+\epsilon$. As in the proof under general weight distributions, this result is again proven via the two-stage cycle finding scheme described in \prettyref{alg:cycle_finding}. Recall from \prettyref{alg:cycle_finding} that we reserve a set $V$ of $\gamma n$ left vertices from $[n]$. We will first construct many disjoint alternating paths on the subgraph $G_1=G[V^c\times (V^c)']$, and then use the reserved vertices to connect the paths into alternating cycles.

\subsection{Path construction}
The sets $L_k$ and $R_k$ and the alternating paths will again be constructed using two-sided trees. However, unlike the model with general weight distribution, we do not need to keep track of the weights of the paths, hence the construction is much simpler. For example, the leaf node selection step is no longer necessary. Moreover, in the previous sections we needed the paths to be long enough, so that the weights on the paths dominate the weights of the sprinkling edges. In the unweighted case, that restriction is also lifted. Therefore we can define $L_k$ (resp.~$R_k$) to be all the left (resp.~right) vertices in the left (resp.~right) subtree, instead of only the selected leaf nodes. The detailed construction is given in \prettyref{alg:exploration.unweighted} below.

\begin{algorithm}[H]
\caption{Construction of two-sided trees (under the unweighted model)} \label{alg:exploration.unweighted}
\begin{algorithmic}[1]
\State{\bfseries Input:} $n,\gamma$, a bipartite graph $G_1$ that contains $(1-\gamma)n$ pairs of vertices, and parameter $\ell$.
\State Initialize $\mathcal{U}=\{\text{all left vertices of }G_1\}$ as the set of unexplored left vertices.
\State For $k=1,2,...$, repeat the following steps 4-7 to construct the two-sided tree $T_k$, until $|\mathcal{U}|=(1-2\gamma) n$.
\State Let $i_k$ be the member of $\calU$ with the smallest index. Update $\mathcal{U}\leftarrow\mathcal{U}\backslash \{i_k\}$. Initialize $T_k=\{(i_k,i_k')\}$ to be a tree containing only one red edge.
\item Construct the left tree of $T_k$ via a color-alternating breadth-first search on $G_1$.
Concretely, define the offsprings of $i_k$ as
\[
O_{i_k}'=\{u'\in (\mathcal{U})': (u', i_k)\in E(G_1)\}.
\]
For all $u'\in O_{i_k}'$, append edges $(u',v)$ and $(u,u')$ to $T_k$. Update $\calU\leftarrow\calU\backslash O_{i_k}'$. To construct the next 2 layers of the left tree, sequentially (ordering defined by the vertex indices) for all $u\in O_{i_k}'$, define its offsprings as the set of unexplored vertices that are connected to $u$ via a blue edge in $G_1$; append to $T_k$ all the blue edges from $u$ to the offsprings and their corresponding red edges; and mark all the offsprings as explored. Repeat this process until the branching process dies, or the left tree contains $\ell$ vertices in total.
\item Construct the right tree of $T_k$ via the same scheme, starting from the vertex $i_k'$.
\item Let $L_k$ be the set of all left vertices in the left tree of $T_k$; let $R_k$ be the set of all right vertices in the right tree of $T_k$.
\State Define $\mathcal{K}_1=\{k: |L_k|=|R_k|=2\ell\}$.
\end{algorithmic}
\end{algorithm}

Note that \prettyref{alg:exploration.unweighted} explores $\gamma n$ pairs of vertices in total. Next, we show that with high probability, the size $K_1$ of the set $\calK_1$ is at least $c_3n/\ell$ for some constant $c_3=\Omega(\epsilon^3)$. That is, a constant proportion of two-sided trees contain exactly $2\ell$ pairs of vertices in both the left and the right tree.

By construction, for each $k$, $|L_k|$ can only be strictly smaller than $2\ell$ if the breadth-first search cannot find more vertices to explore, namely, the branching process dies. Since the number $|\calU|$ of unused vertices to explore is at least $(1-2\gamma)n$, we have
\begin{align*}
\mathbb{P}\left\{|L_k|<2\ell\right\}\leq & \text{ Probability of extinction for a branching process}\\
&\text{with offspring distribution }\Binom((1-2\gamma)n, d/n)=: 1-c_4.
\end{align*}
By choosing $\gamma=\epsilon/2$, the mean of the offspring distribution is $(1-2\gamma)d =(1-\epsilon)(1+\epsilon)^2\geq 1+\epsilon/2$ for $\epsilon$ smaller than some universal constant $\epsilon_0$. Therefore,
according to the standard Branching process theory (see e.g.~\cite[Theorem 23.1]{frieze2016introduction}), $1-c_4$
is the unique solution $\rho<1$ so that $\phi(\rho)=\rho$, where $\phi(\rho)=\expect{\rho^X}$ with 
$X \sim \Binom((1-2\gamma)n, d/n)$. 
In particular, the probability of survival $c_4$ is strictly positive, and it can be further shown that $c_4\geq c_5 \epsilon$ for some universal constant $c_5$.

Note that the argument would be simplified if the events $\{|L_k|=2\ell\}$ and $\{|R_k|=2\ell\}$ were independent. This, however, is not true since when constructing $R_k$, the number of unexplored vertices depends on $|L_k|$. To resolve this technicality, note that there are always at least $(1-2\gamma)n$ unused vertices, and that the construction never reuses vertices. Therefore we can couple the construction of $L_k$, $R_k$ with two independent branching processes, each with offspring distribution $\Binom((1-2\gamma)n, d/n)$, such that
\[
\mathbb{P}\left\{|L_k|=|R_k|=2\ell\right\} \geq \mathbb{P}\left\{\text{both branching processes survive}\right\} = c_4^2.
\]
Since the algorithm uses $\gamma n$ pairs of vertices in total, and at most $2\ell$ pairs are used for each $k$, we have $K\triangleq\text{the total number of two-sided trees}\geq \gamma n/(2\ell)$. Thus
\begin{align*}
\mathbb{P}\left\{\sum_{k\leq K}\indc{|L_k|=|R_k|=2\ell}<\frac{c_4^2 \gamma n}{4\ell}\right\}
\leq & \mathbb{P}\left\{\Binom(K, c_4^2)<\frac{c_4^2 \gamma n}{4\ell}\right\}\\
\leq & \exp\left(-\frac{c_4^4\gamma}{4\ell}n\right)
\end{align*}
by Hoeffding's inequality. We have shown that with probability $1-\exp(-\Omega(n))$, $K_1\geq c_3n/\ell$, with constant $c_3\triangleq c_4^2 \gamma/4 \geq c_5^2 \epsilon^3/8$. 

\subsection{Sprinkling stage}
In this subsection, we apply \prettyref{alg:sprinkling} and 
Theorem~\ref{thm:sprinkling} to show the existence of exponentially alternating cycles in $G$. 
Since there are no weights, we set the thresholds in \prettyref{alg:sprinkling} as $\tau_\mathsf{red}=\tau_\mathsf{blue}=0$.
Since $\calP=\calQ$, these thresholds yield $V^*=V$, and $G_2$ contains all edges in $G$ except those in $V^c\times (V^c)'$. The parameters in Theorem~\ref{thm:sprinkling} are specified as follows. Since $|V|=\gamma n$, we have $\beta=\gamma=\epsilon/2$. The blue edge probability in $G_2$ is $d/n$, so that $\eta=d$. In the path construction stage, we showed that the set $\calK_1$ is of size at least $c_5^2\epsilon^3 n/(8 \ell)$. By taking its subset, we can assume WLOG that $K_1=|\calK_1|=c_5^2\epsilon^3 n/(8\ell)=c_6 \epsilon^3 n/\ell$ for some universal constant $c_6$. 

Let $\ell=c_7/\epsilon^5$ for some universal constant $c_7$ that will be chosen later. Next, we check that the assumptions of Theorem~\ref{thm:sprinkling}. Note that $s=2\ell$ and 
\[
b=\frac{\beta s \eta}{4}=\frac{c_7 (1+\epsilon)^2}{4\epsilon^4}\geq 4
\]
for all $\epsilon<\epsilon_0$ for small enough $\epsilon_0$;
\[
K_1=\frac{c_6 \epsilon^3 n}{ \ell}\geq 8400
\]
for large enough $n$;
\[
\kappa=\frac{2K_1 s\eta}{n}=4c_6\epsilon^3 (1+\epsilon)^2\leq \frac{1}{16^2}
\]
for small enough $\epsilon_0$;
\[
d_{\rm super}=\frac{K_1 b^2\eta}{32n}=\frac{c_6c_7 (1+\epsilon)^6}{512} \geq 256\log(32 e)
\]
for $c_7$ chosen large enough. We have checked that all the assumptions of Theorem~\ref{thm:sprinkling} are satisfied. Therefore, with high probability, \prettyref{alg:cycle_finding} yields at least $\exp(K_2/20)=\exp(c_6\epsilon^8 n/(320 c_7))$ alternating cycles of length at least $3K_2/4=3c_6\epsilon^8 n/(64 c_7)$.
Each alternating cycle corresponds to a perfect matching in $G$. By taking $\delta= 3K_2/8=3c_6\epsilon^8 n/(128 c_7)$,
all these perfect matchings are in $\Mbad$.
Note that under the unweighted model, all the perfect matchings that appear in $G$ occupy the same posterior mass. 
Thus we have shown that with high probability, $\mu_W(\Mbad)/\mu_W(M^*)\geq \exp(c_6\epsilon^8 n/(320 c_7))$. 
By Lemma~\ref{lmm:Mgood}, $\mu_W(\Mgood)/\mu_W(M^*) \leq 2 e^{7\epsilon \delta n}$ with high probability.
Thus we conclude that
$\mathbb{E}[\ell(\widetilde{M},M^*)]\gtrsim \delta=\Omega(\epsilon^8)$.



\section*{Acknowledgment}
J.~Xu would like to thank Cristopher Moore for many inspiring discussions on the posterior sampling.
J.~Xu is also grateful to Guilhem Semerjian, Gabriele Sicuro, and Lenka Zdeborov\'a for sharing the early draft
of~\cite{Semerjian2020}.

J.~Ding is supported by the NSF Grants DMS-1757479 and DMS-1953848.
Y.~Wu is supported in part by the NSF Grant CCF-1900507, an NSF CAREER award CCF-1651588, and an Alfred Sloan fellowship.
J.~Xu is supported by the NSF Grants IIS-1838124, CCF-1850743, and CCF-1856424. 
D.~Yang is supported by the NSF Grants CCF-1850743 and IIS-1838124. 

\bibliographystyle{plain}
\bibliography{optimal_overlap,bibliography}

\begin{thebibliography}{10}

\bibitem{Aldous2001}
David Aldous.
\newblock The {$\zeta(2)$} limit in the random assignment problem.
\newblock {\em Random Structures \& Algorithms}, 18(4):381--418, 2001.

\bibitem{Alon2016}
Noga Alon and Joel~H. Spencer.
\newblock {\em The Probabilistic Method}.
\newblock Wiley-Interscience Series in Discrete Mathematics and Optimization, 3
  edition, 2008.

\bibitem{bagaria2020hidden}
Vivek Bagaria, Jian Ding, David Tse, Yihong Wu, and Jiaming Xu.
\newblock Hidden {H}amiltonian cycle recovery via linear programming.
\newblock {\em Operations research}, 68(1):53--70, 2020.

\bibitem{bhattacharyya1943measure}
Anil Bhattacharyya.
\newblock On a measure of divergence between two statistical populations
  defined by their probability distributions.
\newblock {\em Bull. Calcutta Math. Soc.}, 35:99--109, 1943.

\bibitem{Chertkov2010}
M.~Chertkov, L.~Kroc, F.~Krzakala, M.~Vergassola, and L.~Zdeborov{\'a}.
\newblock Inference in particle tracking experiments by passing messages
  between images.
\newblock {\em PNAS}, 107(17):7663--7668, 2010.

\bibitem{Decelle11}
A.~Decelle, F.~Krzakala, C.~Moore, and L.~Zdeborova.
\newblock Asymptotic analysis of the stochastic block model for modular
  networks and its algorithmic applications.
\newblock {\em Physics Review E}, 84:066106, 2011.

\bibitem{ding2013scaling}
Jian Ding.
\newblock Scaling window for mean-field percolation of averages.
\newblock {\em The Annals of Probability}, 41(6):4407--4427, 2013.

\bibitem{ding2015percolation}
Jian Ding and Subhajit Goswami.
\newblock Percolation of averages in the stochastic mean field model: the
  near-supercritical regime.
\newblock {\em Electronic Journal of Probability}, 20, 2015.

\bibitem{ding2015supercritical}
Jian Ding, Nike Sun, and David~B Wilson.
\newblock Supercritical minimum mean-weight cycles.
\newblock {\em To appear in Transactions of the American Mathematical Society,
  arXiv preprint arXiv:1504.00918.}, 2015.

\bibitem{DWXY20}
Jian Ding, Yihong Wu, Jiaming Xu, and Dana Yang.
\newblock Consistent recovery threshold of hidden nearest neighbor graphs.
\newblock In {\em Proceedings of Conference on Learning Theory (COLT)}, Jul
  2020.

\bibitem{durrett2007random}
Richard Durrett.
\newblock {\em Random graph dynamics}, volume 200.
\newblock Citeseer, 2007.

\bibitem{frieze2016introduction}
Alan Frieze and Micha{\l} Karo{\'n}ski.
\newblock {\em Introduction to random graphs}.
\newblock Cambridge University Press, 2016.

\bibitem{HajekWuXu_one_info_lim15}
B.~Hajek, Y.~Wu, and J.~Xu.
\newblock Information limits for recovering a hidden community.
\newblock {\em IEEE Trans. on Information Theory}, 63(8):4729 -- 4745, 2017.

\bibitem{Karp1987}
Richard~M. Karp.
\newblock An upper bound on the expected cost of an optimal assignment.
\newblock In David~S. Johnson, Takao Nishizeki, Akihiro Nozaki, and Herbert~S.
  Wilf, editors, {\em Discrete Algorithms and Complexity}, pages 1 -- 4.
  Academic Press, 1987.

\bibitem{krivelevich2016long}
Michael Krivelevich.
\newblock Long paths and hamiltonicity in random graphs.
\newblock {\em Random Graphs, Geometry and Asymptotic Structure}, 84:1, 2016.

\bibitem{lesieur2015mmse}
T.~Lesieur, F.~Krzakala, and L.~Zdeborov\'a.
\newblock Mmse of probabilistic low-rank matrix estimation: Universality with
  respect to the output channel.
\newblock In {\em 2015 53rd Annual Allerton Conference on Communication,
  Control, and Computing (Allerton)}, pages 680--687, Sept 2015.

\bibitem{Linusson2004}
Svante Linusson and Johan W{\"a}stlund.
\newblock A proof of {P}arisi's conjecture on the random assignment problem.
\newblock {\em Probability Theory and Related Fields}, 128(3):419--440, Mar
  2004.

\bibitem{Parisi1987}
{Marc M\'ezard} and {Giorgio Parisi}.
\newblock On the solution of the random link matching problems.
\newblock {\em J. Phys. France}, 48(9):1451--1459, 1987.

\bibitem{marinari2006number}
Enzo Marinari and Guilhem Semerjian.
\newblock On the number of circuits in random graphs.
\newblock {\em Journal of Statistical Mechanics: Theory and Experiment},
  2006(06):P06019, 2006.

\bibitem{Moharrami2020a}
Mehrdad Moharrami, Cristopher Moore, and Jiaming Xu.
\newblock The planted matching problem: Phase transitions and exact results.
\newblock {\em \tt arXiv:1912.08880v3 [cs.DS]}.

\bibitem{Moore2017}
Cristopher Moore.
\newblock The computer science and physics of community detection: Landscapes,
  phase transitions, and hardness.
\newblock {\em Bulletin of the {EATCS}}, 121, 2017.

\bibitem{Nair2005}
Chandra Nair, Balaji Prabhakar, and Mayank Sharma.
\newblock Proofs of the {P}arisi and {C}oppersmith-{S}orkin random assignment
  conjectures.
\newblock {\em Random Structures \& Algorithms}, 27(4):413--444, 2005.

\bibitem{robbins1955remark}
Herbert Robbins.
\newblock A remark on stirling's formula.
\newblock {\em The American mathematical monthly}, 62(1):26--29, 1955.

\bibitem{Semerjian2020}
Guilhem Semerjian, Gabriele Sicuro, and Lenka Zdeborov\'a.
\newblock Recovery thresholds in the sparse planted matching problem.
\newblock {\em Phys. Rev. E}, 102:022304, 2020.

\bibitem{sicuro2020planted}
Gabriele Sicuro and Lenka Zdeborov{\'a}.
\newblock The planted $ k $-factor problem.
\newblock {\em arXiv preprint arXiv:2010.13700}, 2020.

\bibitem{Walkup1979}
David~W. Walkup.
\newblock On the expected value of a random assignment problem.
\newblock {\em SIAM Journal on Computing}, 8(3):440--442, 1979.

\bibitem{Wastlund2009}
Johan W{\"a}stlund.
\newblock An easy proof of the {$\zeta(2)$} limit in the random assignment
  problem.
\newblock {\em Electron. Commun. Probab.}, 14:261--269, 2009.

\bibitem{WX18}
Yihong Wu and Jiaming Xu.
\newblock Statistical problems with planted structures: Information-theoretical
  and computational limits.
\newblock In Yonina Eldar and Miguel Rodrigues, editors, {\em
  Information-Theoretic Methods in Data Science}. Cambridge University Press,
  2020.
\newblock arXiv:1806.00118.

\end{thebibliography}

\end{document}